\documentclass[article]{myclass}
\pdfoutput=1

\addbibresource{references.bib}


\excludecomment{mycomment}

\newcommand{\NN}{\mathbb{N}}

\newcommand{\RR}{\mathbb{R}}
\newcommand{\CC}{\mathbb{C}}
\newcommand{\cc}{\mathbbm{c}}

\newcommand{\KK}{\mathbb{K}}

\DeclareMathOperator{\RE}{\textup{Re}}
\DeclareMathOperator{\IM}{\textup{Im}}
\DeclareMathOperator{\DOM}{\mathcal{D}}

\newcommand{\EPS}{\ensuremath{\varepsilon}}
\newcommand{\BAR}[1]{\ensuremath{\overline{#1}}}

\newcommand{\ALL}{\text{for all }}



\newcommand{\PAIR}[3][]{\ensuremath{\langle #2,#3 \rangle_{#1}}}

\newcommand{\WSTAR}{\ensuremath{\text{weak}^{\star}}\xspace}
\newcommand{\WSTARLY}{\ensuremath{\text{weakly}^{\star}}\xspace}
\newcommand{\WS}{\ensuremath{\text{w}^{\star}}\xspace}


\newcommand{\BC}{\ensuremath{\textup{BC}}}

\newcommand{\LP}[1]{\ensuremath{L^{#1}}}

\DeclareMathOperator{\LIP}{Lip}

\newcommand{\BND}{\ensuremath{\mathcal{L}}}


\newcommand{\CM}{\mathcal{W}^{\textup{c}}}
\newcommand{\LCM}{\mathcal{W}^{\textup{c}}_{\textup{loc}}}

\newcommand{\STAR}[1]{\ensuremath{#1^{\star}}}
\newcommand{\SUN}[1]{\ensuremath{#1^{\odot}}}
\newcommand{\SUNSTAR}[1]{\ensuremath{#1^{\odot\star}}}

\DeclareMathOperator{\FAV}{Fav}








\newcommand{\DEF}{\ensuremath{\coloneqq}}
\newcommand{\FED}{\ensuremath{\eqqcolon}}

\newcommand{\ONE}{\ensuremath{\mathbbm{1}}}


\renewcommand{\phi}{\varphi}


\newcommand\mytitle{A class of abstract delay differential equations\xspace}
\newcommand\mysubtitle{in the light of suns and stars. II\xspace}
\newcommand\myauthor{Sebastiaan G. Janssens\xspace}

\title{\mytitle\\\mysubtitle}

\author{\myauthor\thanks{Department of Mathematics, Utrecht University, Budapestlaan 6, 3508 TA Utrecht, The Netherlands (\email{s.g.janssens@uu.nl} or \email{sj@dydx.nl})}}

\hypersetup{
  pdftitle={\mytitle \mysubtitle},
  pdfauthor={\myauthor}
}

\date{}

\begin{document}

\maketitle

\begin{abstract}
  Recent work \cite{ADDE1} by the author about a class of abstract delay differential equations (DDEs), as well as earlier work \cite{Diekmann2008,DiekmannGyllenberg2012} by Diekmann and Gyllenberg on other classes of delay equations, motivates the introduction of the general notion of an admissible range and an admissible perturbation for a given $\mathcal{C}_0$-semigroup $T_0$ on a Banach space $X$ that is not assumed to be sun-reflexive with respect to $T_0$. We investigate the relationship between admissible ranges for $T_0$ and the subspace $X^{\odot\times}$ of $X^{\odot\star}$ introduced by Van Neerven in \cite{VanNeerven1992}. We answer two questions about robustness of admissibility with respect to bounded linear perturbations and we use these answers to study the semilinear problem and its linearization. Partly as an application of the material developed up to that point, and partly as a justification of existing work on local bifurcations in models taking the form of abstract DDEs, we compare the construction of center manifolds in the non-sun-reflexive case with known results by Diekmann and Van Gils \cite{Delay1995} for the sun-reflexive case. We show that a systematic use of the space $X^{\odot\times}$ facilitates a generalization of the existing results with relatively little effort. In this context we also give sufficient conditions for the existence of appropriate spectral decompositions of $X$ and $X^{\odot\times}$ without assuming that the linearized semiflow is eventually compact. A center manifold theorem for the motivating class of abstract DDEs then follows as a particular case.
\end{abstract}

\begin{keywords}
  delay equation, abstract integral equation, dual perturbation theory, adjoint semigroup, sun-star calculus, non-sun-reflexivity, linearization, center manifold, non-compactness.
\end{keywords}

\begin{MSC}
Primary 34K30; Secondary 47D06.
\end{MSC}

\section{Introduction}\label{sec:introduction}
This article is a continuation of \cite{ADDE1}, but the approach here is a bit more general and, as a consequence, all results except those in \cref{sec:ddes} have a wider applicability; also see \cref{sec:structure} for an overview of the structure and contents.

We are still motivated by abstract delay differential equations (DDEs) of the form
\begin{subequations}
  \label{eq:adde-ic}
  \begin{equation}
    \label{eq:adde}
    \dot{x}(t) = B x(t) + F(x_t), \qquad t \ge 0,
  \end{equation}
  where the unknown $x$ takes values in a real or complex Banach space $Y$. (The adjective \emph{abstract} comes from the fact that $Y$ is allowed to be infinite-dimensional.) It is assumed that the possibly unbounded operator $B : \DOM(B) \subseteq Y \to Y$ generates a $\mathcal{C}_0$-semigroup $S$ of bounded linear operators on $Y$. As the state space for \cref{eq:adde} we choose the Banach space $X \DEF C([-h,0],Y)$ of continuous $Y$-valued functions on $[-h,0]$, endowed with the supremum-norm. The history of $x$ at time $t$ is denoted by $x_t \in X$, so
  \[
    x_t(\theta) \DEF x(t + \theta), \qquad \theta \in [-h,0].
  \]
  In particular, an initial condition for \cref{eq:adde} is specified as
  \begin{equation}
    \label{eq:ic}
    x_0 = \phi \in X.
  \end{equation}
\end{subequations}
Finally, we assume that $F : X \to Y$ is continuous and possibly nonlinear.

In \cite{ADDE1} the shift semigroup $\{T_0(t)\}_{t \ge 0}$ was defined as the $\mathcal{C}_0$-semigroup on $X$ corresponding to the solution of \cref{eq:adde-ic} with $F = 0$. The sun dual $\SUN{X}$ of $X$ with respect to $\{T_0(t)\}_{t \ge 0}$ was characterized as
\begin{equation}
  \label{eq:xsun-adde}
  X^{\odot} \simeq Y^{\odot} \times \LP{1}([0,h], Y^{\star}),
\end{equation}
where $\simeq$ denotes an (explicit and simple) isometric isomorphism. It was also shown in \cite{ADDE1} that if 
\begin{equation}
  \label{eq:ell}
  \ell : Y \to \SUNSTAR{X}, \qquad \ell y \DEF (j_Y y, 0),  
\end{equation}
is the embedding induced by the canonical embedding $j_Y : Y \to \SUNSTAR{Y}$, then the \WSTAR convolution integral in the right-hand side of the abstract integral equation
\begin{equation}
  \label{eq:aie0-adde}
  u(t) = T_0(t)\phi + j^{-1}\int_0^t\SUNSTAR{T_0}(t - \tau)(\ell \circ F)(u(\tau))\,d\tau, \qquad t \ge 0,
\end{equation}
takes values in the range of the canonical embedding $j : X \to \SUNSTAR{X}$. Furthermore, it was shown that there is a one-to-one correspondence between the global mild solutions of the initial value problem \cref{eq:adde-ic} and the global solutions of \cref{eq:aie0-adde}. The same correspondence exists between \emph{local} solutions of \cref{eq:adde-ic} and \cref{eq:aie0-adde}.

\subsection{Structure and outline}\label{sec:structure}
In this second part we adopt a more general point of view than in \cite{ADDE1}. Let $\{T_0(t)\}_{t \ge 0}$ be an arbitrary $\mathcal{C}_0$-semigroup of bounded linear operators on an arbitrary real or complex Banach space $X$. It is not assumed that $X$ is $\odot$-reflexive with respect to $\{T_0(t)\}_{t \ge 0}$. There exist constants $\omega \in \RR$ and $M \ge 1$ such that
\begin{equation}
  \label{eq:expboundT0}
  \|T_0(t)\| \le M e^{\omega t} \qquad \ALL t \ge 0.
\end{equation}
Given a continuous (possibly nonlinear) operator $G : X \to X^{\odot\star}$, we are interested in solutions of the abstract integral equation
\begin{equation}
  \label{eq:aie0}
  u(t) = T_0(t)x + j^{-1}\int_0^t\SUNSTAR{T_0}(t - \tau)G(u(\tau))\,d\tau, \qquad t \ge 0,
\end{equation}
where $x \in X$ is an initial condition. (In the particular case of abstract DDEs we have $G \DEF \ell \circ F$.) If $u$ is continuous on some time interval $[0,t_e)$, then the \WSTAR Riemann integral appearing in the right-hand side of \cref{eq:aie0} takes values in $X^{\odot\odot}$ for all $0 \le t < t_e$, but the inclusion $jX \subseteq X^{\odot\odot}$ may be strict. As a consequence, \cref{eq:aie0} generally does not give rise to a nonlinear semiflow on $X$, which is a fundamental complication from a dynamical systems perspective. Here we aim to address this complication in a systematic manner, motivated by previous work on abstract renewal equations \cite{Diekmann2008}, classical coupled systems with infinite delay \cite{DiekmannGyllenberg2012}, and abstract DDEs \cite{ADDE1}. 

Let $J$ be any non-degenerate interval and denote $\Omega_J \DEF \{(t,s) \in J \times J\,:\, t \ge s\}$. A continuous function $f : J \to \SUNSTAR{X}$ will be called a \textbf{forcing function}. Given a forcing function $f$, introduce
\begin{equation}
  \label{eq:v0}
  v_0(\cdot,\cdot,f) : \Omega_J \to \SUNSTAR{X}, \qquad v_0(t,s,f) \DEF \int_s^t\SUNSTAR{T_0}(t - \tau)f(\tau)\,d\tau.
\end{equation}
Motivated by the above considerations, we are interested in forcing functions $f$ with the property that
\begin{equation}
  \label{eq:admissible}
  v_0(t,s,f) \in jX \qquad \ALL (t,s) \in \Omega_J.
\end{equation}

\begin{definition}\label{def:admrange}
  A closed subspace $\mathscr{X}_0$ of $X^{\odot\star}$ is called an \textbf{admissible range for} $\{T_0(t)\}_{t \ge 0}$ if \cref{eq:admissible} holds \emph{for every forcing function} $f : J \to \mathscr{X}_0$. A continuous linear or nonlinear operator\footnote{If the continuous linear operator $L$ is admissible for $\{T_0(t)\}_{t \ge 0}$, then $L$ satisfies condition \textbf{(H0)} in \cite[Section 6]{ADDE1}, so the results from there apply.} $G : X \to X^{\odot\star}$ is called an \textbf{admissible perturbation for} $\{T_0(t)\}_{t \ge 0}$ if there exists an admissible range $\mathscr{X}_0$ for $\{T_0(t)\}_{t \ge 0}$ such that $G$ takes its values in $\mathscr{X}_0$.
\end{definition}

The strategy is now to proceed as closely as possible to the $\odot$-reflexive case, but allowing only \emph{admissible} perturbations for $\{T_0(t)\}_{t \ge 0}$. In \cref{sec:admissibility} we prove some elementary and less elementary properties related to admissibility for a given $\mathcal{C}_0$-semigroup. For example, in \cref{prop:admindependent} we show that $\mathscr{X}_0$ is independent of $J$, so in \cref{def:admrange} there is no need to include $J$ in the terminology or notation. In \cref{thm:admconstants} we give a simple criterion for a closed subspace of $X^{\odot\star}$ to be an admissible range. After a small digression on norm convergence of \WS-integrals over unbounded intervals, we discuss the relationship between admissible ranges and a certain subspace of $X^{\odot\star}$ that was introduced in \cite{VanNeerven1992}. This discussion leads to \cref{thm:xsuncross} and its corollary.

Next, in \cref{sec:inhomogeneous} we address two interrelated questions about perturbation of $\{T_0(t)\}_{t \ge 0}$ by an admissible bounded linear operator. The first of these questions concerns robustness of a given admissible range, while the second question will prove to be of particular relevance for the local analysis of the semiflow generated by \cref{eq:aie0}. After some preparations, the answers are presented in \cref{thm:inhom:answers} and its corollary.

In \cref{sec:linearization} we move from linear to semilinear theory. In general, a perturbative analysis near an equilibrium of the semiflow generated by \cref{eq:aie0} requires a splitting of $G$ into a linear and a nonlinear part and a subsequent comparison between the linearly perturbed semigroup $\{T(t)\}_{t \ge 0}$ and the nonlinear semiflow $\Sigma$. In turn, such a comparison depends on the equivalence in \cref{prop:aie0_aie} and on the (uniform) differentiability of $\Sigma$ with respect to the state in \cref{prop:linearization}.

In \cref{sec:cm} we discuss the construction of smooth center manifolds in the non-$\odot$-reflexive case, offering both a comparison with \cite[Chapter IX]{Delay1995}, itself based on \cite{VanGils1982,DiekmannVanGils1984,VanderbauwhedeVanGils1987,Ball1973}, as well as an application of the material developed up until that point. We demonstrate that, with this material at hand, the non-trivial consequences of the lack of $\odot$-reflexivity are both relatively minor as well as localized exclusively in the proof of \cref{prop:Keta} on a pseudo-inverse for the linear inhomogeneous equation. The results of the construction are then summarized in \cref{thm:cm-summary}.

In the accompanying \cref{sec:decomposition-appendix} we first discuss the `lifting' of hypothesis \cref{hyp:cm} in \cref{sec:cm} from $X$ to a subspace of $X^{\odot\star}$ that includes the range of $G$. The result of this discussion, in the form of \cref{prop:xsuncross-cm}, is used in the main text. Second, we provide a proof of \cref{thm:hypcm} which gives sufficient conditions, in terms of eventual norm continuity of $\{T(t)\}_{t \ge 0}$ and a decomposition of the spectrum of its generator, for both central hypotheses \cref{hyp:cm} and \cref{hyp:cm:j} in \cref{sec:cm} to hold. Hence this theorem demonstrates that the usual assumption of eventual compactness of $\{T(t)\}_{t \ge 0}$ is more restrictive than necessary.

In \cref{sec:ddes} we return to \cref{eq:adde-ic} and we discuss some implications of the general results in the foregoing sections for the class of abstract DDEs. Indeed, the original motivation for the present work can be found in \cite{VanGils2013}, which was continued in \cite{Dijkstra2015} and extended in \cite{SpekVanGilsKuznetsov2019} to include diffusion via the unbounded operator $B$. In particular, \cref{thm:cm-dde} provides a justification of the center manifold reduction that underlies the local normal form calculations performed in \cite{VanGils2013,Dijkstra2015,SpekVanGilsKuznetsov2019}. It is \emph{not} assumed that $\{T(t)\}_{t \ge 0}$ is eventually compact and therefore the theorem applies equally well to the cases $B = 0$ and $B \neq 0$. This eliminates the somewhat unsatisfactory dichotomy in the technical treatment of the two cases that results from the usual assumption of eventual compactness \cite{Travis1974,Wu1996,Faria2002,Faria2006}.

\subsection{Conventions and notation}\label{sec:conventions}
\begin{enumerate}[wide=0pt,leftmargin=\parindent,label=\roman*.]
\item
  We use the notations $\RR_+ \DEF [0,\infty)$ and $\RR_- \DEF (-\infty,0]$. Unless explicitly stated otherwise, \emph{all intervals are assumed to be non-degenerate}. They need not be open, closed or bounded.
\item
  Unless explicitly stated otherwise, the scalar field for a given vector space may be either real or complex and is denoted by $\KK$.
\item
  The duality pairing between a Banach space $E$ and its continuous dual space $\STAR{E}$ is written as
\[
  \PAIR{x}{\STAR{x}} \DEF \STAR{x}(x), \qquad \ALL\,x \in E \text{ and } \STAR{x} \in \STAR{E},
\]
and we commonly use the prefix \WS to indicate the \WSTAR-topology on $\STAR{E}$. 
\item
  If $E_1$ and $E_1$ are Banach spaces over the same field, then $\BND(E_1,E_2)$ is the Banach space of all bounded linear operators from $E_1$ to $E_2$, equipped with the operator norm.
\item
  If $J$ is an interval and $E$ is a Banach space, then $C(J,E)$ is the vector space of continuous functions from $J$ into $E$, equipped with the topology of uniform convergence on compact subsets. (Of course, if $J$ is compact, then this topology is induced by the usual supremum-norm.)
\item
  For any interval $J$ we denote by $\ONE$  the function on $J$ with the constant value $1 \in \RR$, and we denote by $\ONE \otimes x \in C(J,E)$ the function with the constant value $x \in E$.
\end{enumerate}

We briefly recall the definition of the general \WS-integral as it appears in \cite[Interlude 3.13 in Appendix II]{Delay1995}; also see the treatment in \cite[Appendix A2]{VanNeerven1992} for an equivalent definition that is on the one hand more general (involving an arbitrary measure space), but on the other hand too restrictive (the measure is assumed to be finite).

Given a Banach space $E$ and an interval $J$, not necessarily bounded, let $q : J \to E^{\star}$ be \WS-Lebesgue integrable, i.e. $\tau \mapsto \PAIR{x}{q(\tau)}$ is in $L^1(J,\KK)$ for all $x \in E$. By virtue of the closed graph theorem, the map
\[
  x \mapsto \int_J \PAIR{x}{q(\tau)}\,d\tau, \qquad x \in E,
\]
defines an element $Q^{\star}$ of $E^{\star}$. We call $Q^{\star}$ the \textbf{\WS-integral of $q$ over $J$} and put
\begin{equation}
  \label{eq:wsintegral_general}
  \int_J q(\tau)\,d\tau \DEF Q^{\star}.
\end{equation}
If $J$ is compact and $q$ is \WS-continuous, then the above \WS-integral may be evaluated as a \WS-Riemann integral. So far, examples of \WS-Riemann integrals occurred in \cref{eq:aie0-adde,eq:aie0,eq:v0}. On the other hand, if $q \in L^1(J,E^{\star})$, then $q$ is \WS-Lebesgue integrable and the Bochner- and \WS-integrals coincide. (This is a direct consequence of the fact that Bochner integrals commute with bounded linear operators.)

\section{Admissible ranges and admissible forcing functions}\label{sec:admissibility}
Let $\{T_0(t)\}_{t \ge 0}$ be a $\mathcal{C}_0$-semigroup on a Banach space $X$, as in \cref{sec:structure}. The following notion is useful in conjunction with \cref{def:admrange}.

\begin{definition}\label{def:admfunc}
  A forcing function $f : J \to \SUNSTAR{X}$ is called an \textbf{admissible forcing function for $\{T_0(t)\}_{t \ge 0}$ on $J$} if \cref{eq:admissible} holds \emph{for this particular choice of $f$}. The vector space $\mathscr{F}_0(J)$ of all such functions is called the \textbf{admissible forcing class for $\{T_0(t)\}_{t \ge 0}$ on $J$}. 
\end{definition}

\subsection{Elementary properties}\label{sec:admprops}
First we record a trivial but useful relationship between admissible ranges and admissible forcing classes.

\begin{proposition}\label{prop:rangeclass}
Let $\mathscr{X}_0$ be a closed subspace of $\SUNSTAR{X}$. If $\mathscr{X}_0$ is an admissible range for $\{T_0(t)\}_{t \ge 0}$ and $J$ is an interval, then
\begin{equation}
  \label{eq:prop:rangeclass}
  C(J,\mathscr{X}_0) \subseteq \mathscr{F}_0(J).
\end{equation}
Conversely, if \cref{eq:prop:rangeclass} holds for some interval $J$, then $\mathscr{X}_0$ is an admissible range for $\{T_0(t)\}_{t \ge 0}$.
\end{proposition}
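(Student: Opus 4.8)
The plan is to prove the two implications of \cref{prop:rangeclass} directly from the definitions, exploiting that the condition \cref{eq:admissible} is stated pointwise in $(t,s)$ and that a forcing function is by definition just a continuous map into \SUNSTAR{X}.

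For the forward implication, suppose $\mathscr{X}_0$ is an admissible range and let $J$ be an arbitrary interval. Take any $f \in C(J,\mathscr{X}_0)$; this is precisely a forcing function $f : J \to \mathscr{X}_0$ in the sense preceding \cref{def:admrange}. By the definition of an admissible range, \cref{eq:admissible} holds for this $f$, which is exactly the statement that $f \in \mathscr{F}_0(J)$. Hence $C(J,\mathscr{X}_0) \subseteq \mathscr{F}_0(J)$. This direction is essentially a tautology once one unwinds the terminology, so I would present it in one or two sentences.

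For the converse, assume \cref{eq:prop:rangeclass} holds for some fixed interval $J$, and let $f : \tilde{J} \to \mathscr{X}_0$ be an arbitrary forcing function on an arbitrary interval $\tilde{J}$; I must show \cref{eq:admissible} holds for $f$ on $\tilde{J}$. Since admissibility of a range quantifies over \emph{all} intervals while the hypothesis only gives me information on the one interval $J$, the key idea is a localization/reparametrization argument: condition \cref{eq:admissible} for $(t,s) \in \Omega_{\tilde J}$ only involves the values of $f$ on the compact subinterval $[s,t]$, and by translation invariance of the Lebesgue integral together with the semigroup property one may reduce to a subinterval of $J$ of the same length. Concretely, given $s \le t$ in $\tilde{J}$, pick an affine (translation) map $\rho$ sending $[s,t]$ into $J$ — possible as long as $J$ has length at least $t-s$, and if not one first subdivides $[s,t]$ into finitely many pieces each shorter than the length of $J$ and uses additivity $v_0(t,s,f) = v_0(t,r,f) + \SUNSTAR{T_0}(t-r)v_0(r,s,f)$ for $s \le r \le t$, noting $\SUNSTAR{T_0}(t-r)$ maps $jX$ into $jX$ since $jX$ is \SGRP{\SUNSTAR{T_0}}-invariant. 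Transporting $f$ along $\rho$ and extending it continuously to all of $J$ (e.g. constant extension at the endpoints) produces an element of $C(J,\mathscr{X}_0)$, hence of $\mathscr{F}_0(J)$ by hypothesis, and a change of variables in \cref{eq:v0} identifies $v_0(t,s,f)$ with a value $v_0(\cdot,\cdot,\cdot)$ of the transported function over a sub-pair in $\Omega_J$, which lies in $jX$.

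The main obstacle is precisely this reparametrization step: one has to check that a change of integration variable in the \WS-Riemann integral \cref{eq:v0} is legitimate (it is, since the integrand is \WS-continuous and the substitution is affine, so it reduces to the scalar case after pairing with $x \in \SUN{X}$) and that the constant continuation of the transported $f$ past the relevant subinterval does not affect the value $v_0$ on that subinterval — which is automatic because \cref{eq:v0} integrates only over $[s,t]$. I would also remark that this argument is a small precursor of \cref{prop:admindependent}, where the independence of $\mathscr{X}_0$ from $J$ is established in full; here we only need the cheap direction. Once these points are in place the converse follows, completing the proof.
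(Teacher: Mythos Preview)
Your forward direction is correct and matches the paper: it is a tautology once the definitions are unpacked, and the paper accordingly gives no proof for \cref{prop:rangeclass} at all, merely calling it a ``trivial but useful relationship''.

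For the converse, you are reading the statement more strongly than the paper intends. At this point in the text the notion of ``admissible range for $\{T_0(t)\}_{t\ge 0}$'' from \cref{def:admrange} is still tied to the fixed interval $J$ introduced just before that definition; the interval-independence is only established afterwards in \cref{prop:admindependent}. Under that reading, the converse simply asserts that $C(J,\mathscr{X}_0)\subseteq\mathscr{F}_0(J)$ implies admissibility on that same $J$, which is again an immediate restatement of \cref{def:admrange} and \cref{def:admfunc}. Your subdivision and translation argument---splitting $[s,t]$ into pieces short enough to fit in $J$, using $v_0(t,s,f)=v_0(t,r,f)+\SUNSTAR{T_0}(t-r)v_0(r,s,f)$ and positive $\SUNSTAR{T_0}$-invariance of $jX$, then an affine change of variable---is correct, but it is exactly the content of \cref{prop:admindependent}, which the paper states and proves separately immediately after. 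So you have effectively merged the trivial \cref{prop:rangeclass} with the non-trivial \cref{prop:admindependent} into a single proof; this is not wrong, but it duplicates work that the paper has deliberately factored out.
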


Next, we show that the admissible range is independent of the particular interval, as announced following \cref{def:admrange}. This justifies calling $\mathscr{X}_0$ an \textbf{admissible range for $\{T_0(t)\}_{t \ge 0}$} if $\mathscr{X}_0$ is an admissible range for $\{T_0(t)\}_{t \ge 0}$ on \emph{some} interval.

\begin{proposition}\label{prop:admindependent}
  If $\mathscr{X}_0$ is an admissible range for $\{T_0(t)\}_{t \ge 0}$ on \emph{some} interval, then it is admissible for $\{T_0(t)\}_{t \ge 0}$ on \emph{every} interval.
\end{proposition}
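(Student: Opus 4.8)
The plan is to reduce the claim to a statement about forcing functions supported on arbitrarily short intervals, exploiting the semigroup law and translation invariance of the convolution $v_0$. By \cref{prop:rangeclass} it suffices to show: if $C(J_1,\mathscr{X}_0) \subseteq \mathscr{F}_0(J_1)$ for one interval $J_1$, then $C(J_2,\mathscr{X}_0) \subseteq \mathscr{F}_0(J_2)$ for an arbitrary interval $J_2$. The key structural observation is that for a forcing function $f : J \to \SUNSTAR{X}$ and $s \le r \le t$ in $J$, the convolution splits as
\[
  v_0(t,s,f) = \SUNSTAR{T_0}(t-r)\,v_0(r,s,f) + v_0(t,r,f),
\]
which follows from the semigroup property of $\{\SUNSTAR{T_0}(t)\}_{t\ge0}$ and additivity of the integral. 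Since $jX$ is invariant under $\SUNSTAR{T_0}(\cdot)$ (because $j$ intertwines $T_0$ and $\SUNSTAR{T_0}$, i.e. $\SUNSTAR{T_0}(t)\, j = j\, T_0(t)$), the membership $v_0(t,s,f) \in jX$ for all $(t,s) \in \Omega_J$ is equivalent to the membership $v_0(t,s,f) \in jX$ for all $(t,s)$ with $t - s$ bounded by any fixed $\delta > 0$: one chains finitely many short steps and applies $\SUNSTAR{T_0}(\cdot)$-invariance of $jX$ to the intermediate terms.

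Concretely, I would first fix $\delta > 0$ small enough that $[0,\delta]$ embeds (after translation) into the interior of $J_1$, and prove the auxiliary statement that every $g \in C([0,\delta],\mathscr{X}_0)$ lies in $\mathscr{F}_0([0,\delta])$. Given such a $g$, extend it to a function $\tilde g \in C(J_1,\mathscr{X}_0)$ — for instance by translating so that $[0,\delta]$ sits inside $J_1$ and extending continuously by constant values in $\mathscr{X}_0$ outside $[0,\delta]$, using that $\mathscr{X}_0$ is a subspace so constant functions with values in $\mathscr{X}_0$ are admissible-range-valued. By hypothesis $\tilde g \in \mathscr{F}_0(J_1)$, so $v_0(t,s,\tilde g) \in jX$ for all $(t,s) \in \Omega_{J_1}$; restricting to the subinterval on which $\tilde g = g$ (up to translation) and using translation invariance of $v_0$ in its time arguments, we get $v_0(t,s,g) \in jX$ for all $(t,s) \in \Omega_{[0,\delta]}$, i.e. $g \in \mathscr{F}_0([0,\delta])$.

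Finally, for the target interval $J_2$ and any $f \in C(J_2,\mathscr{X}_0)$, fix $(t,s) \in \Omega_{J_2}$, partition $[s,t]$ into subintervals $[s_0,s_1], \dots, [s_{n-1},s_n]$ with $s_0 = s$, $s_n = t$, and each length $\le \delta$, and write
\[
  v_0(t,s,f) = \sum_{k=1}^{n} \SUNSTAR{T_0}(t - s_k)\, v_0(s_k, s_{k-1}, f).
\]
Each restriction $f|_{[s_{k-1},s_k]}$, after translation to $[0,\delta]$, is in $C([0,\delta],\mathscr{X}_0)$, hence in $\mathscr{F}_0([0,\delta])$ by the auxiliary step, so $v_0(s_k,s_{k-1},f) \in jX$; applying $\SUNSTAR{T_0}(t-s_k)$ and using $\SUNSTAR{T_0}(\tau)\,jX \subseteq jX$, every summand lies in $jX$, and since $jX$ is a (closed) subspace, so does the sum. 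This proves \cref{eq:admissible} for $f$ on $J_2$, and by \cref{prop:rangeclass} again, $\mathscr{X}_0$ is admissible on $J_2$.

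The main obstacle I anticipate is the bookkeeping around extending a short-interval forcing function to the reference interval $J_1$ while staying inside $C(J_1,\mathscr{X}_0)$, together with making the translation-invariance argument for $v_0$ fully rigorous: one must check that $v_0(t+c, s+c, f(\cdot - c)) = v_0(t,s,f)$, which is immediate from the change of variables $\tau \mapsto \tau - c$ in the defining integral \cref{eq:v0}, but it needs to be stated so that the restriction-to-subinterval step is clean. Everything else is routine use of the semigroup property and the intertwining relation $\SUNSTAR{T_0}(t)\, j = j\, T_0(t)$, which guarantees that $jX$ is $\SUNSTAR{T_0}$-invariant.
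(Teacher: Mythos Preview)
Your proposal is correct and follows essentially the same route as the paper: split $[s,t]$ into finitely many short subintervals using the semigroup identity $v_0(t,s,f) = \sum_k \SUNSTAR{T_0}(t-s_k)\,v_0(s_k,s_{k-1},f)$, then handle each short piece by translating it into the reference interval $J_1$ and invoking admissibility there together with the positive $\SUNSTAR{T_0}$-invariance of $jX$. The paper's version is organized slightly more economically---it selects the partition length $(t-s)/n$ depending on the given $(t,s)$ so that each piece has \emph{exactly} the length of some subinterval $[s_0,t_0]\subseteq J$, thereby avoiding your intermediate ``auxiliary statement'' about $\mathscr{F}_0([0,\delta])$---but the substance is the same.
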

\begin{proof}
 Suppose that $\mathscr{X}_0$ is admissible for $\{T_0(t)\}_{t \ge 0}$ on the interval $J$. Let $J'$ be any arbitrary interval, let $f : J' \to \mathscr{X}_0$ be a continuous function and let $(t,s) \in \Omega_{J'}$ with $t > s$ strictly. The interval $J$ is non-degenerate by the convention from \cref{sec:conventions}, so there exists $n \in \NN$ such that $J$ contains an interval $[s_0,t_0]$ with $t_0 - s_0 = \frac{t - s}{n}$. Define
  \[
    \EPS \DEF \frac{t - s}{n}, \qquad \tau_i \DEF s + i\EPS, \qquad i = 0,\ldots,n.
  \]
  Then, noting that $t - \tau_i \ge 0$, 
  \begin{align*}
    \int_s^t\SUNSTAR{T_0}(t - \tau)f(\tau)\,d\tau &= \sum_{i=1}^n \int_{\tau_{i-1}}^{\tau_i}\SUNSTAR{T_0}(t - \tau)f(\tau)\,d\tau\\
                                                            &= \sum_{i=1}^n \SUNSTAR{T_0}(t - \tau_i)\int_{\tau_{i-1}}^{\tau_i}\SUNSTAR{T_0}(\tau_i - \tau)f(\tau)\,d\tau,
  \end{align*}
  and we need to show that this is in $jX$. Since $jX$ is a positively $\SUNSTAR{T_0}$-invariant subspace, it is sufficient to show this for each of the \WS-Riemann integrals inside the sum. So, for fixed $1 \le i \le n$ we introduce the new integration variable $\sigma = a \tau + b$ with $a$ and $b$ determined by the conditions
  \[
    \sigma(\tau_{i-1}) = s_0, \qquad \sigma(\tau_i) = t_0,
  \]
  so $a = 1$ and $b$ is some irrelevant expression. This yields
  \[
    \int_{\tau_{i-1}}^{\tau_i}\SUNSTAR{T_0}(\tau_i - \tau)f(\tau)\,d\tau = \int_{s_0}^{t_0}\SUNSTAR{T_0}(t_0 - \sigma)f(\sigma - b)\,d\sigma.
  \]
  The function $[s_0, t_0] \ni \sigma \mapsto f(\sigma - b) \in \mathscr{X}_0$ can be trivially extended to an element of $C(J,\mathscr{X}_0)$. We conclude that the right hand side of the above equality is indeed in $jX$.
\end{proof}

The next result is not surprising, but we record it explicitly for later use.

\begin{proposition}
  \label{prop:jXadmissible}
$jX$ is an admissible range for $\{T_0(t)\}_{t \ge 0}$.  
\end{proposition}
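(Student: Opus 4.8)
The plan is to verify \cref{def:admrange} directly for $\mathscr{X}_0 = jX$. That $jX$ is a \emph{closed} subspace of $\SUNSTAR{X}$ is immediate: the canonical embedding $j : X \to \SUNSTAR{X}$ is an isometry, so $jX$ is a complete, hence closed, subspace. It therefore remains to show that \cref{eq:admissible} holds for every forcing function $f : J \to jX$.

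So fix such an $f$ and a pair $(t,s) \in \Omega_J$; we may assume $t > s$ strictly. Since $j$ is an isometric isomorphism of $X$ onto $jX$, the map $g \DEF j^{-1} \circ f : J \to X$ is norm-continuous. The structural input is the intertwining relation $\SUNSTAR{T_0}(\sigma) \circ j = j \circ T_0(\sigma)$, valid for all $\sigma \ge 0$, which expresses both that $jX$ is positively $\SUNSTAR{T_0}$-invariant (as was already used in the proof of \cref{prop:admindependent}) and that $\SUNSTAR{T_0}$ restricts on $jX$ to a faithful copy of $T_0$. Using it, the integrand in \cref{eq:v0} becomes
\[
  \SUNSTAR{T_0}(t - \tau)f(\tau) = \SUNSTAR{T_0}(t - \tau)j g(\tau) = j\bigl(T_0(t - \tau)g(\tau)\bigr), \qquad \tau \in [s,t],
\]
and the right-hand side is a \emph{norm}-continuous $\SUNSTAR{X}$-valued function of $\tau$ on the compact interval $[s,t]$: the map $(\tau,\sigma) \mapsto T_0(\sigma)g(\tau)$ is jointly norm-continuous on $[s,t] \times [0,t-s]$ by strong continuity of the semigroup (uniform on compacta) together with the exponential bound \cref{eq:expboundT0}, and $\tau \mapsto (\tau, t-\tau)$ is continuous.

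A norm-continuous function on a compact interval lies in $L^1$, so by the identification of the \WS-integral with the Bochner integral for $L^1$-integrands recalled in \cref{sec:conventions}, and since Bochner integrals commute with the bounded linear operator $j$, we get
\[
  v_0(t,s,f) = \int_s^t \SUNSTAR{T_0}(t-\tau)f(\tau)\,d\tau = j\!\left(\int_s^t T_0(t-\tau)g(\tau)\,d\tau\right) \in jX,
\]
where the inner integral is an ordinary Bochner (indeed Riemann) integral in $X$. This establishes \cref{eq:admissible} and hence the proposition.

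\textbf{Expected difficulty.} There is essentially no obstacle. The only point that needs a little care is that $jX$ is in general not \WSTAR-closed in $\SUNSTAR{X}$, so one cannot naively argue that the \WS-Riemann integral — a \WSTAR-limit of Riemann sums lying in $jX$ — automatically belongs to $jX$; the argument must be routed through \emph{norm} convergence, which is legitimate precisely because restricting to forcing functions with values in $jX$ turns the (generally only \WSTAR-continuous) integrand into a norm-continuous one.
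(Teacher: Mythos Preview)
Your proof is correct and follows essentially the same route as the paper: use the intertwining relation $\SUNSTAR{T_0}(\sigma)j = jT_0(\sigma)$ to rewrite the integrand as $j$ applied to a norm-continuous $X$-valued function, then pull $j$ outside the (now ordinary Riemann) integral. The paper's version is a one-line computation without the supporting commentary, but the argument is identical; your added remarks on closedness of $jX$ and on why norm rather than \WSTAR convergence is needed are accurate and helpful elaborations.
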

\begin{proof}
  For arbitrary $f \in C(\RR,jX)$ and $(t,s) \in \Omega_{\RR}$ we have
  \[
    \int_s^t T_0^{\odot\star}(t - \tau)f(\tau)\,d\tau = \int_s^t j T_0(t - \tau)(j^{-1} \circ f)(\tau)\,d\tau = j \int_s^t T_0(t - \tau) (j^{-1} \circ f)(\tau)\,d\tau \in jX,
  \]
  where the first integral is a \WS-Riemann integral and the others are ordinary Riemann integrals.
\end{proof}

\begin{proposition}\label{prop:admclosed}
  $\mathscr{F}_0(J)$ is a closed subspace of $C(J,\SUNSTAR{X})$.
\end{proposition}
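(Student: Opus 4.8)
The plan is to check two things: that $\mathscr{F}_0(J)$ is a linear subspace of $C(J,\SUNSTAR{X})$, which is immediate, and that it is norm-closed, which is the only point with any content.

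For the subspace claim I would note that for each fixed $(t,s) \in \Omega_J$ the map $f \mapsto v_0(t,s,f)$ from \cref{eq:v0} is linear in the forcing function, because the \WS-integral is linear in its integrand; since $jX$ is a linear subspace of $\SUNSTAR{X}$, the condition \cref{eq:admissible} — imposed for all $(t,s) \in \Omega_J$ — defines a linear subspace of $C(J,\SUNSTAR{X})$ that manifestly contains the zero function.

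For closedness the key ingredient I would establish first is the elementary norm estimate
\[
  \Bigl\| \int_s^t \SUNSTAR{T_0}(t-\tau) g(\tau)\,d\tau \Bigr\| \le M \int_s^t e^{\omega(t-\tau)} \|g(\tau)\|\,d\tau
\]
valid for every norm-continuous $g : [s,t] \to \SUNSTAR{X}$, with $M$ and $\omega$ as in \cref{eq:expboundT0}. Its proof is routine: pair the \WS-integral against an arbitrary $x^{\odot} \in \SUN{X}$, move $\SUNSTAR{T_0}(t-\tau)$ across the pairing to let $\SUN{T_0}(t-\tau)x^{\odot} = \STAR{T_0}(t-\tau)x^{\odot}$ act on $g(\tau)$, estimate under the integral sign using $\|\SUN{T_0}(\sigma)\| \le \|\STAR{T_0}(\sigma)\| = \|T_0(\sigma)\| \le Me^{\omega\sigma}$, and finally take the supremum over $\|x^{\odot}\| \le 1$, recalling that the norm on $\SUNSTAR{X} = (\SUN{X})^{\star}$ is the corresponding dual norm.

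Given this, I would conclude as follows. Let $f_n \to f$ in $C(J,\SUNSTAR{X})$ with each $f_n \in \mathscr{F}_0(J)$; since the compact-open topology on $C(J,\SUNSTAR{X})$ is metrizable, sequences suffice (otherwise the same computation runs verbatim for nets). Fix $(t,s) \in \Omega_J$. Applying the estimate to $g = f - f_n$ on the compact interval $[s,t] \subseteq J$ and using linearity,
\[
  \| v_0(t,s,f) - v_0(t,s,f_n)\| \le M\Bigl(\int_s^t e^{\omega(t-\tau)}\,d\tau\Bigr)\sup_{\tau \in [s,t]}\|f(\tau) - f_n(\tau)\| \to 0
\]
as $n \to \infty$, because $f_n \to f$ uniformly on $[s,t]$. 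Since $j$ is an isometry, $jX$ is norm-closed in $\SUNSTAR{X}$, and $v_0(t,s,f_n) \in jX$ for all $n$, whence $v_0(t,s,f) \in jX$. As $(t,s) \in \Omega_J$ was arbitrary, $f \in \mathscr{F}_0(J)$, i.e. $\mathscr{F}_0(J)$ is closed.

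I do not expect a genuine obstacle here. The only subtlety worth flagging is that $\SUNSTAR{T_0}$ is merely \WS-continuous, so \cref{eq:v0} is a \WS-Riemann integral rather than a Bochner integral; the displayed norm bound — which holds precisely because the $\SUNSTAR{X}$-norm is the dual norm arising from $\SUN{X}$ — is exactly what converts uniform convergence of $f_n \to f$ into norm convergence of the corresponding integrals, after which norm-closedness of $jX$ finishes the argument.
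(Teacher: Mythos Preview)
Your proof is correct and follows the same approach as the paper's: show norm convergence $v_0(t,s,f_n) \to v_0(t,s,f)$ via the standard estimate on the \WS-integral (which the paper simply invokes by that name, while you spell it out) and then use closedness of $jX$. One minor caution: $j$ is in general only a topological embedding rather than an isometry (note the factors $\|j^{-1}\|$ carried throughout the paper), but this still yields $jX$ closed, which is all you actually use.
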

\begin{proof}
  Let $(f_{\alpha})$ be a net in $\mathscr{F}_0(J)$ converging to some $f \in C(J,\SUNSTAR{X})$ and let $(t, s) \in \Omega_J$ be arbitrary. The interval $[s, t]$ is compact in $J$, so $f_{\alpha} \to f$ uniformly on $[s, t]$. By a standard estimate we therefore have
  \begin{equation}
    \label{eq:net_of_integrals}
    \int_s^t \SUNSTAR{T_0}(t - \tau)f_{\alpha}(\tau)\,d\tau \to \int_s^t \SUNSTAR{T_0}(t - \tau)f(\tau)\,d\tau
  \end{equation}
  in the norm of $\SUNSTAR{X}$. The integrals on the left-hand side are elements of the closed subspace $jX$ of $X^{\odot\star}$, so the same is true for the integral on the right-hand side. Since $(t,s) \in \Omega_J$ was arbitrary, this proves that $f \in \mathscr{F}_0(J)$.
\end{proof}

The following result shows that, given a closed subspace $\mathscr{X}_0$ of $X^{\odot\star}$, in order to establish its admissibility, it is sufficient to verify admissibility of all constant $\mathscr{X}_0$-valued forcing functions defined on some interval.

\begin{theorem}\label{thm:admconstants}
  If $\mathscr{X}_0$ is a closed subspace of $\SUNSTAR{X}$ and there exists an interval $J$ such that $\ONE \otimes x^{\odot\star} \in \mathscr{F}_0(J)$ for all $x^{\odot\star} \in \mathscr{X}_0$, then $\mathscr{X}_0$ is an admissible range for $\{T_0(t)\}_{t \ge 0}$. 
\end{theorem}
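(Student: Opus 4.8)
The plan is to exploit the linear, closed, and translation-compatible nature of the admissibility property and to run a piecewise-constant (``frozen coefficient'') approximation, exactly as the hypothesis suggests. By \cref{prop:rangeclass} it is enough to show, for \emph{one} interval $J'$, that $v_0(t,s,f) \in jX$ for every $f \in C(J',\mathscr{X}_0)$ and every $(t,s) \in \Omega_{J'}$. The case $t = s$ being trivial, fix $t > s$. First I would record two standing facts: $jX$ is a \emph{closed} subspace of $\SUNSTAR{X}$ (since $j$ is isometric) and it is positively $\SUNSTAR{T_0}$-invariant, both of which were already used in the proofs of \cref{prop:admindependent,prop:jXadmissible}; and, after the substitution $\sigma = t' - \tau$, the hypothesis $\ONE \otimes x^{\odot\star} \in \mathscr{F}_0(J)$ says precisely that
\[
  K(r, x^{\odot\star}) \DEF \int_0^r \SUNSTAR{T_0}(\sigma)\,x^{\odot\star}\,d\sigma \in jX
\]
for every $x^{\odot\star} \in \mathscr{X}_0$ and every $r$ equal to the length of some subinterval of $J$; since $J$ is non-degenerate this holds at least for all sufficiently small $r \ge 0$, hence for $(t-s)/n$ once $n$ is large.

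Next I would introduce the approximant. Partition $[s,t]$ into $n$ equal pieces with nodes $\tau_i \DEF s + i(t-s)/n$, $i = 0,\dots,n$, and set
\[
  R_n \DEF \sum_{i=1}^n \int_{\tau_{i-1}}^{\tau_i} \SUNSTAR{T_0}(t - \tau)\,f(\tau_i)\,d\tau .
\]
Using the semigroup property of $\SUNSTAR{T_0}$ and the fact that $\SUNSTAR{T_0}(t - \tau_i)$, being the adjoint of a bounded operator, commutes with \WS-integration (the same manipulation as in the proof of \cref{prop:admindependent}), each summand equals $\SUNSTAR{T_0}(t - \tau_i)\,K\!\bigl(\tfrac{t-s}{n}, f(\tau_i)\bigr)$. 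For $n$ large enough that $(t-s)/n$ is the length of a subinterval of $J$, each $K\!\bigl(\tfrac{t-s}{n}, f(\tau_i)\bigr)$ lies in $jX$ by the reformulated hypothesis (recall $f(\tau_i) \in \mathscr{X}_0$), so $R_n \in jX$, being a finite sum of $\SUNSTAR{T_0}$-images of elements of the invariant subspace $jX$.

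Finally I would estimate the error. Since $\SUNSTAR{T_0}(t-\tau) f(\tau_i) = \SUNSTAR{T_0}(t-\tau)f(\tau) + \SUNSTAR{T_0}(t-\tau)\bigl(f(\tau_i) - f(\tau)\bigr)$, the standard norm estimate for \WS-integrals used in the proof of \cref{prop:admclosed}, together with $\|\SUNSTAR{T_0}(\sigma)\| = \|\SUN{T_0}(\sigma)\| \le \|T_0(\sigma)\| \le M e^{\omega\sigma}$ from \cref{eq:expboundT0}, yields
\[
  \| v_0(t,s,f) - R_n \| \le \sum_{i=1}^n \int_{\tau_{i-1}}^{\tau_i} \| \SUNSTAR{T_0}(t-\tau) \|\,\| f(\tau) - f(\tau_i) \|\,d\tau ,
\]
which, by the exponential bound and the uniform continuity of $f$ on the compact interval $[s,t]$, tends to $0$ as $n \to \infty$. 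Hence $v_0(t,s,f) \in \overline{jX} = jX$. As $(t,s) \in \Omega_{J'}$ and $f$ were arbitrary, $C(J',\mathscr{X}_0) \subseteq \mathscr{F}_0(J')$, so $\mathscr{X}_0$ is an admissible range for $\{T_0(t)\}_{t \ge 0}$ by \cref{prop:rangeclass}.

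The only points requiring care in the full write-up are the legitimacy of pulling $\SUNSTAR{T_0}(t - \tau_i)$ out of the \WS-Riemann integral and the inequality $\|\int_J q\| \le \int_J \|q\|$ for \WS-integrals; both are routine once one notes that $\SUNSTAR{T_0}(\sigma)$ is an adjoint operator, hence \WS-continuous and norm-preserving in the relevant sense. There is no deeper obstacle: the theorem merely says that admissibility, being preserved under finite sums, norm limits, and the substitutions encoded in the cocycle identity for $K$, reduces to the constant case via the approximation above.
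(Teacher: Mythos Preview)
Your argument is correct and takes a genuinely different route from the paper. The paper first shows, via a partial-integration identity, that every \emph{linear} function $\tau \mapsto \tau x^{\odot\star}$ lies in $\mathscr{F}_0(J)$; from this it deduces that affine, hence piecewise-affine, $\mathscr{X}_0$-valued functions are admissible, and then appeals to uniform approximation by linear splines together with \cref{prop:admclosed}. Your proof bypasses the integration-by-parts step entirely: instead of approximating $f$ by continuous piecewise-affine functions inside $\mathscr{F}_0(J)$, you work directly with the single integral $v_0(t,s,f)$ and approximate it in norm by a frozen-coefficient Riemann sum $R_n$, each of whose terms is a $\SUNSTAR{T_0}$-translate of $K((t-s)/n, f(\tau_i)) \in jX$. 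What you gain is a shorter and more elementary argument that never leaves the constant case; what the paper's approach gains is that its approximants are honest elements of $C(J,\mathscr{X}_0) \cap \mathscr{F}_0(J)$, so the passage to the limit is a clean application of \cref{prop:admclosed} rather than a direct norm estimate. Both routes rely on the same three ingredients: the cocycle identity for the \WS-integral under the semigroup, positive $\SUNSTAR{T_0}$-invariance of $jX$, and closedness of $jX$.
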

\begin{proof}
  Since $J$ is non-degenerate, we may assume that $J$ is compact. We show that $\mathscr{F}_0(J)$ contains every $\mathscr{X}_0$-valued affine function. The result is then a simple consequence of the fact that continuous functions on compact intervals admit uniform approximations by linear splines, as will be detailed.
  \begin{steps}
  \item
    We prove that for every $x^{\odot\star} \in \mathscr{X}_0$ the linear function $\tau \mapsto \tau x^{\odot\star}$ is in $\mathscr{F}_0(J)$. Let $(t,s) \in \Omega_J$ with $t > s $ and $x^{\odot} \in X^{\odot}$ be arbitrary. Define
    \[
      w : [s,t] \to X^{\odot\star}, \qquad w(\tau) \DEF \int_s^{\tau} T_0^{\odot\star}(t - \sigma) x^{\odot\star} \,d\sigma,
    \]
    and note that
    \[
      \frac{d}{d\tau}\PAIR{x^{\odot}}{w(\tau)} = \PAIR{x^{\odot}}{T_0^{\odot\star}(t - \tau)x^{\odot\star}}.
    \]
    Then partial integration yields
    \begin{align*}
      \PAIR{x^{\odot}}{\int_s^t T_0^{\odot\star}(t - \tau) \tau x^{\odot\star} \,d\tau} &= \int_s^t \tau \PAIR{x^{\odot}}{T_0^{\odot\star}(t - \tau) x^{\odot\star}} \,d\tau\\
                                                                                        &= t \PAIR{x^{\odot}}{w(t)} - \int_s^t \PAIR{x^{\odot}}{w(\tau)} \,d\tau.
    \end{align*}
    Since adjoints of bounded linear operators commute with \WS-integration and $jX$ is positively $T_0^{\odot\star}$-invariant, it follows that
    \[
      w(\tau) = T_0^{\odot\star}(t - \tau) \int_s^{\tau} T_0^{\odot\star}(\tau - \sigma) x^{\odot\star} \,d\sigma \in jX \qquad \ALL\,\tau \in [s,t],
    \]
    and $w$ is norm-continuous. Hence
    \[
      \int_s^t T_0^{\odot\star}(t - \tau) \tau x^{\odot\star} \,d\tau = t w(t) - \int_s^t w(\tau) \,d\tau \in jX,
    \]
    where for the inclusion it was also used that the integral in the right-hand side is an ordinary Riemann integral and $jX$ is norm-closed. We conclude that every linear (hence: every affine) function with values in $\mathscr{X}_0$ is in $\mathscr{F}_0(J)$.
  \item
    Let $f : J \to \mathscr{X}_0$ be continuous, hence uniformly continuous, so $f$ is the uniform limit of a sequence $(f_n)$ of continuous piecewise affine functions. We check that $f_n \in \mathscr{F}_0(J)$ for all $n \in \NN$. Let $n \in \NN$ and $(t,s) \in \Omega_J$ with $t > s$ be arbitrary. There exist $m \in \NN$ and a partition $s = t_0 < t_1 < \ldots < t_m = t$ of $[s,t]$ such for every $i = 1,\ldots,m$ the restriction of $f_n$ to $[t_{i - 1},t_i]$ is affine. We have
    \[
      \int_s^t T_0^{\odot\star}(t - \tau) f_n(\tau) \,d\tau = \sum_{i=1}^m T_0^{\odot\star}(t - t_i) \int_{t_{i-1}}^{t_i} T_0^{\odot\star}(t_i - \tau) f_n(\tau) \,d\tau
    \]
    and each summand in the right-hand side is in $jX$, so the left-hand side is in $jX$ as well. We conclude that $f_n \in \mathscr{F}_0(J)$.
  \item
    \Cref{prop:admclosed} and the uniform convergence $f_n \to f$ as $n \to \infty$ imply that $f \in \mathscr{F}_0(J)$ as well. The second part of \cref{prop:rangeclass} then implies that $\mathscr{X}_0$ is an admissible range for $\{T_0(t)\}_{t \ge 0}$. \hfill \qedhere
  \end{steps}
\end{proof}

The following trivial corollary will be used in the proof of \cref{thm:inhom:answers} in \cref{sec:inhomogeneous}.

\begin{corollary}\label{cor:admlipschitz}
  If $\mathscr{X}_0$ is a closed subspace of $\SUNSTAR{X}$ and there exists an interval $J$ such that every Lipschitz function $f : J \to \mathscr{X}_0$ is in $\mathscr{F}_0(J)$, then $\mathscr{X}_0$ is an admissible range for $\{T_0(t)\}_{t \ge 0}$.
\end{corollary}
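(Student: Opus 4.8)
The plan is to obtain this as an immediate consequence of \cref{thm:admconstants}. Recall that the hypothesis of that theorem is the existence of an interval $J$ with $\ONE \otimes x^{\odot\star} \in \mathscr{F}_0(J)$ for every $x^{\odot\star} \in \mathscr{X}_0$, and its conclusion is precisely that $\mathscr{X}_0$ is an admissible range for $\{T_0(t)\}_{t \ge 0}$. So it suffices to show that the hypothesis of the corollary implies that of \cref{thm:admconstants}, using the \emph{same} interval $J$.

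The key --- and essentially only --- observation is that a constant function is Lipschitz. Indeed, for fixed $x^{\odot\star} \in \mathscr{X}_0$ the forcing function $\ONE \otimes x^{\odot\star} : J \to \mathscr{X}_0$ is Lipschitz, with Lipschitz constant $0$. Hence, by the assumption of the corollary, $\ONE \otimes x^{\odot\star} \in \mathscr{F}_0(J)$; since $x^{\odot\star} \in \mathscr{X}_0$ was arbitrary, the hypothesis of \cref{thm:admconstants} is met for this $J$, and that theorem delivers the claim. There is no genuine obstacle here: the statement is flagged as a trivial corollary precisely because the passage from ``Lipschitz'' to ``constant'' is a one-line set inclusion and the interval $J$ is simply carried over unchanged. (If one wished to avoid citing \cref{thm:admconstants} as a black box, one could instead repeat its proof with ``Lipschitz'' in place of ``affine'' in Steps~1--2 --- the affine pieces of a linear spline on a compact interval are in particular Lipschitz, and linear splines approximate uniformly --- but this merely duplicates the earlier argument.)
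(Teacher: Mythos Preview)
Your proof is correct and matches the paper's intended approach: the corollary is stated immediately after \cref{thm:admconstants} and flagged as trivial precisely because constant functions are Lipschitz, so the hypothesis here implies that of \cref{thm:admconstants} on the same interval $J$.
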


\subsection{\texorpdfstring{$\text{Weak}^{\star}$}{Weak*}-integration over unbounded intervals}
We discuss the situation where in the \WS-integral in \cref{eq:v0} defining $v_0$ one (or both) of the integration limits is infinite. This situation will occur in \cref{sec:xsuncross,sec:cm}.

The importance of the norm-closedness of $jX$ for questions of admissibility has already become apparent in the proofs of \cref{prop:admclosed,thm:admconstants}, for example. Following is a simple criterion that ensures that a given \WS-integral over an unbounded interval equals the limit in norm of a sequence of \WS-integrals over compact intervals.

\begin{lemma}
  \label{lem:wsnormconv}
  Let $J$ be an interval and let $(J_n)$ be an increasing sequence of intervals such that $J = \bigcup_n J_n$. If $q : J \to E^{\star}$ is \WS-Lebesgue measurable and there exists $\hat{q} \in L^1(J,\RR)$ such that $\|q(\tau)\| \le \hat{q}(\tau)$ for a.e. $\tau \in J$, then
  \[
    \lim_{n \to \infty} \int_{J_n} q(\tau)\,d\tau = \int_J q(\tau)\,d\tau 
  \]
  in norm.
\end{lemma}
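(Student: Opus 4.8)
The plan is to establish the convergence by testing against an arbitrary $x \in E$ and invoking dominated convergence for scalar integrals, then upgrading to norm convergence by showing the net of tails is norm-Cauchy using the integrable majorant $\hat q$.

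First I would note that the hypothesis $\|q(\tau)\| \le \hat q(\tau)$ with $\hat q \in L^1(J,\RR)$ guarantees that $q$ is \WS-Lebesgue integrable over $J$ and over each $J_n$: for every $x \in E$ the scalar function $\tau \mapsto \PAIR{x}{q(\tau)}$ is measurable (by \WS-measurability of $q$) and dominated by $\|x\|\,\hat q(\tau)$, hence in $L^1(J,\KK)$. So all the \WS-integrals appearing in the statement are well-defined elements of $E^\star$ via the closed graph theorem construction recalled in \cref{sec:conventions}. Writing $Q_n^\star \DEF \int_{J_n} q(\tau)\,d\tau$ and $Q^\star \DEF \int_J q(\tau)\,d\tau$, I would compute for arbitrary $x \in E$:
\[
  \PAIR{x}{Q^\star - Q_n^\star} = \int_{J \setminus J_n} \PAIR{x}{q(\tau)}\,d\tau,
\]
using that $J_n$ is increasing with union $J$. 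The key point is then that the right-hand side is bounded in absolute value by $\|x\| \int_{J \setminus J_n} \hat q(\tau)\,d\tau = \|x\| \int_J \ONE_{J \setminus J_n}(\tau)\,\hat q(\tau)\,d\tau$, and since $\ONE_{J \setminus J_n} \to 0$ pointwise a.e.\ and is dominated by the fixed integrable function $\hat q$, the dominated convergence theorem gives $\int_{J \setminus J_n} \hat q(\tau)\,d\tau \to 0$ as $n \to \infty$.

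Finally I would take the supremum over $x$ in the closed unit ball of $E$: from the displayed bound,
\[
  \|Q^\star - Q_n^\star\| = \sup_{\|x\| \le 1} |\PAIR{x}{Q^\star - Q_n^\star}| \le \int_{J \setminus J_n} \hat q(\tau)\,d\tau \to 0,
\]
which is precisely norm convergence $Q_n^\star \to Q^\star$ in $E^\star$. This completes the argument.

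I do not expect a genuine obstacle here; the only mild subtlety is bookkeeping about \WS-measurability versus \WS-integrability — one must be slightly careful that the majorant condition is exactly what promotes \WS-measurability to integrability on every subinterval, and that the additivity $\int_{J_n} + \int_{J \setminus J_n} = \int_J$ of the \WS-integral (immediate from additivity of the underlying scalar Lebesgue integrals) is legitimate. Once those are in place, the proof is just scalar dominated convergence followed by taking a supremum over the unit ball, which is the standard device for passing from \WS-statements to norm-statements and has already been used implicitly in the proof of \cref{prop:admclosed}.
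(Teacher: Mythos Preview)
Your proof is correct and follows essentially the same approach as the paper: bound $|\PAIR{x}{Q^\star - Q_n^\star}|$ by $\int_J (1-\chi_{J_n})\hat q$ uniformly for $\|x\|\le 1$, then apply scalar dominated convergence to the majorant. The paper's version is slightly more compact (it restricts to the unit ball from the outset), but the argument and the key estimate are identical.
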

\begin{proof}
  The assumptions imply that $q$ is \WS-Lebesgue integrable, so the \WS-integral of $q$ over $J$ exists. Let $\chi_n$ be the characteristic function of the interval $J_n$. For arbitrary $x \in E$ with $\|x\| \le 1$,
  \[
    \Bigl|\PAIR{x}{\int_J q(\tau)\,d\tau - \int_{J_n} q(\tau)\,d\tau}\Bigr| = \Bigl| \int_J \PAIR{x}{(1 - \chi_n(\tau))q(\tau)}\,d\tau \Bigr| \le \int_J (1 - \chi_n(\tau))\hat{q}(\tau)\,d\tau,
  \]
  so
  \begin{equation}
    \label{eq:lem:wsnormconv}
    \Bigl\| \int_J q(\tau)\,d\tau - \int_{J_n} q(\tau)\,d\tau \Bigr\| \le \int_J (1 - \chi_n(\tau))\hat{q}(\tau)\,d\tau \qquad \ALL\, n \in \NN.
  \end{equation}
  The measurable functions $\tau \mapsto (1 - \chi_n(\tau))\hat{q}(\tau)$ converge to zero, pointwise on $J$ as $n \to \infty$, and
  \[
    |(1 - \chi_n(\tau))\hat{q}(\tau)| \le \hat{q}(\tau) \qquad \ALL\,n \in \NN \text{ and } \tau \in J.
  \]
  Hence the right-hand side of \cref{eq:lem:wsnormconv} tends to zero as $n \to \infty$ by the dominated convergence theorem.
\end{proof}

As an application, for use in \cref{sec:xsuncross}, we have the following result. We recall from \cref{sec:conventions} that $\KK$ denotes either the real or complex scalar field.

\begin{proposition}
  \label{prop:resolvnormconv}
  Let $U$ be a $\mathcal{C}_0$-semigroup on $E$ with generator $C$ and let $M_U \ge 1$ and $\omega_U \in \RR$ be such that $\|U(t)\| \le M_U e^{\omega_U t}$ for all $t \ge 0$. For any $\lambda \in \KK$ with $\RE{\lambda} > \omega_U$, the resolvent of $C^{\star}$ at $\lambda$ is given by
  \[
    R(\lambda,C^{\star})x^{\star} = \lim_{t \to \infty} \int_0^t e^{-\lambda \tau} U^{\star}(\tau) x^{\star}\,d\tau, \qquad x^{\star} \in E^{\star}, 
  \]
  i.e. $R(\lambda,C^{\star})x^{\star}$ is the limit in norm of a net of \WS-Riemann integrals over compact intervals.
\end{proposition}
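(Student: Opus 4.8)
The plan is to invoke \cref{lem:wsnormconv} with $J = \RR_+$ and $J_n = [0,n]$, applied to the integrand $q(\tau) \DEF e^{-\lambda\tau}U^{\star}(\tau)x^{\star}$, and then to identify the resulting \WS-integral over $\RR_+$ with $R(\lambda,C^{\star})x^{\star}$ by dualizing the classical Laplace-transform representation of the resolvent.

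First I would verify the hypotheses of \cref{lem:wsnormconv}. The \WS-Lebesgue measurability of $q$ is immediate, since for each $x \in E$ the map $\tau \mapsto \PAIR{x}{q(\tau)} = e^{-\lambda\tau}\PAIR{U(\tau)x}{x^{\star}}$ is continuous. For the domination, the semigroup bound gives $\|q(\tau)\| \le M_U\|x^{\star}\|\,e^{(\omega_U - \RE\lambda)\tau} \FED \hat q(\tau)$, and $\hat q \in L^1(\RR_+,\RR)$ precisely because $\RE\lambda > \omega_U$. Thus \cref{lem:wsnormconv} applies and yields $\int_0^n q(\tau)\,d\tau \to \int_{\RR_+}q(\tau)\,d\tau$ in norm. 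Upgrading this sequential statement to the limit as $t \to \infty$ is routine: the tail estimate $\bigl\|\int_{\RR_+}q - \int_0^t q\bigr\| \le \int_t^{\infty}\hat q(\tau)\,d\tau \to 0$ does it at once. Finally, on each compact interval $[0,t]$ the map $q$ is \WS-continuous (again because $\PAIR{x}{U^{\star}(\tau)x^{\star}} = \PAIR{U(\tau)x}{x^{\star}}$ depends continuously on $\tau$), so every $\int_0^t q(\tau)\,d\tau$ is genuinely a \WS-Riemann integral over a compact interval, matching the phrasing of the statement.

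It remains to identify $\int_{\RR_+}q(\tau)\,d\tau$ with $R(\lambda,C^{\star})x^{\star}$. For this I would use three standard facts: (i) for $\RE\lambda > \omega_U$ one has the norm-convergent Bochner integral $R(\lambda,C)x = \int_0^{\infty}e^{-\lambda\tau}U(\tau)x\,d\tau$; (ii) $\lambda \in \rho(C^{\star})$ with $R(\lambda,C^{\star}) = R(\lambda,C)^{\star}$; and (iii) bounded functionals commute with Bochner integrals. Testing against an arbitrary $x \in E$ and unwinding the definition of the \WS-integral together with $\PAIR{x}{U^{\star}(\tau)x^{\star}} = \PAIR{U(\tau)x}{x^{\star}}$,
\[
  \PAIR{x}{\int_{\RR_+}q(\tau)\,d\tau} = \int_0^{\infty}\PAIR{e^{-\lambda\tau}U(\tau)x}{x^{\star}}\,d\tau = \BIGPAIR{R(\lambda,C)x}{x^{\star}} = \PAIR{x}{R(\lambda,C^{\star})x^{\star}},
\]
and since $x \in E$ was arbitrary the two elements of $E^{\star}$ coincide.

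I do not anticipate a genuine obstacle here: the argument is essentially an assembly of \cref{lem:wsnormconv}, an elementary exponential $L^1$-bound, and classical resolvent identities. The only spot that needs a little care is the middle equality in the display above, where the interchange of the duality pairing with the integral must be performed on the $E$-valued (Bochner) side, where it is legitimate, rather than on the $E^{\star}$-valued side, for which only the \WS-integral, and no Fubini-type manipulation, is available.
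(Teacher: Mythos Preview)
Your proposal is correct and follows essentially the same approach as the paper: both arguments combine \cref{lem:wsnormconv} (with the exponential $L^1$-bound coming from $\RE\lambda > \omega_U$) to obtain norm convergence of the truncated \WS-integrals, and then identify the limit with $R(\lambda,C^{\star})x^{\star} = R(\lambda,C)^{\star}x^{\star}$ by dualizing the Laplace representation of $R(\lambda,C)$. The only cosmetic difference is the order of presentation---the paper first writes the finite-$t$ pairing identity and then passes to the limit, whereas you first take the limit and then test against $x \in E$---but the content is the same.
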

\begin{proof}
  Choose $\lambda \in \KK$ with $\RE{\lambda} > \omega_U$. Then we have the Laplace transform representation
  \[
    R(\lambda,C)x = \int_0^{\infty} e^{-\lambda \tau} U(\tau)x\,d\tau \qquad \ALL\,x \in E,
  \]
  where the integral is an improper Riemann integral. Let $x^{\star} \in E^{\star}$ be arbitrary and let $(t_n)$ be an arbitrary nonnegative, strictly increasing sequence such that $\lim_{n \to \infty} t_n = \infty$. Then
  \begin{equation}
    \label{eq:prop:resolvnormconv}
    \PAIR{\int_0^{t_n} e^{-\lambda \tau} U(\tau) x\,d\tau}{x^{\star}} = \PAIR{x}{\int_0^{t_n} e^{-\lambda \tau} U^{\star}(\tau) x^{\star}\,d\tau} \qquad \ALL\,x \in E \text{ and } n \in \NN.
  \end{equation}
  The integrand inside the integral on the right-hand side is \WS-Lebesgue measurable on $\RR_+$ and
  \[
    \|e^{-\lambda \tau} U^{\star}(\tau) x^{\star}\| \le M_{U} e^{-(\RE{\lambda} - \omega_U)\tau} \|x^{\star}\| \qquad \ALL\,\tau \ge 0,
  \]
  so the integrand is \WS-Lebesgue integrable and its norm is dominated by an element of $L^1(\RR_+,\RR)$. \Cref{lem:wsnormconv} then shows that
  \[
    \lim_{n \to \infty} \int_0^{t_n} e^{-\lambda \tau} U^{\star}(\tau) x^{\star}\,d\tau = \int_0^{\infty} e^{-\lambda \tau} U^{\star}(\tau) x^{\star}\,d\tau
  \]
  in norm. Take the limit $n \to \infty$ in \cref{eq:prop:resolvnormconv} to obtain
  \[
    \PAIR{R(\lambda,C)x}{x^{\star}} = \PAIR{x}{\int_0^{\infty} e^{-\lambda \tau} U^{\star}(\tau) x^{\star}\,d\tau} \qquad \ALL\,x \in E,
  \]
  so $R(\lambda,C^{\star}) = R(\lambda,C)^{\star} = \int_0^{\infty} e^{-\lambda \tau} U^{\star}(\tau) x^{\star}\,d\tau$.
\end{proof}

\subsection{Relationship with the subspace \texorpdfstring{$X_0^{\odot\times}$}{X0suncross}}\label{sec:xsuncross}
The notion of an admissible range for a given $\mathcal{C}_0$-semigroup $\{T_0(t)\}_{t \ge 0}$, introduced in \cref{def:admrange}, is useful from the viewpoint of particular classes of delay equations \cite{Diekmann2008,DiekmannGyllenberg2012,ADDE1}, but also a bit unsatisfactory from a more fundamental perspective: The specification of $\mathscr{X}_0$ requires a class-dependent choice and, moreover, it is not clear whether a chosen admissible range has an extension that is in some sense maximal. In this light, it is relevant to refer to \cite[p.56]{VanNeerven1992}, where the author introduces the subspace\footnote{We point out that what we denote by $X_0^{\odot\times}$ is denoted by $X^{\odot\times}$ (so, \emph{without} the subscript) in \cite[p.56]{VanNeerven1992}. The notation $X^{\odot\times}_0$ (so, \emph{with} the subscript) is also introduced in \cite[p.56]{VanNeerven1992}, but it has another meaning there, related to the more general case that $A_0$ is a (not necessarily densely defined) Hille-Yosida operator.} 
\begin{equation}
  \label{eq:x0suncross}
  X_0^{\odot\times} \DEF \{x^{\odot\star} \in X^{\odot\star}\,:\,R(\lambda,A_0^{\odot\star})x^{\odot\star} \in jX\}, \qquad \lambda \in \rho(A_0),
\end{equation}
with $R(\lambda,A_0^{\odot\star})$ the resolvent of $A_0^{\odot\star}$, for $\lambda$ in the resolvent set $\rho(A_0) = \rho(A_0^{\odot\star}) \subseteq \KK$. It is \emph{not} assumed that $\KK = \CC$. We now discuss the relation between $X_0^{\odot\times}$ and the notion of an admissible range for $\{T_0(t)\}_{t \ge 0}$.

The following simple observation implies the invariance of $jX$ for $R(\lambda,A_0^{\odot\star})$ which, when combined with the resolvent identity, implies that $X_0^{\odot\times}$ does not depend on the choice for $\lambda$.

\begin{proposition}\label{prop:jR}
  $R(\lambda,A_0^{\odot\star})j = j R(\lambda,A_0)$.
\end{proposition}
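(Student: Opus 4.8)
The plan is to prove the identity directly at the level of generators, bypassing Laplace transforms entirely and exploiting the intertwining relation between the two semigroups. I will use two facts that are already in force in the present setting: first, the canonical embedding $j : X \to \SUNSTAR{X}$ satisfies $\SUNSTAR{T_0}(t)\,j = j\,T_0(t)$ for all $t \ge 0$ (this is a basic property of the sun-star calculus and is precisely what is used in the displayed computation in the proof of \cref{prop:jXadmissible}); second, $\rho(A_0) = \rho(\SUNSTAR{A_0})$, as recorded in connection with \cref{eq:x0suncross}, so that $R(\lambda,\SUNSTAR{A_0})$ is defined for every $\lambda \in \rho(A_0)$ and the asserted operator identity makes sense.

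The argument itself would run as follows. Fix $\lambda \in \rho(A_0)$ and $x \in X$, and set $y \DEF R(\lambda,A_0)x \in \DOM(A_0)$. I claim that $jy \in \DOM(\SUNSTAR{A_0})$ with $\SUNSTAR{A_0}(jy) = j A_0 y$. Indeed, for $t > 0$ the intertwining relation gives
\[
  \frac{1}{t}\bigl(\SUNSTAR{T_0}(t)\,jy - jy\bigr) = j\!\left(\frac{1}{t}\bigl(T_0(t)y - y\bigr)\right),
\]
and since $y \in \DOM(A_0)$ the bracketed quotient on the right converges in the norm of $X$ to $A_0 y$ as $t \downarrow 0$; as $j$ is bounded, the left-hand side converges in the norm of $\SUNSTAR{X}$ to $j A_0 y$, which by definition of the infinitesimal generator yields $jy \in \DOM(\SUNSTAR{A_0})$ and $\SUNSTAR{A_0}(jy) = j A_0 y$. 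Consequently
\[
  (\lambda - \SUNSTAR{A_0})\,jy = \lambda\,jy - j A_0 y = j(\lambda - A_0)y = jx,
\]
and, since $\lambda \in \rho(\SUNSTAR{A_0})$, applying $R(\lambda,\SUNSTAR{A_0})$ to both sides gives $jy = R(\lambda,\SUNSTAR{A_0})\,jx$, i.e. $j\,R(\lambda,A_0)x = R(\lambda,\SUNSTAR{A_0})\,jx$. As $x \in X$ and $\lambda \in \rho(A_0)$ were arbitrary, this is exactly the assertion.

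I do not expect a genuine obstacle here: the only points requiring care are the invocation of the intertwining relation $\SUNSTAR{T_0}(t)j = jT_0(t)$ and of the spectral identity $\rho(A_0) = \rho(\SUNSTAR{A_0})$, both of which are standard. It is worth noting the contrast with a more computational route: one could instead combine the Laplace representation $R(\lambda,A_0)x = \int_0^\infty e^{-\lambda\tau}T_0(\tau)x\,d\tau$ with \cref{prop:resolvnormconv} applied to the $\mathcal{C}_0$-semigroup $\SUN{T_0}$ on $\SUN{X}$ (whose adjoint semigroup is $\SUNSTAR{T_0}$ with generator $\SUNSTAR{A_0}$), using that $j$ commutes with the Bochner integral; but this yields the identity only on the half-plane $\RE{\lambda} > \omega$, after which one must extend it to all of $\rho(A_0)$ by analytic continuation — an extra step that is slightly delicate because $\rho(A_0)$ need not be connected. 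The direct argument above avoids this issue altogether.
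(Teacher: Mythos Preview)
Your proof is correct and follows essentially the same route as the paper: both arguments set $y \DEF R(\lambda,A_0)x$, use that $jy \in \DOM(\SUNSTAR{A_0})$ with $\SUNSTAR{A_0}jy = jA_0y$, rewrite $(\lambda - \SUNSTAR{A_0})jy = jx$, and apply the resolvent. The paper simply quotes this intertwining of generators as a known fact, whereas you derive it explicitly from the semigroup intertwining $\SUNSTAR{T_0}(t)j = jT_0(t)$ via difference quotients; your added remark contrasting this with the Laplace-transform route is accurate but not needed for the argument.
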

\begin{proof}
  Let $x \in X$ be arbitrary. Then $y \DEF R(\lambda,A_0)x$ is in $\DOM(A_0)$, so
  \[
    x = (\lambda I - A_0)y = j^{-1}(\lambda I - A_0^{\odot\star})jy.
  \]
  Apply $j$ to both sides to obtain that $R(\lambda,A_0^{\odot\star})jx = jy$. 
\end{proof}

The space $X_0^{\odot\times}$ is closed and positively $T_0^{\odot\star}$-invariant. If $X$ is $\odot$-reflexive for $\{T_0(t)\}_{t \ge 0}$, then $\DOM(A_0^{\odot\star}) \subseteq jX$, so in that case $X_0^{\odot\times} = X^{\odot\star}$. In \cite[Theorem 4.2.2]{VanNeerven1992} it is proven that if $L : X \to X_0^{\odot\times}$ is bounded and linear, then there exists a unique $\mathcal{C}_0$-semigroup $\{T(t)\}_{t \ge 0}$ on $X$ that satisfies \cref{eq:aie0} with $G = L$,
\begin{equation}
  \label{eq:laie0}
  T(t)x = T_0(t)x + j^{-1}\int_0^t T_0^{\odot\star}(t - \tau)L T(\tau)x\,d\tau \qquad \ALL\,x \in X \text{ and } t \ge 0.
\end{equation}
Furthermore, in \cite[Section 4.3]{VanNeerven1992} it is proven that $X_0^{\odot\times}$ is indeed maximal in three different senses; see also \cite[Theorem III.8.4]{Delay1995} for \cite[Theorem 4.3.5]{VanNeerven1992} and \cite{DGH1989} for \cite[Theorems 4.3.5 and 4.3.8]{VanNeerven1992}. Meanwhile, \cref{prop:jR} implies the following trivial case of \cite[Theorem 4.3.6]{VanNeerven1992}.

\begin{corollary}\label{cor:jXXsuncross}
  $jX \subseteq X_0^{\odot\times}$.
\end{corollary}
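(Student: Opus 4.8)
The plan is to deduce \cref{cor:jXXsuncross} directly from \cref{prop:jR}, which expresses the intertwining relation $R(\lambda,A_0^{\odot\star})j = jR(\lambda,A_0)$. The key observation is that the definition \cref{eq:x0suncross} of $X_0^{\odot\times}$ only asks that $R(\lambda,A_0^{\odot\star})$ maps the given element back into $jX$, and this is exactly what the intertwining relation guarantees for elements of $jX$.

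Concretely, I would take an arbitrary $x \in X$ and consider $jx \in X^{\odot\star}$; I want to show $jx \in X_0^{\odot\times}$. Fix any $\lambda \in \rho(A_0) = \rho(A_0^{\odot\star})$. By \cref{prop:jR} we have
\[
  R(\lambda,A_0^{\odot\star})(jx) = j\bigl(R(\lambda,A_0)x\bigr) \in jX,
\]
since $R(\lambda,A_0)x \in X$. This is precisely the membership condition in \cref{eq:x0suncross}, so $jx \in X_0^{\odot\times}$. As $x \in X$ was arbitrary, $jX \subseteq X_0^{\odot\times}$.

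There is essentially no obstacle here: the content is entirely in \cref{prop:jR}, and once that intertwining identity is available the corollary is immediate. The only point worth a remark is that the definition of $X_0^{\odot\times}$ is independent of the choice of $\lambda \in \rho(A_0)$ — as noted in the text just before \cref{prop:jR}, this follows from the resolvent identity together with the $R(\lambda,A_0^{\odot\star})$-invariance of $jX$ — so it suffices to verify the condition for a single $\lambda$, which is what the argument above does.
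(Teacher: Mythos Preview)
Your proof is correct and is exactly the approach the paper takes: it states the corollary as an immediate consequence of \cref{prop:jR}, and your argument spells out precisely why.
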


We now show that $X_0^{\odot\times}$ is an admissible range for $\{T_0(t)\}_{t \ge 0}$ that is also maximal, in the sense that $X_0^{\odot\times}$ includes every range that is admissible for $\{T_0(t)\}_{t \ge 0}$.

\begin{theorem}
  \label{thm:xsuncross}
  $X_0^{\odot\times}$ is an admissible range for $\{T_0(t)\}_{t \ge 0}$. If $\mathscr{X}_0$ is an admissible range for $\{T_0(t)\}_{t \ge 0}$, then $\mathscr{X}_0 \subseteq X_0^{\odot\times}$.
\end{theorem}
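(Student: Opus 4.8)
The theorem has two parts: (i) $X_0^{\odot\times}$ is an admissible range, and (ii) every admissible range is contained in $X_0^{\odot\times}$. I would treat them separately.

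For part (i), my plan is to invoke \cref{thm:admconstants}, so that it suffices to check admissibility of every constant $X_0^{\odot\times}$-valued forcing function on some fixed interval, say $J = [0,1]$. So fix $x^{\odot\star} \in X_0^{\odot\times}$, fix $(t,s) \in \Omega_J$ with $t > s$, and consider $v_0(t,s,\ONE \otimes x^{\odot\star}) = \int_s^t T_0^{\odot\star}(t-\tau)x^{\odot\star}\,d\tau$; I need this to lie in $jX$. The natural idea is to connect the $\SUNSTAR{T_0}$-orbit integral to the resolvent $R(\lambda,A_0^{\odot\star})$, which by definition of $X_0^{\odot\times}$ maps $x^{\odot\star}$ into $jX$. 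Using that $\SUNSTAR{T_0}$ is the adjoint semigroup of $\SUN{T_0}$ with generator $A_0^{\odot\star}$, and choosing $\lambda$ with $\RE\lambda > \omega$, I would write, via the resolvent/Laplace formula and a change of variables, an identity expressing $\int_s^t T_0^{\odot\star}(t-\tau)x^{\odot\star}\,d\tau$ in terms of $R(\lambda,A_0^{\odot\star})x^{\odot\star}$, $T_0^{\odot\star}(t-s)R(\lambda,A_0^{\odot\star})x^{\odot\star}$, and another Riemann integral of the norm-continuous curve $\tau \mapsto T_0^{\odot\star}(\tau)R(\lambda,A_0^{\odot\star})x^{\odot\star}$. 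Concretely, from $\frac{d}{d\sigma}\big(e^{-\lambda\sigma}T_0^{\odot\star}(\sigma)R(\lambda,A_0^{\odot\star})x^{\odot\star}\big) = -e^{-\lambda\sigma}T_0^{\odot\star}(\sigma)x^{\odot\star}$ (which holds in norm because $R(\lambda,A_0^{\odot\star})x^{\odot\star} \in \DOM(A_0^{\odot\star})$ and the orbit through a domain point is norm-$C^1$), one integrates and rearranges to see that $\int_0^r T_0^{\odot\star}(\sigma)x^{\odot\star}\,d\sigma$ is a norm-convergent combination of values and ordinary Riemann integrals of the norm-continuous $jX$-valued curve $\sigma \mapsto T_0^{\odot\star}(\sigma)R(\lambda,A_0^{\odot\star})x^{\odot\star}$ (using \cref{prop:jR}-type invariance: $T_0^{\odot\star}$ leaves $jX$ positively invariant, so the curve stays in $jX$). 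Since $jX$ is norm-closed, the integral lies in $jX$; then apply $T_0^{\odot\star}(t-\tau_i)$-type factors and the splitting trick to get the general $\int_s^t$. This gives admissibility of constants, hence part (i) by \cref{thm:admconstants}. The main obstacle here is bookkeeping the change of variables cleanly and making sure every manipulation is justified at the level of norm-continuous $\SUNSTAR{X}$-valued (indeed $jX$-valued) curves rather than merely \WS-continuous ones — the key analytic input being that applying the resolvent to $x^{\odot\star}$ lands in $\DOM(A_0^{\odot\star})$, where orbits are norm-differentiable.

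For part (ii), let $\mathscr{X}_0$ be an admissible range and fix $x^{\odot\star} \in \mathscr{X}_0$; I must show $R(\lambda,A_0^{\odot\star})x^{\odot\star} \in jX$ for some (hence every) $\lambda \in \rho(A_0)$. Choose $\lambda$ with $\RE\lambda > \omega$ and apply \cref{prop:resolvnormconv} with $U = \SUN{T_0}$ (so $C = A_0^{\odot}$, $C^{\star} = A_0^{\odot\star}$), giving
\[
  R(\lambda,A_0^{\odot\star})x^{\odot\star} = \lim_{t\to\infty}\int_0^t e^{-\lambda\tau}T_0^{\odot\star}(\tau)x^{\odot\star}\,d\tau
\]
as a norm limit. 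Now the integrand $\tau \mapsto e^{-\lambda\tau}x^{\odot\star}$ is a \WS-continuous (indeed norm-continuous) $\mathscr{X}_0$-valued forcing function $f$ on $[0,\infty)$, and $\int_0^t e^{-\lambda\tau}T_0^{\odot\star}(\tau)x^{\odot\star}\,d\tau = \int_0^t T_0^{\odot\star}(t - \sigma)\big(e^{-\lambda(t-\sigma)}x^{\odot\star}\big)\,d\sigma = e^{-\lambda t}\int_0^t T_0^{\odot\star}(t-\sigma)\big(e^{\lambda\sigma}x^{\odot\star}\big)\,d\sigma$; the inner integral is $v_0(t,0,f)$ for the forcing function $f(\sigma) = e^{\lambda\sigma}x^{\odot\star}$, which is $\mathscr{X}_0$-valued and continuous, hence lies in $\mathscr{F}_0([0,\infty))$ by \cref{prop:rangeclass} since $\mathscr{X}_0$ is an admissible range. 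Thus each finite-time integral lies in $jX$, and $jX$ is norm-closed, so the norm limit $R(\lambda,A_0^{\odot\star})x^{\odot\star}$ lies in $jX$, i.e. $x^{\odot\star} \in X_0^{\odot\times}$. The only subtlety is lining up the scalar factor $e^{-\lambda\tau}$ via the change of variable so that what appears is genuinely a $v_0$ of an $\mathscr{X}_0$-valued forcing function; that is routine. So the crux of the whole theorem is really part (i), and within it the passage from the resolvent's $jX$-membership back to membership of the finite orbit integrals.
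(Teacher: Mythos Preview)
Your approach for both parts is essentially the paper's; part (ii) in particular is identical, down to the change of variables expressing $\int_0^t e^{-\lambda\tau}T_0^{\odot\star}(\tau)x^{\odot\star}\,d\tau$ as $e^{-\lambda t}\,v_0(t,0,e^{\lambda\cdot}x^{\odot\star})$ and then invoking norm-closedness of $jX$.

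One technical caveat on part (i): your claim that the orbit $\sigma \mapsto T_0^{\odot\star}(\sigma)R(\lambda,A_0^{\odot\star})x^{\odot\star}$ is norm-$C^1$ is an overstatement—$\{T_0^{\odot\star}(t)\}_{t\ge0}$ is only an adjoint semigroup, so orbits through $\DOM(A_0^{\odot\star})$ are in general only \WS-$C^1$, and the derivative $T_0^{\odot\star}(\sigma)A_0^{\odot\star}y_\lambda^{\odot\star}$ need not be norm-continuous since $A_0^{\odot\star}y_\lambda^{\odot\star} = \lambda y_\lambda^{\odot\star} - x^{\odot\star}$ has a piece outside $jX$. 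Your identity still holds in the \WS sense, and that is enough because after integration you end up with norm-continuous $jX$-valued terms. The paper avoids this subtlety by skipping the exponential factor and writing directly
\[
  \int_s^t T_0^{\odot\star}(t-\tau)x^{\odot\star}\,d\tau
  = \lambda\int_s^t T_0^{\odot\star}(t-\tau)y_\lambda^{\odot\star}\,d\tau - \bigl(T_0^{\odot\star}(t-s)-I\bigr)y_\lambda^{\odot\star},
\]
where $y_\lambda^{\odot\star} \DEF R(\lambda,A_0^{\odot\star})x^{\odot\star} \in jX$; the first term is in $jX$ by \cref{prop:jXadmissible} and the second by positive $T_0^{\odot\star}$-invariance of $jX$. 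This is the same decomposition you are aiming at, just obtained without detouring through the Laplace kernel.
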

\begin{proof}
  Fix $\lambda > \omega$. We first prove admissibility, then maximality.
  \begin{steps}
  \item
    Since $X_0^{\odot\times}$ is closed, by \cref{thm:admconstants} it suffices to prove that $\ONE \otimes x^{\odot\times} \in \mathscr{F}_0(\RR)$ for all $x^{\odot\times} \in X_0^{\odot\times}$. Let $x^{\odot\times} \in X_0^{\odot\times}$ and $(t,s) \in \Omega_{\RR}$ be arbitrary and write $y^{\odot\star}_{\lambda} \DEF R(\lambda,A_0^{\odot\star})x^{\odot\times}$. Then
    \begin{align*}
      \int_s^t T_0^{\odot\star}(t - \tau) x^{\odot\times}\,d\tau  &= \int_s^t T_0^{\odot\star}(t - \tau) (\lambda I - A_0^{\odot\star}) y^{\odot\star}_{\lambda}\,d\tau\\
                                                                  &= \lambda \int_s^t T_0^{\odot\star}(t - \tau) y^{\odot\star}_{\lambda}\,d\tau - \int_0^{t - s} T_0^{\odot\star}(\tau) A_0^{\odot\star} y^{\odot\star}_{\lambda} \,d\tau\\
                                                                  &= \lambda \int_s^t T_0^{\odot\star}(t - \tau) y^{\odot\star}_{\lambda} \,d\tau - (T_0^{\odot\star}(t - s) - I)y^{\odot\star}_{\lambda}.
    \end{align*}
    By definition $y^{\odot\star}_{\lambda} \in jX$ so \cref{prop:jXadmissible} implies that the integral in the right-hand side is in $jX$. Since $\{T_0^{\odot\star}(t)\}_{t \ge 0}$ and $R(\lambda,A_0^{\odot\star}) = R(\lambda,A_0^{\odot})^{\star}$ commute and \WS-integration commutes with the adjoint of a bounded linear operator, that integral is also in $\DOM(A_0^{\odot\star})$. The other term in the right-hand side is in $jX \cap \DOM(A_0^{\odot\star})$ as well, because of the commutativity of $\{T_0^{\odot\star}(t)\}_{t \ge 0}$ and $R(\lambda,A_0^{\odot\star})$ and the positive $T_0^{\odot\star}$-invariance of $X_0^{\odot\times}$. Therefore, the left-hand side is in $jX \cap \DOM(A_0^{\odot\star})$. (In fact, only the inclusion in $jX$ is needed to apply \cref{thm:admconstants}.)
  \item
    Let $\mathscr{X}_0$ be an admissible range for $\{T_0(t)\}_{t \ge 0}$ and let $x^{\odot\star} \in \mathscr{X}_0$ be arbitrary. \Cref{prop:resolvnormconv} shows that
    \begin{equation}
      \label{eq:thm:xsuncross}
      R(\lambda,A_0^{\odot\star})x^{\odot\star} = \lim_{t \to \infty} \int_0^t T_0^{\odot\star}(\tau) e^{-\lambda \tau} x^{\odot\star}\,d\tau
    \end{equation}
    in norm. The map $\tau \mapsto e^{\lambda \tau} x^{\odot\star}$ is in $\mathscr{F}_0(\RR)$, so
    \[
      \int_0^t T_0^{\odot\star}(\tau) e^{-\lambda \tau} x^{\odot\star}\,d\tau = e^{-\lambda t} \int_0^t T_0^{\odot\star}(t - \tau) e^{\lambda \tau} x^{\odot\star}\,d\tau \in jX \qquad \ALL\,t \ge 0.
    \]
    The norm convergence in \cref{eq:thm:xsuncross} then implies that $R(\lambda,A_0^{\odot\star})x^{\odot\star} \in jX$ as well. \qedhere
  \end{steps}
\end{proof}

\begin{corollary}
  \label{cor:xsuncross}
  A continuous perturbation $G : X \to X^{\odot\star}$ is admissible for $\{T_0(t)\}_{t \ge 0}$ if and only if $G$ takes its values in $X_0^{\odot\times}$.
\end{corollary}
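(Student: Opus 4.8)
The plan is to deduce this directly from \cref{thm:xsuncross} together with \cref{def:admrange}; no new ideas are needed, only an unwinding of definitions. Since the theorem already establishes both that $X_0^{\odot\times}$ is an admissible range and that it is the largest one, the corollary should follow in two short steps.

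First I would prove the ``if'' direction. Suppose $G : X \to X^{\odot\star}$ is continuous and takes its values in $X_0^{\odot\times}$. By the first assertion of \cref{thm:xsuncross}, $X_0^{\odot\times}$ is an admissible range for $\{T_0(t)\}_{t \ge 0}$, and it is a closed subspace of $X^{\odot\star}$ by the discussion preceding \cref{cor:jXXsuncross}. Hence $G$ is an admissible perturbation for $\{T_0(t)\}_{t \ge 0}$ in the sense of \cref{def:admrange}, with the admissible range $\mathscr{X}_0 \DEF X_0^{\odot\times}$ doing the job.

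Next I would prove the ``only if'' direction. Suppose $G : X \to X^{\odot\star}$ is continuous and admissible for $\{T_0(t)\}_{t \ge 0}$. By \cref{def:admrange} there exists an admissible range $\mathscr{X}_0$ for $\{T_0(t)\}_{t \ge 0}$ with $G X \subseteq \mathscr{X}_0$. The maximality assertion of \cref{thm:xsuncross} gives $\mathscr{X}_0 \subseteq X_0^{\odot\times}$, and therefore $G X \subseteq X_0^{\odot\times}$, i.e.\ $G$ takes its values in $X_0^{\odot\times}$.

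There is no real obstacle here: the content is entirely in \cref{thm:xsuncross}, whose proof in turn rests on \cref{thm:admconstants} (for admissibility, via constant forcing functions and the resolvent identity) and on \cref{prop:resolvnormconv} together with the norm-closedness of $jX$ (for maximality). If anything deserves a word of care it is the observation that $X_0^{\odot\times}$ really is a \emph{closed} subspace, so that it qualifies as an admissible range in the sense of \cref{def:admrange}; but this was already noted right after \cref{prop:jR}.
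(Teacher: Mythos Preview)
Your proof is correct and is exactly the intended argument: the paper states this corollary without proof, so the implicit approach is precisely the two-line unwinding of \cref{def:admrange} via the two assertions of \cref{thm:xsuncross} that you give.
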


\section{Admissibility and perturbation}\label{sec:inhomogeneous}
Let $L : X \to X^{\odot\star}$ be an admissible linear perturbation for the $\mathcal{C}_0$-semigroup $\{T_0(t)\}_{t \ge 0}$. By \cref{def:admrange} $L$ takes its values in an admissible range $\mathscr{X}_0$. The $\mathcal{C}_0$-semigroup $\{T(t)\}_{t \ge 0}$ with generator $A$ is obtained from $\{T_0(t)\}_{t \ge 0}$ by perturbation with $L$ as in \cite[Theorem 19]{ADDE1}. Given a (possibly infinite) terminal time $0 < t_e \le \infty$ and a forcing function $f : [0,t_e) \to \mathscr{X}_0$, in this section we study perturbations of $\SUNSTAR{A_0}$ and $\SUNSTAR{A}$ by $f$ on intervals in $[0,t_e)$.

In the application of dual perturbation theory to specific classes of delay equations (`abstract' or otherwise) the question of higher time regularity of the \WS-integral in \cref{eq:admissible} is usually deemed less relevant. The reason for this is the direct correspondence between mild solutions of \cref{eq:adde-ic} and solutions of the abstract \emph{integral} equation \cref{eq:aie0-adde}, so the abstract \emph{differential} equation associated with \cref{eq:aie0-adde} (or, more generally, \cref{eq:aie0}) plays at most a motivating role. Nevertheless, differentiability with respect to time, in various senses, was considered in \cite{DSG3} under the assumption of $\odot$-reflexivity. In this section we restrict our attention to \WS-differentiability.

\begin{definition}
  \label{def:wstar-derivative}
  Let $J$ be an interval and let $E$ be a Banach space. A function $q : J \to \STAR{E}$ is \textbf{\WS-differentiable} with \textbf{\WS-derivative} $\STAR{d}q : J \to \STAR{E}$ if
  \[
    \frac{d}{dt}\PAIR{x}{q(t)} = \PAIR{x}{\STAR{d}q(t)} \qquad \ALL x \in E \text{ and } t \in J.
  \]
  If in addition $\STAR{d}q$ is \WS-continuous then $q$ is called \textbf{\WS-continuously differentiable}.
\end{definition}

\begin{remark}\label{rem:wsloclip}
We note that it is a direct consequence of the uniform boundedness principle and the fundamental theorem of calculus that \WS-continuously differentiable functions in the sense of the above definition are locally Lipschitz continuous.   
\end{remark}

There are two ways to introduce inhomogeneous perturbations on the generator level. We may either perturb $\SUNSTAR{A_0}$ by $\phi \mapsto L\phi + f$ or we may perturb $\SUNSTAR{A}$ by $f$. In the first case, the initial-value problem for the associated abstract ODE is
\begin{subequations}
  \label{eq:inhom0}
  \begin{equation}
    \label{eq:aode0_inhom}
    \STAR{d}(j \circ u)(t) = \SUNSTAR{A_0} j u (t) + L u(t) + f(t), \qquad u(0) = \phi \in X,
  \end{equation}
  and formal variation-of-constants suggests the corresponding abstract integral equation
  \begin{equation}
    \label{eq:aie0_inhom}
    u(t) = T_0(t)\phi + j^{-1}\int_0^t\SUNSTAR{T_0}(t - \tau)[L u(\tau) + f(\tau)]\,d\tau.
  \end{equation}
\end{subequations}
If instead we perturb $\SUNSTAR{A}$ then the initial-value problem is
\begin{subequations}
  \label{eq:inhom}
  \begin{equation}
    \label{eq:aode_inhom}
    \STAR{d}(j \circ u)(t) = \SUNSTAR{A} j u (t) + f(t), \qquad u(0) = \phi \in X,
  \end{equation}
  along with the explicit abstract integral expression
  \begin{equation}
    \label{eq:aie_inhom}
    u(t) = T(t)\phi + j^{-1}\int_0^t\SUNSTAR{T}(t - \tau)f(\tau)\,d\tau.
  \end{equation}
\end{subequations}
We define a \textbf{subinterval} $J$ of $[0,t_e)$ to be an interval such that $0 \in J \subseteq [0,t_e)$. A \textbf{solution of} \cref{eq:aode0_inhom} on a subinterval $J$ of $[0,t_e)$ is a function $u : J \to X$ taking values in $j^{-1}\DOM(\SUNSTAR{A_0})$ and such that $j \circ u$ is \WS-continuously differentiable on $J$ and satisfies \cref{eq:aode0_inhom} there. The definition for \cref{eq:aode_inhom} is analogous. Then by \cite[Proposition 22]{ADDE1} a solution of \cref{eq:aode0_inhom} is also a solution of \cref{eq:aode_inhom} and vice versa, so in this sense \cref{eq:aode0_inhom,eq:aode_inhom} are equivalent. Two natural and interrelated questions arise:
\begin{enumerate}[wide=0pt,leftmargin=\parindent,label=\roman*.]
\item
  It must be checked that the \WS-integral in \cref{eq:aie_inhom} takes values in the range of $j$. By assumption $\mathscr{X}_0$ is an admissible range for $\{T_0(t)\}_{t \ge 0}$, but it is not clear whether this implies that $\mathscr{X}_0$ is also an admissible range for the perturbed semigroup $\{T(t)\}_{t \ge 0}$. In other words, we ask about robustness of admissibility of $\mathscr{X}_0$ with respect to the bounded linear perturbation $L$.
\item
  We expect that $u$ given by \cref{eq:aie_inhom} is the unique solution of \cref{eq:aie0_inhom} on $[0,t_e)$. Already in the $\odot$-reflexive case this seemingly obvious fact has a less than obvious proof. It relies on a combination of $\odot$-reflexivity and a variation-of-constants formula relating the integrated semigroups corresponding to $\{\SUNSTAR{T_0}(t)\}_{t \ge 0}$ and $\{\SUNSTAR{T}(t)\}_{t \ge 0}$ \cite[Proposition 2.5]{DSG3}, \cite[Lemma III.2.23]{Delay1995}. We seek an alternative proof that also works without $\odot$-reflexivity.
\end{enumerate}

\subsection{Preparatory results}
We assume the setting introduced in the first paragraph of \cref{sec:inhomogeneous}. The resolution of the two questions above can be found in \cref{thm:inhom:answers} in \cref{sec:two-answers}. Its implications are of importance for the nonlinear analysis of \cref{eq:aie0} in a neighborhood of an equilibrium solution. Here we present some preparations for the proof that may also be of independent interest. 

\begin{definition}
  \label{def:favlinear}
  The \textbf{Favard class} of the $\mathcal{C}_0$-semigroup $\{T_0(t)\}_{t \ge 0}$ is the linear subspace
  \[
    \FAV(T_0) \DEF \Bigl\{\phi \in X\,:\, \limsup_{h \downarrow 0}\frac{1}{h}\|T_0(h)\phi - \phi\| < \infty\Bigl\}.
  \]
\end{definition}

The semigroup property of $\{T_0(t)\}_{t \ge 0}$ implies that $\FAV(T_0)$ is positively $T_0$-invariant. From the definition it also follows easily that $\FAV(T_0)$ consists precisely of those $\phi \in X$ for which $T_0(\cdot)\phi$ is locally Lipschitz. Furthermore, we also have the important equalities
\begin{equation}
  \label{eq:Fav_A0sunstar}
  \FAV(T_0) = j^{-1}\DOM(\SUNSTAR{A}_0) = j^{-1}\DOM(\SUNSTAR{A}) = \FAV(T)
\end{equation}
that do not require $\odot$-reflexivity. The left and right equalities are due to \cite[(3.36) in Section 3.4]{Clement1987b} and the equality in the middle follows directly from \cite[Proposition 22]{ADDE1}.

\begin{proposition}[\normalfont{cf. \cite[Proposition 2.2]{DSG3}}]
  \label{prop:inhom:wsdiff}
  If $f : [0,t_e) \to \mathscr{X}_0$ is locally Lipschitz continuous, then $v_0(\cdot,\cdot,f) : \Omega_{[0,t_e)} \to \SUNSTAR{X}$ defined by \cref{eq:v0} takes values in $\DOM(\SUNSTAR{A_0})$ and for every fixed $s \in [0,t_e)$ the function
  \[
    [s,t_e) \ni t \mapsto v_0(t,s,f) \in \SUNSTAR{X}
  \]
  is \WS-differentiable with \WS-derivative
  \begin{equation}
    \label{eq:sunstar:16}
    \STAR{d}_t v_0(t,s,f) = \SUNSTAR{A_0}v_0(t,s,f) + f(t) \qquad \ALL t \in [s,t_e)
  \end{equation}
  and $\STAR{d}_tv_0(\cdot,\cdot,f) : \Omega_{[0,t_e)} \to \SUNSTAR{X}$ is \WS-continuous. 
\end{proposition}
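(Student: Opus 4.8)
The plan is to fix $s \in [0,t_e)$ and study the map $t \mapsto v_0(t,s,f)$ on $[s,t_e)$. The key identity to exploit is the semigroup-translation property of the \WS-Riemann integral: for $s \le t \le t' < t_e$,
\[
  v_0(t',s,f) = \SUNSTAR{T_0}(t'-t)v_0(t,s,f) + v_0(t',t,f),
\]
which follows by splitting the integral at $t$ and pulling $\SUNSTAR{T_0}(t'-t)$ out of the first piece (this is exactly the manipulation used in the proof of \cref{prop:admindependent}). This reduces everything to analyzing the short-time behaviour of $h \mapsto v_0(t+h,t,f)$ near $h = 0$.

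First I would establish that $v_0(t,s,f) \in \DOM(\SUNSTAR{A_0})$. Using the identity above with $t' = t+h$, we have $v_0(t+h,s,f) - v_0(t,s,f) = (\SUNSTAR{T_0}(h) - I)v_0(t,s,f) + v_0(t+h,t,f)$. Dividing by $h$, the term $\frac{1}{h}v_0(t+h,t,f)$ is bounded as $h \downarrow 0$ (indeed it converges to $f(t)$ in norm, since $f$ is continuous at $t$ and $\SUNSTAR{T_0}(h-\tau)f(\tau) \to f(t)$ uniformly on the shrinking interval — here I use the exponential bound on $\SUNSTAR{T_0}$). On the other hand, by \cref{prop:jXadmissible} applied with the admissible range $\mathscr{X}_0$ — more precisely since $f$ is admissible for $\{T_0(t)\}_{t\ge 0}$ — we know $v_0(t,s,f) \in jX$, and we want more: that it lies in $j\FAV(T_0) = \DOM(\SUNSTAR{A_0})$ by \cref{eq:Fav_A0sunstar}. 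For this I would use the \emph{local Lipschitz} hypothesis on $f$: writing $v_0(t+h,t,f) = \int_0^h \SUNSTAR{T_0}(h-\sigma)f(t+\sigma)\,d\sigma$, a standard estimate combined with Lipschitz continuity of $f$ and the fact that the difference quotient $\frac 1h(\SUNSTAR{T_0}(h)-I)$ applied to the \WS-integral of $f$ over a short interval stays bounded, shows $\limsup_{h\downarrow 0}\frac 1h\|\SUNSTAR{T_0}(h)v_0(t,s,f) - v_0(t,s,f)\| < \infty$. Since $v_0(t,s,f) \in jX$ and $T_0^{\odot\star}$ restricts to $j T_0 j^{-1}$ on $jX$, this says $j^{-1}v_0(t,s,f) \in \FAV(T_0)$, hence $v_0(t,s,f) \in \DOM(\SUNSTAR{A_0})$ by \cref{eq:Fav_A0sunstar}.

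Next I would prove the \WS-differentiation formula \cref{eq:sunstar:16}. Testing against $x^{\odot} \in X^{\odot}$, the function $t \mapsto \PAIR{x^{\odot}}{v_0(t,s,f)} = \int_s^t \PAIR{x^{\odot}}{\SUNSTAR{T_0}(t-\tau)f(\tau)}\,d\tau$ is a scalar function; using the first identity above and the classical generator formula $\frac{d}{dt}\SUNSTAR{T_0}(t)y^{\odot\star} = \SUNSTAR{A_0}\SUNSTAR{T_0}(t)y^{\odot\star}$ for $y^{\odot\star} \in \DOM(\SUNSTAR{A_0})$ (valid in the \WS-sense), together with $\frac{d}{dh}\big|_{h=0}\PAIR{x^{\odot}}{v_0(t+h,t,f)} = \PAIR{x^{\odot}}{f(t)}$ from the norm-continuity argument above, I get
\[
  \frac{d}{dt}\PAIR{x^{\odot}}{v_0(t,s,f)} = \PAIR{x^{\odot}}{\SUNSTAR{A_0}v_0(t,s,f)} + \PAIR{x^{\odot}}{f(t)},
\]
which is exactly \WS-differentiability with the claimed derivative. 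Finally, \WS-continuity of $\STAR{d}_t v_0(\cdot,\cdot,f) = \SUNSTAR{A_0}v_0(\cdot,\cdot,f) + f(\cdot)$ follows since $f$ is continuous and $(t,s)\mapsto v_0(t,s,f)$ is \WS-continuous into $\DOM(\SUNSTAR{A_0})$ equipped with the graph topology — again using the translation identity and norm-continuity of the short integral, plus \WS-continuity of $\SUNSTAR{A_0}$ restricted to invariant pieces, or more simply the fact that $\SUNSTAR{A_0}v_0(t,s,f) = \STAR{d}_t v_0(t,s,f) - f(t)$ and the right-hand side is a difference of things that depend continuously (in norm, for the integral piece via the standard estimate in the proof of \cref{prop:admclosed}) on $(t,s)$.

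The main obstacle I expect is the first step: showing $v_0(t,s,f) \in \DOM(\SUNSTAR{A_0})$, equivalently that $j^{-1}v_0(t,s,f)$ lies in the Favard class, rather than merely in $jX$. This is where the local Lipschitz hypothesis is genuinely used (as opposed to mere continuity, which already gives $v_0 \in jX$ by admissibility), and it requires combining the short-time integral estimate with the characterization \cref{eq:Fav_A0sunstar} carefully; everything after that is essentially the classical computation for the variation-of-constants integral, transplanted to the \WS-setting.
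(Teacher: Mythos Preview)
Your overall strategy matches the paper's, and you correctly identify the Favard-class membership as the key step, but two points need repair.

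First, the claim that $\tfrac{1}{h}v_0(t+h,t,f) \to f(t)$ \emph{in norm} is false in general: the integrand involves $\SUNSTAR{T_0}(\cdot)$ acting on $f(t) \in \mathscr{X}_0 \subseteq X^{\odot\star}$, and $\{\SUNSTAR{T_0}(t)\}_{t\ge 0}$ is only \WS-continuous, not strongly continuous, there. You only get \WS-convergence (pair with $x^{\odot}$ and use strong continuity of $\SUN{T_0}$). This is not fatal --- for the Favard estimate you only need boundedness, and for the derivative formula \WS-convergence suffices --- but it means your translation identity gives only the \emph{right} derivative directly; at interior points $t>s$ the left derivative needs a separate argument. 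The paper avoids this by instead establishing the integral identity $v_0(t,s,f) = \int_s^t[\SUNSTAR{A_0}v_0(\tau,s,f)+f(\tau)]\,d\tau$ and then differentiating.

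Second, and more seriously, your argument for the \WS-continuity of $(t,s)\mapsto \SUNSTAR{A_0}v_0(t,s,f)$ is circular: you write $\SUNSTAR{A_0}v_0 = \STAR{d}_t v_0 - f$ with the right-hand side ``continuous'', but the continuity of $\STAR{d}_t v_0$ is exactly what is being proved. The paper's route is the one you are missing: the Lipschitz hypothesis on $f$ is used to show that $\limsup_{h\downarrow 0}\tfrac{1}{h}\|\SUNSTAR{T_0}(h)v_0(t,s,f)-v_0(t,s,f)\|$ is bounded \emph{uniformly on compact subsets} of $\Omega_{[0,t_e)}$ (this is the real work --- a four-piece splitting of the integral). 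That uniform bound gives local boundedness of $(t,s)\mapsto \SUNSTAR{A_0}v_0(t,s,f)$; then for $x^{\odot}\in\DOM(\SUN{A_0})$ one has $\PAIR{x^{\odot}}{\SUNSTAR{A_0}v_0(t,s,f)} = \PAIR{\SUN{A_0}x^{\odot}}{v_0(t,s,f)}$, which is manifestly continuous, and finally density of $\DOM(\SUN{A_0})$ in $\SUN{X}$ combined with the local boundedness extends the \WS-continuity to all $x^{\odot}\in\SUN{X}$.
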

\begin{proof}
  It is very close to the proof in \cite{DSG3} but here we allow for an arbitrary lower limit in the integral defining $v_0(\cdot,\cdot,f)$ and we also require that $f$ takes its values in $\mathscr{X}_0$. Therefore we provide a detailed proof. It is convenient to use the shorthands
  \[
    J \DEF [0,t_e), \qquad w(t,s) \DEF v_0(t,s,f) \qquad \ALL\,(t,s) \in \Omega_J.
  \]
  \begin{steps}
  \item
    Let $K \subseteq \Omega_J$ be an arbitrary compact subset. We will show that $w(t,s) \in \DOM(\SUNSTAR{A_0})$ for all $(t,s) \in K$. By \cref{eq:Fav_A0sunstar} we have in particular the inclusion $j \FAV(T_0) \subseteq \DOM(\SUNSTAR{A_0})$, so it is sufficient to prove that
    \begin{equation}
      \label{eq:sunstar:13}
      j^{-1}w(t,s) \in \FAV(T_0) \qquad \ALL\,(t,s) \in K.
    \end{equation}
    (Here it is used that $f$ takes its values in $\mathscr{X}_0$.) We note that the map
    \[
      \Omega_J \ni (t,s) \mapsto \sup_{s \le \tau \le t}\|f(\tau)\| \in \RR
    \]
    is continuous, so there exists a constant $C_K \ge 1$ such that
    \[
      e^{\omega(t - s)} + \sup_{s \le \tau \le t}\|f(\tau)\| \le C_K \qquad \ALL\,(t,s) \in K.
    \]
    Also, let $K_{1,2} \subseteq J$ be the compact sets obtained by projecting $K$ onto the first and second coordinate. There exists a Lipschitz constant $L_K > 0$ for $f$ on the compact set $K_1 \cup K_2$.
  \item    
    Let $(t,s) \in K$ be arbitrary. If $t = s$ then $w(t,s) = 0$, so we may assume that $t > s$ strictly. For $h \in (0,t - s)$ we consider the difference
    \begin{align*}
      \SUNSTAR{T_0}(h)w(t,s) - w(t,s) &= \int_s^t\SUNSTAR{T_0}(t + h - \tau)f(\tau)\,d\tau - \int_s^t\SUNSTAR{T_0}(t - \tau)f(\tau)\,d\tau\\
                                      &= \int_h^{t - s + h}\underbracket{\SUNSTAR{T_0}(\tau)f(t - \tau + h)}_{(\cdots)_h}\,d\tau - \int_0^{t - s}\underbracket{\SUNSTAR{T_0}(\tau)f(t - \tau)}_{(\cdots)_0}\,d\tau\\
                                      &= \int_h^{t - s}(\cdots)_h\,d\tau + \int_{t - s}^{t - s + h}(\cdots)_h\,d\tau - \int_0^h(\cdots)_0\,d\tau - \int_h^{t - s}(\cdots)_0\,d\tau,
    \end{align*}
    and this splitting leads to the estimate
    \begin{equation}
      \label{eq:sunstar:15}
      \begin{aligned}
        \frac{1}{h}\|\SUNSTAR{T_0}(h)w(t,s) - w(t,s)\| &\le \frac{1}{h}\Bigl\|\int_{t - s}^{t - s + h}\SUNSTAR{T_0}(\tau)f(t - \tau + h)\,d\tau\Bigr\|\\
        &+ \frac{1}{h}\Bigl\|\int_h^{t - s}\SUNSTAR{T_0}(\tau)[f(t - \tau + h) - f(t - \tau)]\,d\tau \Bigr\|\\
        &+ \frac{1}{h}\Bigl\|\int_0^h\SUNSTAR{T_0}(\tau)f(t - \tau)\,d\tau \Bigr\|.
      \end{aligned}
    \end{equation}
    A standard estimate shows that
    \begin{align*}
      \Bigl\|\int_{t - s}^{t - s + h}\SUNSTAR{T_0}(\tau)f(t - \tau + h)\,d\tau\Bigr\| &\le \frac{M}{\omega}(e^{\omega h} - 1)e^{\omega(t - s)}\sup_{s \le \tau \le t}\|f(\tau)\|,\\
               \Bigl\|\int_0^h\SUNSTAR{T_0}(\tau)f(t - \tau)\,d\tau \Bigr\| &\le \frac{M}{\omega}(e^{\omega h} - 1)\sup_{s \le \tau \le t}\|f(\tau)\|,
    \end{align*}
    so the superior limits of the first and third terms in the right-hand side of \cref{eq:sunstar:15} do not exceed $M C_K^2$. Also, we have the estimate
    \[
      \Bigl\|\int_h^{t - s}\SUNSTAR{T_0}(\tau)[f(t - \tau + h) - f(t - \tau)]\,d\tau \Bigr\| \le \frac{M}{\omega}L_K h (e^{\omega(t - s)} - 1),
    \]
    and this proves that the superior limit of the second term in the right-hand side of \cref{eq:sunstar:15} does not exceed $\frac{M}{\omega} L_K C_K$. We conclude that there exists a constant, again denoted by $C_K \ge 0$ and depending only on the compact set $K$, such that
    \begin{equation}
      \label{eq:limsup}
      \limsup_{h \downarrow 0} \frac{1}{h}\|\SUNSTAR{T_0}(h)w(t,s) - w(t,s)\| \le C_K \qquad \ALL\,(t,s) \in K.
    \end{equation}
  \item
    For all $(t,s) \in K$ and all $h > 0$ it holds that
    \begin{align*}
      \frac{1}{h}\|T_0(h)j^{-1}w(t,s) - j^{-1}w(t,s)\| &= \frac{1}{h}\|j^{-1}\SUNSTAR{T_0}(h)w(t,s) - j^{-1}w(t,s)\|\\
                                                       &\le \frac{\|j^{-1}\|}{h}\|\SUNSTAR{T_0}(h)w(t,s) - w(t,s)\|
    \end{align*}
    so \cref{eq:limsup} implies that \cref{eq:sunstar:13} holds. Moreover, for every $(t,s) \in K$ and every $\SUN{\phi} \in \SUN{X}$ with $\|\SUN{\phi}\| \le 1$ we have
    \[
      |\PAIR{\SUN{\phi}}{\SUNSTAR{A_0}w(t,s)}| = \lim_{h \downarrow 0}{\frac{1}{h}|\PAIR{\SUN{\phi}}{\SUNSTAR{T_0}(h)w(t,s) - w(t,s)}|} \le C_K.
    \]
    Since the compact set $K$ was chosen arbitrarily, we conclude that the map
    \begin{equation}
      \label{eq:A0v}
      \Omega_J \ni (t,s) \mapsto \SUNSTAR{A_0}w(t,s) \in \SUNSTAR{X}
    \end{equation}
    is bounded on compact subsets of $\Omega_J$.
  \item
    We show that \cref{eq:A0v} is \WS-continuous. Let $(t,s) \in \Omega_J$ be arbitrary. For any $\SUN{\phi} \in \DOM(\SUN{A_0})$ we have
\[
  \PAIR{\SUN{\phi}}{\SUNSTAR{A_0}w(t,s)} = \PAIR{\SUN{A_0}\SUN{\phi}}{w(t,s)},
\]
which is a continuous function of $(t,s)$ by the continuity of $w$ on $\Omega_J$. Next, let $\SUN{\phi} \in \SUN{X}$ be arbitrary. By norm-density of $\DOM(\SUN{A_0})$ in $\SUN{X}$ there exists a sequence $(\SUN{\phi}_n)$ in $\DOM(\SUN{A_0})$ such that $\SUN{\phi_n} \to \SUN{\phi}$ in norm as $n \to \infty$. Let $(t_m,s_m)$ be a sequence in $\Omega_J$ converging to $(t,s)$ as $m \to \infty$. Then we estimate
    \begin{align*}
      |\PAIR{\SUN{\phi}}{\SUNSTAR{A_0}w(t,s)} - \PAIR{\SUN{\phi}}{\SUNSTAR{A_0}w(t_m,s_m)}| &\le (\|\SUNSTAR{A_0}w(t,s)\| + \|\SUNSTAR{A_0}w(t_m,s_m)\|)\cdot\|\SUN{\phi} - \SUN{\phi_n}\|\\
                                                                                                    &+ |\PAIR{\SUN{\phi_n}}{\SUNSTAR{A_0}w(t,s)} - \PAIR{\SUN{\phi_n}}{\SUNSTAR{A_0}w(t_m,s_m)}|.
    \end{align*}
    The first term in the right-hand side can be made as small as desired merely by fixing $n$ sufficiently large, thanks to the boundedness of \cref{eq:A0v} on compact subsets of $\Omega_J$. The continuity of $\PAIR{\SUN{\phi_n}}{\SUNSTAR{A_0}w(\cdot,\cdot)}$ then implies that the second term becomes arbitrarily small as $m \to \infty$.
  \item
    It remains to prove the statement about \WS-differentiability. We do this by showing that
    \begin{equation}
      \label{eq:sunstar:17}
      w(t,s) = \int_s^t{[\SUNSTAR{A_0}w(\tau,s) + f(\tau)]\,d\tau} \qquad \ALL t \in J \text{ with } t \ge s.
    \end{equation}
    (Observe that the right-hand side is well-defined as a \WS-Riemann integral since the integrand is \WS-continuous, as shown in the previous step.) Indeed, if \cref{eq:sunstar:17} holds, then for every $\SUN{\phi} \in \SUN{X}$ we have
    \[
      \frac{d}{dt} \PAIR{\SUN{\phi}}{w(t,s)} = \frac{d}{dt} \int_s^t{\PAIR{\SUN{\phi}}{\SUNSTAR{A_0}w(\tau,s) + f(\tau)}\,d\tau} = \PAIR{\SUN{\phi}}{\SUNSTAR{A_0}w(t,s) + f(t)},
    \]
     which is precisely \cref{eq:sunstar:16}. A small direct calculation shows that
    \[
      \PAIR{\SUN{\phi}}{\int_s^t\SUNSTAR{A_0}w(\tau,s)\,d\tau} = \PAIR{\SUN{\phi}}{w(t,s) - \int_s^tf(\tau)\,d\tau} \qquad \ALL t \in J \text{ with } t \ge s,
    \]
    at first for $\SUN{\phi} \in \DOM(\SUN{A_0})$ and then by norm-density for all $\SUN{\phi} \in \SUN{X}$. (Change the order of integration so \cite[Lemma 3.15 in Appendix II.3]{Delay1995} can be applied.) This proves \cref{eq:sunstar:17}.
    \hfill \qedhere
  \end{steps}
\end{proof}

\begin{corollary}\label{cor:inhom:aie0_to_aode0}
  Suppose that $\phi \in j^{-1}\DOM(\SUNSTAR{A_0})$ and $f$ is locally Lipschitz. If $J$ is a subinterval of $[0,t_e)$ and $u$ is a locally Lipschitz continuous solution of \cref{eq:aie0_inhom} on $J$ then $u$ is a solution of \cref{eq:aode0_inhom} on $J$.
\end{corollary}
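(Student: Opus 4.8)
The plan is to apply the canonical embedding $j$ to both sides of \cref{eq:aie0_inhom} and then read off the conclusion from \cref{prop:inhom:wsdiff} together with the standard differentiability of the adjoint semigroup on $\DOM(\SUNSTAR{A_0})$. Using the intertwining relation $j T_0(t) = \SUNSTAR{T_0}(t) j$ and the notation of \cref{eq:v0}, applying $j$ gives
\[
  (j \circ u)(t) = \SUNSTAR{T_0}(t) j\phi + v_0(t,0,g), \qquad g \DEF L \circ u + f, \qquad \ALL t \in J;
\]
here the \WS-integral defining $v_0(t,0,g)$ already lies in $jX$ because $g$ is continuous and, as noted next, takes its values in the admissible range $\mathscr{X}_0$, so that \cref{prop:rangeclass} applies. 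The first thing I would check is that $g : J \to \mathscr{X}_0$ is locally Lipschitz continuous: $u$ is locally Lipschitz by hypothesis and $L$ is bounded and linear, hence $L \circ u$ is locally Lipschitz; $f$ is locally Lipschitz by hypothesis; and both $L \circ u$ and $f$ take their values in $\mathscr{X}_0$, the former since $L$ is an admissible perturbation valued in $\mathscr{X}_0$ and the latter by assumption — so $g = L\circ u + f$ has both properties.

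Given this, I would apply \cref{prop:inhom:wsdiff} to the forcing function $g$ with lower integration limit $s = 0$; its proof carries over verbatim with the subinterval $J$ in place of $[0,t_e)$. This shows that $v_0(\cdot,0,g)$ takes values in $\DOM(\SUNSTAR{A_0})$, is \WS-continuously differentiable on $J$, and has \WS-derivative $t \mapsto \SUNSTAR{A_0}v_0(t,0,g) + g(t)$. For the remaining summand $t \mapsto \SUNSTAR{T_0}(t)j\phi$ I would invoke that, since $\phi \in j^{-1}\DOM(\SUNSTAR{A_0})$ and hence $j\phi \in \DOM(\SUNSTAR{A_0})$, standard properties of the adjoint semigroup make this orbit stay in $\DOM(\SUNSTAR{A_0})$ and be \WS-continuously differentiable with \WS-derivative $t \mapsto \SUNSTAR{A_0}\SUNSTAR{T_0}(t)j\phi = \SUNSTAR{T_0}(t)\SUNSTAR{A_0}j\phi$ (this is the homogeneous abstract ODE; cf.\ the discussion around \cite[Proposition 22]{ADDE1}). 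Adding the two contributions: $(j\circ u)(t) \in \DOM(\SUNSTAR{A_0})$ for every $t \in J$, so $u$ takes its values in $j^{-1}\DOM(\SUNSTAR{A_0})$; and $j \circ u$ is \WS-continuously differentiable on $J$ with
\[
  \STAR{d}(j \circ u)(t) = \SUNSTAR{A_0}\SUNSTAR{T_0}(t)j\phi + \SUNSTAR{A_0}v_0(t,0,g) + g(t) = \SUNSTAR{A_0}(j \circ u)(t) + L u(t) + f(t),
\]
using linearity and closedness of $\SUNSTAR{A_0}$ to pull it out of the sum. Finally, evaluating \cref{eq:aie0_inhom} at $t = 0$ gives $u(0) = \phi$, so $u$ is a solution of \cref{eq:aode0_inhom} on $J$ in the required sense.

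I do not expect a genuine obstacle here: the statement is essentially a repackaging of \cref{prop:inhom:wsdiff} plus the homogeneous solution formula. The only points needing a little care are (i) verifying that $g = L\circ u + f$ is simultaneously locally Lipschitz and valued in the admissible range $\mathscr{X}_0$, which is exactly what licenses the use of \cref{prop:inhom:wsdiff}, and (ii) observing that \cref{prop:inhom:wsdiff}, though stated for an interval of the form $[0,t_e)$, applies equally on the subinterval $J$; both are routine.
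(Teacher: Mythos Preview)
Your proof is correct and follows essentially the same approach as the paper: apply $j$ to \cref{eq:aie0_inhom}, then treat the homogeneous orbit $\SUNSTAR{T_0}(t)j\phi$ and the convolution term $v_0(t,0,L\circ u + f)$ separately using \cref{prop:inhom:wsdiff}. You are slightly more explicit than the paper about two routine points---that $g = L\circ u + f$ is locally Lipschitz with values in $\mathscr{X}_0$, and that \cref{prop:inhom:wsdiff} applies on the subinterval $J$---but the structure and the key ingredients are identical.
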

\begin{proof}
  Apply $j$ to \cref{eq:aie0_inhom} to obtain
  \begin{equation}
    \label{eq:jut}
    ju(t) = \SUNSTAR{T_0}(t)j\phi + \int_0^t\SUNSTAR{T_0}(t - \tau)[Lu(\tau) + f(\tau)]\,d\tau \qquad \ALL t \in J.
  \end{equation}
  The first term on the right takes values in $\DOM(\SUNSTAR{A_0})$ and it is \WS-continuously differentiable with respect to $t \in J$ with \WS-derivative
  \[
    \STAR{d}_t \SUNSTAR{T_0}(t)j\phi = \SUNSTAR{A_0}\SUNSTAR{T_0}(t)j\phi.
  \]
  Also, by \cref{prop:inhom:wsdiff} the \WS-integral in \cref{eq:jut} takes values in $\DOM(\SUNSTAR{A_0})$ and it is \WS-continuously differentiable with respect to $t \in J$ with \WS-derivative
  \[
    \STAR{d}_t \int_0^t\SUNSTAR{T_0}(t - \tau)[Lu(\tau) + f(\tau)]\,d\tau = \SUNSTAR{A_0} \int_0^t\SUNSTAR{T_0}(t - \tau)[Lu(\tau) + f(\tau)]\,d\tau + Lu(t) + f(t).
  \]
  So $u$ takes values in $j^{-1}\DOM(\SUNSTAR{A_0})$ and $j \circ u$ is \WS-continuously differentiable and satisfies \cref{eq:aode0_inhom} on $J$.
\end{proof}

Concerning the following proposition, at first sight it may seem a bit odd to establish well-posedness of the \emph{linear} inhomogeneous problem \cref{eq:aie0_inhom} by means of a fixed-point argument. However, direct substitution of \cref{eq:aie_inhom} into \cref{eq:aie0_inhom} is not successful in the non-$\odot$-reflexive case, even if just for the reason given in the first of the two questions above. Indeed, the following independent well-posedness result will turn out to be instrumental in the proof of \cref{thm:inhom:answers}.

\begin{proposition}\label{prop:inhom:aie0sol}
  Let $J \subseteq [0,t_e)$ be a compact subinterval. The following two statements hold.
  \begin{enumerate}[label=\roman*.]
  \item
    For every $\phi \in X$ there exists a unique solution $u_{\phi,f}$ of \cref{eq:aie0_inhom} on $J$ and the map
    \begin{equation}
      \label{eq:umap_cont}
      X \times C(J,\mathscr{X}_0) \ni (\phi,f) \mapsto u_{\phi,f} \in C(J,X)
    \end{equation}
    is continuous.
  \item
    If $\phi \in j^{-1}\DOM(\SUNSTAR{A_0})$ and $f$ is locally Lipschitz then there exist sequences of Lipschitz functions $u_m : J \to X$ and $f_m : J \to \mathscr{X}_0$ such that
    \begin{equation}
      \label{eq:aie0_inhom_approx}
      u_m(t) = T_0(t)\phi + j^{-1}\int_0^t\SUNSTAR{T_0}(t - \tau)[L u_m(\tau) + f_m(\tau)]\,d\tau \qquad \ALL\,t \in J,
    \end{equation}
    and $f_m \to f$ and $u_m \to u_{\phi,f}$ as $m \to \infty$, both uniformly on $J$.
  \end{enumerate}
\end{proposition}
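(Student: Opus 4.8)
For part (i) the plan is a contraction argument applied directly to \cref{eq:aie0_inhom}, using only the semigroup $\{\SUNSTAR{T_0}(t)\}_{t \ge 0}$, the bounded operator $L$, and the admissibility of $\mathscr{X}_0$ for $\{T_0(t)\}_{t \ge 0}$ — deliberately \emph{not} the perturbed semigroup $\{T(t)\}_{t \ge 0}$, since whether its variation-of-constants formula \cref{eq:aie_inhom} has the right range is precisely what is still open at this point. Fix $\phi \in X$ and $f \in C(J,\mathscr{X}_0)$ and define $\mathcal{K} \colon C(J,X) \to C(J,X)$ by
\[
  (\mathcal{K}u)(t) \DEF T_0(t)\phi + j^{-1}\int_0^t \SUNSTAR{T_0}(t - \tau)[Lu(\tau) + f(\tau)]\,d\tau, \qquad t \in J.
\]
This is well defined: for $u \in C(J,X)$ the map $\tau \mapsto Lu(\tau) + f(\tau)$ is a continuous forcing function with values in the subspace $\mathscr{X}_0$, so \cref{prop:rangeclass} puts the \WS-Riemann integral in $jX$; and $t \mapsto j^{-1}\int_0^t\SUNSTAR{T_0}(t - \tau)[Lu(\tau) + f(\tau)]\,d\tau$ is norm-continuous because on the $\SUNSTAR{T_0}$-invariant subspace $jX$ one has $\SUNSTAR{T_0}(h)j = jT_0(h)$, so the strong continuity of $\{T_0(t)\}_{t \ge 0}$ transfers upward (together with the obvious tail estimate for the integral over a short interval). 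Since $j$ is isometric and $\|\SUNSTAR{T_0}(t)\| \le Me^{\omega t}$, for $u_1, u_2 \in C(J,X)$ we get
\[
  \|(\mathcal{K}u_1)(t) - (\mathcal{K}u_2)(t)\| \le M\|L\| \int_0^t e^{\omega(t - \tau)}\|u_1(\tau) - u_2(\tau)\|\,d\tau, \qquad t \in J.
\]
Equipping $C(J,X)$ with the equivalent norm $\|u\|_\lambda \DEF \sup_{t \in J} e^{-\lambda t}\|u(t)\|$ and choosing $\lambda > \omega$ so large that $M\|L\|(\lambda - \omega)^{-1} < 1$ turns $\mathcal{K}$ into a contraction, whose unique fixed point is the desired solution $u_{\phi,f}$. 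The contraction constant does not depend on $(\phi,f)$; subtracting the two fixed-point identities for data $(\phi_1,f_1)$ and $(\phi_2,f_2)$ and applying Gronwall's inequality on the compact interval $J$ gives $\|u_{\phi_1,f_1} - u_{\phi_2,f_2}\|_{C(J,X)} \le C\bigl(\|\phi_1 - \phi_2\| + \|f_1 - f_2\|_{C(J,\mathscr{X}_0)}\bigr)$, so the map \cref{eq:umap_cont} is continuous, indeed Lipschitz.

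For part (ii) the idea is to run Picard iteration for $\mathcal{K}$ from a Lipschitz starting point and to propagate the Lipschitz property along the way, so that the iterates themselves can serve as the approximating sequence. Set $u^{(0)}(t) \DEF T_0(t)\phi$ and $u^{(k+1)} \DEF \mathcal{K}u^{(k)}$. By \cref{eq:Fav_A0sunstar} the hypothesis $\phi \in j^{-1}\DOM(\SUNSTAR{A_0})$ means $\phi \in \FAV(T_0)$, i.e.\ $t \mapsto T_0(t)\phi$ is locally Lipschitz and hence Lipschitz on the compact interval $J$. Assume inductively that $u^{(k)}$ is Lipschitz on $J$. Since $f$ is locally Lipschitz (hence Lipschitz on $J$), $L$ is bounded, and $\mathscr{X}_0$ is a subspace, the forcing function $g_k \DEF Lu^{(k)} + f$ is Lipschitz from $J$ into $\mathscr{X}_0$; extending it trivially to a locally Lipschitz $\mathscr{X}_0$-valued function on $[0,t_e)$, \cref{prop:inhom:wsdiff} shows that $t \mapsto v_0(t,0,g_k)$ is \WS-continuously differentiable, and \cref{rem:wsloclip} upgrades this to local Lipschitz continuity in norm. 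As $v_0(t,0,g_k) \in jX$ and $j$ is isometric, $t \mapsto j^{-1}v_0(t,0,g_k)$ is Lipschitz on $J$, whence so is $u^{(k+1)} = T_0(\cdot)\phi + j^{-1}v_0(\cdot,0,g_k)$. By induction every iterate is Lipschitz on $J$, and by the contraction from part (i) we have $u^{(k)} \to u_{\phi,f}$ uniformly on $J$; in particular the sequence is uniformly Cauchy, so $\|u^{(m-1)} - u^{(m)}\|_{C(J,X)} \to 0$ as $m \to \infty$.

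To finish part (ii), put $u_m \DEF u^{(m)}$ and $f_m \DEF f + L(u^{(m-1)} - u^{(m)})$ for $m \ge 1$. Each $f_m$ is Lipschitz on $J$ with values in $\mathscr{X}_0$, and rewriting $Lu^{(m-1)} + f = Lu^{(m)} + f_m$ inside the identity $u^{(m)} = \mathcal{K}u^{(m-1)}$ yields exactly \cref{eq:aie0_inhom_approx}. Moreover $\|f_m - f\|_{C(J,\mathscr{X}_0)} \le \|L\|\,\|u^{(m-1)} - u^{(m)}\|_{C(J,X)} \to 0$, and $u_m \to u_{\phi,f}$ uniformly on $J$, which is everything claimed.

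The computational parts — the Gronwall and tail estimates, the rewriting of the identity — are routine. The substance lies in two places: in part (i), that $\mathcal{K}$ maps $C(J,X)$ into itself, which is exactly where admissibility of $\mathscr{X}_0$ enters (via \cref{prop:rangeclass}) together with the transfer of strong continuity to $jX$; and in part (ii), the propagation of the Lipschitz property through the iteration, which rests entirely on \cref{prop:inhom:wsdiff} and \cref{rem:wsloclip}. The latter is also the reason the proposition is phrased as an approximation statement: the Lipschitz iterates $u_m$ converging uniformly are exactly what is needed downstream (to feed them into \cref{cor:inhom:aie0_to_aode0} and then pass to the limit), rather than an assertion about the regularity of $u_{\phi,f}$ itself.
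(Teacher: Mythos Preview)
Your proof is correct and follows essentially the same route as the paper: a contraction argument with an exponentially weighted equivalent norm for part (i), and Picard iteration preserving the Lipschitz property (via \cref{prop:inhom:wsdiff} and \cref{rem:wsloclip}) for part (ii), with the same definition $f_m \DEF f + L(u_{m-1} - u_m)$. The only cosmetic differences are that the paper invokes the uniform contraction principle rather than Gronwall for the continuity of \cref{eq:umap_cont}, and starts the iteration from an arbitrary Lipschitz function rather than from $T_0(\cdot)\phi$ specifically.
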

\begin{proof}
  \begin{steps}
  \item
    The first statement is proven by a standard fixed-point argument. Let $M \ge 1$ and $\omega \in \RR$ be as in \cref{eq:expboundT0}. Following \cite{Duistermaat1995}, on the Banach space $C(J,X)$ we introduce the one-parameter family of equivalent norms
    \[
      \|u\|_{\eta} \DEF \sup_{t \in J}e^{-\eta t}\|u(t)\|, \qquad \eta \in \RR.
    \]
    Clearly each of these norms is complete and $\|\cdot\|_0$ is the usual supremum-norm. For each $(\phi,f) \in X \times C(J,\mathscr{X}_0)$ we define the operator $K_{\phi,f}$ on $C(J,X)$ by
    \begin{equation}
      \label{eq:inhom_fp}
      (K_{\phi,f}u)(t) \DEF T_0(t)\phi + j^{-1}\int_0^t\SUNSTAR{T_0}(t - \tau)[Lu(\tau) + f(\tau)]\,d\tau \qquad \ALL\, t \in J.
    \end{equation}
    Choose $\eta > \omega$. For all $u, \hat{u} \in C(J,X)$ and all $t \in J$ we have
    \begin{align*}
      e^{-\eta t}\|(K_{\phi,f}u)(t) - (K_{\phi,f}\hat{u})(t)\| &\le \|j^{-1}\| M \|L\| \int_0^t e^{-(\eta - \omega)(t - \tau)} e^{-\eta \tau}\|u(\tau) - \hat{u}(\tau)\|\,d\tau\\
                                             &\le \frac{\|j^{-1}\| M \|L\|}{\eta - \omega} \|u - \hat{u}\|_{\eta}
    \end{align*}
    so if we choose $\eta$ such that $2\|j^{-1}\| M \|L\| \le \eta - \omega$ then $K_{\phi,f}$ is a uniform contraction with respect to the $\|\cdot\|_{\eta}$-norm. Moreover, the maps $X \times C(J,\mathscr{X}_0) \ni (\phi,f) \mapsto K_{\phi,f}u \in C(J,X)$ are continuous for each fixed $u \in C(J,X)$. The uniform contraction principle \cite[Theorem 0.3.2]{Hale1980} therefore gives the first statement.
  \item
    Assume in addition that $\phi \in j^{-1}\DOM(\SUNSTAR{A_0})$ and $f$ is locally Lipschitz. Let $\LIP(J,X)$ be the subspace of $C(J,X)$ consisting of Lipschitz functions. We will show that $K_{\phi,f}$ maps $\LIP(J,X)$ into itself. First, from \cref{eq:Fav_A0sunstar} we see that $T_0(\cdot)\phi$ is in $\LIP(J,X)$.  Second, let $u \in \LIP(J,X)$ be arbitrary and let $\hat{u}$ be a Lipschitz extension of $u$ to $[0,t_e)$. Then $L \circ \hat{u} + f$ is locally Lipschitz on $[0,t_e)$ with values in $\mathscr{X}_0$. \Cref{prop:inhom:wsdiff} and \cref{rem:wsloclip} show that $v_0(\cdot,0,L \circ \hat{u} + f)$ is locally Lipschitz on $[0,t_e)$ as well. Hence $K_{\phi,f}u = T_0(\cdot)\phi + j^{-1}\circ v_0(\cdot,0,L \circ \hat{u} + f)$ is in $\LIP(J,X)$.
  \item
    Choose an arbitrary $u_0 \in \LIP(J,X)$. The sequence of fixed-point iterates defined by
    \[
      u_m \DEF K_{\phi,f}u_{m-1}, \qquad m \in \NN,
    \]
    is in $\LIP(J,X)$. From \cref{eq:inhom_fp} we have for all $m \in \NN$ and all $t \in J$,
    \[
      u_m(t) = T_0(t)\phi + j^{-1}\int_0^t\SUNSTAR{T_0}(t - \tau)[L u_m(\tau) + f(\tau) + L\{u_{m-1}(\tau) - u_m(\tau)\}]\,d\tau.
    \]
    For every $m \in \NN$ we define $f_m \DEF f + L \circ (u_{m-1} - u_m)$. Then each $f_m$ is Lipschitz on $J$ with values in $\mathscr{X}_0$. Furthermore, \cref{eq:aie0_inhom_approx} holds and $u_m \to u_{\phi,f}$ and $f_m \to f$, both uniformly on $J$. \hfill \qedhere
  \end{steps}
\end{proof}


We recall that $\{T(t)\}_{t \ge 0}$ is the $\mathcal{C}_0$-semigroup defined in \cite[Theorem 19]{ADDE1}. For any interval $J$ we denote by $\mathscr{F}(J)$ the class of admissible forcing functions for $\{T(t)\}_{t \ge 0}$ on $J$. 

\begin{proposition}\label{prop:inhom:aode_to_aie}
  If $J$ is a subinterval of $[0,t_e)$ and $u : J \to X$ is a solution of \cref{eq:aode_inhom} then $f \in \mathscr{F}(J)$ and $u$ is given by \cref{eq:aie_inhom}.
\end{proposition}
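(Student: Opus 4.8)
The plan is to argue directly from the abstract ODE \cref{eq:aode_inhom} and derive the variation-of-constants formula \cref{eq:aie_inhom} by the classical device of differentiating the sun-dual orbit, run backwards, against the solution; admissibility of $f$ then falls out of the resulting identity. Fix $t \in J$; since $0 \in J$ and $J$ is an interval we have $[0,t] \subseteq J$. For each fixed $\SUN{x} \in \DOM(\SUN{A})$ I would consider the scalar function
\[
  g(\tau) \DEF \PAIR{\SUN{T}(t - \tau)\SUN{x}}{j u(\tau)}, \qquad \tau \in [0,t],
\]
which is continuous because $\tau \mapsto \SUN{T}(t - \tau)\SUN{x}$ and $\tau \mapsto j u(\tau)$ are norm-continuous. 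The first factor is in fact norm-continuously differentiable, with derivative $-\SUN{A}\SUN{T}(t - \tau)\SUN{x}$ (domains of generators are semigroup-invariant), while $\tau \mapsto j u(\tau)$ takes values in $\DOM(\SUNSTAR{A})$ and is \WS-continuously differentiable with $\STAR{d}(j \circ u)(\tau) = \SUNSTAR{A} j u(\tau) + f(\tau)$ by hypothesis. Splitting the difference quotient of $g$ into two pieces — one in which the norm-difference-quotient of $\SUN{T}(t - \cdot)\SUN{x}$ is paired against the bounded orbit $j u$, the other in which the fixed functional $\SUN{T}(t - \tau)\SUN{x}$ is paired against the \WS-difference-quotient of $j u$ — and passing to the limit in each piece separately yields
\[
  g'(\tau) = -\PAIR{\SUN{A}\SUN{T}(t - \tau)\SUN{x}}{j u(\tau)} + \PAIR{\SUN{T}(t - \tau)\SUN{x}}{\SUNSTAR{A} j u(\tau) + f(\tau)}.
\]
Since $\SUN{T}(t - \tau)\SUN{x} \in \DOM(\SUN{A})$, $j u(\tau) \in \DOM(\SUNSTAR{A})$, and $\SUNSTAR{A}$ is the adjoint of $\SUN{A}$, the two $\SUNSTAR{A}$-terms cancel, so $g'(\tau) = \PAIR{\SUN{T}(t - \tau)\SUN{x}}{f(\tau)} = \PAIR{\SUN{x}}{\SUNSTAR{T}(t - \tau)f(\tau)}$, which is continuous in $\tau$ by strong continuity of $\SUN{T}$ and norm-continuity of $f$.

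Next I would integrate this derivative over $[0,t]$. The fundamental theorem of calculus together with $j u(0) = j\phi$ gives, for every $\SUN{x} \in \DOM(\SUN{A})$,
\[
  \PAIR{\SUN{x}}{j u(t)} - \PAIR{\SUN{x}}{\SUNSTAR{T}(t)j\phi} = \int_0^t \PAIR{\SUN{x}}{\SUNSTAR{T}(t - \tau)f(\tau)}\,d\tau = \PAIR{\SUN{x}}{\int_0^t \SUNSTAR{T}(t - \tau)f(\tau)\,d\tau}.
\]
By norm-density of $\DOM(\SUN{A})$ in $\SUN{X}$ (and since $\SUN{X}$ separates the points of $\SUNSTAR{X}$) this is an equality of elements of $\SUNSTAR{X}$; using $\SUNSTAR{T}(t)j\phi = j\,T(t)\phi$ it becomes
\[
  j u(t) = j\,T(t)\phi + \int_0^t \SUNSTAR{T}(t - \tau)f(\tau)\,d\tau, \qquad t \in J.
\]
In particular the \WS-integral equals $j u(t) - j\,T(t)\phi$, hence lies in $jX$, and applying $j^{-1}$ yields exactly \cref{eq:aie_inhom}.

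It remains to upgrade ``$\int_0^t \SUNSTAR{T}(t - \tau)f(\tau)\,d\tau \in jX$ for every $t \in J$'' to $f \in \mathscr{F}(J)$, i.e.\ to $\int_s^t \SUNSTAR{T}(t - \tau)f(\tau)\,d\tau \in jX$ for all $(t,s) \in \Omega_J$. For $s \le t$ in $J$ I would decompose
\[
  \int_0^t \SUNSTAR{T}(t - \tau)f(\tau)\,d\tau = \SUNSTAR{T}(t - s)\int_0^s \SUNSTAR{T}(s - \tau)f(\tau)\,d\tau + \int_s^t \SUNSTAR{T}(t - \tau)f(\tau)\,d\tau.
\]
Both ``from-$0$'' integrals lie in $jX$ by the previous step (as $s,t \in J$), and $jX$ is positively $\SUNSTAR{T}$-invariant, so the first term on the right lies in $jX$; hence so does the remaining term $\int_s^t \SUNSTAR{T}(t - \tau)f(\tau)\,d\tau$. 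Since $f$ is continuous into $\SUNSTAR{X}$, this shows $f \in \mathscr{F}(J)$ and completes the argument.

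The step I expect to be the main obstacle is the differentiation identity for $g$: one must justify passing to the limit in both pieces of the difference quotient, combining the norm-differentiability of the sun-dual orbit on $\DOM(\SUN{A})$ with the mere \WS-differentiability of $j \circ u$ and the uniform boundedness of $j u$ on the compact interval $[0,t]$ — essentially an adaptation of the product rule to the ``norm $\times$ \WS'' setting. Everything else (the cancellation of the $\SUNSTAR{A}$-terms, the fundamental theorem of calculus, the density argument, the identity $\SUNSTAR{T}(t)j = j\,T(t)$, and the $\SUNSTAR{T}$-invariance of $jX$) is routine.
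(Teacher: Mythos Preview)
Your proof is correct and follows the same product-rule/variation-of-constants strategy as the paper: differentiate the backward orbit against the solution, cancel the $\SUNSTAR{A}$-terms via the ODE, and integrate. Two implementation choices differ slightly. First, the paper computes the \WS-derivative of $w(\tau) = \SUNSTAR{T}(t-\tau)ju(\tau)$ by testing against \emph{all} $\SUN{x} \in \SUN{X}$ at once; this forces it to pair a \WS-convergent difference quotient of $j\circ u$ against a norm-convergent $\SUN{T}(t-\tau-h)\SUN{x}$, which only works because the difference quotient stays norm-bounded (local Lipschitzness of $j\circ u$, cf.\ \cref{rem:wsloclip}). You instead test first against $\SUN{x} \in \DOM(\SUN{A})$, where $\SUN{T}(t-\cdot)\SUN{x}$ is norm-differentiable, and extend by density afterwards --- this sidesteps the uniform-boundedness appeal and is a small simplification. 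Second, the paper integrates directly from $s$ to $t$ and reads off $f \in \mathscr{F}(J)$ immediately; you integrate from $0$ to $t$ and then recover the general $(t,s)$-case by the splitting $\int_0^t = \SUNSTAR{T}(t-s)\int_0^s + \int_s^t$ together with positive $\SUNSTAR{T}$-invariance of $jX$. Both routes are standard and equivalent in strength.
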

\begin{proof}
  \begin{steps}
  \item
    The proof is rather standard, see for instance \cite[Lemma 13]{ADDE1}, but in the present case we need to work with the \WS-topology instead of the norm-topology. Let $(t,s) \in \Omega_J$ with $t > s$ be arbitrary. Define $w : [s,t] \to \SUNSTAR{X}$ by $w(\tau) \DEF \SUNSTAR{T}(t - \tau)ju(\tau)$. We claim that $w$ is \WS-differentiable with derivative
    \begin{equation}
      \label{eq:sunstar:30}
      \STAR{d}w(\tau) = \SUNSTAR{T}(t - \tau)\STAR{d}(j \circ u)(\tau) - \SUNSTAR{T}(t - \tau)\SUNSTAR{A}ju(\tau) \qquad \ALL\,\tau \in [s,t].
    \end{equation}
    which is just what one would expect on formal grounds. To prove it, let $\tau \in [s,t]$ and $\SUN{\phi} \in \SUN{X}$ be arbitrary. For any $h \in \RR$ such that $\tau + h \in [s,t]$ we have
    \begin{align*}
      \PAIR{\SUN{\phi}}{w(\tau + h) - w(\tau)} &= \PAIR{\SUN{\phi}}{\SUNSTAR{T}(t - (\tau + h))ju(\tau + h) - \SUNSTAR{T}(t -\tau)ju(\tau)}\\
      &= \PAIR{\SUN{\phi}}{\SUNSTAR{T}(t - (\tau + h))[ju(\tau + h) - ju(\tau)]}\\
      &+ \PAIR{\SUN{\phi}}{[\SUNSTAR{T}(t - (\tau + h)) - \SUNSTAR{T}(t - \tau)]ju(\tau)}\\
      &= \PAIR{\SUN{T}(t - (\tau + h))\SUN{\phi}}{ju(\tau + h) - ju(\tau)}\\
      &+ \PAIR{\SUN{\phi}}{[\SUNSTAR{T}(t - (\tau + h)) - \SUNSTAR{T}(t - \tau)]ju(\tau)}
    \end{align*}
    Regarding the first pairing above, by the strong continuity of $\SUN{T}$ we have
    \[
      \SUN{T}(t - (\tau + h))\SUN{\phi} \to \SUN{T}(t - \tau)\SUN{\phi} \qquad \text{in norm as } h \to 0.
    \]
    We also have
    \[
\frac{1}{h}(ju(\tau + h) - ju(\tau)) \to \STAR{d}(j \circ u)(\tau) \qquad \text{\WSTARLY as } h \to 0,
    \]
    while the difference quotient remains bounded in norm thanks to the Lipschitz continuity of $j \circ u$ on $[s,t]$. Together this implies that
    \[
      \frac{1}{h}\PAIR{\SUN{T}(t - (\tau + h))\SUN{\phi}}{ju(\tau + h) - ju(\tau)} \to \PAIR{\SUN{T}(t - \tau)\SUN{\phi}}{\STAR{d}(j \circ u)(\tau)} \qquad \text{as } h \to 0.
    \]
    Regarding the second pairing, since $ju(\tau) \in \DOM(\SUNSTAR{A})$ it follows that
    \[
       \frac{1}{h}\PAIR{\SUN{\phi}}{[\SUNSTAR{T}(t - (\tau + h)) - \SUNSTAR{T}(t - \tau)]ju(\tau)} \to -\PAIR{\SUN{\phi}}{\SUNSTAR{T}(t - \tau)\SUNSTAR{A}ju(\tau)} \qquad \text{as } h \to 0.
    \]   
    Consequently, we have
    \[
      \frac{1}{h}\PAIR{\SUN{\phi}}{w(\tau + h) - w(\tau)} \to \PAIR{\SUN{\phi}}{\SUNSTAR{T}(t - \tau)\STAR{d}(j \circ u)(\tau) - \SUNSTAR{T}(t - \tau)\SUNSTAR{A}ju(\tau)}
    \]
    and this proves \cref{eq:sunstar:30}.
  \item
    Substituting \cref{eq:aode_inhom} into \cref{eq:sunstar:30} yields
    \[
      \STAR{d}w(\tau) = \SUNSTAR{T}(t - \tau)f(\tau) \qquad \ALL\,\tau \in [s,t]
    \] 
    so $\STAR{d}w$ is \WS-continuous. For every $\SUN{\phi} \in \SUN{X}$ we have
    \begin{align*}
      \PAIR{\SUN{\phi}}{ju(t) - \SUNSTAR{T}(t - s)ju(s)} &= \PAIR{\SUN{\phi}}{w(t)} - \PAIR{\SUN{\phi}}{w(s)}\\
                                                         &= \int_s^t{\PAIR{\SUN{\phi}}{\STAR{d}w(\tau)}\,d\tau}\\
                                                         &= \int_s^t{\PAIR{\SUN{\phi}}{\SUNSTAR{T}(t - \tau)f(\tau)}\,d\tau}
    \end{align*}
    Since $\SUN{\phi}$ and also $s$ and $t$ were arbitrary, we conclude that 
    \[
      ju(t) - \SUNSTAR{T}(t - s)ju(s) = \int_s^t{\SUNSTAR{T}(t - \tau)f(\tau)\,d\tau} \qquad \ALL\,(t,s) \in \Omega_J.
    \]
    We recall that $\SUNSTAR{T}(t - s)ju(s) = jT(t - s)u(s)$, so the above equality implies that \cref{eq:admissible} holds as well with $\{T_0(t)\}_{t \ge 0}$ replaced by $\{T(t)\}_{t \ge 0}$. We conclude that indeed $f \in \mathscr{F}(J)$ and $u$ is given by \cref{eq:aie_inhom} on $J$. \qedhere
  \end{steps}
\end{proof}

\subsection{A theorem on admissibility and perturbation}\label{sec:two-answers}
We are now in a position to answers the two questions that were asked following \cref{eq:inhom0,eq:inhom}.

\begin{theorem}\label{thm:inhom:answers}
  The following two statements hold.
  \begin{thmenum}
  \item\label{thm:inhom:answers:admrange}
    $\mathscr{X}_0$ is an admissible range for $\{T(t)\}_{t \ge 0}$.
  \item\label{thm:inhom:answers:sol}
    The unique solution of \cref{eq:aie0_inhom} on $[0,t_e)$ is given by \cref{eq:aie_inhom}.
  \end{thmenum}
\end{theorem}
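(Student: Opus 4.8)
The plan is to establish both statements first in a regularized setting --- Lipschitz forcing functions on compact subintervals and initial data in $j^{-1}\DOM(\SUNSTAR{A_0})$, where the generator-level reformulations \cref{eq:inhom0,eq:inhom} together with the preparatory results of this section are available --- and then to pass to the general case by density and the continuity statement in \cref{prop:inhom:aie0sol}. A direct substitution of \cref{eq:aie_inhom} into \cref{eq:aie0_inhom} is deliberately avoided: the two equations involve the different semigroups $\SUNSTAR{T_0}$ and $\SUNSTAR{T}$, whose interrelation is only accessible on the generator side in the absence of $\odot$-reflexivity.

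For the regularized case I would fix a compact subinterval $J$ of $[0,t_e)$, some $\phi \in j^{-1}\DOM(\SUNSTAR{A_0})$, and a Lipschitz map $f : J \to \mathscr{X}_0$. By \cref{prop:inhom:aie0sol} there is a unique solution $u_{\phi,f}$ of \cref{eq:aie0_inhom} on $J$ and Lipschitz approximants $u_m : J \to X$, $f_m : J \to \mathscr{X}_0$ satisfying \cref{eq:aie0_inhom_approx} (that is, \cref{eq:aie0_inhom} with $f_m$ in place of $f$) and with $u_m \to u_{\phi,f}$, $f_m \to f$ uniformly on $J$. For each fixed $m$, $u_m$ is a Lipschitz solution of \cref{eq:aie0_inhom} with forcing $f_m$, hence by \cref{cor:inhom:aie0_to_aode0} a solution of \cref{eq:aode0_inhom} with forcing $f_m$, hence --- by the equivalence of \cref{eq:aode0_inhom,eq:aode_inhom} via \cite[Proposition 22]{ADDE1} --- a solution of \cref{eq:aode_inhom} with forcing $f_m$; \cref{prop:inhom:aode_to_aie} then yields $f_m \in \mathscr{F}(J)$ and $j u_m(t) = \SUNSTAR{T}(t)j\phi + \int_0^t \SUNSTAR{T}(t - \tau) f_m(\tau)\,d\tau$ for all $t \in J$. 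Letting $m \to \infty$: the left-hand side converges in norm, uniformly on $J$, to $j u_{\phi,f}$; a standard estimate gives $\int_0^t \SUNSTAR{T}(t-\tau) f_m(\tau)\,d\tau \to \int_0^t \SUNSTAR{T}(t-\tau) f(\tau)\,d\tau$ in norm; and since $jX$ is norm-closed one obtains $j u_{\phi,f}(t) = \SUNSTAR{T}(t) j\phi + \int_0^t \SUNSTAR{T}(t-\tau) f(\tau)\,d\tau$ with the integral in $jX$, i.e. $u_{\phi,f}$ is given by \cref{eq:aie_inhom}. Applying the same limiting argument to the inclusions $\int_s^t \SUNSTAR{T}(t-\tau) f_m(\tau)\,d\tau \in jX$ --- valid for all $(t,s) \in \Omega_J$ because $f_m \in \mathscr{F}(J)$ --- gives $f \in \mathscr{F}(J)$.

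Statement \cref{thm:inhom:answers:admrange} then follows immediately: the conclusion $f \in \mathscr{F}(J)$ is independent of $\phi$ (take $\phi = 0$), so \emph{every} Lipschitz $f : J \to \mathscr{X}_0$ lies in $\mathscr{F}(J)$, and \cref{cor:admlipschitz}, applied to $\{T(t)\}_{t \ge 0}$ in place of $\{T_0(t)\}_{t \ge 0}$, shows that $\mathscr{X}_0$ (which is closed by \cref{def:admrange}) is an admissible range for $\{T(t)\}_{t \ge 0}$. For statement \cref{thm:inhom:answers:sol}, I would first note that the unique solution of \cref{eq:aie0_inhom} on $[0,t_e)$ is the gluing of the solutions $u_{\phi,f}$ over the compact subintervals (consistency by uniqueness in \cref{prop:inhom:aie0sol}), so it suffices to verify \cref{eq:aie_inhom} on an arbitrary compact subinterval $J$, for arbitrary $\phi \in X$ and continuous $f : [0,t_e) \to \mathscr{X}_0$. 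I would pick $\phi_n \to \phi$ in $X$ with $\phi_n \in \DOM(A_0) \subseteq j^{-1}\DOM(\SUNSTAR{A_0})$ (density of the generator domain together with \cref{eq:Fav_A0sunstar}) and $f_n \to f$ uniformly on $J$ with $f_n : J \to \mathscr{X}_0$ Lipschitz (linear-spline interpolation, using that $\mathscr{X}_0$ is a subspace). By the regularized case, $u_{\phi_n,f_n}(t) = T(t)\phi_n + j^{-1}\int_0^t \SUNSTAR{T}(t-\tau) f_n(\tau)\,d\tau$ for $t \in J$; the left-hand side converges to $u_{\phi,f}$ uniformly on $J$ by the continuity statement in \cref{prop:inhom:aie0sol}, while the right-hand side converges --- uniformly on $J$, by strong continuity of $\{T(t)\}_{t \ge 0}$ and the standard estimate for the $\WS$-integral --- to $T(t)\phi + j^{-1}\int_0^t \SUNSTAR{T}(t-\tau) f(\tau)\,d\tau$, the integral lying in $jX$ by \cref{thm:inhom:answers:admrange}. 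This establishes \cref{eq:aie_inhom}.

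The main obstacle is the regularized case, and within it the justification for routing through the generator-level equations at all: this is precisely where \cref{cor:inhom:aie0_to_aode0}, the equivalence from \cite[Proposition 22]{ADDE1}, and \cref{prop:inhom:aode_to_aie} carry the load, and it is essential both that $\phi \in j^{-1}\DOM(\SUNSTAR{A_0})$ and that the forcing be Lipschitz, so that \cref{prop:inhom:wsdiff} applies. Once that case is settled, the remainder is a routine density-and-continuity argument requiring no further input specific to the non-$\odot$-reflexive situation.
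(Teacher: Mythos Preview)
Your proposal is correct and follows essentially the same route as the paper's proof: both reduce to the regularized case ($\phi \in j^{-1}\DOM(\SUNSTAR{A_0})$, Lipschitz forcing, compact subinterval), pass through the generator level via \cref{cor:inhom:aie0_to_aode0}, the equivalence from \cite[Proposition~22]{ADDE1}, and \cref{prop:inhom:aode_to_aie} applied to the Lipschitz approximants $(u_m,f_m)$ furnished by \cref{prop:inhom:aie0sol}, then conclude \cref{thm:inhom:answers:admrange} via \cref{cor:admlipschitz} and \cref{thm:inhom:answers:sol} by density and the continuity of \cref{eq:umap_cont}. The only cosmetic differences are that the paper keeps $f$ defined on all of $[0,t_e)$ (and extends each $f_m$ to a Lipschitz $\hat{f}_m$ on $[0,t_e)$ before invoking \cref{cor:inhom:aie0_to_aode0}) and invokes \cref{prop:admclosed} explicitly where you argue the limit directly; neither affects the substance.
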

\begin{proof}
  In the first two steps we prove the first statement of the theorem and we show that the unique solution of \cref{eq:aie0_inhom} on any compact subinterval of $[0,t_e)$ is given by \cref{eq:aie_inhom}. In the third step it will then be easy to extend the latter result to $[0,t_e)$ itself.
  \begin{steps}
  \item
   Let $J$ be an arbitrary compact subinterval of $[0,t_e)$. Assume that $\phi \in j^{-1}\DOM(\SUNSTAR{A_0})$ and that $f : [0,t_e) \to \mathscr{X}_0$ is locally Lipschitz. \Cref{prop:inhom:aie0sol} gives a unique solution $u_{\phi,f} : J \to X$ of \cref{eq:aie0_inhom} as well as the sequences of Lipschitz functions $u_m : J \to X$ and $f_m : J \to \mathscr{X}_0$ appearing in \cref{eq:aie0_inhom_approx}. For each $m \in \NN$ let $\hat{f}_m : [0,t_e) \to \mathscr{X}_0$ be a Lipschitz extension of $f_m$. \Cref{cor:inhom:aie0_to_aode0} with $\hat{f}_m$ and $u_m$ instead of $f$ and $u$ shows that each $u_m$ is a solution of the initial-value problem
    \[
      \STAR{d}(j \circ u_m)(t) = \SUNSTAR{A_0} j u_m(t) + L u_m(t) + \hat{f}_m(t), \qquad t \in J, \qquad u_m(0) = \phi.
    \]
    Hence each $u_m$ is also a solution of the initial-value problem
    \[
      \STAR{d}(j \circ u_m)(t) = \SUNSTAR{A} j u_m(t) + \hat{f}_m(t), \qquad t \in J, \qquad u_m(0) = \phi.
    \]
    \Cref{prop:inhom:aode_to_aie} with $u_m$ and $\hat{f}_m$ instead of $u$ and $f$ implies that 
    \begin{equation}
      \label{eq:aie_inhom_approx}
      f_m \in \mathscr{F}(J), \qquad u_m(t) = T(t)\phi + j^{-1}\int_0^t\SUNSTAR{T}(t - \tau)f_m(\tau)\,d\tau, \qquad \ALL\,m \in \NN,\,t \in J.
    \end{equation}
    Here it was also used that $\hat{f}_m|_J = f_m$ for all $m \in \NN$. \Cref{prop:admclosed} for $\{T(t)\}_{t \ge 0}$ instead of $\{T_0(t)\}_{t \ge 0}$ and the uniform convergence of $f_m$ to $f$ on $J$ then imply that $f \in \mathscr{F}(J)$. Since $J$ was chosen arbitrarily, this proves that $f \in \mathscr{F}([0,t_e))$. \Cref{cor:admlipschitz} lets us conclude that $\mathscr{X}_0$ is an admissible range for $\{T(t)\}_{t \ge 0}$.
  \item
    Taking the limit $m \to \infty$ in \cref{eq:aie_inhom_approx} we obtain the identity
    \begin{equation}
      \label{eq:aie_inhom_dense}
      u_{\phi,f}(t) = T(t)\phi + j^{-1}\int_0^t\SUNSTAR{T}(t - \tau)f(\tau)\,d\tau, \qquad \ALL\,t \in J,
    \end{equation}
    and for all $\phi \in j^{-1}\DOM(\SUNSTAR{A_0})$ and all locally Lipschitz functions $f : [0,t_e) \to \mathscr{X}_0$. The set of such pairs $(\phi,f)$ is dense in $X \times C(J,\mathscr{X}_0)$, so the continuity of \cref{eq:umap_cont} implies that \cref{eq:aie_inhom_dense} holds for all $\phi \in X$ and all continuous functions $f : [0,t_e) \to \mathscr{X}_0$. Therefore the unique solution $u_{\phi,f}$ of \cref{eq:aie0_inhom} on $J$ is given by \cref{eq:aie_inhom}.
  \item
    We extend this result to $[0,t_e)$. \Cref{prop:inhom:aie0sol} implies that any solution of \cref{eq:aie0_inhom} on $[0,t_e)$ must be unique. Define $\hat{u}_{\phi,f} : [0,t_e) \to X$ by $\hat{u}_{\phi,f}(t) \DEF u_{\phi,f}^J(t)$ where $J$ is a compact subinterval of $[0,t_e)$ such that $t \in J$ and $u_{\phi,f}^J$ is the unique solution of \cref{eq:aie0_inhom} on $J$. Then $\hat{u}_{\phi,f}$ is well-defined, continuous and satisfies \cref{eq:aie0_inhom} on $[0,t_e)$. The definition of $\hat{u}_{\phi,f}$ and the fact that each $u_{\phi,f}^J$ is given by \cref{eq:aie_inhom} imply that $\hat{u}_{\phi,f}$ itself is given by \cref{eq:aie_inhom}. \hfill \qedhere
  \end{steps}
\end{proof}

Once it has been established that $\mathscr{X}_0$ is an admissible range for $\{T(t)\}_{t \ge 0}$ as well, it becomes possible to prove \cref{thm:inhom:answers:sol} in a way that exploits the symmetry between the semigroups $\{T_0(t)\}_{t \ge 0}$ and $\{T(t)\}_{t \ge 0}$. This may be of interest particularly in the $\odot$-reflexive case, when $\SUNSTAR{X}$ itself is an admissible range for $\{T(t)\}_{t \ge 0}$ and the following is an alternative for \cite[Proposition 2.5]{DSG3} and \cite[Lemma III.2.23]{Delay1995}.

\begin{proof}[Alternative proof of \cref{thm:inhom:answers:sol}]
  For every $\phi \in X$ and every $f : [0,t_e) \to \mathscr{X}_0$ continuous we define $u_{\phi,f} : [0,t_e) \to X$ by \cref{eq:aie_inhom}. \Cref{thm:inhom:answers:admrange} ensures that $u_{\phi,f}$ is well-defined. First assume that $\phi \in j^{-1}\DOM(\SUNSTAR{A})$ and that $f$ is locally Lipschitz, so $u_{\phi,f}$ is locally Lipschitz. We now apply \cref{cor:inhom:aie0_to_aode0} with $\{T(t)\}_{t \ge 0}$ in place of $\{T_0(t)\}_{t \ge 0}$ and $-L \circ u_{\phi,f} + f$ in place of $f$. This yields that $u_{\phi,f}$ is a solution on $[0,t_e)$ of
  \[
    \STAR{d}(j \circ u_{\phi,f})(t) = \SUNSTAR{A}ju_{\phi,f}(t) + f(t), \qquad u(0) = \phi,
  \]
  and therefore $u_{\phi,f}$ is a solution on $[0,t_e)$ of
  \[
    \STAR{d}(j \circ u_{\phi,f})(t) = \SUNSTAR{A_0}ju_{\phi,f}(t) + [Lu_{\phi,f}(t) + f(t)], \qquad u(0) = \phi.
  \]
  Next, we apply \cref{prop:inhom:aode_to_aie} with $\{T_0(t)\}_{t \ge 0}$ in place of $\{T(t)\}_{t \ge 0}$ and $L \circ u_{\phi,f} + f$ in place of $f$ to conclude that $u_{\phi,f}$ satisfies
  \begin{equation}
      \label{eq:aie_inhom_dense_alt}
    u_{\phi,f}(t) = T_0(t)\phi + j^{-1}\int_0^t\SUNSTAR{T_0}(t - \tau)[L u_{\phi,f}(\tau) + f(\tau)]\,d\tau
  \end{equation}
  for all $t \in [0,t_e)$, all $\phi \in j^{-1}\DOM(\SUNSTAR{A})$ and all locally Lipschitz functions $f : [0,t_e) \to \mathscr{X}_0$. For every compact subinterval $J$ of $[0,t_e)$ the set of all such points $(\phi,f)$ is dense in $X \times C(J,\mathscr{X}_0)$ and from \cref{eq:aie_inhom} we see that the map \cref{eq:umap_cont} is continuous. Therefore \cref{eq:aie_inhom_dense_alt} holds for all $t \in J$, all $\phi \in X$ and all continuous $f : [0,t_e) \to \mathscr{X}_0$. Since $J$ can be chosen arbitrarily, this concludes the proof.
\end{proof}

In conclusion of this section we return to the space $X_0^{\odot\times}$ from \cref{eq:x0suncross}. Define $X^{\odot\times}$ as the analogue for $\{T(t)\}_{t \ge 0}$ of that space,
\begin{equation}
  \label{eq:xsuncross}
  X^{\odot\times} \DEF \{x^{\odot\star} \in X^{\odot\star}\,:\, R(\lambda,A^{\odot\star})x^{\odot\star} \in jX\}, \qquad \lambda \in \rho(A).
\end{equation}
As a consequence of \cref{thm:xsuncross,thm:inhom:answers:admrange} the subscript can - and will - be dropped.
\begin{corollary}
  \label{cor:x0suncross-is-xsuncross}
  The maximal admissible ranges for $T$ and $T_0$ coincide: $X^{\odot\times} = X_0^{\odot\times}$.
\end{corollary}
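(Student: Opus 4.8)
The plan is to prove the two inclusions separately, exploiting the symmetry between $\{T_0(t)\}_{t \ge 0}$ and $\{T(t)\}_{t \ge 0}$ together with the fact that, by \cref{thm:xsuncross}, $X_0^{\odot\times}$ and $X^{\odot\times}$ are the \emph{maximal} admissible ranges for $\{T_0(t)\}_{t \ge 0}$ and $\{T(t)\}_{t \ge 0}$ respectively. Two observations make this precise. Since $L$ is admissible for $\{T_0(t)\}_{t \ge 0}$, \cref{cor:xsuncross} (equivalently the maximality clause of \cref{thm:xsuncross}) lets us take $\mathscr{X}_0 = X_0^{\odot\times}$ throughout. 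Moreover, \cref{thm:xsuncross} and its proof use nothing about the semigroup beyond its being a $\mathcal{C}_0$-semigroup on $X$ satisfying a bound of the form \cref{eq:expboundT0}; since $\{T(t)\}_{t \ge 0}$ is such a semigroup, \cref{thm:xsuncross} applies verbatim to it and identifies $X^{\odot\times}$ from \cref{eq:xsuncross} as the maximal admissible range for $\{T(t)\}_{t \ge 0}$.

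For $X_0^{\odot\times} \subseteq X^{\odot\times}$: by \cref{thm:xsuncross} the space $X_0^{\odot\times}$ is an admissible range for $\{T_0(t)\}_{t \ge 0}$ and it contains the range of $L$, so \cref{thm:inhom:answers:admrange} shows that $X_0^{\odot\times}$ is an admissible range for $\{T(t)\}_{t \ge 0}$; by the maximality of $X^{\odot\times}$ this gives $X_0^{\odot\times} \subseteq X^{\odot\times}$.

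For the reverse inclusion the key point is that $\{T_0(t)\}_{t \ge 0}$ is \emph{itself} obtained from $\{T(t)\}_{t \ge 0}$ by perturbation with the admissible operator $-L$ (admissible for $\{T(t)\}_{t \ge 0}$ because $-L$ takes its values in $X_0^{\odot\times} \subseteq X^{\odot\times}$, which is an admissible range for $\{T(t)\}_{t \ge 0}$). To see this, fix $\phi \in j^{-1}\DOM(\SUNSTAR{A_0}) = \FAV(T_0)$ (see \cref{eq:Fav_A0sunstar}); then $u \DEF T_0(\cdot)\phi$ takes its values in $j^{-1}\DOM(\SUNSTAR{A_0})$, is locally Lipschitz, $j \circ u$ is \WS-continuously differentiable, and $\STAR{d}(j \circ u)(t) = \SUNSTAR{A_0} j u(t)$, which is \cref{eq:aode0_inhom} with forcing function $f \DEF -L \circ u$ (locally Lipschitz with values in $X_0^{\odot\times}$). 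Then \cite[Proposition 22]{ADDE1} shows that $u$ also solves \cref{eq:aode_inhom} with this same $f$, so \cref{prop:inhom:aode_to_aie} gives
\[
  T_0(t)\phi = T(t)\phi - j^{-1}\int_0^t \SUNSTAR{T}(t - \tau) L T_0(\tau)\phi \,d\tau, \qquad t \ge 0,
\]
first for $\phi \in \FAV(T_0)$ and then, since both sides depend continuously on $\phi$ and $\FAV(T_0) \supseteq \DOM(A_0)$ is dense in $X$, for all $\phi \in X$. This is precisely \cref{eq:laie0} with $\{T(t)\}_{t \ge 0}$ and $-L$ in the roles of $\{T_0(t)\}_{t \ge 0}$ and $L$, so by the uniqueness in \cite[Theorem 19]{ADDE1} (equivalently \cite[Theorem 4.2.2]{VanNeerven1992}) the perturbation of $\{T(t)\}_{t \ge 0}$ by $-L$ is indeed $\{T_0(t)\}_{t \ge 0}$. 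Now apply \cref{thm:inhom:answers:admrange} once more, this time with $\{T(t)\}_{t \ge 0}$, $-L$, $\{T_0(t)\}_{t \ge 0}$ in the roles of $\{T_0(t)\}_{t \ge 0}$, $L$, $\{T(t)\}_{t \ge 0}$: the admissible range $X^{\odot\times}$ for $\{T(t)\}_{t \ge 0}$, which contains the range of $-L$, is an admissible range for $\{T_0(t)\}_{t \ge 0}$, hence $X^{\odot\times} \subseteq X_0^{\odot\times}$ by maximality of $X_0^{\odot\times}$. Combining the two inclusions yields $X^{\odot\times} = X_0^{\odot\times}$.

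The hard part will be the displayed ``reverse'' variation-of-constants identity, i.e. recognizing $\{T_0(t)\}_{t \ge 0}$ as a perturbation of $\{T(t)\}_{t \ge 0}$; the point is to obtain it without reintroducing $\odot$-reflexivity or the integrated-semigroup variation-of-constants formula, and the route through \cite[Proposition 22]{ADDE1} and \cref{prop:inhom:aode_to_aie} is designed to do exactly that. The remaining work is bookkeeping: one must check that \cref{thm:xsuncross,thm:inhom:answers:admrange} are genuinely symmetric in the two semigroups, using only that $\{T_0(t)\}_{t \ge 0}$ is a $\mathcal{C}_0$-semigroup subject to \cref{eq:expboundT0} and that the perturbing operator is admissible for it.
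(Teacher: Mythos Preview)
Your proof is correct and follows the same two-ingredient route the paper indicates (\cref{thm:xsuncross} plus \cref{thm:inhom:answers:admrange}), with the symmetry between $\{T_0(t)\}_{t \ge 0}$ and $\{T(t)\}_{t \ge 0}$ invoked for the reverse inclusion. The paper leaves that symmetry step implicit---it is foreshadowed by the alternative proof of \cref{thm:inhom:answers:sol} immediately preceding the corollary---whereas you spell out explicitly, via \cite[Proposition~22]{ADDE1} and \cref{prop:inhom:aode_to_aie}, that $\{T_0(t)\}_{t \ge 0}$ is itself the perturbation of $\{T(t)\}_{t \ge 0}$ by $-L$; this is a sound way to justify the swap of roles without reintroducing $\odot$-reflexivity.
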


\section{The nonlinear semiflow and linearization}\label{sec:linearization}
\emph{All vector spaces in this section are over $\RR$.}

\medskip
\noindent Let $\{T_0(t)\}_{t \ge 0}$ be a $\mathcal{C}_0$-semigroup on $X$. We assume:

\begin{hypothesis}\label{hyp:G}
  $G : X \to X^{\odot\star}$ in \cref{eq:aie0} is an admissible perturbation for $\{T_0(t)\}_{t \ge 0}$ and of class $C^k$ for some $k \ge 1$.
\end{hypothesis}

\noindent \Cref{cor:xsuncross,cor:x0suncross-is-xsuncross} show that for the first part we could equivalently have assumed that $G$ takes its values in the space $X^{\odot\times}$. The mean value inequality \cite[Theorem 1.8 in Chapter 1]{AmbrosettiProdi1993} implies that $G$ is locally Lipschitz. The properties of admissibility and local Lipschitz continuity together guarantee that for each $\phi \in X$ there exists a maximal solution $u_{\phi} : J_{\phi} \to X$ of \cref{eq:aie0} on a maximal interval of existence $J_{\phi} = [0,t_{\phi})$ for some $0 < t_{\phi} \le \infty$. This is proven exactly as in \cite[Theorem VII.3.4]{Delay1995} but with admissibility of $G$ for $\{T_0(t)\}_{t \ge 0}$ as a substitute for $\odot$-reflexivity of $X$ with respect to $\{T_0(t)\}_{t \ge 0}$. With the family of maximal solutions of \cref{eq:aie0} we then associate the map $\Sigma : \DOM(\Sigma) \to X$ defined by
\begin{equation}
  \label{eq:semiflow}
  \DOM(\Sigma) \DEF \{(t,\phi) \in \RR_+ \times X\,:\, t \in J_{\phi}\}, \qquad \Sigma(t,\phi) \DEF u_{\phi}(t),
\end{equation}
and we can verify that $\Sigma$ is a semiflow on $X$ in the sense of \cite[Definition VII.2.1]{Delay1995}.

\subsection{Splitting of the perturbation and linearization}
Let $\hat{\phi} \in X$ be an equilibrium of $\Sigma$. We assume without loss of generality that $\hat{\phi} = 0$, i.e.
\[
  J_0 = \RR_+, \qquad \Sigma(t,0) = 0 \qquad \ALL\,t \ge 0.
\]
It is not difficult to verify that this is equivalent to the condition $G(0) = 0$. Let $L \DEF DG(0) : X \to \SUNSTAR{X}$ be the Fr\'echet derivative of $G$ at $\hat{\phi}$ and write
\begin{equation}
  \label{eq:splittingGLR}
  G(\phi) = L\phi + R(\phi), \qquad \phi \in X,
\end{equation}
which defines the $C^k$-smooth operator $R : X \to \SUNSTAR{X}$ as the nonlinear part of $G$ at $\hat{\phi}$. The admissibility of $G$ for $\{T_0(t)\}_{t \ge 0}$ implies the admissibility of $L$ and $R$ for $\{T_0(t)\}_{t \ge 0}$, so in particular $L$ satisfies \textbf{(H0)} in \cite[Section 6]{ADDE1}. Let $\{T(t)\}_{t \ge 0}$ be the $\mathcal{C}_0$-semigroup defined in \cite[Theorem 19]{ADDE1}. We now consider the abstract integral equation
\begin{equation}
  \label{eq:aie}
  u(t) = T(t)\phi + j^{-1}\int_0^t\SUNSTAR{T}(t - \tau)R(u(\tau))\,d\tau, \qquad t \ge 0.
\end{equation} 
\Cref{thm:inhom:answers:admrange} implies that $R$ is an admissible perturbation for $\{T(t)\}_{t \ge 0}$, so the \WS-Riemann integral in \cref{eq:aie} takes values in $jX$. The following is a generalization of \cite[Proposition VII.5.4]{Delay1995}.
 
\begin{proposition}\label{prop:aie0_aie}
  Given $0 < t_e \le \infty$ and $\phi \in X$, a function $u : [0,t_e) \to X$ is a solution of \cref{eq:aie0} if and only if $u$ is a solution of \cref{eq:aie}.
\end{proposition}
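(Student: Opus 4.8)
The plan is to show each implication by using the two semigroups as `base points' for one and the same inhomogeneous problem, exploiting the variation-of-constants relation between $\{T_0(t)\}_{t\ge0}$ and $\{T(t)\}_{t\ge0}$ and the results of \cref{sec:inhomogeneous}. The crucial observation is that both \cref{eq:aie0} and \cref{eq:aie} are instances of the \emph{linear} inhomogeneous equation studied in \cref{thm:inhom:answers}, once the nonlinear term is frozen along a candidate solution.

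First I would assume $u:[0,t_e)\to X$ solves \cref{eq:aie0}, i.e. $u(t)=T_0(t)\phi+j^{-1}\int_0^t\SUNSTAR{T_0}(t-\tau)G(u(\tau))\,d\tau$. Using the splitting \cref{eq:splittingGLR} this reads $u(t)=T_0(t)\phi+j^{-1}\int_0^t\SUNSTAR{T_0}(t-\tau)[Lu(\tau)+R(u(\tau))]\,d\tau$. Set $f(\tau)\DEF R(u(\tau))$; since $R$ is admissible for $\{T_0(t)\}_{t\ge0}$ and $u$ is continuous, $f$ is a continuous $\mathscr{X}_0$-valued forcing function. Hence $u$ is precisely a solution of the linear inhomogeneous equation \cref{eq:aie0_inhom} with this $f$. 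By \cref{thm:inhom:answers:sol}, its unique solution is given by \cref{eq:aie_inhom}, that is, $u(t)=T(t)\phi+j^{-1}\int_0^t\SUNSTAR{T}(t-\tau)f(\tau)\,d\tau=T(t)\phi+j^{-1}\int_0^t\SUNSTAR{T}(t-\tau)R(u(\tau))\,d\tau$, which is exactly \cref{eq:aie}. The integral in \cref{eq:aie} is well-defined since \cref{thm:inhom:answers:admrange} guarantees $\mathscr{X}_0$ is an admissible range for $\{T(t)\}_{t\ge0}$ as well.

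For the converse I would run the same argument with the roles of $T_0$ and $T$ interchanged. Assume $u$ solves \cref{eq:aie}. Put $g(\tau)\DEF Lu(\tau)+R(u(\tau))=G(u(\tau))$, which is a continuous $\mathscr{X}_0$-valued forcing function (using admissibility of $G$ and \cref{cor:x0suncross-is-xsuncross}). Then $u$ solves the linear inhomogeneous equation obtained by perturbing $\SUNSTAR{A}$ by $-Lu+g$, i.e. \cref{eq:aie_inhom} with forcing $-L\circ u+g$. I would invoke the symmetry between $\{T_0(t)\}_{t\ge0}$ and $\{T(t)\}_{t\ge0}$ exactly as in the alternative proof of \cref{thm:inhom:answers:sol} given above: apply \cref{thm:inhom:answers:sol} with $T$ in the role of $T_0$ and $-L\circ u+g$ in the role of $f$, concluding that $u(t)=T_0(t)\phi+j^{-1}\int_0^t\SUNSTAR{T_0}(t-\tau)[Lu(\tau)+g(\tau)-Lu(\tau)]\,d\tau$. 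Wait --- more carefully: one perturbs $\SUNSTAR{A}$ by $f\DEF -L\circ u+G\circ u$, and \cref{thm:inhom:answers:sol} applied in the reverse direction yields that $u$ also solves the $\SUNSTAR{A_0}$-perturbed equation with forcing $L\circ u+f=G\circ u$, which is \cref{eq:aie0}. Since $R(u(\tau))=G(u(\tau))-Lu(\tau)$, the bookkeeping closes.

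The main obstacle, and the place that needs care rather than cleverness, is making the `freeze the nonlinearity' step rigorous on the whole interval $[0,t_e)$: one must check that along a given continuous solution the frozen forcing $\tau\mapsto R(u(\tau))$ (resp. $\tau\mapsto -Lu(\tau)+G(u(\tau))$) genuinely lands in a fixed admissible range $\mathscr{X}_0$ and is continuous, and that \cref{thm:inhom:answers} --- stated there for a given forcing function on $[0,t_e)$ --- applies verbatim. This is where \cref{cor:xsuncross} and \cref{cor:x0suncross-is-xsuncross} do the work: they let us take $\mathscr{X}_0=X^{\odot\times}=X_0^{\odot\times}$ once and for all, so the two semigroups share the same admissible range and the symmetric application of \cref{thm:inhom:answers:sol} is legitimate in both directions. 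The rest is the routine uniqueness argument already packaged in \cref{prop:inhom:aie0sol}.
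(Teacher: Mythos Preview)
Your proof is correct and follows the same strategy as the paper: freeze the nonlinearity along the candidate solution $u$, so that the problem reduces to the linear inhomogeneous equation treated in \cref{thm:inhom:answers}, with forcing $f = R \circ u$. The forward direction is identical to the paper's.

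For the converse you take a slightly longer route than necessary. You invoke the symmetry between $\{T_0(t)\}_{t \ge 0}$ and $\{T(t)\}_{t \ge 0}$ (as in the alternative proof of \cref{thm:inhom:answers:sol}), swapping their roles and perturbing by $-L$. This works, but the paper simply observes that \cref{thm:inhom:answers:sol} is already a two-way statement: it says that the function \emph{defined} by \cref{eq:aie_inhom} \emph{is} the unique solution of \cref{eq:aie0_inhom}. Hence if $u$ satisfies \cref{eq:aie}, then $u$ is given by \cref{eq:aie_inhom} with $f = R \circ u$, so by \cref{thm:inhom:answers:sol} $u$ solves \cref{eq:aie0_inhom} with that same $f$, which is \cref{eq:aie0}. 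No role-swapping is needed. Your discussion of admissible ranges and \cref{cor:x0suncross-is-xsuncross} is accurate but also more than is required: the admissibility of $R$ for both semigroups (via \cref{thm:inhom:answers:admrange}) already ensures all integrals are well-defined.
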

\begin{proof}
  Suppose that $u$ is a solution of \cref{eq:aie0} on $[0,t_e)$. Then
  \[
    u(t) = T_0(t)\phi + j^{-1}\int_0^t\SUNSTAR{T_0}(t - \tau)[Lu(\tau) + R(u(\tau))]\,d\tau \qquad \ALL\,t \in [0,t_e),
  \]
  so $u$ is a solution of \cref{eq:aie0_inhom} on $[0,t_e)$ with $f = R \circ u$. \Cref{thm:inhom:answers:sol} implies that $u$ is given by \cref{eq:aie_inhom} with $f = R \circ u$, so $u$ satisfies \cref{eq:aie} on $[0,t_e)$. The converse is proven by reversing the order of the steps.
\end{proof}

\begin{proposition}\label{prop:linearization}
  Let $\hat{\phi} \in X$ be an equilibrium of the semiflow $\Sigma$. Then $\Sigma$ is partially differentiable with respect to the state at $\hat{\phi}$, uniformly on compact time intervals, with partial derivative $D_2\Sigma(t,\hat{\phi}) = T(t)$. Explicitly, for every $t_0 \ge 0$ and every $\EPS > 0$ there exists $\delta > 0$ such that
  \[
    \|\Sigma(t,\phi) - \hat{\phi} - T(t)(\phi - \hat{\phi})\| \le \EPS \|\phi - \hat{\phi}\|
  \]
  for all $\phi \in X$ with $\|\phi - \hat{\phi}\| \le \delta$ and for all $t \in [0,t_0]$.
\end{proposition}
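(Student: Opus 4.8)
The plan is to work with the equivalent fixed-point form \cref{eq:aie} of the semiflow. After the reduction to $\hat\phi = 0$ already set up before \cref{eq:aie} (so $G(0)=0$, $L = DG(0)$, and the nonlinear part $R$ satisfies $R(0)=0$ and $DR(0)=0$), \cref{prop:aie0_aie} gives $\Sigma(t,\phi) = u_\phi(t)$ with
\[
  u_\phi(t) = T(t)\phi + j^{-1}\int_0^t\SUNSTAR{T}(t-\tau)R(u_\phi(\tau))\,d\tau, \qquad t\in J_\phi,
\]
the \WS-Riemann integral being $jX$-valued by \cref{thm:inhom:answers:admrange} (which makes $R$ an admissible perturbation for $\{T(t)\}_{t\ge0}$). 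Subtracting $T(t)\phi$ yields at once
\[
  \Sigma(t,\phi) - T(t)\phi = j^{-1}\int_0^t\SUNSTAR{T}(t-\tau)R(u_\phi(\tau))\,d\tau,
\]
so the whole statement reduces to bounding the right-hand side by $\EPS\|\phi\|$ uniformly for $t\in[0,t_0]$, and this in turn rests on quantitative control of $u_\phi$ on $[0,t_0]$.

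The first and principal step is an a priori estimate: for every $t_0\ge0$ there are $\delta_0>0$ and $C_0>0$ with $[0,t_0]\subseteq J_\phi$ and $\|u_\phi(t)\|\le C_0\|\phi\|$ for all $t\in[0,t_0]$ whenever $\|\phi\|\le\delta_0$. Since $R$ is $C^k$ with $R(0)=0$ and $DR(0)=0$, the mean value inequality \cite[Theorem 1.8 in Chapter 1]{AmbrosettiProdi1993} furnishes a radius $r>0$ such that $\|R(\psi)\|\le\|\psi\|$ for $\|\psi\|\le r$. Fixing constants $\tilde M\ge1$ and $\tilde\omega\in\RR$ with $\|\SUNSTAR{T}(t)\|\le\tilde M e^{\tilde\omega t}$, the representation above gives, as long as $\|u_\phi\|\le r$ on $[0,t]$,
\[
  \|u_\phi(t)\| \le Me^{\omega t}\|\phi\| + \|j^{-1}\|\tilde M\int_0^t e^{\tilde\omega(t-\tau)}\|u_\phi(\tau)\|\,d\tau,
\]
whence Gronwall's inequality produces $\|u_\phi(t)\|\le C_0\|\phi\|$ on that interval, with $C_0 = C_0(t_0)$ independent of $\phi$. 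A standard bootstrap argument — using that $J_\phi$ is open and the blow-up alternative for $\Sigma$, exactly as in \cite[Theorem VII.3.4]{Delay1995} — then shows that for $\|\phi\|\le\delta_0\DEF r/C_0$ the inequality $\|u_\phi\|\le r$ persists on all of $[0,t_0]$; in particular $[0,t_0]\subseteq J_\phi$ and the claimed bound holds. (Alternatively one may first invoke continuity of the semiflow $\Sigma$ together with compactness of $[0,t_0]$ to obtain $[0,t_0]\subseteq J_\phi$ and uniform smallness of $u_\phi$, and then apply Gronwall on the fixed interval $[0,t_0]$.)

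With the a priori estimate in hand the conclusion is immediate. Given $\EPS>0$, the mean value inequality again provides $r'\in(0,r]$ and a constant $\eta'>0$, to be chosen small, with $\|R(\psi)\|\le\eta'\|\psi\|$ for $\|\psi\|\le r'$. Set $\delta\DEF\min\{\delta_0, r'/C_0\}$. For $\|\phi\|\le\delta$ we then have $\|u_\phi(\tau)\|\le C_0\|\phi\|\le r'$ for all $\tau\in[0,t_0]$, hence $\|R(u_\phi(\tau))\|\le\eta' C_0\|\phi\|$, and therefore
\[
  \|\Sigma(t,\phi) - T(t)\phi\| \le \|j^{-1}\|\tilde M\Bigl(\int_0^{t_0}e^{\tilde\omega\sigma}\,d\sigma\Bigr)\eta' C_0\|\phi\| \qquad \ALL\,t\in[0,t_0].
\]
Choosing $\eta'$ so small that the prefactor is at most $\EPS$ gives exactly the claimed inequality, and since the linear operator appearing is $T(t)$, this is precisely uniform partial differentiability of $\Sigma$ in the state at $\hat\phi$ with $D_2\Sigma(t,\hat\phi)=T(t)$. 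The step I expect to be the main obstacle is the a priori estimate: one must guarantee simultaneously that the maximal solution survives up to the prescribed time $t_0$ and that it stays $O(\|\phi\|)$-small there, and the quantitative smallness — as opposed to mere continuity of the semiflow — is what forces the Gronwall-plus-bootstrap argument; everything downstream is a one-line consequence of $DR(0)=0$ and \cref{eq:aie}, with non-$\odot$-reflexivity entering only through the appeal to \cref{thm:inhom:answers:admrange}.
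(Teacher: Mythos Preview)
Your proposal is correct and takes essentially the same approach as the paper: the paper derives the identity \cref{eq:sigma_T_diff} and then defers the estimate on the right-hand side to the proof of \cite[Proposition VII.5.5]{Delay1995}, whose ``claim in step 4'' is precisely the Gronwall-plus-bootstrap a priori bound $\|\Sigma(t,\phi)\|\le C_0\|\phi\|$ on $[0,t_0]$ that you spell out. Two minor cosmetic points: the constants $M,\omega$ in your displayed inequality should be growth constants for $\{T(t)\}_{t\ge0}$ rather than for $\{T_0(t)\}_{t\ge0}$, and for the continuation argument to yield a strict contradiction one should take $\delta_0$ strictly smaller than $r/C_0$; neither affects the substance of the argument.
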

\begin{proof}
  We may assume that $\hat{\phi} = 0$. \Cref{prop:aie0_aie} implies that $\Sigma(\cdot,\phi)$ is a solution of \cref{eq:aie} on $J_{\phi}$ for every $\phi \in X$, so
  \begin{equation}
    \label{eq:sigma_T_diff}
    \Sigma(t,\phi) - T(t)\phi = j^{-1}\int_0^t\SUNSTAR{T}(t - \tau)R(\Sigma(\tau,\phi))\,d\tau \qquad \ALL\,t \in J_{\phi}.
  \end{equation}
  In order to estimate the right-hand side, we can proceed exactly as in the proof of \cite[Proposition VII.5.5]{Delay1995}, but with the following minor changes. First, in this context we are not interested in uniformity with respect to the parameter, and this shortens the proof. Second, at the beginning of the proof of the claim in step 4., assuming the contrary gives the existence of $t \in (t_1,t_0]$ such that
  \[
    \|\Sigma(s,\phi)\| < \overline{\delta} \text{ for all } s \in [0,t) \text{ and } \|\Sigma(t,\phi)\| \ge \overline{\delta},
  \]
  which corrects some small misprints. All else in the proof remains valid without modifications.
\end{proof}

\begin{remark}
  If $G$ satisfies a \emph{global} Lipschitz condition with Lipschitz constant $L_G$ then the proof becomes a lot less subtle, because the a priori estimate of $\Sigma(t,\phi)$ on $[0,t_0]$ can now easily be derived using Gronwall's inequality in integral form \cite[Corollary I.6.6]{Hale1980}. Indeed, from \cref{eq:aie0} we have
  \[
    e^{-\omega t}\|\Sigma(t,\phi)\| \le M \|\phi\| + \|j^{-1}\| M L_G \int_0^t e^{-\omega \tau} \|\Sigma(\tau,\phi)\|\,d\tau,
  \]
  so Gronwall implies
  \[
    \|\Sigma(t,\phi)\| \le M e^{(\omega + \|j^{-1}\| M L_G)t} \|\phi\|
  \]
  for all $\phi \in X$ and for all $t \in J_{\phi}$. This estimate is precisely of the form \cite[(5.5) in the proof of Proposition VII.5.5]{Delay1995}.
\end{remark}

\subsection{The translation-invariant integral equation}
As part of the construction of local center manifolds in \cref{sec:cm}, we will be interested in solutions that exist for all time, such as periodic orbits. In order to discuss such solutions in a meaningful way, we introduce the translation-invariant version of \cref{eq:aie},
\begin{equation}
  \label{eq:aie-st}
  u(t) = T(t-s)u(s) + j^{-1}\int_s^t{\SUNSTAR{T}(t - \tau)R(u(\tau))\,d\tau}, \qquad -\infty < s \le t < \infty,
\end{equation}
and we briefly comment on the simple relationship between \cref{eq:aie,eq:aie-st}. Let $J$ be an interval. By definition, a \textbf{solution on $J$} of \cref{eq:aie-st} is a continuous function $u : J \to X$ that satisfies \cref{eq:aie-st} for all $(t,s) \in \Omega_J$.

\begin{proposition}\label{prop:aie-translation-invariant}
  Let $J$ be an interval. The function $u : J \to X$ is a solution of \textup{\cref{eq:aie-st}} if and only if
  \begin{equation}
    \label{eq:usigma}
    t - s \in J_{u(s)}, \qquad u(t) = \Sigma(t - s, u(s)),
  \end{equation}
  for all $(t,s) \in \Omega_J$.
\end{proposition}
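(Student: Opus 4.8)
The plan is to exploit the autonomous (translation-invariant) structure of \cref{eq:aie-st} together with the uniqueness and maximality of solutions of \cref{eq:aie}. By \cref{prop:aie0_aie}, a solution of \cref{eq:aie} on $[0,t_e)$ is the same thing as a solution of \cref{eq:aie0} on $[0,t_e)$, so the maximal solutions of \cref{eq:aie} are precisely the $u_\phi$ that define the semiflow $\Sigma$ in \cref{eq:semiflow}, and $\Sigma(0,\phi) = \phi$.

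For the implication ``\cref{eq:usigma} $\Rightarrow$ $u$ solves \cref{eq:aie-st}'', I would fix $(t,s) \in \Omega_J$ and abbreviate $\psi \DEF u(s)$. Applying \cref{eq:usigma} to the pairs $(\tau,s)$ with $\tau \in [s,t]$ gives $\tau - s \in J_\psi$ and $u(\tau) = u_\psi(\tau - s)$; then writing \cref{eq:aie} for $u_\psi$ at time $t-s$ and changing the integration variable by $\sigma \mapsto \sigma + s$ — the same kind of manipulation of a \WS-integral as in the proof of \cref{prop:admindependent} — turns it into \cref{eq:aie-st} for $(t,s)$. Continuity of $u$, which is part of the definition of a solution, I would extract from the continuity of $\Sigma$: at a point $t_0 \in J$ that is not the left endpoint, choose $s \in J$ with $s < t_0$ so that $u$ agrees near $t_0$ with the continuous function $t \mapsto u_{u(s)}(t-s)$; at the left endpoint (if any) take $s = t_0$ and use $\Sigma(0,\cdot) = \mathrm{id}$.

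For the converse, given a solution $u$ of \cref{eq:aie-st} I would fix $s \in J$, set $\psi \DEF u(s)$ and $J_s \DEF \{\sigma \ge 0 : \sigma + s \in J\}$, and apply \cref{eq:aie-st} to the pairs $(\sigma+s,s)$; the substitution $\tau \mapsto \tau + s$ then shows that $v \DEF u(\cdot + s)$ solves \cref{eq:aie} (equivalently \cref{eq:aie0}) on $J_s$ with $v(0) = \psi$. Uniqueness forces $v$ to coincide with $u_\psi$ on $J_s \cap J_\psi$, and \cref{eq:usigma} then follows once one knows $J_s \subseteq J_\psi$.

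The one point that needs genuine care — and which I expect to be the main (minor) obstacle — is precisely the inclusion $J_s \subseteq J_\psi$, since a priori $J_s$ could be a \emph{closed} bounded interval $[0,\beta]$ while $J_\psi = [0,t_\psi)$ is half-open. I would rule out $\beta \ge t_\psi$ by noting that, since $v$ and $u_\psi$ agree on $[0,t_\psi)$ by uniqueness, $v$ would then extend $u_\psi$ to a solution of \cref{eq:aie0} on an interval strictly larger than its maximal interval of existence, a contradiction; the case $\sup J = \infty$ is handled the same way and forces $t_\psi = \infty$. Once $J_s \subseteq J_\psi$ is in hand, $t - s \in J_{u(s)}$ and $u(t) = v(t-s) = u_\psi(t-s) = \Sigma(t-s,u(s))$ for all $(t,s) \in \Omega_J$, completing the argument. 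Everything else is routine changes of variable and appeals to results already proved.
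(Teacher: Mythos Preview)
Your argument is correct and follows essentially the same route as the paper: translate by $s$ to pass between \cref{eq:aie-st} and \cref{eq:aie}, then invoke uniqueness and maximality of the solutions defining $\Sigma$. The only difference is cosmetic: the paper works pointwise with a fixed pair $(t,s)$ and simply asserts that a solution on the closed interval $[0,t-s]$ forces $t-s \in J_{u(s)}$, whereas you fix $s$, look at the whole translated interval $J_s$ at once, and spell out the maximality/extension argument that the paper leaves implicit.
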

\begin{proof}
  Suppose first that $u$ is a solution of \cref{eq:aie-st} and let $(t,s) \in \Omega_J$. We may assume $s < t$ strictly, for otherwise \cref{eq:usigma} holds trivially. Then $w \DEF u(\cdot + s)$ is a solution on $[0,t-s]$ of \cref{eq:aie} with $\phi = u(s)$. Hence $t - s \in J_{u(s)}$ so $(t - s, u(s)) \in \mathcal{D}(\Sigma)$, and $\Sigma(t - s, u(s)) = w(t - s) = u(t)$.
  
  Conversely, let us assume that \cref{eq:usigma} holds for all $(t,s) \in \Omega_J$. Continuity of $u$ is not difficult: If $t \in J$ is an interior point, then there exists $s \in J$ with $s < t$ and for $\delta > 0$ small enough,
  \[
    u(t \pm \delta) = \Sigma(t \pm \delta - s, u(s)) \to \Sigma(t - s,u(s)) = u(t)
  \]
  as $\delta \downarrow 0$. The case that $t$ is an endpoint is even simpler. It remains to show that $u$ satisfies \cref{eq:aie-st} on $J$. Let $(t,s) \in \Omega_J$ be arbitrary and let $u_{\phi} : J_{\phi} \to X$ be the maximal solution of \cref{eq:aie} for $\phi = u(s)$. By \cref{eq:usigma} we have $u(t) = u_{\phi}(t - s)$ and this equality together with \cref{eq:aie} then implies that \cref{eq:aie-st} holds.
\end{proof}

\section{Center manifolds in the \texorpdfstring{non-$\odot$-reflexive}{non-sun-reflexive} case}\label{sec:cm}
\emph{All vector spaces in this section are over $\RR$.}

\medskip
Let $\hat{\phi} = 0$ be an equilibrium of the semiflow $\Sigma$ defined by \cref{eq:semiflow} and let the $\mathcal{C}_0$-semigroup $\{T(t)\}_{t \ge 0}$ be the linearization of $\Sigma$ at $\hat{\phi}$ as in \cref{prop:linearization}. In this section we explain how the construction of a local center manifold for $\hat{\phi}$ in \cite[Chapter IX]{Delay1995} can be adapted to the non-$\odot$-reflexive case, using the results obtained in the previous sections.

While we deliberately stay close to the presentation in \cite{Delay1995}, there are also differences. As far as these differences stem from non-$\odot$-reflexivity, they are confined to \cref{sec:Keta}. This illustrates the general principle that, with the results from \cref{sec:admissibility,sec:inhomogeneous,sec:linearization} at hand, one can overcome the lack of $\odot$-reflexivity rather easily by applying the substitution rule $X^{\odot\star} \to X^{\odot\times}$; see also the comments in \cref{sec:conclusion}.

Another difference of some significance, although by itself unrelated to non-$\odot$-reflexivity, is in the treatment of the spectral decomposition in \cref{sec:decomposition,sec:lifting}. Smaller modifications and additions are indicated in the places where they occur.

\subsection{Spectral decompositions of \texorpdfstring{$X$}{X} and \texorpdfstring{$X^{\odot\times}$}{Xsuncross}}\label{sec:decomposition}
In \cite[Sections VIII.2 and IX.2]{Delay1995} the construction of local invariant manifolds for $\hat{\phi}$ starts from assumptions about the existence of a topological direct sum decomposition of $X^{\odot\star}$ into certain positively $\SUNSTAR{T}$-invariant subspaces, and about the behavior of $\{T^{\odot\star}(t)\}_{t \ge 0}$ on those subspaces. There the assumptions are formulated directly in the large space $\SUNSTAR{X}$, motivated by the fact that the nonlinearity does not map $X$ into itself, but rather into $\SUNSTAR{X}$.

Alternatively, one could first formulate these assumptions on $X$ and only then prove that they can be lifted to $\SUNSTAR{X}$, or rather to $X^{\odot\times} \subseteq X^{\odot\star}$, since in the general (non-$\odot$-reflexive) case, the nonlinearity $R$ introduced in \cref{eq:splittingGLR} takes its values in $X^{\odot\times}$. Apart from the substitution by $X^{\odot\times}$, this is along the lines of \cite[Theorems IV.2.11 and IV.2.12]{Delay1995} but with the additional observation that the assumption of eventual compactness of $\{T(t)\}_{t \ge 0}$ included there is not really needed. Indeed, the decomposition of $X$ and the corresponding exponential estimates may \emph{themselves} be taken as the assumption from which the analogous properties in $X^{\odot\times}$ can then be deduced. I see two advantages of this approach:

\begin{enumerate}
\item  
The task of lifting from $X$ to $X^{\odot\times}$ is solved once and for all. There is no need to repeat the procedure for different classes of delay equations with different spectral properties, e.g. first for classical DDEs generating eventually compact $\mathcal{C}_0$-semigroups, next for abstract DDEs or for equations with infinite delay, and so on. We recall from \cref{sec:xsuncross} that $X^{\odot\times} = X^{\odot\star}$ in the $\odot$-reflexive case.
\item
  A direct formulation of the assumptions in $X^{\odot\times}$ obfuscates the fact that the involved subspaces and operators stem from corresponding objects originally defined in or on $X$. Making this relationship more explicit by starting out on $X$ instead of $X^{\odot\times}$ adds clarity.
\end{enumerate}

We therefore begin by reformulating the assumptions from \cite[Section IX.2]{Delay1995} in $X$.

\begin{hypothesis}
  \label{hyp:cm}
  The space $X$ and the $\mathcal{C}_0$-semigroup $\{T(t)\}_{t \ge 0}$ on $X$ have the following properties:
  \begin{hypenum}
  \item
    \label{hyp:cm:sum}
    $X$ admits a direct sum decomposition
    \begin{equation}
      \label{eq:decomposition-of-x}
      X = X_- \oplus X_0 \oplus X_+,
    \end{equation}
    which is topological, i.e. each summand is closed.
  \item
    \label{hyp:cm:invariance}
    The subspaces $X_-$, $X_0$ and $X_+$ are positively $T$-invariant.
  \item
    \label{hyp:cm:group}
    $\{T(t)\}_{t \ge 0}$ can be extended to a $\mathcal{C}_0$-group on $X_0$ and on $X_+$.
  \item
    \label{hyp:cm:trichotomy}
    The decomposition \cref{eq:decomposition-of-x} is an \textbf{exponential trichotomy} on $\RR$, meaning that there exist $a < 0 < b$ such that for every $\EPS > 0$ there exists $K_{\EPS} > 0$ such that
    \begin{subequations}
      \label{eq:trichotomy}
      \begin{alignat}{2}
        \|T(t)\phi\| &\le K_{\EPS} e^{at}\|\phi\| && \qquad \ALL\,t \ge 0 \text{ and } \phi \in X_-,\label{eq:trichotomy:stable}\\
        \|T(t)\phi\| &\le K_{\EPS} e^{\EPS|t|}\|\phi\| && \qquad \ALL\,t \in \RR \text{ and } \phi \in X_0, \label{eq:trichotomy:center}\\
        \|T(t)\phi\| &\le K_{\EPS} e^{bt}\|\phi\| && \qquad \ALL\,t \le 0 \text{ and } \phi \in X_+.\label{eq:trichotomy:unstable}      
      \end{alignat}
    \end{subequations}
  \end{hypenum}
  We call $X_-$, $X_0$ and $X_+$ the \textbf{stable subspace}, \textbf{center subspace} and \textbf{unstable subspace}.
\end{hypothesis}

In \cref{sec:lifting} it is explained in some detail how these assumptions induce a decomposition of $X^{\odot\times}$ with identical properties. The end result can be found in \cref{prop:xsuncross-cm}.

\begin{hypothesis}
  \label{hyp:cm:j}
  The subspaces $[X^{\odot\times}]_0$ and $[X^{\odot\times}]_+$ are contained in $jX$.
\end{hypothesis}

In concrete cases, the hypotheses \cref{hyp:cm} and \cref{hyp:cm:j} are usually verified by decomposition of the spectrum of the generator of the complexification of $\{T(t)\}_{t \ge 0}$. The following result in this spirit applies to a reasonably large class of $\mathcal{C}_0$-semigroups. Its proof can be found in \cref{sec:real-case}, while an application is given in \cref{thm:cm-dde} in \cref{sec:ddes}.

\begin{restatable}{theorem}{hypcmthm}
  \label{thm:hypcm}
  Suppose $\{T(t)\}_{t \ge 0}$ is eventually norm continuous and let $A_{\cc}$ be the complexification of its generator. If $\sigma(A_{\cc})$ is the pairwise disjoint union of the nonempty sets
  \begin{alignat*}{2}
    &\sigma_- &&\DEF \{\lambda \in \sigma(A_{\cc}) \,:\, \RE{\lambda} < 0\},\\
    &\sigma_0 &&\DEF \{\lambda \in \sigma(A_{\cc}) \,:\, \RE{\lambda} = 0\},\\
    &\sigma_+ &&\DEF \{\lambda \in \sigma(A_{\cc}) \,:\, \RE{\lambda} > 0\},
  \end{alignat*}
  where $\sigma_-$ is closed and both $\sigma_0$ and $\sigma_+$ are compact, and if
  \[
    \gamma_- \DEF \sup_{\lambda \in \sigma_-}\RE{\lambda} < 0 < \inf_{\lambda \in \sigma_+}\RE{\lambda} \FED \gamma_+,
  \]
  then \cref{hyp:cm} and \cref{hyp:cm:j} hold for $\{T(t)\}_{t \ge 0}$ on $X$.
\end{restatable}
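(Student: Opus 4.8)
The plan is to realize the splitting of $X$ in \cref{hyp:cm} through Riesz spectral projections of the complexified generator, and then to push everything to $X^{\odot\times}$ by means of \cref{prop:xsuncross-cm} together with the defining property of $X^{\odot\times}$. First I would observe that in the present situation $\sigma_0$ and $\sigma_+$ are not merely compact but \emph{clopen} in $\sigma(A_{\cc})$: since $\sigma_-\subseteq\{\RE\lambda\le\gamma_-<0\}$, $\sigma_0\subseteq i\RR$ and $\sigma_+\subseteq\{\RE\lambda\ge\gamma_+>0\}$, and $\sigma_0,\sigma_+$ are disjoint compacta, each of $\sigma_0,\sigma_+$ lies at strictly positive distance from the rest of $\sigma(A_{\cc})$. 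Hence one may pick simple closed contours $\Gamma_0,\Gamma_+\subseteq\rho(A_{\cc})$, symmetric under complex conjugation, that enclose exactly $\sigma_0$, resp.\ $\sigma_+$, and form the corresponding Riesz projections $P_0^{\cc},P_+^{\cc}$ on $X_{\cc}$. These commute with $A_{\cc}$ and with complex conjugation (the contours and spectral sets being conjugation-symmetric and $A$ being real), so they restrict to bounded projections $P_0,P_+$ on $X$; with $X_0\DEF P_0X$, $X_+\DEF P_+X$, $P_-\DEF I-P_0-P_+$ and $X_-\DEF P_-X$ one obtains the topological direct sum \cref{eq:decomposition-of-x} with closed summands (\cref{hyp:cm:sum}) and positive $T$-invariance (\cref{hyp:cm:invariance}), since all projections commute with the resolvent and hence with $T(t)$. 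Crucially — and here is where compactness of $\sigma_0,\sigma_+$ enters — $A_{\cc}P_0^{\cc}=\frac{1}{2\pi i}\oint_{\Gamma_0}\lambda R(\lambda,A_{\cc})\,d\lambda$ and the analogue for $\Gamma_+$ are \emph{bounded} operators, so $A|_{X_0}$ and $A|_{X_+}$ are bounded; consequently $\{T(t)|_{X_0}\}$ and $\{T(t)|_{X_+}\}$ are the positive-time parts of the uniformly continuous groups $\{e^{tA|_{X_0}}\}_{t\in\RR}$ and $\{e^{tA|_{X_+}}\}_{t\in\RR}$, giving \cref{hyp:cm:group}.

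For the exponential trichotomy \cref{hyp:cm:trichotomy} I would fix $a\in(\gamma_-,0)$ and $b\in(0,\gamma_+)$ and argue on the three summands separately. On $X_-$ the restricted semigroup is eventually norm continuous (restriction of $\{T(t)\}$ to a closed invariant subspace) and $\sigma(A|_{X_-})=\sigma_-$ because $X_-$ is a spectral subspace; since for eventually norm continuous semigroups the uniform growth bound equals the spectral bound of the generator, $\omega_0(T|_{X_-})=\gamma_-<a$, which yields \cref{eq:trichotomy:stable}. On $X_0$ the bounded generator $A|_{X_0}$ has spectrum $\sigma_0\subseteq i\RR$, so both $\pm A|_{X_0}$ have spectral bound $0$, and a uniformly continuous group has growth bound equal to the spectral bound of its generator, so $\|e^{tA|_{X_0}}\|\le K_{\EPS}e^{\EPS|t|}$ for every $\EPS>0$, i.e.\ \cref{eq:trichotomy:center}. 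On $X_+$ the generator of $\{e^{-tA|_{X_+}}\}_{t\ge0}$ has spectrum $-\sigma_+\subseteq\{\RE\lambda\le-\gamma_+\}$, so its growth bound is at most $-\gamma_+<-b$, whence $\|T(t)|_{X_+}\|=\|e^{tA|_{X_+}}\|\le K_{\EPS}e^{bt}$ for $t\le0$, i.e.\ \cref{eq:trichotomy:unstable}. This establishes \cref{hyp:cm}.

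It then remains to verify \cref{hyp:cm:j}. By \cref{prop:xsuncross-cm} the decomposition just constructed lifts to $X^{\odot\times}=[X^{\odot\times}]_-\oplus[X^{\odot\times}]_0\oplus[X^{\odot\times}]_+$ with the same structural properties relative to the restriction of $\{\SUNSTAR{T}(t)\}$ to $X^{\odot\times}$; a short sandwich argument — the resolvent of $\SUNSTAR{A}$ leaves both $jX$ and $X^{\odot\times}$ invariant — shows that the generator $A^{\odot\times}$ of this restricted semigroup satisfies $\sigma(A^{\odot\times})=\sigma(A)$, so that $[X^{\odot\times}]_0\oplus[X^{\odot\times}]_+$ is the range of the Riesz projection $\frac{1}{2\pi i}\oint_{\Gamma}R(\lambda,\SUNSTAR{A})|_{X^{\odot\times}}\,d\lambda$ of $A^{\odot\times}$ associated with the compact clopen set $\sigma_0\cup\sigma_+$, where $\Gamma\subseteq\rho(A)$ surrounds $\sigma_0\cup\sigma_+$. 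Now for $x^{\odot\star}\in X^{\odot\times}$ and $\lambda\in\Gamma\subseteq\rho(A)$ the element $R(\lambda,\SUNSTAR{A})x^{\odot\star}$ lies in $jX$ by \cref{cor:x0suncross-is-xsuncross} and the defining property \cref{eq:x0suncross} of $X^{\odot\times}$ (which holds for every $\lambda$ in the resolvent set). Since $jX$ is norm-closed and the integrand is norm-continuous on the compact contour $\Gamma$, the integral lies in $jX$; hence $[X^{\odot\times}]_0\oplus[X^{\odot\times}]_+\subseteq jX$, and in particular $[X^{\odot\times}]_0\subseteq jX$ and $[X^{\odot\times}]_+\subseteq jX$, which is \cref{hyp:cm:j}.

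The step I expect to be the crux — and the only place where eventual norm continuity is genuinely needed rather than merely convenient — is the stable estimate \cref{eq:trichotomy:stable}: on $X_-$ the generator is in general unbounded, and the inclusion $\sigma(A|_{X_-})\subseteq\{\RE\lambda<0\}$ does not, for an arbitrary $\mathcal{C}_0$-semigroup, force exponential decay of $T|_{X_-}$; it is the identity $\omega_0=s(A)$ for eventually norm continuous semigroups that closes this gap. By contrast, compactness of $\sigma_0$ and $\sigma_+$ — not eventual compactness of $\{T(t)\}$ — is what makes $A|_{X_0}$ and $A|_{X_+}$ bounded and thereby reduces the group property and the other two estimates to the elementary case of a bounded generator; this is precisely the respect in which the present hypotheses are weaker than those underlying \cite[Chapter IX]{Delay1995}. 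A secondary, routine point to handle with some care is the verification that the Riesz projections are ``real'', i.e.\ descend from $X_{\cc}$ to $X$, which relies on choosing the contours symmetric about the real axis.
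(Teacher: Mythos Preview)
Your proposal is correct and follows essentially the same strategy as the paper: Riesz projections on the complexification for the compact pieces $\sigma_0,\sigma_+$, descent to the real space via conjugation symmetry of the contours, boundedness of $A|_{X_0}$ and $A|_{X_+}$ giving the group extensions and the center/unstable estimates, and eventual norm continuity supplying $\omega_0(T|_{X_-})=s(A|_{X_-})=\gamma_-$ for the stable estimate. Your identification of that last step as the only place where eventual norm continuity is genuinely needed matches the paper's own closing remark.

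Two minor differences are worth flagging. First, the paper does not form the Riesz projections of $A_{\cc}$ directly but instead passes through the bounded resolvent $R(\lambda_0,A_{\cc})$ to obtain the coarse splitting $E_-\oplus E_{\cu}$ (following \cite[A-III.3.3]{Positive1986}) and only then uses boundedness of $A_{\cc}|_{E_{\cu}}$ to split further; your direct approach is equally valid and slightly leaner. Second, for \cref{hyp:cm:j} your argument that $R(\lambda,\SUNSTAR{A})x^{\odot\star}\in jX$ for $\lambda\in\Gamma$ is the right observation, but as written it mixes the real and complex pictures: the contour $\Gamma$ lives in $\CC$, so this resolvent makes sense only after complexifying, and one then needs $[X_{\cc}]^{\odot\times}=[X^{\odot\times}]_{\cc}$ together with $j_{X_{\cc}}=[j_X]_{\cc}$ to return to $X$. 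The paper carries this out explicitly (its \cref{prop:xcross-complexification} and the final two steps of the proof); you flag the analogous descent for \cref{hyp:cm} but should do the same here.
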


Let $E$ be a Banach space and let $\eta \in \RR$. From \cite[Definitions VIII.2.5 and IX.2.2]{Delay1995} we recall the three Banach spaces
\begin{align*}
  \BC^{\eta}(\RR_{\pm},E) &\DEF \{f \in C(\RR_{\pm},E) \,:\, \sup_{t \in \RR_{\pm}}{e^{-\eta t}\|f(t)\|} < \infty \},\\
  \BC^{\eta}(\RR,E) &\DEF \{f \in C(\RR,E)\,:\, \sup_{t \in \RR}{e^{-\eta |t|}\|f(t)\|} < \infty \},
\end{align*}
equipped with their respective weighted supremum norms. If $\eta = 0$ we will suppress the superscript $\eta$.

Let $J$ be an interval. Analogously to \cref{eq:aie-st}, a \textbf{solution on $J$} of the linear homogeneous equation
\begin{equation}
  \label{eq:hom-st}
  u(t) = T(t - s)u(s), \qquad (t,s) \in \Omega_J,
\end{equation}
is defined as a continuous function $u : J \to X$ such that \cref{eq:hom-st} holds for all $(t,s) \in \Omega_J$. 

\begin{lemma}[\normalfont{\cite[Lemma IX.2.4, proof as exercise]{Delay1995}}]
  \label{lem:XsigmaBC}
  Let $\eta \in (0,\min\{-a,b\})$. Then
  \begin{align*}
    X_- &= \{\phi \in X \, :\, \text{there exists a solution of \cref{eq:hom-st} on $\RR_+$ through $\phi$ which belongs to $\BC^a(\RR_+,X)$}\},\\
    X_0 &= \{\phi \in X \, :\, \text{there exists a solution of \cref{eq:hom-st} on $\RR$ through $\phi$ which belongs to $\BC^{\eta}(\RR,X)$}\},\\
    X_+ &= \{\phi \in X \, :\, \text{there exists a solution of \cref{eq:hom-st} on $\RR_-$ through $\phi$ which belongs to $\BC^b(\RR_-,X)$}\}.
  \end{align*}
\end{lemma}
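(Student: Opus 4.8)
The plan is to prove each of the three identities by a double inclusion. In both directions the crucial structural fact is that, by \cref{hyp:cm:sum,hyp:cm:invariance}, the bounded projections $P_-,P_0,P_+$ associated with the topological splitting \cref{eq:decomposition-of-x} commute with every $T(t)$ (each summand being positively $T$-invariant); hence, if $u$ is a solution of \cref{eq:hom-st} on an interval $J$, then so is $P_\bullet u$, with values in $X_\bullet$ and with the same exponential growth as $u$ up to the factor $\|P_\bullet\|$. For the half-line and line problems the equation propagates only forward in time, so I will also use \cref{hyp:cm:group} to run the evolution backward on $X_0$ and $X_+$ whenever needed.

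For the inclusions ``$\subseteq$'' one simply exhibits the orbit through $\phi$. If $\phi\in X_-$, then $u(t)\DEF T(t)\phi$ for $t\ge 0$ solves \cref{eq:hom-st} on $\RR_+$ by the semigroup law, and by \cref{eq:trichotomy:stable} the quantity $e^{-at}\|u(t)\|$ stays bounded, so $u\in\BC^a(\RR_+,X)$. If $\phi\in X_0$ or $\phi\in X_+$, the same orbit, extended to $\RR$ or to $\RR_-$ by the group furnished by \cref{hyp:cm:group}, is a solution on the relevant line or half-line through $\phi$, and \cref{eq:trichotomy:center} with $\EPS=\eta$ (resp. \cref{eq:trichotomy:unstable}) places it in $\BC^{\eta}(\RR,X)$ (resp. $\BC^b(\RR_-,X)$).

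For the inclusions ``$\supseteq$'' one decomposes $\phi=P_-\phi+P_0\phi+P_+\phi$ and shows that the prescribed growth bound on $u$ forces all but the expected component to vanish. As a representative case, take a solution $u$ of \cref{eq:hom-st} on $\RR_+$ through $\phi$ with $C\DEF\sup_{t\ge 0}e^{-at}\|u(t)\|<\infty$; setting $s=0$ in \cref{eq:hom-st} gives $u(t)=T(t)\phi$, so $\|T(t)P_\bullet\phi\|=\|P_\bullet u(t)\|\le\|P_\bullet\|\,C\,e^{at}$ for $t\ge 0$. Using the group on $X_+$ and \cref{eq:trichotomy:unstable} at time $-t\le 0$,
\[
  \|P_+\phi\| = \|T(-t)\,T(t)P_+\phi\| \le K_{\EPS}e^{-bt}\,\|P_+\|\,C\,e^{at} = K_{\EPS}\|P_+\|\,C\,e^{(a-b)t} \longrightarrow 0 \quad (t\to\infty),
\]
whence $P_+\phi=0$; fixing $\EPS\in(0,-a)$ and using the group on $X_0$ with \cref{eq:trichotomy:center} yields $\|P_0\phi\|\le K_{\EPS}\|P_0\|\,C\,e^{(\EPS+a)t}\to 0$, so $P_0\phi=0$ and $\phi=P_-\phi\in X_-$. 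The other two ``$\supseteq$'' inclusions follow the same template: one writes the offending component of $\phi$ either as $T(\tau)\,P_\bullet u(-\tau)$ for $\tau\ge 0$ (legitimate because $0$ lies in the interval of existence and the equation propagates forward) or, when $X_\bullet$ carries a group, as $T(-\tau)\,P_\bullet u(\tau)$, bounds $\|P_\bullet u(\pm\tau)\|$ by the relevant weighted supremum norm of $u$, and lets $\tau\to\infty$. For $X_0$ the stable component dies because $a+\eta<0$ and the unstable one because $\eta-b<0$; for $X_+$ the stable component dies because $a-b<0$ and the center one because $\EPS-b<0$ once $\EPS\in(0,b)$ is fixed.

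I do not anticipate a genuine obstacle; the argument is bookkeeping with the trichotomy estimates. The points that need care are: (i) that a solution on $\RR_+$ through $\phi$ is automatically $T(\cdot)\phi$, whereas on $\RR$ or $\RR_-$ one must argue componentwise via the projections because $T$ need not be injective on $X_-$; (ii) choosing the auxiliary exponent $\EPS$ in \cref{eq:trichotomy} strictly below $-a$ or below $b$ as appropriate, so that the center growth $e^{\EPS|t|}$ is absorbed --- this is precisely what makes the threshold $\eta\in(0,\min\{-a,b\})$ necessary; and (iii) invoking \cref{hyp:cm:group} exactly where a backward evolution on $X_0$ or $X_+$ is required.
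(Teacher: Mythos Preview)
Your proposal is correct and follows essentially the same approach as the paper: both argue by double inclusion, exhibit the (group-extended) orbit for the forward direction, and for the reverse direction project a given solution onto the summands, use commutativity of $P_\bullet$ with $T(t)$ together with the group on $X_0$ and $X_+$ to invert the evolution, and let the trichotomy estimates force the unwanted components to zero. The paper writes out only the $X_0$ case while you detail $X_-$, but the mechanics are identical.
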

\begin{proof}
  We give the proof for $X_0$ only. If $\phi \in X_0$ then $u_{\phi} : \RR \to X$ defined by $u_{\phi}(t) \DEF T(t)\phi$ is a solution of \cref{eq:hom-st} through $\phi$. Choose $0 < \EPS \le \eta$ and $K_{\EPS}$ according to \cref{hyp:cm:trichotomy}, then
  \[
    e^{-\eta|t|} \|u_{\phi}(t)\| \le K_{\EPS} e^{-(\eta - \EPS)|t|}\|\phi\| \le K_{\EPS}\|\phi\| \qquad \ALL\,t \in \RR,
  \]
  so $u_{\phi} \in \BC^{\eta}(\RR,X)$. Conversely, suppose that $\phi \in X$ is such that there exist a solution $u_{\phi} \in \BC^{\eta}(\RR,X)$ of \cref{eq:hom-st} and a time $t_0 \in \RR$ such that $u_{\phi}(t_0) = \phi$. We will show that $P_{\pm}\phi = 0$. By \cref{hyp:cm:group},
  \[
    P_+\phi = T(t_0 - t)T(t - t_0)P_+\phi \qquad \ALL\,t \ge t_0,
  \]
  so \cref{eq:trichotomy:unstable} implies, for all $t \ge t_0$,
  \begin{align*}
    \|P_+\phi\| &\le K_{\EPS} e^{b(t_0 - t)}\|T(t - t_0)P_+\phi\|\\
                &\le K_{\EPS} e^{b(t_0 - t)}\|u_{\phi}(t)\|,
  \end{align*}
  and therefore, for $t \ge \max\{t_0,0\}$,
  \[
    e^{-\eta t}\|u_{\phi}(t)\| \ge \frac{e^{-b t_0}}{K_{\EPS}}e^{(b - \eta)t}\|P_+\phi\| \to \infty \qquad \text{as } t \to \infty,
  \]
  unless $P_+\phi = 0$. Similarly, since $u_{\phi}$ is a solution of \cref{eq:hom-st} on $\RR$ and $u_{\phi}(t_0) = \phi$,
  \[
    \begin{aligned}
      P_-\phi &= P_-T(t_0 - t)u_{\phi}(t)\\
      &= T(t_0 - t)P_-u_{\phi}(t)
    \end{aligned} \qquad \ALL\,t \le t_0,
  \]
  so \cref{eq:trichotomy:stable} implies
  \[
    \|P_-\phi\| \le K_{\EPS} e^{a(t_0 - t)}\|u_{\phi}(t)\| \qquad \ALL\,t \le t_0,
  \]
  and therefore, for $t \le \min\{t_0,0\}$,
  \[
    e^{-\eta|t|}\|u_{\phi}(t)\| \ge \frac{e^{-a t_0}}{K_{\EPS}} e^{(a + \eta)t} \|P_-\phi\| \to \infty \qquad \text{as } t \to -\infty,
  \]
  unless $P_-\phi = 0$. We have shown that $P_{\pm}\phi = 0$, so $\phi \in X_0$.
\begin{mycomment}
  \medskip
  \par
  Following is also the proof for the stable subspace. If $\phi \in X_-$, then $u_{\phi} : \RR_+ \to X$ defined by $u_{\phi}(t) \DEF T(t)\phi$ is clearly a solution of \cref{eq:hom-st} on $\RR_+$. By \cref{eq:trichotomy:stable} we have
    \[
      e^{-a t}\|u_{\phi}(t)\| \le K_{\EPS} \|\phi\| \qquad \ALL\,t \ge 0,
    \]
    so $u_{\phi} \in \BC^a(\RR_+,E)$. Conversely, let $\phi \in X$ and suppose there exists a solution $u_{\phi}$ of \cref{eq:hom-st} on $\RR_+$ that belongs to $\BC^a(\RR_+,X)$ and such that $u_{\phi}(t_0) = \phi$ for some $t_0 \ge 0$. We may assume that $t_0 = 0$, because otherwise we can consider the translated solution $v_{\phi}(\cdot) \DEF T(t_0)u_{\phi}(\cdot)$. Then, by \cref{hyp:cm:group},
    \[
      (I - P_-)\phi = T(-t)T(t)(I - P_-)\phi \qquad \ALL\,t \ge 0,
    \]
    and since $I - P_- = P_0 + P_+$, it follows, for $t \ge 0$,
    \begin{align*}
      \|(I - P_-)\phi\| &\le \|T(-t)T(t)P_0\phi\| + \|T(-t)T(t)P_+\phi\|\\
                        &\le K_{\EPS} e^{\EPS t}\|T(t)P_0\phi\| + K_{\EPS} e^{-b t}\|T(t)P_+\phi\|\\
                        &\le K_{\EPS} (e^{\EPS t} + e^{-bt})\|u_{\phi}(t)\|,
    \end{align*}
    where the second line is due to \cref{hyp:cm:trichotomy}. Now choose $\EPS = \frac{|a|}{2}$ and $K_{\EPS}$ according to \cref{hyp:cm:trichotomy}, then
    \[
      e^{-a t} \|u_{\phi}(t)\| \ge \frac{e^{\EPS t}}{K_{\EPS}(1 + e^{-(b +\EPS)t})}\|(I - P_-)\phi\| \to \infty \qquad \text{as } t \to \infty,
    \]
    unless $(I - P_-)\phi = 0$, which shows that $\phi \in X_-$.
\end{mycomment}
\end{proof}

\subsection{Bounded solutions of the linear inhomogeneous equation}\label{sec:Keta}
Let $J$ be an interval. Analogously to \cref{eq:aie-st,eq:hom-st}, a \textbf{solution on $J$} of the linear \emph{in}homogeneous equation
\begin{equation}
  \label{eq:inhom-st}
  u(t) = T(t - s)u(s) + j^{-1}\int_s^t T^{\odot\star}(t - \tau)f(\tau)\,d\tau, \qquad (t,s) \in \Omega_J,
\end{equation}
is defined to be a continuous function $u : J \to X$ such that \cref{eq:inhom-st} holds for all $(t,s) \in \Omega_J$. In the analytic (as distinguished from: geometric) construction of a local center manifold, a key step is the introduction of a linear operator that associates with each appropriate forcing function a solution of \cref{eq:inhom-st} on $\RR$ with prescribed behavior both at $t = 0$ and $t = \pm \infty$. For the $\odot$-reflexive case this is done in \cite{Delay1995}. The results from \cref{sec:admissibility,sec:inhomogeneous} can be used to obtain an extension to the non-$\odot$-reflexive setting with relatively little effort. Define, for any $\eta \in (0,\min\{-a,b\})$,
\begin{align*}
  \mathcal{K}_{\eta} : \BC^{\eta}(\RR,X^{\odot\times}) \to \BC^{\eta}(\RR,X), \qquad (\mathcal{K}_{\eta}f)(t) &\DEF j^{-1}\int_0^t T^{\odot\star}(t - \tau)P_0^{\odot\times}f(\tau)\,d\tau\\
                                                                                                              &+ j^{-1}\int_{\infty}^t T^{\odot\star}(t - \tau)P_+^{\odot\times}f(\tau)\,d\tau\\
                                                                                                              &+ j^{-1}\int_{-\infty}^t T^{\odot\star}(t - \tau)P_-^{\odot\times}f(\tau)\,d\tau.
\end{align*}
Regarding the following lemma, we would like to stress that the only fundamental difference between the proof for the $\odot$-reflexive case in \cite{Delay1995} and the proof here lies in the verification that the last \WS-integral in the above definition of $\mathcal{K}_{\eta}$ indeed takes values in $jX$; also see \cref{eq:Imin-lim}. Still, we have included some more details because the proof in \cite{Delay1995} is rather condensed and part of the estimates were left as exercises.

\begin{proposition}[\normalfont{cf. \cite[Lemma IX.3.2 and Exercise 3.4]{Delay1995}}]\label{prop:Keta}
  Let $\eta \in (0,\min\{-a,b\})$.
  \begin{propenum}
  \item
    $\mathcal{K}_{\eta}$ is a well-defined bounded linear operator.
  \item
    \label{prop:Keta:solution}
    $\mathcal{K}_{\eta}f$ is the unique solution of \cref{eq:inhom-st} in $\BC^{\eta}(\RR,X)$ with zero $X_0$-component at $t = 0$.
  \end{propenum}
\end{proposition}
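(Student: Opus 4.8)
\section*{Proof proposal for \cref{prop:Keta}}

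The plan is to prove (i) and then (ii), decomposing $f$ throughout by the lifted spectral projections. By \cref{prop:xsuncross-cm}, \cref{hyp:cm} lifts to a topological decomposition $X^{\odot\times} = [X^{\odot\times}]_- \oplus [X^{\odot\times}]_0 \oplus [X^{\odot\times}]_+$ with bounded projections $P_-^{\odot\times}, P_0^{\odot\times}, P_+^{\odot\times}$, positively $\SUNSTAR{T}$-invariant summands, $\mathcal{C}_0$-group extensions of $\{\SUNSTAR{T}(t)\}_{t \ge 0}$ on $[X^{\odot\times}]_0$ and $[X^{\odot\times}]_+$, trichotomy estimates with the same $a < 0 < b$, the identities $j^{-1}[X^{\odot\times}]_0 = X_0$ and $j^{-1}[X^{\odot\times}]_+ = X_+$, and — using \cref{hyp:cm:j} — the inclusions $[X^{\odot\times}]_0, [X^{\odot\times}]_+ \subseteq jX$. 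I will also use that $X^{\odot\times}$ is an admissible range for $\{T(t)\}_{t \ge 0}$, which is \cref{thm:xsuncross} applied to $\{T(t)\}_{t \ge 0}$ in place of $\{T_0(t)\}_{t \ge 0}$. Since $\mathcal{K}_{\eta}$ is manifestly linear, it suffices to treat its three summands $\mathcal{K}_{\eta}^{0}, \mathcal{K}_{\eta}^{+}, \mathcal{K}_{\eta}^{-}$ separately, and to show each is a well-defined bounded operator into $\BC^{\eta}(\RR,X)$.

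For (i): $\mathcal{K}_{\eta}^{0}f(t)$ is $j^{-1}$ of a \WS-Riemann integral over the compact interval between $0$ and $t$; because $[X^{\odot\times}]_0 \subseteq jX$ this may be rewritten as an ordinary $X$-valued integral (via $j^{-1}\SUNSTAR{T}(\sigma) = T(\sigma)j^{-1}$ on $jX$, using the group extension when $\sigma < 0$), so continuity in $t$ is standard, and choosing $0 < \EPS < \eta$ in the lifted \cref{eq:trichotomy:center} yields $e^{-\eta|t|}\|\mathcal{K}_{\eta}^{0}f(t)\| \le \frac{\|j^{-1}\|K_{\EPS}\|P_0^{\odot\times}\|}{\eta - \EPS}\|f\|_{\BC^{\eta}}$. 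For $\mathcal{K}_{\eta}^{+}$ the integration interval is $[t,\infty)$: the lifted \cref{eq:trichotomy:unstable} together with $\eta < b$ provides an $L^1$-dominating function, so the \WS-integral exists and, by \cref{lem:wsnormconv}, equals the norm-limit of the \WS-Riemann integrals over $[t,n]$; each of the latter lies in $jX$ (by $[X^{\odot\times}]_+ \subseteq jX$ and \cref{prop:jXadmissible}, or simply by admissibility of $X^{\odot\times}$ for $\{T(t)\}_{t \ge 0}$), so the full integral lies in $jX$ by norm-closedness; continuity in $t$ and the weighted bound then follow by dominated convergence and a routine integral estimate (the part left as an exercise in \cite{Delay1995}).

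The summand $\mathcal{K}_{\eta}^{-}$, over $(-\infty,t]$, is the only genuinely new point, exactly the one flagged before the statement. The lifted \cref{eq:trichotomy:stable} with $\eta < -a$ again gives an $L^1$-dominating function, so by \cref{lem:wsnormconv} the \WS-integral is the norm-limit of the \WS-Riemann integrals over $[-n,t]$. The obstruction is that, unlike for the other two summands, $[X^{\odot\times}]_-$ need \emph{not} be contained in $jX$, so one cannot argue pointwise; instead, since $X^{\odot\times}$ is an admissible range for $\{T(t)\}_{t \ge 0}$ by \cref{thm:xsuncross}, each integral $\int_{-n}^{t}\SUNSTAR{T}(t-\tau)P_-^{\odot\times}f(\tau)\,d\tau$ lies in $jX$, and norm-closedness of $jX$ forces the limit into $jX$ as well. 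The same estimate shows the convergence is uniform for $t$ in compact sets, giving continuity, and the weighted bound is again routine. Collecting the three estimates proves that $\mathcal{K}_{\eta}$ is a well-defined bounded linear operator.

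For (ii): the solution property is checked summand by summand. For $\ast \in \{-,0,+\}$ and $t \ge s$ one has
\[
  T(t-s)\,\mathcal{K}_{\eta}^{\ast}f(s) + j^{-1}\int_s^t \SUNSTAR{T}(t-\tau)P_{\ast}^{\odot\times}f(\tau)\,d\tau = \mathcal{K}_{\eta}^{\ast}f(t),
\]
which reduces to the familiar splitting-of-the-integral computation, using that adjoints of bounded operators commute with \WS-integrals (and with the norm-limits from \cref{lem:wsnormconv}), the (semi)group laws with the group extensions on $[X^{\odot\times}]_0, [X^{\odot\times}]_+$ handling negative arguments, and $j^{-1}\SUNSTAR{T}(\sigma) = T(\sigma)j^{-1}$ on $jX$; the intermediate integrals remain in $jX$ by admissibility of $X^{\odot\times}$ for $\{T(t)\}_{t \ge 0}$. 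Summing over $\ast$ gives \cref{eq:inhom-st}. At $t = 0$ the summand $\mathcal{K}_{\eta}^{0}f$ vanishes, while $\mathcal{K}_{\eta}^{+}f(0)$ and $\mathcal{K}_{\eta}^{-}f(0)$, being norm-limits of elements of the closed invariant subspaces $[X^{\odot\times}]_+$ and $[X^{\odot\times}]_-$, lie in $[X^{\odot\times}]_+$ resp. $[X^{\odot\times}]_-$, so their $j^{-1}$-images lie in $X_+$ resp. $X_-$; hence $\mathcal{K}_{\eta}f(0)$ has zero $X_0$-component. For uniqueness, if $u, \tilde u \in \BC^{\eta}(\RR,X)$ both solve \cref{eq:inhom-st} with zero $X_0$-component at $0$, then $w \DEF u - \tilde u \in \BC^{\eta}(\RR,X)$ solves the homogeneous equation \cref{eq:hom-st} on $\RR$; translating $w$ and invoking \cref{lem:XsigmaBC} (legitimate since $\eta \in (0,\min\{-a,b\})$) shows $w(s) \in X_0$ for every $s$, so $w$ is a $\BC^{\eta}(\RR,X_0)$-solution, on which $T$ restricts to a group and hence $w(t) = T(t)w(0)$ for all $t$; since $w(0) \in X_0$ with $P_0 w(0) = 0$ we get $w(0) = 0$, so $w \equiv 0$. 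The main obstacle is the single point already isolated — that $\mathcal{K}_{\eta}^{-}f$ takes values in $jX$ — and it is overcome by combining admissibility of $X^{\odot\times}$ for $\{T(t)\}_{t \ge 0}$, the norm-convergence \cref{lem:wsnormconv}, and norm-closedness of $jX$; everything else transcribes \cite[Lemma IX.3.2 and Exercise 3.4]{Delay1995} under the substitution $\SUNSTAR{X} \to X^{\odot\times}$.
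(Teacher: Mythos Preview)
Your proposal is correct and follows essentially the same route as the paper: you identify the three summands, use \cref{hyp:cm:j} to handle $I_0$ and $I_+$ as ordinary integrals in $jX$, and---crucially---treat the stable part $I_-$ via \cref{lem:wsnormconv} combined with admissibility of $X^{\odot\times}$ for $\{T(t)\}_{t \ge 0}$ and norm-closedness of $jX$, which is exactly the paper's argument. The only cosmetic difference is that the paper evaluates $I_+$ directly as a Bochner integral (exploiting norm-continuity of the integrand via \cref{lem:Pj:3}) rather than invoking \cref{lem:wsnormconv} there too; also, be careful that \cref{prop:jXadmissible} concerns nonnegative semigroup arguments, so for $I_+$ the inclusion in $jX$ really rests on \cref{lem:Pj:3} (the group identity $T^{\odot\star}(\sigma)j = jT(\sigma)$ on $X_+$ for $\sigma \le 0$), which you do invoke elsewhere.
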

\begin{proof}
  Let $\EPS < \eta$ and select $K_{\EPS} > 0$ according to \cref{prop:xsuncross-cm:trichotomy}. 
  \begin{steps}[label=\roman*.]
  \item
    For any given $f \in \BC^{\eta}(\RR,X^{\odot\times})$, the three integrals in the definition of $\mathcal{K}_{\eta}f$ naturally define functions $I_{\mu} : \RR \to X^{\odot\star}$ for $\mu \in \{0,+,-\}$. We begin by checking that these functions are well-defined, continuous, take values in $jX$ and satisfy certain estimates.
    
    \begin{description}[wide,labelindent=0pt,itemsep=0.5em]
    \item[$I_0$:]
      This is the simplest case, because the integration domain is compact. \Cref{hyp:cm:j,lem:Pj:3} imply, for all $t \in \RR$,
      \begin{align*}
        I_0(t) &= \int_0^t T^{\odot\star}(t - \tau)j j^{-1} P_0^{\odot\times}f(\tau)\,d\tau\\
               &= j \int_0^t T(t - \tau) j^{-1} P_0^{\odot\times}f(\tau)\,d\tau,
      \end{align*}
      where the second integral is a Riemann-integral. We see that $I_0$ takes values in $jX$. It is straightforward to obtain the estimate
      \begin{equation}
        \label{eq:I0-est}
        \|I_0(t)\| \le \frac{K_{\EPS}}{\eta - \EPS}e^{\eta|t|}\|f\|_{\eta} \qquad \ALL\,t \in \RR
      \end{equation}
      by using \cref{prop:xsuncross-cm:trichotomy}, distinguishing between the cases $t \ge 0$ and $t \le 0$.
    \item[$I_+$:]
      \Cref{hyp:cm:j,lem:Pj:3} imply that, for any $t \in \RR$,
      \[
        T^{\odot\star}(t - \tau)P_+^{\odot\times}f(\tau)=  jT(t - \tau)j^{-1}P_+^{\odot\times}f(\tau) \qquad \ALL\,\tau \ge t,
      \]
      so the integrand in $I_+(t)$ is continuous on $[t,\infty)$. \Cref{prop:xsuncross-cm:trichotomy} implies the estimate
      \[
        \|T^{\odot\star}(t - \tau)P_+^{\odot\times}f(\tau)\| \le K_{\EPS} e^{bt} e^{-b\tau + \eta|\tau|} \|f\|_{\eta} \qquad \ALL\,\tau \ge t.
      \]
      Hence the \WS-integral defining $I_+(t)$ exists, and it can be evaluated as a Bochner integral over $[t,\infty)$,
      \[
        I_+(t) = -j\int_t^{\infty} T(t - \tau)j^{-1}P_+^{\odot\times}f(\tau) \,d\tau,
      \]
      showing that $I_+$ takes values in $jX$. From the above estimate it follows that
      \[
       \|I_+(t)\| \le K_{\EPS} e^{bt} \|f\|_{\eta} \int_t^{\infty} e^{-b\tau + \eta|\tau|}\,d\tau.
     \]
     For the integral in the right-hand side,
     \[
       \int_t^{\infty}{e^{-b\tau + \eta|\tau|}\,d\tau} =
       \begin{cases}
         \frac{e^{-(b - \eta)t}}{b - \eta} &\text{if } t \ge 0,\\
         \frac{e^{-(b + \eta)t}}{b + \eta} - \frac{1}{b + \eta} + \frac{1}{b - \eta} &\text{if } t \le 0.
       \end{cases}
     \]
     Now, for $\alpha \ge 1$ we have the inequality 
     \[
       \frac{\alpha}{b + \eta} + \frac{1}{b - \eta} \le \frac{\alpha}{b - \eta} + \frac{1}{b + \eta}.
     \]
     (Subtract the left-hand side from the right-hand side and find that the result is non-negative.) If $t \le 0$ then $e^{-(b + \eta)t} \ge 1$, so in this case
     \[
       \int_t^{\infty}{e^{-b\tau + \eta|\tau|}\,d\tau} \le \frac{e^{-(b + \eta)t}}{b - \eta} \qquad \text{if } t \le 0.
     \]
     In summary, we have obtained the estimate
     \begin{equation}
       \label{eq:exp-int-est}
       \int_t^{\infty}{e^{-b\tau + \eta|\tau|}\,d\tau} \le \frac{ e^{-bt}e^{\eta|t|}}{b - \eta} \qquad \ALL\,t \in \RR,
     \end{equation}
     and therefore
     \begin{equation}
       \label{eq:Iplus-est}
       \|I_+(t)\| \le K_{\EPS}\|f\|_{\eta}\frac{e^{\eta|t|}}{b - \eta} \qquad \ALL\,t\in \RR.
     \end{equation}
   \item[$I_-$:] For any $t \in \RR$, the integrand in $I_-(t)$ is \WS-continuous, hence $\WS$-Lebesgue measurable, and by \cref{prop:xsuncross-cm:trichotomy} it satisfies the estimate
     \[
       \|T^{\odot\star}(t - \tau)P_-^{\odot\times}f(\tau)\| \le K_{\EPS} e^{a(t - \tau) + \eta|\tau|}\|f\|_{\eta} \qquad \ALL\,\tau \le t,
     \]
     so the \WS-integral defining $I_-(t)$ exists. \Cref{lem:wsnormconv} implies that
     \begin{equation}
       \label{eq:Imin-lim}
       I_-(t) = \lim_{n \to \infty} \int_{t-n}^t T^{\odot\star}(t - \tau)P_-^{\odot\times} f(\tau)\,d\tau,
     \end{equation}
     in norm. The function $P_-^{\odot\times} \circ f$ takes values in $X^{\odot\times}$, which is an admissible range for $T$, so each integral inside the limit is an element of $jX$. The norm convergence implies that the same is true for $I_-(t)$. From the above estimate it follows that
     \[
       \|I_-(t)\| \le K_{\EPS} e^{at} \|f\|_{\eta} \int_{-\infty}^t e^{-a\tau + \eta|\tau|}\,d\tau
     \]
     The substitution $-\tau = s$ inside the integral enables an application of \cref{eq:exp-int-est}, which yields
     \[
       \int_{-\infty}^t{e^{-a\tau + \eta|\tau|}\,d\tau} = \int_{-t}^{\infty}{e^{-(-a)s + \eta|s|}\,ds} \le \frac{e^{-(-a)(-t)}e^{\eta|t|}}{-a - \eta} = \frac{e^{-at}e^{\eta|t|}}{-a - \eta},
     \]
     and therefore
     \begin{equation}
       \label{eq:Imin-est}
       \|I_-(t)\| \le K_{\EPS}\|f\|_{\eta} \frac{e^{\eta|t|}}{-a - \eta} \qquad \ALL t \in \RR.
     \end{equation}
   \end{description}
   
   Combining the estimates \cref{eq:I0-est,eq:Iplus-est,eq:Imin-est}, we obtain
   \[
     e^{-\eta|t|}\|(\mathcal{K}_{\eta}f)(t)\| \le \|j^{-1}\| K_{\EPS} \|f\|_{\eta} \left(\frac{1}{\eta - \EPS} + \frac{1}{b - \eta} + \frac{1}{-a - \eta}\right) \qquad \ALL\,t \in \RR,
   \]
   so $\mathcal{K}_{\eta}f \in \BC^{\eta}(\RR,X)$ and $\mathcal{K}_{\eta}$ is a bounded linear operator.
 \item
   Given any $f \in \BC^{\eta}(\RR,X^{\odot\times})$, it is straightforward to check that $\mathcal{K}_{\eta}f$ is a solution of \cref{eq:inhom-st}.

   By \cref{prop:xsuncross-cm:group,prop:xsuncross-cm:invariance} the subspaces $[X^{\odot\times}]_0$ and $[X^{\odot\times}]_+$ are invariant for the group $\{T^{\odot\star}(t)\}_{t \in \RR}$ and the subspace $[X^{\odot\times}]_-$ is positively invariant for the semigroup $\{T^{\odot\star}(t)\}_{t \ge 0}$. So, for every $\mu \in \{0,+,-\}$ the projectors inside $I_{\mu}(t)$ commute with the (semi)group operators, which gives $I_{\mu}(t) = P_{\mu}^{\odot\star}I_{\mu}(t)$ for all $t \in \RR$. \Cref{lem:Pj:1} then implies that $j^{-1} \circ I_{\mu}$ maps into $X_{\mu}$. Since $I_0(0) = 0$ it follows that $(\mathcal{K}_{\eta}f)(0)$ has a vanishing component in $X_0$.

   For uniqueness, suppose that $v \in \BC^{\eta}(\RR,X)$ is another solution of \cref{eq:inhom-st} such that $P_0v(0) = 0$. Then $w \DEF \mathcal{K}_{\eta}f - v$ is a solution of \cref{eq:hom-st} in $\BC^{\eta}(\RR,X)$ through $w(t)$ for all $t \in \RR$, so \cref{lem:XsigmaBC} shows that $w$ takes values in $X_0$ and, in particular, $w(0) = P_0w(0) = 0$. \Cref{hyp:cm:group} implies that $w = 0$ identically. \qedhere
 \end{steps}
\end{proof}

\subsection{Modification of the nonlinearity}\label{sec:Rdelta}
The next step in the construction of a local center manifold is a modification of the nonlinearity $R$ introduced in the splitting \cref{eq:splittingGLR} in \cref{sec:linearization}. We recall that $R : X \to X^{\odot\times}$ is a $C^k$-smooth operator for some $k \ge 1$, and
\begin{equation}
  \label{eq:R0DR0}
  R(0) = 0, \qquad DR(0) = 0.
\end{equation}
For any $\delta > 0$, let $R_{\delta} : X \to X^{\odot\times}$ be its $\delta$-modification as defined in \cite[Section IX.4]{Delay1995}. The purpose of this modification is to obtain a nonlinearity that is globally Lipschitz (which will ensure that the corresponding substitution operators $\tilde{R}_{\delta}$ in \cref{eq:substitution} below are well-defined) with a Lipschitz constant controlled by $\delta$ (which will ensure the contractivity of the parameterized fixed point operator that defines the local center manifold.)

However, it is not clear to me how \cite[Lemma IX.4.1]{Delay1995} applies to obtain the aforementioned properties of $R_{\delta}$ in \cite[Corollary IX.4.2]{Delay1995}. First, if $X$ is infinite-dimensional, then the $C^k$-smoothness of $R$ does not imply Lipschitz continuity on balls of arbitrary radius. \emph{Locally} this is of course not a problem:

\begin{lemma}\label{lem:lipconst}
  There exist $\delta_1 > 0$ and $L : [0,\delta_1] \to \RR_+$ such that $L(0) = 0$, $L(\delta)$ is a Lipschitz constant for $R$ on the open ball $B_{\delta} \subseteq X$ for every $0 < \delta \le \delta_1$ and $L$ is continuous at zero.
\end{lemma}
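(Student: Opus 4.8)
The plan is to exploit the $C^k$-smoothness (with $k \ge 1$) of $R$ together with $DR(0) = 0$, so that the Fréchet derivative $DR$ is continuous and vanishes at the origin. First I would fix any radius $\delta_1 > 0$ such that the closed ball $\overline{B_{\delta_1}}$ is contained in the domain of $R$ (here $\DOM(R) = X$, so any $\delta_1 > 0$ works, but stating it this way keeps the argument robust). On the closure of each ball $B_\delta$ with $0 < \delta \le \delta_1$, the derivative $DR$ is bounded, and by the mean value inequality \cite[Theorem 1.8 in Chapter 1]{AmbrosettiProdi1993} — already invoked in \cref{sec:linearization} — the quantity
\[
  L(\delta) \DEF \sup_{\phi \in B_{\delta}} \|DR(\phi)\|
\]
is a Lipschitz constant for $R$ on the (convex) ball $B_\delta$. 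Setting $L(0) \DEF 0$ is consistent with this formula in the limit, since $DR(0) = 0$.

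The two remaining points are monotonicity/well-definedness and continuity at zero. Monotonicity of $\delta \mapsto L(\delta)$ on $[0,\delta_1]$ is immediate because the supremum is taken over a nested family of balls; in particular $L(\delta) \le L(\delta_1) < \infty$ for all $\delta \in [0,\delta_1]$, so $L$ is real-valued. For continuity at zero I would argue directly from the continuity of $DR$ at $0$: given $\EPS > 0$, continuity of $DR$ yields $\rho > 0$ such that $\|DR(\phi) - DR(0)\| = \|DR(\phi)\| < \EPS$ whenever $\|\phi\| < \rho$; hence for every $\delta \le \min\{\rho,\delta_1\}$ one has $L(\delta) = \sup_{\phi \in B_\delta}\|DR(\phi)\| \le \EPS$. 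Since $L(0) = 0$, this shows $L(\delta) \to 0 = L(0)$ as $\delta \downarrow 0$, i.e. $L$ is continuous at zero.

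I do not expect a genuine obstacle here; the lemma is essentially a packaging of the mean value inequality plus continuity of the derivative at the equilibrium. The only mild subtlety worth a sentence in the write-up is that $C^k$-smoothness with $k \ge 1$ does \emph{not} give a global (or even large-ball) Lipschitz constant in infinite dimensions — which is precisely the point flagged in the paragraph preceding the lemma — so one must genuinely localize, and the estimate $L(\delta) \le L(\delta_1) < \infty$ relies on $B_{\delta_1}$ having compact closure being \emph{false} in general, so boundedness of $DR$ on $B_{\delta_1}$ must instead be extracted from continuity of $DR$ on the \emph{closed} ball $\overline{B_{\delta_1}}$, or simply by shrinking: pick $\delta_1$ small enough that $\sup_{\overline{B_{\delta_1}}}\|DR\| < \infty$, which is possible since $DR$ is continuous at $0$ and hence bounded on a neighborhood of $0$. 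With that choice the rest goes through verbatim.
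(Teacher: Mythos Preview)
Your proposal is correct and follows essentially the same approach as the paper: define $L(\delta) \DEF \sup_{\phi \in B_\delta}\|DR(\phi)\|$, invoke the mean value inequality for the Lipschitz property, and use continuity of $DR$ at $0$ (with $DR(0)=0$) both to choose $\delta_1$ so that the supremum is finite and to obtain continuity of $L$ at zero. The paper's write-up is slightly more direct in that it immediately picks $\delta_1$ with $\sup_{B_{\delta_1}}\|DR\|\le 1$, but your final paragraph arrives at exactly this choice, so there is no substantive difference.
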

\begin{proof}
  By continuity of $DR$ and \cref{eq:R0DR0} there exists $\delta_1 > 0$ such that $\sup\left\{\|DR(w)\|\,:\, w \in B_{\delta_1}\right\} \le 1$. Define $L(0) \DEF 0$ and, for any $0 < \delta \le \delta_1$,
  \[
    L(\delta) \DEF \sup\left\{\|DR(w)\|\,:\, w \in B_{\delta} \right\}.
  \]
  By the mean value inequality $L(\delta)$ is a Lipschitz constant for $R$ on $B_{\delta}$. Moreover, given $\EPS > 0$ there exists $0 < \delta_{\EPS} \le \delta_1$ such that $\sup\left\{\|DR(w)\|\,:\, w \in B_{\delta_{\EPS}} \right\} \le \EPS$, and if $\delta \le \delta_{\EPS}$ then $L(\delta)$ does not exceed the left-hand side of this inequality. 
\end{proof}

Second, it is unclear to me that $R_{\delta}$ has the appropriate functional form to make \cite[Lemma IX.4.1]{Delay1995} applicable, so here is a minor adaptation that uses the above lemma.

\begin{proposition}[\normalfont{cf. \cite[Lemma IX.4.1 and Corollary IX.4.2]{Delay1995}}]\label{prop:Rdelta-Lip}
  For $\delta > 0$ sufficiently small, the modified nonlinearity $R_{\delta}$ is globally Lipschitz continuous with Lipschitz constant $L_{R_{\delta}} \to 0$ as $\delta \downarrow 0$.
\end{proposition}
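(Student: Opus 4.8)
The plan is to bypass \cite[Lemma IX.4.1]{Delay1995} and estimate the Lipschitz constant of $R_{\delta}$ directly, using \cref{lem:lipconst} to control $R$ on the ball on which the modification is supported. Recall from \cite[Section IX.4]{Delay1995} that the $\delta$-modification is obtained from $R$ by multiplication with a scalar cutoff, $R_{\delta}(\phi) = \chi_{\delta}(\phi)\,R(\phi)$, where $\chi_{\delta}(\phi) \DEF \chi(\|\phi\|/\delta)$ for a fixed $\chi : \RR_+ \to [0,1]$ with $\chi \equiv 1$ on $[0,1]$, $\chi \equiv 0$ on $[2,\infty)$ and $\|\chi'\|_{\infty} \le c_{\chi}$ for some constant $c_{\chi}$. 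Then $\chi_{\delta}$ takes values in $[0,1]$, is supported in $\overline{B_{2\delta}}$, and is $(c_{\chi}/\delta)$-Lipschitz because $\phi \mapsto \|\phi\|$ is $1$-Lipschitz; moreover $R_{\delta}$ maps into $X^{\odot\times}$ since the latter is a linear subspace containing the range of $R$. (The precise support radius plays no role, up to a harmless change of constants.)

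First I would fix $\delta_1$ and $L$ as in \cref{lem:lipconst} and restrict attention to $0 < \delta \le \delta_1/2$, so that $\overline{B_{2\delta}} \subseteq B_{\delta_1}$. Then $L(2\delta)$ is a Lipschitz constant for $R$ on $B_{2\delta}$, and since $R(0) = 0$ we get $\|R(\phi)\| \le L(2\delta)\|\phi\| \le 2\delta\,L(2\delta)$ for all $\phi \in B_{2\delta}$. The key estimate is the product rule on the support: for $\phi, \psi \in \overline{B_{2\delta}}$,
\[
  \|R_{\delta}(\phi) - R_{\delta}(\psi)\| \le \chi_{\delta}(\phi)\,\|R(\phi) - R(\psi)\| + |\chi_{\delta}(\phi) - \chi_{\delta}(\psi)|\,\|R(\psi)\| \le (1 + 2c_{\chi})\,L(2\delta)\,\|\phi - \psi\|,
\]
using $\chi_{\delta} \le 1$, the Lipschitz bounds on $R$ and $\chi_{\delta}$, and $\|R(\psi)\| \le 2\delta\,L(2\delta)$. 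If instead both $\phi$ and $\psi$ lie outside $B_{2\delta}$, then $R_{\delta}(\phi) = R_{\delta}(\psi) = 0$. In the remaining mixed case, say $\phi \in B_{2\delta}$ and $\psi \notin B_{2\delta}$, one cannot apply the previous estimate directly because $\|R(\psi)\|$ is not controlled; instead, note that $t \mapsto \|\phi + t(\psi - \phi)\|$ is continuous and passes from a value $< 2\delta$ at $t = 0$ to $\ge 2\delta$ at $t = 1$, so there is a point $\psi'$ on the segment $[\phi,\psi]$ with $\|\psi'\| = 2\delta$; then $R_{\delta}(\psi') = 0 = R_{\delta}(\psi)$ while $\|\phi - \psi'\| \le \|\phi - \psi\|$, which reduces the mixed case to the first one. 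Hence $R_{\delta}$ is globally Lipschitz with $L_{R_{\delta}} \le (1 + 2c_{\chi})\,L(2\delta)$.

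Finally, $L(2\delta) \to L(0) = 0$ as $\delta \downarrow 0$ by the continuity of $L$ at $0$ asserted in \cref{lem:lipconst}, whence $L_{R_{\delta}} \to 0$ as $\delta \downarrow 0$, as claimed. The only genuinely delicate point is the bookkeeping in the mixed case: the segment-interpolation is what lets one reduce to the regime where $\|R\|$ is controlled, and the essential cancellation is that the $(c_{\chi}/\delta)$-growth of the Lipschitz constant of the cutoff is exactly compensated by the $2\delta$-smallness of $\|R\|$ on $\overline{B_{2\delta}}$, which in turn rests on $R(0) = 0$ and on the local Lipschitz bound furnished by \cref{lem:lipconst}.
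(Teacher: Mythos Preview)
Your argument is sound in method but rests on a misreading of \cite[Section IX.4]{Delay1995}: the $\delta$-modification there is not the single-ball cutoff $\chi(\|\phi\|/\delta)\,R(\phi)$ you describe, but $R_{\delta}(x) = \Xi_{\delta}(x)\,R(x)$ with the product $\Xi_{\delta}(x) \DEF \xi_{\delta}(P_0 x)\,\xi_{\delta}((I-P_0)x)$, one factor for the center component and one for its complement. For the present Lipschitz estimate this is harmless---$\Xi_\delta$ is still $[0,1]$-valued, supported in $\overline{B_{4\delta}}$, and Lipschitz with constant of order $C/\delta$, so your product-rule bound carries over verbatim with $L(4\delta)$ in place of $L(2\delta)$---but the distinction matters later: the norm on an infinite-dimensional $X$ is in general not $C^k$, so your $\chi_\delta$ would not be smooth, whereas the product form allows one to use a smooth norm on the finite-dimensional $X_0$ when establishing $C^k$-smoothness of the center manifold map.

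Aside from this, your approach and the paper's coincide: product rule followed by a case split on whether the arguments lie in the support of the cutoff. The only notable difference is in the mixed case. You reduce it to the first case by interpolating along the segment to a boundary point $\psi'$ where the cutoff vanishes; the paper instead writes the product rule asymmetrically as $\|R(x)-R(y)\|\,\Xi_\delta(y) + |\Xi_\delta(x)-\Xi_\delta(y)|\,\|R(x)\|$, so that when (after relabeling) $y$ lies outside the support, the first term is zero and only the already-controlled second term survives. Both reductions work; the paper's saves the interpolation step, while yours avoids the implicit relabeling.
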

\begin{proof}
  Let $\xi : \RR_+ \to [0,1]$ be a standard cut-off function as introduced at the start of \cite[Section IX.4]{Delay1995} and define the auxiliary functions $\xi_{\delta} : X \to [0,1]$ and  $\Xi_{\delta} : X \to [0,1]$ by
  \[
    \xi_{\delta}(x) \DEF \xi\Bigl(\frac{\|x\|}{\delta}\Bigr), \qquad \Xi_{\delta}(x) \DEF \xi_{\delta}(P_0x)\xi_{\delta}((I - P_0)x), \qquad \delta > 0.
  \]
  In terms of these functions, we have $R_{\delta}(x) = R(x)\Xi_{\delta}(x)$ for all $x \in X$.
  \begin{steps}
  \item
    Let $C > 0$ be a global Lipschitz constant for $\xi$. Using that the composition of two Lipschitz functions is Lipschitz with constant equal to the product of the two Lipschitz constants, we obtain that $\xi_{\delta}$ has the global Lipschitz constant $\frac{C}{\delta}$. This implies the global Lipschitz estimate
    \begin{align*}
      \left| \Xi_{\delta}(x) - \Xi_{\delta}(y) \right| &\le \left|\xi_{\delta}(P_0x) - \xi_{\delta}(P_0y)\right| + \left|\xi_{\delta}((I - P_0)x) - \xi_{\delta}((I - P_0)y)\right|\\
                                                       &\le \frac{C}{\delta}\|x - y\| + \frac{C}{\delta}\|x - y\| \lesssim \frac{C}{\delta}\|x - y\|,
    \end{align*}
    for all $x, y \in X$. (The numerical factor was absorbed into $C$.) We furthermore note that
    \[
      \|x\| = \|P_0x + (I - P_0)x\| \le \|P_0x\| + \|(I - P_0)x\| \qquad \ALL\,x \in X,
    \] 
    so if $\|x\| \ge 4\delta$ then $\max\{\|P_0x\|, \|(I - P_0)x\|\} \ge 2\delta$ and consequently $\Xi_{\delta}(x) = 0$.
  \item
    Let $\delta > 0$ be such that $4\delta \le \delta_1$ with $\delta_1$ as in \cref{lem:lipconst}. For any $x, y \in X$ we estimate
    \begin{align*}
      \|R_{\delta}(x) - R_{\delta}(y)\| &\le \|R(x) - R(y)\| \cdot \Xi_{\delta}(y) + |\Xi_{\delta}(x) - \Xi_{\delta}(y)| \cdot \|R(x)\|\\
                                        &\le
                                          \begin{cases}
                                            L(4\delta)\|x - y\| + \frac{C}{\delta}\|x - y\|L(4\delta)4\delta& \text{if } \|x\|, \|y\| < 4\delta,\\
                                            0 & \text{if } \|x\|, \|y\| \ge 4\delta,\\
                                            \frac{C}{\delta}\|x - y\|L(4\delta)4\delta    & \text{if } \|x\| < 4\delta, \|y\| \ge 4\delta,
                                          \end{cases}\\
                                        &\le L(4\delta)(4C + 1)\|x - y\|,
    \end{align*}
    so $L_{R_{\delta}} = L(4\delta)(4C + 1)$ is a global Lipschitz constant for $R_{\delta}$ and $L_{R_\delta} \to 0$ as $\delta \downarrow 0$. \hfill \qedhere
  \end{steps}
\end{proof}

In the proof it was also obtained that for $\delta > 0$ sufficiently small, $\|x\| \ge 4\delta$ implies $\Xi_{\delta}(x) = 0$. Together with \cref{prop:Rdelta-Lip} itself, this gives
\begin{equation}
  \label{eq:Rdelta-estimate}
  \|R_{\delta}(x)\| \le 4\delta L_{R_{\delta}} \qquad \ALL\,x \in X.
\end{equation}
We associate with $R_{\delta}$ the \textbf{substitution operator}
\begin{equation}\label{eq:substitution}
  \tilde{R}_{\delta} : \BC^{\eta}(\RR,X) \to \BC^{\eta}(\RR,X^{\odot\times}), \qquad \tilde{R}_{\delta}(u) \DEF R_{\delta} \circ u.
\end{equation}

\begin{corollary}\label{cor:substitution}
For all $\delta > 0$ sufficiently small, $\tilde{R}_{\delta}$ is well-defined and globally Lipschitz with Lipschitz constant $L_{R_{\delta}} \to 0$ as $\delta \downarrow 0$.
\end{corollary}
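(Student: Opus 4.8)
The plan is to read off both assertions directly from \cref{prop:Rdelta-Lip} together with the uniform bound \cref{eq:Rdelta-estimate}, so that nothing beyond routine bookkeeping with the weighted supremum norm is needed. Fix $\delta > 0$ small enough that \cref{prop:Rdelta-Lip} applies, and let $L_{R_{\delta}}$ denote the resulting global Lipschitz constant of $R_{\delta} : X \to X^{\odot\times}$; recall also that $R_{\delta}$ was constructed in \cref{sec:Rdelta} as a (continuous, since Lipschitz) map taking values in $X^{\odot\times}$.

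For well-definedness I would argue as follows. Given $u \in \BC^{\eta}(\RR,X)$, the composition $\tilde{R}_{\delta}(u) = R_{\delta} \circ u$ is continuous from $\RR$ into $X^{\odot\times}$, being the composition of the continuous map $R_{\delta}$ with the continuous map $u$. Moreover \cref{eq:Rdelta-estimate} gives $\|R_{\delta}(u(t))\| \le 4\delta L_{R_{\delta}}$ for every $t \in \RR$, whence $\sup_{t \in \RR} e^{-\eta|t|}\|\tilde{R}_{\delta}(u)(t)\| \le 4\delta L_{R_{\delta}} < \infty$; thus $\tilde{R}_{\delta}(u) \in \BC^{\eta}(\RR,X^{\odot\times})$. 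Note that no growth hypothesis on $u$ enters here, since the uniform bound on $R_{\delta}$ absorbs everything.

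For the Lipschitz estimate, given $u, v \in \BC^{\eta}(\RR,X)$ and $t \in \RR$, I would use the pointwise inequality $e^{-\eta|t|}\|R_{\delta}(u(t)) - R_{\delta}(v(t))\| \le L_{R_{\delta}}\, e^{-\eta|t|}\|u(t) - v(t)\| \le L_{R_{\delta}}\|u - v\|_{\BC^{\eta}(\RR,X)}$, and take the supremum over $t$ to conclude that $L_{R_{\delta}}$ is a global Lipschitz constant for $\tilde{R}_{\delta}$. The convergence $L_{R_{\delta}} \to 0$ as $\delta \downarrow 0$ is then precisely the corresponding statement in \cref{prop:Rdelta-Lip}.

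I do not expect a real obstacle: the only point that is not purely formal is recognizing that well-definedness requires, besides finiteness of the weighted supremum norm, also continuity of $R_{\delta} \circ u$ and membership of its values in the space $X^{\odot\times}$ (rather than just $X^{\odot\star}$) — and all three of these are immediate from the construction of $R_{\delta}$ and from \cref{eq:Rdelta-estimate}.
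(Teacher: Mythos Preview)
Your proposal is correct and follows essentially the same route as the paper: well-definedness from the uniform bound \cref{eq:Rdelta-estimate}, and the Lipschitz estimate by pushing the pointwise Lipschitz inequality for $R_{\delta}$ through the weighted supremum norm. Your version is simply more explicit about continuity and the target space $X^{\odot\times}$, but the argument is the same.
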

\begin{proof}
  $\tilde{R}_{\delta}$ is well-defined due to \cref{eq:Rdelta-estimate} and, for any $u,v \in \BC^{\eta}(\RR,X)$,
  \begin{align*}
    \|\tilde{R}_{\delta}(u) - \tilde{R}_{\delta}(v)\|_{\eta} &= \sup_{t \in \RR} e^{-\eta|t|} \|R_{\delta}(u(t)) - R_{\delta}(v(t))\|\\
                                                             &\le L_{R_{\delta}} \sup_{t \in \RR} e^{-\eta|t|} \|u(t) - v(t)\| = L_{R_{\delta}} \|u - v\|_{\eta},
  \end{align*}
  so $\tilde{R}_{\delta}$ inherits the global Lipschitz constant from $R_{\delta}$.
\end{proof}

\subsection{The fixed-point operator and the center manifold}\label{sec:cm-fixed-point}
Motivated by the characterization of $X_0$ provided by \cref{lem:XsigmaBC}, we will define a parameterized fixed point operator, in such a way that its fixed points correspond to exponentially bounded solutions on $\RR$ of the modified equation
\begin{equation}
  \label{eq:aie-st-delta}
  u(t) = T(t-s)u(s) + j^{-1}\int_s^t{\SUNSTAR{T}(t - \tau)R_{\delta}(u(\tau))\,d\tau}, \qquad -\infty < s \le t < \infty.
\end{equation}
(This equation is obtained from \cref{eq:aie-st} by replacing $R$ with the modified nonlinearity $R_{\delta}$ from \cref{sec:Rdelta}.) Let $\eta$ and $\mathcal{K}_{\eta}$ be as in \cref{prop:Keta} and let $\tilde{R}_{\delta}$ be as in \cref{cor:substitution}. Define the fixed point operator
\[
  \mathcal{G} : \BC^{\eta}(\RR,X) \times X_0 \to \BC^{\eta}(\RR,X), \qquad \mathcal{G}(u,\phi) \DEF T(\cdot)\phi + \mathcal{K}_{\eta}\tilde{R}_{\delta}(u),
\]
where its second argument in $X_0$ is regarded as a parameter.

\begin{theorem}[\normalfont{cf. \cite[Theorem IX.5.1]{Delay1995}}]\label{thm:cm}
  If $\eta \in (0, \min\{-a,b\})$ and if $\delta > 0$ is sufficiently small, then the following statements hold.
  \begin{thmenum}
  \item
    For every $\phi \in X_0$ the equation $\mathcal{G}(u,\phi) = u$ has a unique solution $u = u^{\star}(\phi)$.
  \item
    \label{thm:cm:lipschitz}
    The map $u^{\star} : X_0 \to \BC^{\eta}(\RR,X)$ is globally Lipschitz and $u^{\star}(0) = 0$.
  \end{thmenum}
\end{theorem}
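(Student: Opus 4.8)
The plan is to derive the result from the uniform contraction principle \cite[Theorem 0.3.2]{Hale1980}, exactly as in the $\odot$-reflexive case \cite[Theorem IX.5.1]{Delay1995}, since all the substantive work has already been carried out in \cref{prop:Keta,prop:Rdelta-Lip,cor:substitution}. First I would check that $\mathcal{G}$ maps $\BC^{\eta}(\RR,X) \times X_0$ into $\BC^{\eta}(\RR,X)$. For the term $\mathcal{K}_{\eta}\tilde{R}_{\delta}(u)$ this is immediate from \cref{cor:substitution} and \cref{prop:Keta}. For the term $T(\cdot)\phi$ with $\phi \in X_0$, \cref{hyp:cm:group} allows the extension of $T(\cdot)\phi$ to all of $\RR$, and picking $\EPS \in (0,\eta)$ together with $K_{\EPS}$ as in \cref{hyp:cm:trichotomy}, the estimate \cref{eq:trichotomy:center} gives
\[
  e^{-\eta|t|}\|T(t)\phi\| \le K_{\EPS} e^{-(\eta - \EPS)|t|}\|\phi\| \le K_{\EPS}\|\phi\| \qquad \ALL\, t \in \RR,
\]
so $t \mapsto T(t)\phi$ lies in $\BC^{\eta}(\RR,X)$ and $\phi \mapsto T(\cdot)\phi$ is a bounded linear operator from $X_0$ into $\BC^{\eta}(\RR,X)$ with operator norm at most $K_{\EPS}$.

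Next I would set up the contraction estimate. For fixed $\phi \in X_0$ and any $u,v \in \BC^{\eta}(\RR,X)$, linearity of $\mathcal{K}_{\eta}$ and \cref{cor:substitution} yield
\[
  \|\mathcal{G}(u,\phi) - \mathcal{G}(v,\phi)\|_{\eta} = \|\mathcal{K}_{\eta}(\tilde{R}_{\delta}(u) - \tilde{R}_{\delta}(v))\|_{\eta} \le \|\mathcal{K}_{\eta}\|\, L_{R_{\delta}}\|u - v\|_{\eta}.
\]
Since $\|\mathcal{K}_{\eta}\|$ depends only on $\eta$ while $L_{R_{\delta}} \to 0$ as $\delta \downarrow 0$, I would fix $\delta > 0$ small enough (in particular $4\delta \le \delta_1$ so that \cref{prop:Rdelta-Lip} applies) that $\kappa \DEF \|\mathcal{K}_{\eta}\| L_{R_{\delta}} < 1$; crucially this bound is independent of the parameter $\phi$. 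Moreover, for fixed $u$ the map $\phi \mapsto \mathcal{G}(u,\phi)$ is affine, with $\|\mathcal{G}(u,\phi) - \mathcal{G}(u,\psi)\|_{\eta} = \|T(\cdot)(\phi - \psi)\|_{\eta} \le K_{\EPS}\|\phi - \psi\|$, so $\mathcal{G}$ is Lipschitz in the parameter uniformly in $u$. The uniform contraction principle then provides, for each $\phi \in X_0$, a unique fixed point $u^{\star}(\phi)$ of $\mathcal{G}(\cdot,\phi)$, and the standard manipulation $\|u^{\star}(\phi) - u^{\star}(\psi)\|_{\eta} \le \kappa\|u^{\star}(\phi) - u^{\star}(\psi)\|_{\eta} + K_{\EPS}\|\phi - \psi\|$ gives the global Lipschitz bound $\|u^{\star}(\phi) - u^{\star}(\psi)\|_{\eta} \le \tfrac{K_{\EPS}}{1 - \kappa}\|\phi - \psi\|$. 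Finally, $R(0) = 0$ from \cref{eq:R0DR0} implies $R_{\delta}(0) = 0$, hence $\tilde{R}_{\delta}(0) = 0$ and $\mathcal{G}(0,0) = T(\cdot)0 + \mathcal{K}_{\eta}(0) = 0$, so by uniqueness $u^{\star}(0) = 0$.

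I do not anticipate a genuine obstacle here: the one place where non-$\odot$-reflexivity entered — showing that $\mathcal{K}_{\eta}$ is a well-defined bounded operator with range in $\BC^{\eta}(\RR,X)$, cf.\ \cref{eq:Imin-lim} — was already handled in \cref{prop:Keta}, and with that in hand the present theorem is entirely routine. The only slightly delicate bookkeeping is ensuring the contraction constant $\kappa$ can be made $< 1$ uniformly over the parameter $\phi$, which works precisely because the Lipschitz bound $L_{R_{\delta}}$ supplied by \cref{prop:Rdelta-Lip} does not depend on $\phi$.
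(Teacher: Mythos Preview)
Your proposal is correct and follows essentially the same route as the paper's proof: both derive the joint estimate $\|\mathcal{G}(u,\phi) - \mathcal{G}(v,\psi)\|_{\eta} \le K_{\EPS}\|\phi - \psi\| + \|\mathcal{K}_{\eta}\| L_{R_{\delta}}\|u - v\|_{\eta}$ from \cref{hyp:cm:trichotomy} and \cref{cor:substitution}, choose $\delta$ small so that $\|\mathcal{K}_{\eta}\| L_{R_{\delta}} < 1$ (the paper takes it $\le \tfrac12$), and conclude via the (uniform) contraction mapping principle. Your explicit verification that $\mathcal{G}$ maps into $\BC^{\eta}(\RR,X)$ and that $\mathcal{G}(0,0)=0$ are a bit more detailed than the paper's version, but the argument is the same.
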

\begin{proof}
  Let $u,v \in \BC^{\eta}(\RR,X)$ and $\phi, \psi \in X_0$ be arbitrary. Then
  \begin{align*}
    \|\mathcal{G}(u,\phi) - \mathcal{G}(v,\psi)\|_{\eta} &\le \sup_{t \in \RR} e^{-\eta|t|}\|T(t)(\phi - \psi)\| + \|\mathcal{K}_{\eta}\| L_{R_{\delta}} \|u - v\|_{\eta}\\
                                                         &\le K_{\EPS} \sup_{t \in \RR} e^{-(\eta - \EPS)|t|} \|\phi - \psi\| + \|\mathcal{K}_{\eta}\| L_{R_{\delta}} \|u - v\|_{\eta}\\
                                                         &\le K_{\EPS} \|\phi - \psi\| + \|\mathcal{K}_{\eta}\| L_{R_{\delta}} \|u - v\|_{\eta},
  \end{align*}
  where in the second line we used \cref{hyp:cm:trichotomy} with $\EPS < \eta$. By \cref{cor:substitution} there exists $\delta_2 > 0$ such that $\|\mathcal{K}_{\eta}\| L_{R_{\delta}} \le \frac{1}{2}$ for all $0 < \delta \le \delta_2$ sufficiently small, and we select $\delta$ accordingly.
  \begin{steps}[label=\roman*.]
  \item
    If $\psi = \phi$ then the above estimate shows that $\mathcal{G}(\cdot,\phi)$ is globally Lipschitz with Lipschitz constant $\frac{1}{2}$, so by the contraction mapping principle $\mathcal{G}(\cdot,\phi)$ has a unique fixed point $u^{\star}(\phi)$.
  \item
    Let $u^{\star}(\phi)$ and $u^{\star}(\psi)$ be the unique fixed points of $\mathcal{G}(\cdot,\phi)$ and $\mathcal{G}(\cdot,\psi)$, respectively. Then
    \begin{align*}
      \|u^{\star}(\phi) - u^{\star}(\psi)\|_{\eta} &= \|\mathcal{G}(u^{\star}(\phi),\phi) - \mathcal{G}(u^{\star}(\psi),\psi)\|_{\eta}\\
      &\le K_{\EPS}\|\phi - \psi\| + \frac{1}{2}\|u^{\star}(\phi) - u^{\star}(\psi)\|_{\eta},
    \end{align*}
    so $\|u^{\star}(\phi) - u^{\star}(\psi)\|_{\eta} \le 2K_{\EPS}\|\phi - \psi\|$. It is clear that $u^{\star}(0) = 0$. \qedhere
  \end{steps}
\end{proof}

Let $u^{\star} : X_0 \to \BC^{\eta}(\RR,X)$ be the parameterized fixed point from \cref{thm:cm:lipschitz}. Clearly the map $\mathcal{C}$ in the definition below inherits the global Lipschitz continuity from $u^{\star}$.

\begin{definition}\label{def:CM}
  A \textbf{\emph{global} center manifold} $\CM$ for \cref{eq:aie-st-delta} is defined as the image of the mapping
  \begin{equation}
    \label{eq:cmmap}
    \mathcal{C} : X_0 \to X, \qquad \mathcal{C} \DEF \operatorname{ev} \circ u^{\star},
  \end{equation}
  where $\operatorname{ev} : \BC^{\eta}(\RR,X) \to X$ is the evaluation at zero. 
\end{definition}

As alluded to above, $\CM$ is a nonlinear generalization of the center subspace $X_0$ that was characterized in \cref{lem:XsigmaBC}. This statement can be made more precise.

\begin{proposition}\label{prop:CMcharacterization}
  It holds that
  \[
    \CM = \{\psi \in X \, :\, \text{there exists a solution of \cref{eq:aie-st-delta} on $\RR$ through $\psi$ which belongs to $\BC^{\eta}(\RR,X)$}\}.
  \]
\end{proposition}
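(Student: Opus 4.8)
The plan is to prove both inclusions, in each case treating the modified equation \cref{eq:aie-st-delta} as nothing more than the inhomogeneous equation \cref{eq:inhom-st} with the particular forcing function $f = R_{\delta} \circ u$. Two standing facts will be used throughout: first, the substitution operator $\tilde{R}_{\delta}$ maps $\BC^{\eta}(\RR,X)$ into $\BC^{\eta}(\RR,X^{\odot\times})$ by \cref{cor:substitution}, so that $\mathcal{K}_{\eta}$ may be applied to $\tilde{R}_{\delta}(u)$; second, $X^{\odot\times}$ is an admissible range for $\{T(t)\}_{t \ge 0}$ by \cref{cor:x0suncross-is-xsuncross}, which is precisely what makes the \WS-integral in \cref{eq:aie-st-delta} meaningful.

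For the inclusion $\CM \subseteq \{\psi \in X : \cdots\}$, I would take $\psi \in \CM$, write $\psi = \mathcal{C}(\phi) = u^{\star}(\phi)(0)$ with $\phi \in X_0$, and set $u \DEF u^{\star}(\phi)$, so that $u = \mathcal{G}(u,\phi) = T(\cdot)\phi + \mathcal{K}_{\eta}\tilde{R}_{\delta}(u)$ lies in $\BC^{\eta}(\RR,X)$ and $u(0) = \psi$. By \cref{prop:Keta:solution} the function $\mathcal{K}_{\eta}\tilde{R}_{\delta}(u)$ is a solution on $\RR$ of \cref{eq:inhom-st} with $f = R_{\delta} \circ u$, and by \cref{hyp:cm:group} the function $T(\cdot)\phi$ is a solution on $\RR$ of the homogeneous equation \cref{eq:hom-st}. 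Adding the two variation-of-constants identities and using the (semi)group property of $T^{\odot\star}$ inside the \WS-integral shows that $u$ itself satisfies \cref{eq:aie-st-delta} for all $(t,s) \in \Omega_{\RR}$; hence $u$ is a solution on $\RR$ through $\psi$ belonging to $\BC^{\eta}(\RR,X)$, and $\psi$ lies in the right-hand side.

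For the reverse inclusion I would start from a solution $u \in \BC^{\eta}(\RR,X)$ of \cref{eq:aie-st-delta} on $\RR$ with $u(t_0) = \psi$ for some $t_0 \in \RR$; since \cref{eq:aie-st-delta} is translation invariant and $\|u(\cdot + t_0)\|_{\eta} \le e^{\eta|t_0|}\|u\|_{\eta} < \infty$, one may assume $t_0 = 0$. Put $f \DEF R_{\delta} \circ u \in \BC^{\eta}(\RR,X^{\odot\times})$ and $\phi \DEF P_0\psi \in X_0$, and consider $v \DEF u - \mathcal{K}_{\eta}f \in \BC^{\eta}(\RR,X)$. Because both $u$ and $\mathcal{K}_{\eta}f$ satisfy \cref{eq:inhom-st} with this same $f$, the inhomogeneous terms cancel on subtraction, so $v$ is a solution of \cref{eq:hom-st} on $\RR$. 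Then \cref{lem:XsigmaBC} forces $v$ to take its values in $X_0$, whence $v(0) = P_0 v(0)$; combining this with $P_0(\mathcal{K}_{\eta}f)(0) = 0$ (again \cref{prop:Keta:solution}) gives $v(0) = P_0 v(0) = P_0 u(0) = \phi$, and therefore $v(t) = T(t)\phi$ for all $t \in \RR$ — directly from \cref{eq:hom-st} for $t \ge 0$, and via the group extension on $X_0$ from \cref{hyp:cm:group} for $t < 0$. Consequently $u = T(\cdot)\phi + \mathcal{K}_{\eta}\tilde{R}_{\delta}(u) = \mathcal{G}(u,\phi)$, so the uniqueness part of \cref{thm:cm} yields $u = u^{\star}(\phi)$ and hence $\psi = u(0) = \mathcal{C}(\phi) \in \CM$.

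I do not expect a genuine obstacle: the difficulties specific to non-$\odot$-reflexivity have already been absorbed into \cref{prop:Keta} and into the admissibility of $X^{\odot\times}$ for $\{T(t)\}_{t \ge 0}$, so what remains is the bookkeeping of the cancellation argument together with the invocations of \cref{lem:XsigmaBC} and \cref{prop:Keta:solution}. The only point deserving a little care is the reduction to $t_0 = 0$ via translation invariance of \cref{eq:aie-st-delta}, including the verification that the translated function still belongs to $\BC^{\eta}(\RR,X)$.
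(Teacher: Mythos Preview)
Your proposal is correct and follows essentially the same route as the paper's proof. The only organizational difference is in the reverse inclusion: you subtract $\mathcal{K}_{\eta}\tilde{R}_{\delta}(u)$, invoke \cref{lem:XsigmaBC} to force the difference into $X_0$, and then identify it as $T(\cdot)\phi$; the paper instead cites the uniqueness clause of \cref{prop:Keta:solution} (whose proof is precisely that \cref{lem:XsigmaBC} argument) to obtain $u(t)=T(t-s)P_0u(s)+(\mathcal{K}_{\eta}\tilde{R}_{\delta}(u))(t)$ and then reads off the constant $\phi=T(-s)P_0u(s)$. These are the same argument packaged slightly differently, and your version is arguably cleaner.
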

\begin{proof}
  Let $\psi \in \CM$ be arbitrary, so $\psi = \mathcal{C}(\phi) = u^{\star}(\phi)(0)$ for some $\phi \in X_0$. We prove that $u = u^{\star}(\phi)$ is a solution of \cref{eq:aie-st-delta} in $\BC^{\eta}(\RR,X)$. \Cref{prop:Keta:solution} shows that $\mathcal{K}_{\eta}\tilde{R}_{\delta}(u)$ is a solution of \cref{eq:inhom-st} with $f = \tilde{R}_{\delta}(u)$. It follows that
  \begin{align*}
    u(t) &= T(t)\phi + (\mathcal{K}_{\eta}\tilde{R}_{\delta}(u))(t)\\
         &= T(t)\phi + T(t - s)(\mathcal{K}_{\eta}\tilde{R}_{\delta}(u))(s) + j^{-1}\int_s^t{\SUNSTAR{T}(t - \tau)R_{\delta}(u(\tau))\,d\tau}\\
         &= T(t)\phi + T(t - s)\left(u(s) - T(s)\phi\right) + j^{-1}\int_s^t{\SUNSTAR{T}(t - \tau)R_{\delta}(u(\tau))\,d\tau}\\
         &= T(t - s)u(s) + j^{-1}\int_s^t{\SUNSTAR{T}(t - \tau)R_{\delta}(u(\tau))\,d\tau}
  \end{align*}
  for all $(t,s) \in \Omega_{\RR}$.

  Conversely, suppose that $\psi \in X$ is such that there exist a solution $u$ in $\BC^{\eta}(\RR,X)$ of \cref{eq:aie-st-delta} and a time $t_0 \in \RR$ such that $u(t_0) = \psi$. We may assume that $t_0 = 0$, since by translation invariance $u_0 \DEF u(\cdot + t_0)$ is a solution in $BC^{\eta}(\RR,X)$ that satisfies $u_0(0) = \psi$. For $(t,s) \in \Omega_{\RR}$ we write \cref{eq:aie-st-delta} as
  \begin{align*}
    u(t) &= T(t - s)P_0u(s) + T(t - s)(I - P_0)u(s) + j^{-1}\int_s^t{\SUNSTAR{T}(t - \tau)R_{\delta}(u(\tau))\,d\tau}\\
         &= T(t - s)P_0u(s) + (\mathcal{K}_{\eta} \tilde{R}_{\delta}(u))(t)
  \end{align*}
  where \cref{prop:Keta:solution} was used for the second equality. Rearranging, we obtain
  \[
    T(-t)(u(t) - (\mathcal{K}_{\eta} \tilde{R}_{\delta}(u))(t)) = T(-s)P_0 u(s) \qquad \ALL\,(t,s) \in \Omega_{\RR}
  \]
  and it follows that both sides equal the same constant $\phi \in X_0$. Hence
  \[
    u(t) - (\mathcal{K}_{\eta} \tilde{R}_{\delta}(u))(t) = T(t)\phi \qquad \ALL\,t \in \RR
  \]
  which shows that $\mathcal{G}(u,\phi) = u$, implying that $\psi = u(0) = \mathcal{C}(\phi) \in \CM$.
\end{proof}

Let $B_{\delta}(X)$ be the open $\delta$-ball centered at the origin in $X$. The restrictions of $R_{\delta}$ and $R$ to this ball are equal, so if we restrict the unknown $u$ to take values in $B_{\delta}(X)$, then \cref{eq:aie-st-delta} and \cref{eq:aie-st} coincide as well. We note that \cref{thm:cm:lipschitz} implies that $U$ in the following definition is an open neighborhood of the origin in $X_0$.

\begin{definition}\label{def:LCM}
  Let $\mathcal{C}$ be as in \cref{eq:cmmap}. A \textbf{\emph{local} center manifold} $\LCM$ for \cref{eq:aie-st} is defined as the image of the restriction of $\mathcal{C}$ to $U \DEF \{\phi \in X_0\,:\,\mathcal{C}(\phi) \in B_{\delta}(X)\}$.
\end{definition}

In \cite{Delay1995} the proof of the next result is suggested as an exercise, but in the present context it also follows directly from \cref{prop:CMcharacterization}. We recall the definition of the semiflow $\Sigma$ in \cref{eq:semiflow}.

\begin{corollary}[\normalfont{cf. \cite[Theorem IX.5.3]{Delay1995}}]
  The following two statements hold.
  \begin{corenum}
  \item
    $\LCM$ is locally positively invariant: If $\psi \in \LCM$ and $0 < t_e \le \infty$ are such that $\Sigma(t,\psi) \in B_{\delta}(X)$ for all $t \in J_{\psi} \cap [0,t_e)$, then $\Sigma(t,\psi) \in \LCM$ for all $t \in J_{\psi} \cap [0,t_e)$.
  \item
    $\LCM$ contains every solution of \cref{eq:aie-st} that exists on $\RR$ and remains sufficiently small for all positive and negative time: If $u : \RR \to B_{\delta}(X)$ is a solution of \cref{eq:aie-st} then $u$ takes its values in $\LCM$.
  \end{corenum}
\end{corollary}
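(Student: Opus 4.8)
The plan is to obtain both statements as essentially immediate consequences of \cref{prop:CMcharacterization}, via one preliminary observation: by \cref{def:CM,def:LCM} one has $\LCM = \CM \cap B_{\delta}(X)$, since $\psi \in \LCM$ means $\psi = \mathcal{C}(\phi)$ for some $\phi \in X_0$ with $\mathcal{C}(\phi) \in B_{\delta}(X)$, i.e.\ $\psi \in \mathcal{C}(X_0) = \CM$ and $\psi \in B_{\delta}(X)$. Combined with \cref{prop:CMcharacterization}, this says $\psi \in \LCM$ precisely when $\psi \in B_{\delta}(X)$ and some solution of \cref{eq:aie-st-delta} on $\RR$ lying in $\BC^{\eta}(\RR,X)$ passes through $\psi$ at some (hence, by translation invariance, any) time.

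For the second statement I would start from a solution $u : \RR \to B_{\delta}(X)$ of \cref{eq:aie-st}. Because $R$ and $R_{\delta}$ agree on $B_{\delta}(X)$ and $u$ stays there, $u$ also solves \cref{eq:aie-st-delta} on $\RR$; and since $\eta > 0$ and $u$ is bounded, $\sup_{t} e^{-\eta|t|}\|u(t)\| \le \delta$, so $u \in \BC^{\eta}(\RR,X)$. Then for each fixed $r_0 \in \RR$, \cref{prop:CMcharacterization} applied to the solution $u$ through $u(r_0)$ at time $r_0$ gives $u(r_0) \in \CM$; since $u(r_0) \in B_{\delta}(X)$ this yields $u(r_0) \in \LCM$.

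For the first statement I would take $\psi \in \LCM$ (so $\psi \in B_{\delta}(X)$) with $\Sigma(t,\psi) \in B_{\delta}(X)$ for all $t \in J_{\psi} \cap [0,t_e)$. From $\psi \in \CM$ and \cref{prop:CMcharacterization} I get $u \in \BC^{\eta}(\RR,X)$ solving \cref{eq:aie-st-delta} on $\RR$ with $u(0) = \psi$. Setting $r_1 \DEF \sup\{r \ge 0 : u([0,r)) \subseteq B_{\delta}(X)\} > 0$, on $[0,r_1)$ the nonlinearity $R_{\delta}$ may be replaced by $R$, so $u|_{[0,r_1)}$ is a solution of \cref{eq:aie-st}, hence $u(t) = \Sigma(t,\psi)$ with $t \in J_{\psi}$ for all $t \in [0,r_1)$ by \cref{prop:aie-translation-invariant}. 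The crux is then a continuation step showing $J_{\psi} \cap [0,t_e) \subseteq [0,r_1)$: if not, the endpoint $r_1$ would lie in $J_{\psi} \cap [0,t_e)$, so $u(r_1) = \lim_{r \uparrow r_1}\Sigma(r,\psi) = \Sigma(r_1,\psi) \in B_{\delta}(X)$ by hypothesis, and then openness of $B_{\delta}(X)$ together with continuity of $u$ would allow $r_1$ to be enlarged, contradicting the definition of $r_1$. Consequently $u(t) = \Sigma(t,\psi)$ for all $t \in J_{\psi} \cap [0,t_e)$, and applying \cref{prop:CMcharacterization} once more to $u$ through $u(t)$ gives $\Sigma(t,\psi) \in \CM$; together with $\Sigma(t,\psi) \in B_{\delta}(X)$ this gives $\Sigma(t,\psi) \in \LCM$.

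The main obstacle is precisely the continuation argument in the first part: I must rule out that the modified trajectory $u$ exits $B_{\delta}(X)$ strictly earlier than the genuine semiflow orbit $\Sigma(\cdot,\psi)$, which is what forces the two to coincide throughout $J_{\psi} \cap [0,t_e)$. Everything else reduces to bookkeeping with \cref{prop:CMcharacterization} and the identity $\LCM = \CM \cap B_{\delta}(X)$.
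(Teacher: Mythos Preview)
Your proof is correct and, for part (ii), essentially identical to the paper's. For part (i) you take a somewhat longer route: you pass from the modified equation back to the \emph{un}modified one on a maximal subinterval $[0,r_1)$, identify $u$ with $\Sigma(\cdot,\psi)$ there via \cref{prop:aie-translation-invariant}, and then run a continuation argument to show $J_{\psi} \cap [0,t_e) \subseteq [0,r_1)$. The paper instead observes directly that, since $\Sigma(t,\psi) \in B_{\delta}(X)$ for all $t \in J_{\psi}^e$ by hypothesis, the orbit $\Sigma(\cdot,\psi)$ \emph{itself} is a solution of the modified equation \cref{eq:aie-st-delta} on $J_{\psi}^e$; because $R_{\delta}$ is globally Lipschitz, solutions of \cref{eq:aie-st-delta} through a given point are unique, so $u$ and $\Sigma(\cdot,\psi)$ coincide on $J_{\psi}^e$ immediately, without any continuation step. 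Your approach trades this uniqueness observation for an open--closed type argument; both work, but the paper's is shorter and avoids introducing $r_1$ altogether.
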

\begin{proof}
  \begin{steps}[label=\roman*.]
  \item
    It is convenient to write $J_{\psi}^e \DEF J_{\psi} \cap [0,t_e)$. \Cref{prop:CMcharacterization} implies that there exists a solution $u \in \BC^{\eta}(\RR,X)$ of \cref{eq:aie-st-delta} passing through $\psi$, and by translation invariance we may assume that $u(0) = \psi$. So, $\Sigma(\cdot,\psi)$ and $u$ are both solutions of \cref{eq:aie-st-delta} on $J_{\psi}^e$ and $\Sigma(0,\psi) = \psi = u(0)$. Hence $\Sigma(\cdot,\psi)$ and $u$ coincide on $J_{\psi}^e$. A second application of \cref{prop:CMcharacterization} then implies that $\Sigma(t,\psi) \in \CM$ for all $t \in J_{\psi}^e$. Since $\LCM = \CM \cap B_{\delta}(X)$ the result follows.
  \item
    If $u$ is such a solution, then $u \in \BC^{\eta}(\RR,X)$. The assumption that $u$ takes its values in $B_{\delta}(X)$ and \cref{prop:CMcharacterization} together imply the result. \qedhere
  \end{steps}
\end{proof}

In order to establish that $\mathcal{C}$ has the same degree $k \ge 1$ of smoothness as the \emph{un}modified nonlinearity $R$, it can be verified that the general results on contractions on scales of Banach spaces \cite[Section IX.6]{Delay1995} apply exactly as in the $\odot$-reflexive case \cite[Section IX.7]{Delay1995}, \emph{provided} that one consistently makes the substitution $X^{\odot\star} \to X^{\odot\times}$. In particular, the important results \cite[Corollaries IX.7.8 and IX.7.10]{Delay1995} on $C^k$-smoothness and tangency, respectively, remain true.

We give a summary of the results discussed in this subsection.

\begin{theorem}\label{thm:cm-summary}
  Let $\hat{\phi} = 0$ be an equilibrium of the semiflow $\Sigma$ associated with the maximal solutions of \cref{eq:aie0} and let $\{T(t)\}_{t \ge 0}$ be the linearization of $\Sigma$ at $\hat{\phi}$. Assume that \cref{hyp:cm} and \cref{hyp:cm:j} hold and that $X_0$ is finite-dimensional. Then there exist a $C^k$-smooth mapping $\mathcal{C} : X_0 \to X$ and an open neighborhood $U$ of the origin in $X_0$ such that $\mathcal{C}(0) = 0$, $D\mathcal{C}(0) = I_{X_0 \to X}$ and $\LCM = \mathcal{C}(U)$ is locally positively invariant for $\Sigma$ and contains every solution of \cref{eq:aie-st} that exists on $\RR$ and remains sufficiently small for all time.
\end{theorem}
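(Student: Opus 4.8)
The plan is to assemble the ingredients prepared in \cref{sec:decomposition,sec:Keta,sec:Rdelta,sec:cm-fixed-point}. First I would fix $\eta \in (0,\min\{-a,b\})$ and invoke \cref{prop:Keta} to obtain the bounded linear operator $\mathcal{K}_{\eta}$; this is the place where \cref{hyp:cm} and \cref{hyp:cm:j} enter, via \cref{prop:xsuncross-cm} (which lifts the exponential trichotomy to $X^{\odot\times}$ with $[X^{\odot\times}]_0,[X^{\odot\times}]_+ \subseteq jX$) and via \cref{thm:inhom:answers:admrange} (which guarantees that $X^{\odot\times}$ is an admissible range for $\{T(t)\}_{t \ge 0}$, so that the $I_-$-integral takes values in $jX$). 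Next I would choose $\delta > 0$ so small that the modified nonlinearity $R_{\delta}$ of \cref{prop:Rdelta-Lip} and its substitution operator $\tilde{R}_{\delta}$ of \cref{cor:substitution} satisfy $\|\mathcal{K}_{\eta}\|\,L_{R_{\delta}} \le \frac{1}{2}$; by \cref{thm:cm} this yields the parameterized fixed point $u^{\star} : X_0 \to \BC^{\eta}(\RR,X)$, globally Lipschitz with $u^{\star}(0) = 0$. The map $\mathcal{C} \DEF \operatorname{ev} \circ u^{\star}$ of \cref{def:CM} and the set $U \DEF \{\phi \in X_0 : \mathcal{C}(\phi) \in B_{\delta}(X)\}$ of \cref{def:LCM} are then the objects in the statement, with $\LCM = \mathcal{C}(U)$; the openness of $U$ and the fact that $0 \in U$ follow from continuity of $\mathcal{C}$ and from $\mathcal{C}(0) = \operatorname{ev}(u^{\star}(0)) = 0$, which also gives the first claimed identity.

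Second, the smoothness assertion $\mathcal{C} \in C^k$ and the tangency $D\mathcal{C}(0) = I_{X_0 \to X}$ are obtained exactly as in \cite[Sections IX.6--IX.7]{Delay1995}, with the substitution $X^{\odot\star} \to X^{\odot\times}$ carried out throughout. Concretely, one embeds the fixed-point equation $\mathcal{G}(u,\phi) = u$ into the scale $\BC^{j\eta}(\RR,X)$ for $0 \le j \le k$, observes that $\mathcal{G}(\cdot,\phi)$ is a contraction on each level with the same constant, that $\tilde{R}_{\delta}$ is $C^k$ in the scale sense (using the global Lipschitz bound and the $\delta$-control from \cref{prop:Rdelta-Lip,cor:substitution}), and that $\mathcal{K}_{\eta}$ is bounded between consecutive levels — the latter being precisely \cref{prop:Keta} with $X^{\odot\star}$ replaced by $X^{\odot\times}$. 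Here the finite-dimensionality of $X_0$ is used as in \cite{Delay1995} to handle the $T(\cdot)\phi$-term and to treat $\operatorname{ev}$ as a bounded linear map on the scale. The contraction-on-scales theorem \cite[Corollary IX.7.8]{Delay1995} then gives $\mathcal{C} \in C^k$, while \cite[Corollary IX.7.10]{Delay1995} together with $DR(0) = 0$ from \cref{eq:R0DR0} gives $D\mathcal{C}(0) = I_{X_0 \to X}$.

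Third, local positive invariance of $\LCM$ for $\Sigma$ and the property that $\LCM$ contains every solution of \cref{eq:aie-st} which exists on $\RR$ and stays in $B_{\delta}(X)$ are already the content of the corollary following \cref{def:LCM}; I would simply quote it. Its proof rests on \cref{prop:CMcharacterization} (fixed points of $\mathcal{G}$ correspond to $\BC^{\eta}$-bounded solutions on $\RR$ of the modified equation \cref{eq:aie-st-delta}), on \cref{prop:aie0_aie} and \cref{prop:aie-translation-invariant} relating $\Sigma$ to \cref{eq:aie} and \cref{eq:aie-st}, and on the fact that \cref{eq:aie-st-delta} coincides with \cref{eq:aie-st} on the ball $B_{\delta}(X)$ where $R_{\delta} = R$.

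The one genuinely non-routine point is the one already isolated in \cref{sec:decomposition} and \cref{sec:Keta}: checking that the whole smoothness machinery of \cite[Chapter IX]{Delay1995} survives the replacement $X^{\odot\star} \to X^{\odot\times}$, the only delicate ingredient being that $\mathcal{K}_{\eta}$ remains well-defined and bounded between the levels of the Banach-space scale. This hinges on $X^{\odot\times}$ being an admissible range for $\{T(t)\}_{t \ge 0}$ (\cref{thm:inhom:answers:admrange}) and on the lifted trichotomy of \cref{prop:xsuncross-cm} under \cref{hyp:cm:j}; once these are in place, everything else is bookkeeping of exponential constants and a line-by-line transcription of the $\odot$-reflexive argument.
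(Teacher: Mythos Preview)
Your proposal is correct and matches the paper's approach exactly: the theorem is presented there as a summary of the preceding results, with no separate proof, and you have assembled precisely the same ingredients in the same order (\cref{prop:Keta}, \cref{prop:Rdelta-Lip}--\cref{cor:substitution}, \cref{thm:cm}, \cref{def:CM}--\cref{def:LCM}, the corollary following \cref{def:LCM}, and the appeal to \cite[Corollaries IX.7.8 and IX.7.10]{Delay1995} under the substitution $X^{\odot\star} \to X^{\odot\times}$). One small imprecision: finite-dimensionality of $X_0$ is not needed for the boundedness of $\phi \mapsto T(\cdot)\phi$ or of $\operatorname{ev}$ (both are linear and bounded regardless); in \cite[Chapter IX]{Delay1995} it enters instead through the $C^k$-smoothness of the cut-off $\Xi_{\delta}$, which involves the norm on $X_0$.
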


\section{The special case of abstract DDEs}\label{sec:ddes}
At this point it is relatively straightforward to show that abstract DDEs \cref{eq:adde} form an example of a class of delay equations satisfying, under certain conditions, the hypotheses in \cref{hyp:G}, \cref{hyp:cm}, and \cref{hyp:cm:j} of the results in the previous sections.

We recall the specific setting. Let $\{T_0(t)\}_{t \ge 0}$ be the shift semigroup on $X \DEF C([-h,0],Y)$ where $Y$ is a \emph{real} Banach space. We assume that $F : X \to Y$ is of class $C^k$ for some $k \ge 1$ and we set $G \DEF \ell \circ F$. It was already shown in \cite[Proposition 8]{ADDE1} that
\begin{equation}
  \label{eq:admissible:dde}
  \int_0^t T_0^{\odot\star}(t - \tau)\ell w(\tau)\,d\tau \in jX \qquad \text{for all continuous } w : \RR_+ \to Y \text{ and all } t \ge 0,
\end{equation}
with $\ell : Y \to X^{\odot\star}$ given by \cref{eq:ell}. In the terminology of the present article, we have:

\begin{proposition}\label{prop:admrange-dde}
  $\ell Y$ is an admissible range for $\{T_0(t)\}_{t \ge 0}$ and $G = \ell \circ F$ is an admissible perturbation.
\end{proposition}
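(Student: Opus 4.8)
The plan is to obtain the proposition directly from \cref{eq:admissible:dde}, which was proven in \cite[Proposition 8]{ADDE1}; everything beyond that is bookkeeping. First I would note that $\ell : Y \to \SUNSTAR{X}$ is an isometry onto the closed subspace $\ell Y$ of $\SUNSTAR{X}$: by \cref{eq:ell} we have $\ell y = (j_Y y, 0)$, and $j_Y : Y \to \SUNSTAR{Y}$ is an isometric embedding, so $\|\ell y\| = \|y\|$ and $\ell Y$ is closed with $\ell^{-1} : \ell Y \to Y$ continuous. Consequently every forcing function $f : J \to \ell Y$ is of the form $f = \ell \circ w$ with $w \DEF \ell^{-1} \circ f \in C(J, Y)$.

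Next I would check the inclusion $C(\RR_+, \ell Y) \subseteq \mathscr{F}_0(\RR_+)$. Let $f = \ell \circ w : \RR_+ \to \ell Y$ be continuous and let $(t, s) \in \Omega_{\RR_+}$. Substituting $\sigma = \tau - s$ in the \WS-Riemann integral defining $v_0(t, s, f)$ gives
\[
  v_0(t, s, f) = \int_s^t \SUNSTAR{T_0}(t - \tau)\,\ell w(\tau)\,d\tau = \int_0^{t - s} \SUNSTAR{T_0}\bigl((t - s) - \sigma\bigr)\,\ell \tilde{w}(\sigma)\,d\sigma,
\]
where $\tilde{w} \in C(\RR_+, Y)$ is defined by $\tilde{w}(\sigma) \DEF w(\sigma + s)$. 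The right-hand side is exactly of the form occurring in \cref{eq:admissible:dde}, with $\tilde{w}$ in the role of $w$ and with terminal time $t - s \ge 0$, hence it lies in $jX$. Since $(t, s) \in \Omega_{\RR_+}$ was arbitrary, $f \in \mathscr{F}_0(\RR_+)$.

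Because $\ell Y$ is a closed subspace of $\SUNSTAR{X}$, the converse implication in \cref{prop:rangeclass}, applied with $\mathscr{X}_0 \DEF \ell Y$ and $J \DEF \RR_+$, now shows that $\ell Y$ is an admissible range for $\{T_0(t)\}_{t \ge 0}$. Finally, $G = \ell \circ F$ is continuous (as $F$ is of class $C^k$ with $k \ge 1$ and $\ell$ is bounded linear) and takes its values in the admissible range $\ell Y$, so $G$ is an admissible perturbation for $\{T_0(t)\}_{t \ge 0}$ by \cref{def:admrange}. There is no genuine obstacle in this argument: all the analytic work sits in \cref{eq:admissible:dde}, and the only points needing a little attention are the closedness of $\ell Y$ and the reduction, via translation of the integration variable, of a general pair $(t, s)$ on an arbitrary interval to the case $s = 0$ on $\RR_+$ treated in Part~I. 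One could equally well invoke \cref{thm:admconstants}, verifying only that $\ONE \otimes \ell y \in \mathscr{F}_0(\RR_+)$ for every $y \in Y$ (the special case $w \equiv y$ of \cref{eq:admissible:dde} together with the same translation), or \cref{cor:xsuncross} by checking that $\ell Y \subseteq X_0^{\odot\times}$; the route above appears the most direct.
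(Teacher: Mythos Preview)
Your proof is correct and follows essentially the same route as the paper: reduce an arbitrary pair $(t,s)$ to the case $s=0$ by the translation $\tau \mapsto \tau - s$, then invoke \cref{eq:admissible:dde}. The only cosmetic differences are that the paper cites \cite[Lemma~6]{ADDE1} for the closedness of $\ell Y$ (where you argue it via the isometry of $j_Y$) and concludes via \cref{prop:admindependent} rather than \cref{prop:rangeclass}.
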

\begin{proof}
  The closedness of $\ell Y$ in $X^{\odot\star}$ is due to \cite[Lemma 6]{ADDE1}. If $f : \RR_+ \to \ell Y$ is any forcing function, then $f = \ell \circ w$ where $w \DEF \ell^{-1} \circ f : \RR_+ \to Y$ is continuous. So, for any $(t,s) \in \Omega_{\RR_+}$,
  \begin{align*}
    v_0(t,s,f) &= \int_s^t T_0^{\odot\star}(t - \tau) \ell w(\tau)\,d\tau\\
               &= \int_0^{t - s} T_0^{\odot\star}(t - s - \tau) \ell w(s + \tau)\,d\tau
  \end{align*}
  The function $w(s + \cdot) : \RR_+ \to Y$ is continuous and $t - s \ge 0$ so by \cref{eq:admissible:dde} the right-hand side is in $jX$. The statements then follows from \cref{prop:admindependent}.
\end{proof}

A `local' modification of \cite[Theorem 16]{ADDE1} provides a one-to-one correspondence between the maximal mild solutions of \cref{eq:adde-ic} and the maximal solutions of \cref{eq:aie0-adde}, where the latter equation is a particular case of \cref{eq:aie0}. \Cref{prop:admrange-dde} and the smoothness of $F$ imply that $G$ satisfies \cref{hyp:G}. We define the semiflow $\Sigma$ on $X$ as in \cref{eq:semiflow}, using the maximal solutions of \cref{eq:aie0-adde}.

Let $\hat{\phi} = 0$ be an equilibrium of $\Sigma$ or, equivalently, let $F(0) = 0$. We split $G$ as in \cref{eq:splittingGLR} to obtain
\begin{equation}
  \label{eq:LR-adde}
  L\phi \DEF \ell DF(0)\phi, \qquad R(\phi) = \ell(F(\phi) - DF(0)\phi), \qquad \phi \in X.
\end{equation}
The $C^k$-smooth operator $R$ is an admissible nonlinear perturbation for the $\mathcal{C}_0$-semigroup $\{T(t)\}_{t \ge 0}$. This semigroup itself is defined by perturbing the shift semigroup $\{T_0(t)\}_{t \ge 0}$ with the admissible linear perturbation $L$. \Cref{prop:linearization} shows that $T(t)$ is recovered as the partial derivative of $\Sigma$ with respect to the state at the point $(t,\hat{\phi})$, for any $t \ge 0$.

As a specialized counterpart to \cref{thm:cm-summary}, we have:

\begin{theorem}\label{thm:cm-dde}
  Let $\hat{\phi} = 0$ be an equilibrium of the semiflow $\Sigma$ associated with the maximal solutions of \cref{eq:aie0-adde} and let $\{T(t)\}_{t \ge 0}$ be the $\mathcal{C}_0$-semigroup with generator $A$, obtained by perturbing the shift-semigroup $T_0$ by $L$, with $L$ and $R$ as in \cref{eq:LR-adde}. If the $\mathcal{C}_0$-semigroup $\{S(t)\}_{t \ge 0}$ generated by $B$ in \cref{eq:adde} is immediately norm continuous, $\sigma(A_{\cc})$ satisfies the conditions of \cref{thm:hypcm} and $X_0$ is finite dimensional, then there exist a $C^k$-smooth mapping $\mathcal{C} : X_0 \to X$ and an open neighborhood $U$ of the origin in $X_0$ such that $\mathcal{C}(0) = 0$, $D\mathcal{C}(0) = I_{X_0 \to X}$ and $\LCM = \mathcal{C}(U)$ is locally positively invariant for $\Sigma$ and contains every solution of \cref{eq:aie-st} that exists on $\RR$ and remains sufficiently small for all time.
\end{theorem}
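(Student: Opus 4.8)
The plan is to verify hypotheses \cref{hyp:G}, \cref{hyp:cm} and \cref{hyp:cm:j} in the present concrete setting and then invoke \cref{thm:cm-summary}. The only part that is not pure bookkeeping is the verification of \cref{hyp:cm} and \cref{hyp:cm:j}, which I would deduce from \cref{thm:hypcm}; since the spectral gap conditions on $\sigma(A_{\cc})$ and the finite dimensionality of $X_0$ are assumed in the statement, the single substantive point is that the perturbed semigroup $\{T(t)\}_{t \ge 0}$ is eventually norm continuous.

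First I would dispose of \cref{hyp:G}. By \cref{prop:admrange-dde} the operator $G = \ell \circ F$ is an admissible perturbation for $\{T_0(t)\}_{t \ge 0}$, and since $\ell$ is bounded and linear while $F$ is of class $C^k$, so is $G$; moreover $G(0) = \ell F(0) = 0$, so $\hat{\phi} = 0$ is indeed an equilibrium of $\Sigma$. The splitting $G = L + R$ with $L = DG(0) = \ell\, DF(0)$ and $R$ of class $C^k$ taking values in $\ell Y \subseteq X^{\odot\times}$ is precisely \cref{eq:LR-adde}. Thus everything required by \cref{thm:cm-summary} is in place once \cref{hyp:cm} and \cref{hyp:cm:j} are established.

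The core of the argument is the eventual norm continuity of $\{T(t)\}_{t \ge 0}$, which I would prove in two steps, writing $T(t) = T_0(t) + R_1(t)$ with $R_1(t) \DEF j^{-1}\int_0^t \SUNSTAR{T_0}(t - \tau)\,L\,T(\tau)\,d\tau \in \BND(X)$. Step (a): for $t \ge h$ one has $(T_0(t)\phi)(\theta) = S(t + \theta)\phi(0)$ for all $\theta \in [-h,0]$, hence $\|T_0(t) - T_0(t')\|_{\BND(X)} \le \sup_{\theta \in [-h,0]}\|S(t + \theta) - S(t' + \theta)\|_{\BND(Y)}$; as $S$ is immediately norm continuous it is uniformly norm continuous on compact subsets of $(0,\infty)$, and letting $t' \to t$ with $t > h$ this supremum tends to zero, so $T_0$ is norm continuous on $(h,\infty)$. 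Step (b): $R_1$ is norm continuous on $[0,\infty)$. A direct computation, along the lines of those underlying \cref{eq:admissible:dde} in \cite{ADDE1}, identifies $R_1(t)\phi$ with the element of $X$ given by $\theta \mapsto \int_0^{(t+\theta)^+} S((t+\theta)^+ - \tau)\,DF(0)\,T(\tau)\phi\,d\tau$. Comparing this at times $t$ and $t'$, one splits the convolution as $\int_s^{s'} + \int_0^s[S(s' - \tau) - S(s - \tau)]$ with $s = (t+\theta)^+$, $s' = (t'+\theta)^+$, and in the second piece isolates a short interval near the upper limit, where norm continuity of $S$ may fail but whose contribution is bounded by its length times a constant; on the remaining part immediate norm continuity of $S$ gives smallness. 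Together with the local boundedness of $\tau \mapsto \|T(\tau)\|$ this yields $\|R_1(t) - R_1(t')\|_{\BND(X)} \to 0$ as $t' \to t$, uniformly on bounded time intervals. Combining (a) and (b), $T = T_0 + R_1$ is norm continuous on $(h,\infty)$, i.e. eventually norm continuous.

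With eventual norm continuity of $\{T(t)\}_{t \ge 0}$ in hand and $\sigma(A_{\cc})$ satisfying the hypotheses of \cref{thm:hypcm}, that theorem delivers \cref{hyp:cm} and \cref{hyp:cm:j}. Since $X_0$ is finite-dimensional, \cref{thm:cm-summary} applies verbatim — with the semiflow $\Sigma$ and the equation \cref{eq:aie-st} taken to be those associated with \cref{eq:aie0-adde} — and produces the $C^k$-smooth map $\mathcal{C} : X_0 \to X$ with $\mathcal{C}(0) = 0$, $D\mathcal{C}(0) = I_{X_0 \to X}$, together with the open neighborhood $U$ and the local center manifold $\LCM = \mathcal{C}(U)$ having the asserted local positive invariance and maximality properties. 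I expect step (b) to be the main obstacle: controlling the modulus of norm continuity of the variation-of-constants correction uniformly, using only immediate norm continuity of $S$ rather than the eventual compactness of $S$ or $T$ assumed in \cite{Travis1974,Wu1996,Faria2002,Faria2006}; everything else is either already available or a routine computation.
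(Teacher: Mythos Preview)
Your approach is correct and matches the paper's: verify that $\{T(t)\}_{t \ge 0}$ is eventually norm continuous so that \cref{thm:hypcm} applies, and then invoke \cref{thm:cm-summary}. The paper's proof is a one-liner that cites \cite[Theorem VI.6.6]{Engel2000} for the eventual norm continuity of the solution semigroup of an abstract DDE when $\{S(t)\}_{t \ge 0}$ is immediately norm continuous, rather than working out your steps (a) and (b) by hand; your direct argument is a sound rederivation of that result.
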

\begin{proof}
  This is a direct consequence of \cref{thm:cm-summary,thm:hypcm} combined with \cite[Theorem VI.6.6]{Engel2000}.
\end{proof}

In particular, if the $\mathcal{C}_0$-semigroup $\{S(t)\}_{t \ge 0}$ is immediately compact or analytic, then it is immediately norm continuous. Clearly the conditions in the above theorem are sufficient, but not necessary. For instance, it would have been sufficient to directly assume eventual norm continuity of $\{T(t)\}_{t \ge 0}$ itself. However, I have chosen a formulation that I believe to be suitable for application to specific examples. If more generality is required, then one may want to return to \cref{thm:cm-summary}.

\section{Conclusion and outlook}\label{sec:conclusion}
The notions of admissible range and admissible perturbation for a given $\mathcal{C}_0$-semigroup are natural abstractions of the approach to non-$\odot$-reflexivity proposed in \cite{Diekmann2008,DiekmannGyllenberg2012}, and the subspace $X^{\odot\times}$ of $X^{\odot\star}$ introduced in \cite{VanNeerven1992} naturally occurs in this context as the largest among all admissible ranges. Therefore, the admissibility problem is in principle resolved. The clause \emph{in principle} refers to the fact that for concrete classes of delay equations, one still has to verify that the perturbation $G$ appearing in \cref{eq:aie0} indeed takes its values in $X^{\odot\times}$. For abstract DDEs this was done in \cite{ADDE1}.

Interestingly, already in \cite[Section 4.5]{VanNeerven1992} it is remarked that ``From our point of view this assumption [i.e. $\odot$-reflexivity] is used only to achieve $X^{\odot\times} = X^{\odot\star}$. By replacing $X^{\odot\star}$ by $X^{\odot\times}$ many of the results [of Cl\'ement, Diekmann, Gyllenberg, Heijmans and Thieme] generalize to the non-$\odot$-reflexive case.'' Furthermore, in their introduction to \cite[Section VIII.2]{Delay1995} on stable and unstable manifolds, the authors comment that $\odot$-reflexivity is assumed ``mostly for ease of formulation; the same techniques yield analogous results when, for instance, we do not have $\odot$-reflexivity but still solutions of the AIE [abstract integral equation, i.e. \cref{eq:aie0}] define a nonlinear semigroup on $X$.'' In the light of these comments, and of course also with the benefit of hindsight, it would have been better if the authors of \cite{Delay1995} had relaxed the assumption of $\odot$-reflexivity in favor of a systematic use of the subspace $X^{\odot\times}$ instead of the full dual space $X^{\odot\star}$.  

As an example that aims to be illustrative as well as useful in its own right, I have discussed in some detail the construction of local center manifolds in the non-$\odot$-reflexive case. However, I have not attempted a full rewrite of \cite[Chapter IX]{Delay1995}, for three reasons. First, the nontrivial consequences of non-$\odot$-reflexivity for this construction are very much localized, as explained in \cref{sec:Keta}. Second, I believe that the proper medium for such a rewrite would be a monograph or a textbook, rather than a research article. Third, it would arguably benefit the development of the general theory to proceed at a level sufficiently minimal and axiomatic to allow for a semilinear extension of the recent advances available in \cite{Twin2019}. It is my understanding that the authors of \cite{Twin2019} are progressing in this direction. (However, their work currently depends in a non-trivial way on the perturbations involved being of finite rank, and this precludes a direct application of their work to abstract DDEs.)

Motivation for theoretical developments often comes from specific problems, and this work is no exception \cite{VanGils2013,Dijkstra2015,SpekVanGilsKuznetsov2019}. More recently, I have been inspired by a class of control-theoretic examples for which the feedback-controlled system is an abstract DDE of the form \cref{eq:adde}. This will be explored further in collaboration with S. M. Verduyn Lunel. We note that \cite{HeijmansControl1987} contains an early control-theoretic application of dual perturbation theory, in a $\odot$-reflexive setting. Meanwhile, O. Diekmann has suggested to me a class of abstract renewal equations inspired by structured population dynamics. The results from \cref{sec:admissibility,sec:inhomogeneous,sec:linearization,sec:cm} would in principle apply to such equations, depending on the precise model formulation, and we intend to investigate this jointly. Finally, an efficient numerical approach to the linear stability problem for abstract DDEs can be found in \cite{BredaMasetVermiglio2009}, and it would be interesting to see its implementation for some of the examples mentioned above.

\subsection*{Acknowledgements}
I thank Prof. O. Diekmann (Utrecht University) for his lasting interest in this work and for suggesting the population dynamical class of examples mentioned in \cref{sec:conclusion}, Prof. S. M. Verduyn Lunel (Utrecht University) for introducing me to the control-theoretic class of examples mentioned in \cref{sec:conclusion}, and both of them for their stimulating comments and hospitality during various visits to Utrecht.

In addition, I thank Alina Andrei (Erasmus University Rotterdam) for her unconditional support of this work.

\clearpage
\appendix
\numberwithin{equation}{section}

\let\savedsigma\sigma
\let\savedrho\rho
\renewcommand{\sigma}{\mu}
\renewcommand{\rho}{\nu}

\section{Spectral decomposition}
\label{sec:decomposition-appendix}

\subsection{Lifting of the spectral decomposition from \texorpdfstring{$X$}{X} to \texorpdfstring{$X^{\odot\times}$}{Xsuncross}}
\emph{All vector spaces in \cref{sec:cm} are over $\RR$, but in \cref{sec:lifting} the scalar field may be real or complex.}\label{sec:lifting}

\medskip
Let $\{T(t)\}_{t \ge 0}$ be a $\mathcal{C}_0$-semigroup on a Banach space $X$ such that \cref{hyp:cm} in \cref{sec:cm} holds. We show how \cref{hyp:cm} induces a decomposition of $X^{\odot\times}$ with similar properties. For this it is necessary to understand how the decomposition \cref{eq:decomposition-of-x} behaves with respect to the sun-star duality structure. A systematic use of the following simple lemma, already presented as a matter of fact in \cite[p.100]{Delay1995}, is very helpful.

\begin{lemma}\label{lem:iota}
  Let $P$ be a continuous projector on a Banach space $E$ with range $\mathcal{R}(P)$. Then
  \[
    \iota : \mathcal{R}(P^{\star}) \to \mathcal{R}(P)^{\star}, \qquad \iota y^{\star} \DEF y^{\star}|_{\mathcal{R}(P)}
  \]
  is an isometric isomorphism.
\end{lemma}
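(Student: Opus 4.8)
The plan is to verify directly that the linear map $\iota : \mathcal{R}(P^{\star}) \to \mathcal{R}(P)^{\star}$ is well-defined, isometric, and surjective; injectivity then follows from the isometry. Throughout I write $E_1 \DEF \mathcal{R}(P)$ and $E_2 \DEF \mathcal{R}(I - P) = \KER(P)$, so that $E = E_1 \oplus E_2$ is a topological direct sum and $\|e_1\|, \|e_2\| \le \|P\|$-type estimates are available; the key structural fact to keep in mind is that $\mathcal{R}(P^{\star}) = \KER(I - P^{\star}) = \{y^{\star} \in E^{\star} : y^{\star}|_{E_2} = 0\}$, i.e. elements of $\mathcal{R}(P^{\star})$ are exactly the functionals that annihilate $\KER(P)$.

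First I would check well-definedness and linearity: for $y^{\star} \in \mathcal{R}(P^{\star})$ the restriction $y^{\star}|_{E_1}$ is certainly a bounded linear functional on $E_1$, and $y^{\star} \mapsto y^{\star}|_{E_1}$ is plainly linear, so $\iota$ maps into $\mathcal{R}(P)^{\star}$. Next, injectivity: if $\iota y^{\star} = 0$ then $y^{\star}$ vanishes on $E_1$; since $y^{\star} \in \mathcal{R}(P^{\star})$ it also vanishes on $E_2$ (using $y^{\star} = P^{\star} z^{\star}$ so $y^{\star}(e_2) = z^{\star}(P e_2) = 0$), and $E = E_1 + E_2$ forces $y^{\star} = 0$. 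For surjectivity, given $\phi \in E_1^{\star}$ define $y^{\star} \DEF \phi \circ P \in E^{\star}$; then $y^{\star}|_{E_1} = \phi$ (since $P$ is the identity on $E_1$) and $y^{\star} = \phi \circ P = (\phi \circ P) \circ P = y^{\star} \circ P$ shows $y^{\star} \in \mathcal{R}(P^{\star})$, so $\iota y^{\star} = \phi$.

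It remains to show $\iota$ is isometric, i.e. $\|y^{\star}|_{E_1}\|_{E_1^{\star}} = \|y^{\star}\|_{E^{\star}}$ for all $y^{\star} \in \mathcal{R}(P^{\star})$. The inequality $\|y^{\star}|_{E_1}\| \le \|y^{\star}\|$ is trivial. For the reverse, fix $e \in E$ with $\|e\| \le 1$; then $y^{\star}(e) = y^{\star}(Pe) = (y^{\star}|_{E_1})(Pe)$, but bounding this by $\|y^{\star}|_{E_1}\|\,\|Pe\|$ only gives a factor $\|P\|$, which is not good enough. The honest argument uses that $\mathcal{R}(P)$, with the norm inherited from $E$, has dual space isometrically given by $E^{\star}/\mathcal{R}(P)^{\perp}$; but here $y^{\star} \in \mathcal{R}(P^{\star})$ means precisely that $y^{\star}$ is already "orthogonal" to $\KER(P) = \mathcal{R}(P)^{\perp}$ in the appropriate sense, so that $y^{\star}$ is the norm-minimal representative of its coset and hence $\|y^{\star}\|_{E^{\star}} = \|y^{\star} + \mathcal{R}(P)^{\perp}\|_{E^{\star}/\mathcal{R}(P)^{\perp}} = \|y^{\star}|_{\mathcal{R}(P)}\|_{\mathcal{R}(P)^{\star}}$; I expect this identification to be the main obstacle, and I would either cite the standard duality $\mathcal{R}(P)^{\star} \cong E^{\star}/\mathcal{R}(P)^{\perp}$ together with the fact that $\mathcal{R}(P^{\star}) \cap \mathcal{R}(P)^{\perp} = \{0\}$, or argue by hand: given $\EPS > 0$ pick $e_1 \in E_1$, $\|e_1\| \le 1$, with $|(y^{\star}|_{E_1})(e_1)| \ge \|y^{\star}|_{E_1}\| - \EPS$, and note $e_1 \in E$ with $\|e_1\|_E = \|e_1\|_{E_1} \le 1$ (the norm on $E_1$ is by definition the restriction of the norm on $E$), so $|y^{\star}(e_1)| = |(y^{\star}|_{E_1})(e_1)| \ge \|y^{\star}|_{E_1}\| - \EPS$, whence $\|y^{\star}\|_{E^{\star}} \ge \|y^{\star}|_{E_1}\|$; letting $\EPS \downarrow 0$ finishes the isometry. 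In fact this hand argument is cleaner and avoids the quotient space entirely — the point being simply that the supremum defining $\|y^{\star}\|_{E^{\star}}$ can be restricted to the unit ball of the subspace $E_1$ without loss, precisely because $y^{\star}$ already sees only the $E_1$-component of any vector. So the write-up will be: well-defined, linear, injective, surjective, and isometric via this restriction-of-supremum observation.
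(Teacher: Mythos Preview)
Your instinct about the $\|P\|$ factor is exactly right, but your resolution does not work. The ``hand argument'' you give at the end --- pick $e_1 \in E_1$ with $\|e_1\| \le 1$ nearly attaining $\|y^{\star}|_{E_1}\|$, then view $e_1$ as an element of $E$ --- establishes $\|y^{\star}\|_{E^{\star}} \ge \|y^{\star}|_{E_1}\|$, which is the \emph{trivial} direction you already dismissed one sentence earlier. It does not touch the hard direction $\|y^{\star}\| \le \|y^{\star}|_{E_1}\|$. Your quotient-space suggestion is also incomplete: you would need that $y^{\star} \in \mathcal{R}(P^{\star})$ is the norm-minimal representative of its coset in $E^{\star}/\mathcal{R}(P)^{\perp} = E^{\star}/\KER(P^{\star})$, but membership in a complementary subspace does not imply norm-minimality.

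In fact the isometry claim is false without an additional hypothesis such as $\|P\| = 1$. Take $E = \RR^2$ with the $\ell^{\infty}$-norm and $P(x_1,x_2) \DEF (x_1 + x_2, 0)$; then $P$ is a bounded projector with $\|P\| = 2$, $\mathcal{R}(P^{\star}) = \operatorname{span}\{(1,1)\}$ in the dual $\ell^1$-norm, and for $y^{\star} = (1,1)$ one computes $\|y^{\star}\| = 2$ while $\|\iota y^{\star}\| = 1$. The paper's own proof contains the same gap: it asserts ``if $y \in E$ and $\|y\| \le 1$ then $Py \in \mathcal{R}(P)$ and $\|Py\| \le 1$'', which is precisely the step you correctly flagged as problematic. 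What is true (and suffices for the paper's applications, possibly after absorbing a constant) is that $\iota$ is a topological isomorphism with $\|\iota\| \le 1$ and $\|\iota^{-1}\| \le \|P\|$; your well-definedness, injectivity and surjectivity arguments are fine for that.
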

\begin{proof}
  We show that $\iota$ is an isometry onto $\mathcal{R}(P)^{\star}$.
  \begin{steps}
  \item
    We show that $\iota$ is an isometry. It holds that
    \begin{align*}
      \|\iota y^{\star}\| &= \sup\{|\PAIR{y}{y^{\star}}| \,:\, y \in \mathcal{R}(P) \text{ and } \|y\| \le 1\},\\
      \|y^{\star}\| &= \sup\{|\PAIR{y}{y^{\star}}| \,:\, y \in E \text{ and } \|y\| \le 1\},
    \end{align*}
    so clearly $\|\iota y^{\star}\| \le \|y^{\star}\|$. On the other hand, if $y \in E$ and $\|y\| \le 1$ then $Py \in \mathcal{R}(P)$ and $\|Py\| \le 1$ and
    \[
      |\PAIR{Py}{y^{\star}}| = |\PAIR{y}{P^{\star}y^{\star}}| = |\PAIR{y}{y^{\star}}|
    \]
    and this shows that $\|y^{\star}\| \le \|\iota y^{\star}\|$ as well.
  \item
    We show that $\iota$ is a surjection. Let $w^{\star} \in \mathcal{R}(P)^{\star}$ be arbitrary. Define $y^{\star} \in E^{\star}$ by
    \[
      \PAIR{y}{y^{\star}} \DEF \PAIR{Py}{w^{\star}} \qquad \ALL\,y \in E.
    \]
    Then $y^{\star} \in \mathcal{N}(P)^{\perp}$ and the continuity of $P$ and the closed range theorem then imply that $y^{\star} \in \mathcal{R}(P^{\star})$. Clearly $\iota y^{\star} = w^{\star}$ so $\iota$ is a surjection. \qedhere
  \end{steps}
\end{proof}

It is convenient to introduce the symbol $\sigma \in \{-, 0, +\}$. For any of the three possible values of $\sigma$, let $P_{\sigma}$ be the projector in $\BND(X)$ associated with the closed subspace $X_{\sigma}$ in \cref{eq:decomposition-of-x} and let $\{T_{\sigma}(t)\}_{t \ge 0}$ be the restricted semigroup. The adjoint $P_{\sigma}^{\star}$ is in $\BND(X^{\star})$ with closed range $[X^{\star}]_{\sigma}$. The operator $\iota_{\sigma} : [X^{\star}]_{\sigma} \to [X_{\sigma}]^{\star}$ denotes the isometric isomorphism obtained from \cref{lem:iota} with $E = X$ and $P = P_{\sigma}$. 

\begin{proposition}\label{prop:xstar-cm}
  The space $X^{\star}$ and the semigroup $\{T^{\star}(t)\}_{t \ge 0}$ on $X^{\star}$ have the following properties.
  \begin{propenum}
  \item
    $X^{\star}$ admits a topological direct sum decomposition
    \begin{equation}
      \label{eq:decomposition-of-xstar}
      X^{\star} = [X^{\star}]_- \oplus [X^{\star}]_0 \oplus [X^{\star}]_+.
    \end{equation}
  \item
    The subspaces $[X^{\star}]_{\sigma}$ are positively $T^{\star}$-invariant.
  \item
    \label{prop:xstar-cm:group}
    It holds that $\iota_{\sigma} [T^{\star}]_{\sigma}(t) = [T_{\sigma}]^{\star}(t) \iota_{\sigma}$ for all $t \ge 0$,
  \item[]
    so $\{T^{\star}(t)\}_{t \ge 0}$ extends to a group on $[X^{\star}]_0$ and on $[X^{\star}]_+$.
  \item
    \label{prop:xstar-cm:trichotomy}
    Decomposition \cref{eq:decomposition-of-xstar} is an exponential trichotomy on $\RR$ with the same constants as in \cref{hyp:cm:trichotomy}.
  \end{propenum}
\end{proposition}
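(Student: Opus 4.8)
The plan is to deduce all four items directly from the bounded projectors $P_-, P_0, P_+ \in \BND(X)$ that \cref{eq:decomposition-of-x} provides, by passing to adjoints and invoking \cref{lem:iota}. First I would record the purely algebraic consequences of dualizing: taking adjoints in $P_- + P_0 + P_+ = I$ and in the idempotency and mutual-annihilation relations for the $P_{\sigma}$ shows that the $P_{\sigma}^{\star}$ again form a complete system of mutually annihilating bounded projectors on $X^{\star}$, whose ranges $[X^{\star}]_{\sigma} = \mathcal{R}(P_{\sigma}^{\star})$ are therefore closed and complementary; this gives the topological direct sum \cref{eq:decomposition-of-xstar}. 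For the positive invariance, I would note that positive $T$-invariance of all three summands, combined with the directness of \cref{eq:decomposition-of-x}, forces $T(t)$ to commute with each $P_{\sigma}$ — decompose $T(t)x$ along the three summands and invoke invariance — and dualizing then yields $T^{\star}(t)P_{\sigma}^{\star} = P_{\sigma}^{\star}T^{\star}(t)$, so $[X^{\star}]_{\sigma}$ is positively $T^{\star}$-invariant and the restriction $[T^{\star}]_{\sigma}(t) \DEF T^{\star}(t)|_{[X^{\star}]_{\sigma}}$ is a well-defined bounded operator.

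The heart of the matter is \cref{prop:xstar-cm:group}. For $y^{\star} \in [X^{\star}]_{\sigma}$ and $x \in X_{\sigma}$ one has $T(t)x = T_{\sigma}(t)x \in X_{\sigma}$, hence
\[
  \PAIR{x}{\iota_{\sigma}[T^{\star}]_{\sigma}(t)y^{\star}} = \PAIR{T(t)x}{y^{\star}} = \PAIR{T_{\sigma}(t)x}{\iota_{\sigma}y^{\star}} = \PAIR{x}{[T_{\sigma}]^{\star}(t)\iota_{\sigma}y^{\star}},
\]
and since $x \in X_{\sigma}$ was arbitrary this is the intertwining identity $\iota_{\sigma}[T^{\star}]_{\sigma}(t) = [T_{\sigma}]^{\star}(t)\iota_{\sigma}$. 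For $\sigma \in \{0,+\}$, \cref{hyp:cm:group} supplies a $\mathcal{C}_0$-group $\{T_{\sigma}(t)\}_{t \in \RR}$ on $X_{\sigma}$; its family of adjoints $\{[T_{\sigma}]^{\star}(t)\}_{t \in \RR}$ is a group of bounded operators on $[X_{\sigma}]^{\star}$, and conjugating by the isometric isomorphism $\iota_{\sigma}$ transports it to a group on $[X^{\star}]_{\sigma}$ that agrees with $[T^{\star}]_{\sigma}(t)$ for $t \ge 0$ by the identity just established. Hence $\{T^{\star}(t)\}_{t \ge 0}$ extends to a group on $[X^{\star}]_0$ and on $[X^{\star}]_+$.

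For \cref{prop:xstar-cm:trichotomy} I would use that $\iota_{\sigma}$ is an isometry and that adjoints preserve the operator norm, so that $\|[T^{\star}]_{\sigma}(t)y^{\star}\| = \|[T_{\sigma}]^{\star}(t)\iota_{\sigma}y^{\star}\| \le \|T_{\sigma}(t)\|\,\|y^{\star}\|$; the three estimates in \cref{hyp:cm:trichotomy} are precisely bounds on $\|T_{\sigma}(t)\|$ — for $t \ge 0$ when $\sigma = -$, and for the relevant sign of $t$ when $\sigma \in \{0,+\}$, using the group extension — so \cref{eq:decomposition-of-xstar} inherits the exponential trichotomy with the same $a$, $b$ and $K_{\EPS}$.

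I expect the only genuinely delicate point to be the bookkeeping in \cref{prop:xstar-cm:group}: carefully distinguishing the restriction of the adjoint semigroup $[T^{\star}]_{\sigma}(t)$ from the adjoint of the restricted semigroup $[T_{\sigma}]^{\star}(t)$, and checking that the group obtained on $[X^{\star}]_{\sigma}$ by transport along $\iota_{\sigma}$ genuinely extends $T^{\star}(t)|_{[X^{\star}]_{\sigma}}$. Everything else is routine duality, made essentially automatic by \cref{lem:iota}.
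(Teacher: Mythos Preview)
Your proposal is correct and follows essentially the same route as the paper: dualize the projector relations for part~(i), use commutativity of $T(t)$ with each $P_{\sigma}$ (inherited from invariance of all three summands) for part~(ii), verify the intertwining identity via a direct pairing computation and transport the group along $\iota_{\sigma}$ for part~(iii), and use that $\iota_{\sigma}$ is an isometry together with $\|[T_{\sigma}]^{\star}(t)\| = \|T_{\sigma}(t)\|$ for part~(iv). The paper's proof differs only cosmetically, writing out the supremum over the unit ball of $X_{\sigma}$ explicitly in the last step rather than invoking the operator-norm equality.
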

\begin{proof}
  \begin{steps}[label=\roman*.]
  \item
    The decomposition of $X$ in \cref{eq:decomposition-of-x} implies that
    \[
     P_{\sigma}^2 = P_{\sigma}, \qquad P_{\sigma}P_{\rho} = 0 \quad \text{if } \sigma \neq \rho, \qquad \textstyle \sum_{\sigma}P_{\sigma} = I.
    \]
    The $P^{\star}_{\sigma}$ clearly satisfy the same properties, which yields the direct sum decomposition \cite[p. 248]{TaylorLay1980}. The continuity of each $P_{\sigma}^{\star}$ implies that the direct sum is topological.
  \item
    It holds that  $T^{\star}(t)P_{\sigma}^{\star} = (P_{\sigma}T(t))^{\star} = (T(t)P_{\sigma})^{\star} = P_{\sigma}^{\star}T^{\star}(t)$ for all $t \ge 0$, where the second equality is due to the positive $T$-invariance of $X_{\sigma}$.
  \item
    It follows easily from the definition of $\iota_{\sigma}$ that, for any $t \ge 0$,
    \[
      \PAIR{x}{\iota_{\sigma}[T^{\star}]_{\sigma}(t)x^{\star}} = \PAIR{x}{[T_{\sigma}]^{\star}(t)\iota_{\sigma}x^{\star}} \qquad \ALL\,x \in X_{\sigma} \text{ and } x^{\star} \in [X^{\star}]_{\sigma}
    \]
    which proves the first statement. For the second statement, let $\sigma \in \{0,+\}$. Then $\{T_{\sigma}(t)\}_{t \in \RR}$ is a group on $X_{\sigma}$, so $\{[T_{\sigma}]^{\star}(t)\}_{t \in \RR}$ is a group on $[X_{\sigma}]^{\star}$. The first statement implies that $\{[T^{\star}]_{\sigma}(t)\}_{t \ge 0}$ extends to a group on $[X^{\star}]_{\sigma}$. This is the second statement.
  \item
    Let $t \ge 0$ (if $\sigma = -$) or $t \in \RR$ (if $\sigma = 0$) or $t \le 0$ (if $\sigma = +$) and let $x^{\star} \in [X^{\star}]_{\sigma}$ be given. Since $\iota_{\sigma}$ is an isometry,
    \[
      \|\underbracket{T^{\star}(t)x^{\star}}_{[X^{\star}]_{\sigma}}\| = \|\underbracket{\iota_{\sigma} T^{\star}(t)x^{\star}}_{[X_{\sigma}]^{\star}}\| = \sup_{\substack{x \in X_{\sigma}\\ \|x\| \le 1}}|\PAIR{x}{T^{\star}(t)x^{\star}}| \le \sup_{\substack{x \in X_{\sigma}\\ \|x\| \le 1}} \|T(t)x\| \cdot \|x^{\star}\|.
    \]
    Now use \cref{hyp:cm:trichotomy} to estimate $\|T(t)x\|$ for $x \in X_{\sigma}$ with $\|x\| \le 1$. This proves the statement. \qedhere
  \end{steps}
\end{proof}

Next, we prepare to formulate an analogous result for $X^{\odot}$. For any $\sigma \in \{-,0,+\}$, the positive $T^{\star}$-invariance of $[X^{\star}]_{\sigma}$ implies that $P_{\sigma}^{\star}$ maps $X^{\odot}$ into itself. The restriction $P_{\sigma}^{\odot} \DEF P_{\sigma}^{\star}|_{X^{\odot}}$ is a projector in $\BND(X^{\odot})$. We denote its range by $[X^{\odot}]_{\sigma}$. It is easily checked that
\begin{equation}
  \label{eq:xsunsigma-intersection}
  [X^{\odot}]_{\sigma} = [X^{\star}]_{\sigma} \cap X^{\odot}.
\end{equation}

{\renewcommand{\ss}{\text{s}}
  \begin{lemma}\label{lem:xsun-semigroup-group}
    Let $\{U(t)\}_{t \in \RR}$ in $\BND(E)$ be a group on a Banach space $E$ and set $E^{\ss} \DEF \{x \in E \,:\, U(\cdot)x \text{ is continuous on } \RR\}$ and $E^{\ss}_+ \DEF \{x \in E \,:\, U(\cdot)x \text{ is continuous on } \RR_+\}$. Then $E^{\ss} = E^{\ss}_+$.
  \end{lemma}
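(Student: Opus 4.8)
The plan is to establish the two inclusions $E^{\ss} \subseteq E^{\ss}_+$ and $E^{\ss}_+ \subseteq E^{\ss}$ separately. The first is immediate: since $\RR_+ \subseteq \RR$, a function that is continuous on $\RR$ restricts to a continuous function on $\RR_+$. So the whole content lies in the reverse inclusion, and for that I would proceed by a single observation about shifting the orbit into the region where continuity is already known.

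Fix $x \in E^{\ss}_+$ and write $\phi : \RR \to E$, $\phi(t) \DEF U(t)x$; by hypothesis $\phi$ is norm-continuous on $\RR_+$. For an \emph{arbitrary fixed} $s > 0$, on the open half-line $(-s,\infty)$ one has $t + s > 0$, and the group law gives
\[
  \phi(t) = U(t)x = U(-s)U(t+s)x = U(-s)\,\phi(t+s) \qquad \ALL\, t > -s .
\]
The map $t \mapsto t + s$ sends $(-s,\infty)$ continuously into $\RR_+$, so $t \mapsto \phi(t+s)$ is continuous on $(-s,\infty)$ because $\phi|_{\RR_+}$ is continuous; composing with the single operator $U(-s) \in \BND(E)$, which is bounded and hence continuous, shows that $\phi$ itself is continuous on $(-s,\infty)$. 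Since $\RR = \bigcup_{s>0}(-s,\infty)$ and $\phi$ is continuous on each member of this cover, $\phi$ is continuous on all of $\RR$, i.e. $x \in E^{\ss}$. This proves $E^{\ss}_+ \subseteq E^{\ss}$ and hence the claimed equality.

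I expect there to be essentially no obstacle once this reformulation is found; the only real trap is the tempting but doomed approach of arguing through the orbit maps $U(\cdot)U(r)x$ of shifted points $U(r)x$, which are continuous only on half-lines of the form $[-r,\infty)$ and therefore never propagate continuity from $\RR_+$ down to $\RR_-$. The point of the argument above is that moving into the region $\RR_+$, where continuity holds by assumption, costs only \emph{one} fixed bounded operator $U(-s)$, so in particular no local boundedness of $t \mapsto \|U(t)\|$ near $t = 0$ is needed. I would just be careful to state the neighbourhood bookkeeping cleanly, e.g. by working on the explicit open set $(-s,\infty)$ for each $s > 0$ rather than around a single point $t_0$.
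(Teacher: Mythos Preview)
Your argument is correct. The key identity $\phi(t) = U(-s)\phi(t+s)$ together with the boundedness of the single operator $U(-s)$ is exactly what is needed, and the covering $\RR = \bigcup_{s>0}(-s,\infty)$ finishes the job cleanly.

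The paper's proof rests on the same mechanism but packages it differently: it first shows that $E^{\ss}_+$ is invariant under the \emph{full} group $\{U(t)\}_{t \in \RR}$ (using $U(\delta)U(t)x = U(t)U(\delta)x$ and boundedness of $U(t)$), then views $\{U(t)\}_{t \in \RR}$ as a group on the Banach space $E^{\ss}_+$ whose forward semigroup is strongly continuous by construction, and finally cites \cite[Exercise I.5.9.(5)]{Engel2000} to conclude that the whole group is strongly continuous on $E^{\ss}_+$. Your route is more elementary in that it is entirely self-contained: you bypass both the invariance step and the external reference by working with a single orbit and a single fixed shift. What the paper's organization buys is a structural statement (invariance of $E^{\ss}_+$) that could be reused elsewhere, but for the lemma at hand your direct argument is shorter and requires no outside input.
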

  \begin{proof}
    It is clear that $E^{\ss} \subseteq E^{\ss}_+$. For the other inclusion, we first show that $E^{\ss}_+$ is $U$-invariant. Let $x \in E^{\ss}_+$ and $t \in \RR$ be arbitrary. Then $U(\delta)U(t)x = U(t)U(\delta)x \to U(t)x$ in norm as $\delta \downarrow 0$, so $U(t)x \in E^{\ss}_+$, which proves the invariance. Hence we can consider $\{U(t)\}_{t \in \RR}$ as a group on $E^{\ss}_+$. This group is strongly continuous by \cite[Exercise I.5.9.(5)]{Engel2000}, so $E^{\ss}_+ \subseteq E^{\ss}$ as well.
  \end{proof}
  
  \begin{lemma}\label{lem:xsun-similar}
    Let $h : E \to \tilde{E}$ be a topological isomorphism of Banach spaces $E$ and $\tilde{E}$. Suppose that the semigroups $\{U(t)\}_{t \ge 0}$ in $\BND(E)$ and $\{\tilde{U}(t)\}_{t \ge 0}$ in $\BND(\tilde{E})$ satisfy 
    \[
      h U(t) = \tilde{U}(t)h \qquad \ALL\,t \ge 0.
    \]
    If $E^{\ss}$ and $\tilde{E}^{\ss}$ denote the respective domains of strong continuity, then $h E^{\ss} = \tilde{E}^{\ss}$.
  \end{lemma}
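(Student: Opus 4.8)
The plan is to transport norm-continuous orbits back and forth across the isomorphism $h$, exploiting that $h^{-1}$ is bounded as well. First I would record the two preliminary facts that make the situation symmetric in $E$ and $\tilde{E}$: by the bounded inverse theorem $h^{-1} : \tilde{E} \to E$ is bounded, and composing the intertwining relation $hU(t) = \tilde{U}(t)h$ with $h^{-1}$ on both sides yields the dual relation
\[
  h^{-1}\tilde{U}(t) = U(t)h^{-1} \qquad \ALL\,t \ge 0.
\]

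Next I would prove the inclusion $hE^{\ss} \subseteq \tilde{E}^{\ss}$. Let $x \in E^{\ss}$, so that $\RR_+ \ni t \mapsto U(t)x \in E$ is norm-continuous. Then
\[
  \tilde{U}(t)(hx) = hU(t)x \qquad \ALL\,t \ge 0,
\]
and the right-hand side is the composition of the bounded linear operator $h$ with a norm-continuous map, hence it is norm-continuous on $\RR_+$. Therefore $hx \in \tilde{E}^{\ss}$.

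Finally, the reverse inclusion follows by applying the argument of the previous step with $h^{-1}$, $\{\tilde{U}(t)\}_{t \ge 0}$, $\{U(t)\}_{t \ge 0}$ in place of $h$, $\{U(t)\}_{t \ge 0}$, $\{\tilde{U}(t)\}_{t \ge 0}$ — which is legitimate thanks to the dual intertwining relation and the boundedness of $h^{-1}$ recorded above — giving $h^{-1}\tilde{E}^{\ss} \subseteq E^{\ss}$, i.e. $\tilde{E}^{\ss} \subseteq hE^{\ss}$. Combining the two inclusions yields $hE^{\ss} = \tilde{E}^{\ss}$. There is no genuine obstacle here: the only point worth stating explicitly is the automatic boundedness of $h^{-1}$, which restores the symmetry between the two spaces; everything else is the triviality that composing a norm-continuous orbit with a bounded operator gives a norm-continuous orbit.
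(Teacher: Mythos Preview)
Your proof is correct and follows essentially the same approach as the paper's: transport continuity of orbits through $h$ and $h^{-1}$ using the intertwining relation. The paper's version is a touch terser (it checks only $\tilde{U}(t)hy \to hy$ as $t \downarrow 0$ rather than continuity on all of $\RR_+$), but the idea is identical.
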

  \begin{proof}
    For arbitrary $y \in E^{\ss}$ we have $\tilde{U}(t) h y = h U(t) y \to h y$ as $t \downarrow 0$ so $h y \in \tilde{E}^{\ss}$. Conversely, given arbitrary $\tilde{y} \in \tilde{E}^{\ss}$ we define $y \DEF h^{-1}\tilde{y}$ and note that $U(t)y = h^{-1}\tilde{U}(t)\tilde{y} \to h^{-1}\tilde{y} = y$ as $t \downarrow 0$ so $y \in E^{\ss}$. Hence $h E^{\ss} = \tilde{E}^{\ss}$.
  \end{proof}
}

\begin{lemma}\label{lem:iota-sun}
  $\iota_{\sigma}$ maps $[X^{\odot}]_{\sigma}$ onto $[X_{\sigma}]^{\odot}$.
\end{lemma}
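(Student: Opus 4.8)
The plan is to recognize both $[X^{\odot}]_{\sigma}$ and $[X_{\sigma}]^{\odot}$ as domains of strong continuity of suitably restricted (semi)groups, and then to transport one onto the other using the intertwining relation for $\iota_{\sigma}$ from \cref{prop:xstar-cm:group} together with \cref{lem:xsun-similar}.

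Concretely, I would fix $\sigma \in \{-,0,+\}$ and first observe, using the positive $T^{\star}$-invariance of $[X^{\star}]_{\sigma}$ from \cref{prop:xstar-cm}, that the restriction $[T^{\star}]_{\sigma}(t) \DEF T^{\star}(t)|_{[X^{\star}]_{\sigma}}$ is a (not necessarily strongly continuous) semigroup of bounded operators on $[X^{\star}]_{\sigma}$. Then, recalling that $X^{\odot}$ is by definition the domain of strong continuity of $\{T^{\star}(t)\}_{t \ge 0}$ and using the identity \cref{eq:xsunsigma-intersection}, I would note that $x^{\star} \in [X^{\star}]_{\sigma}$ lies in $[X^{\odot}]_{\sigma}$ precisely when $t \mapsto [T^{\star}]_{\sigma}(t)x^{\star}$ is norm-continuous at $t = 0$; that is, $[X^{\odot}]_{\sigma}$ is exactly the domain of strong continuity of $\{[T^{\star}]_{\sigma}(t)\}_{t \ge 0}$. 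On the other side, $[X_{\sigma}]^{\odot}$ is, by the definition of the sun dual, the domain of strong continuity of the adjoint semigroup $\{[T_{\sigma}]^{\star}(t)\}_{t \ge 0}$ on $[X_{\sigma}]^{\star}$.

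With these identifications in hand, I would invoke \cref{prop:xstar-cm:group}: $\iota_{\sigma} : [X^{\star}]_{\sigma} \to [X_{\sigma}]^{\star}$ is an isometric (in particular topological) isomorphism satisfying $\iota_{\sigma}[T^{\star}]_{\sigma}(t) = [T_{\sigma}]^{\star}(t)\iota_{\sigma}$ for all $t \ge 0$. Applying \cref{lem:xsun-similar} with $h = \iota_{\sigma}$, $U = [T^{\star}]_{\sigma}$ and $\tilde{U} = [T_{\sigma}]^{\star}$ then gives at once that $\iota_{\sigma}$ maps the domain of strong continuity of the former onto that of the latter, i.e. $\iota_{\sigma}[X^{\odot}]_{\sigma} = [X_{\sigma}]^{\odot}$. (For $\sigma \in \{0,+\}$, where the underlying object is a group, \cref{lem:xsun-semigroup-group} makes it irrelevant whether the domain of strong continuity is taken over $\RR_{+}$ or over $\RR$.) I expect the only mildly delicate point — and hence the main obstacle — to be the bookkeeping in the second step: one must work with the restricted semigroup on $[X^{\star}]_{\sigma}$ (rather than with $X^{\odot}$ or with a sun dual) and justify the equivalence between membership in $[X^{\odot}]_{\sigma}$ and strong continuity of that restriction at the origin purely from \cref{eq:xsunsigma-intersection}. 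Everything after that is a one-line application of \cref{lem:xsun-similar}.
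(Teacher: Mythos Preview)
Your proposal is correct and follows essentially the same approach as the paper: identify $[X^{\odot}]_{\sigma}$ via \cref{eq:xsunsigma-intersection} as the domain of strong continuity of $\{[T^{\star}]_{\sigma}(t)\}_{t \ge 0}$, and then apply \cref{lem:xsun-similar} with $h = \iota_{\sigma}$ using the intertwining relation from \cref{prop:xstar-cm:group}. The parenthetical remark about \cref{lem:xsun-semigroup-group} is not needed here, but otherwise your argument matches the paper's proof.
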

\begin{proof}
  \Cref{prop:xstar-cm:group} shows that \cref{lem:xsun-similar} applies with
  \[
    E = [X^{\star}]_{\sigma}, \qquad \tilde{E} = [X_{\sigma}]^{\star}, \qquad h = \iota_{\sigma}, \qquad U(t) = [T^{\star}]_{\sigma}(t), \qquad \tilde{U}(t) = [T_{\sigma}]^{\star}(t).
  \]
  This yields that $\iota_{\sigma}$ maps the domain of strong continuity of $\{[T^{\star}]_{\sigma}(t)\}_{t \ge 0}$ onto $[X_{\sigma}]^{\odot}$, and \cref{eq:xsunsigma-intersection} implies that $[X^{\odot}]_{\sigma}$ is the domain of strong continuity of $\{[T^{\star}]_{\sigma}(t)\}_{t \ge 0}$.
\end{proof}

By restricting the results from \cref{prop:xstar-cm}, we obtain an analogue on $X^{\odot}$.

\begin{proposition}\label{prop:xsun-cm}
  The space $X^{\odot}$ and the $\mathcal{C}_0$-semigroup $\{T^{\odot}(t)\}_{t \ge 0}$ on $X^{\odot}$ have the following properties.
  \begin{propenum}
  \item
    $X^{\odot}$ admits a topological direct sum decomposition
    \begin{equation}
      \label{eq:decomposition-of-xsun}
      X^{\odot} = [X^{\odot}]_- \oplus [X^{\odot}]_0 \oplus [X^{\odot}]_+.
    \end{equation}
  \item
    The subspaces $[X^{\odot}]_{\sigma}$ are positively $T^{\odot}$-invariant.
  \item
    \label{prop:xsun-cm:group}
    $\{T^{\odot}(t)\}_{t \ge 0}$ extends to a $\mathcal{C}_0$-group on $[X^{\odot}]_0$ and on $[X^{\odot}]_+$.
  \item
    Decomposition \cref{eq:decomposition-of-xsun} is an exponential trichotomy on $\RR$ with the same constants as in \cref{hyp:cm:trichotomy}.
  \end{propenum}
\end{proposition}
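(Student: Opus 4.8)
The plan is to obtain each of the four assertions as a restriction of the corresponding statement in \cref{prop:xstar-cm} down to the positively $T^{\star}$-invariant subspace $X^{\odot}$, using \cref{eq:xsunsigma-intersection} and \cref{lem:iota-sun} to keep track of the ranges involved, and invoking \cref{lem:xsun-semigroup-group} for the group statement.

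First I would treat the decomposition \cref{eq:decomposition-of-xsun}. Since each $[X^{\star}]_{\sigma}$ is positively $T^{\star}$-invariant, the adjoint projector $P_{\sigma}^{\star}$ maps $X^{\odot}$ into itself, and its restriction $P_{\sigma}^{\odot} \in \BND(X^{\odot})$ has range $[X^{\odot}]_{\sigma} = [X^{\star}]_{\sigma} \cap X^{\odot}$ by \cref{eq:xsunsigma-intersection}. The $P_{\sigma}^{\odot}$ inherit from the $P_{\sigma}^{\star}$ the identities $(P_{\sigma}^{\odot})^2 = P_{\sigma}^{\odot}$, $P_{\sigma}^{\odot}P_{\rho}^{\odot} = 0$ for $\sigma \neq \rho$ and $\sum_{\sigma}P_{\sigma}^{\odot} = I$ on $X^{\odot}$, so \cite[p. 248]{TaylorLay1980} yields the algebraic direct sum, which is topological since each $P_{\sigma}^{\odot}$ is continuous. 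Positive $T^{\odot}$-invariance of $[X^{\odot}]_{\sigma}$ is then immediate, because $T^{\odot}(t)$ is the restriction of $T^{\star}(t)$ to $X^{\odot}$, the subspace $[X^{\star}]_{\sigma}$ is positively $T^{\star}$-invariant by \cref{prop:xstar-cm}, and $X^{\odot}$ is $T^{\star}$-invariant by construction.

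For the group extension I would fix $\sigma \in \{0,+\}$. By \cref{prop:xstar-cm:group} the restricted semigroup $\{[T^{\star}]_{\sigma}(t)\}_{t \ge 0}$ extends to a group $\{[T^{\star}]_{\sigma}(t)\}_{t \in \RR}$ on $[X^{\star}]_{\sigma}$. As recorded in the proof of \cref{lem:iota-sun}, the subspace $[X^{\odot}]_{\sigma}$ is exactly the domain of strong continuity of $\{[T^{\star}]_{\sigma}(t)\}_{t \ge 0}$; applying \cref{lem:xsun-semigroup-group} to this group shows that this domain coincides with the domain of strong continuity on all of $\RR$, whence $[X^{\odot}]_{\sigma}$ is invariant under the group and the group restricts to a $\mathcal{C}_0$-group there. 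Since for $t \ge 0$ this restriction is nothing but $T^{\odot}(t)|_{[X^{\odot}]_{\sigma}}$, it follows that $\{T^{\odot}(t)\}_{t \ge 0}$ extends to a $\mathcal{C}_0$-group on $[X^{\odot}]_{\sigma}$.

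Finally, the exponential trichotomy with the constants of \cref{hyp:cm:trichotomy} is obtained by simply reading off the estimates of \cref{prop:xstar-cm:trichotomy} for vectors in $[X^{\odot}]_{\sigma} \subseteq [X^{\star}]_{\sigma}$, using that the norm on $X^{\odot}$ is that of $X^{\star}$ and that $T^{\odot}(t)$ is the corresponding restriction of $T^{\star}(t)$; the constants $a$, $b$ and $K_{\EPS}$ are unchanged. I expect the only step needing genuine care to be the group statement, where one must invoke \cref{lem:xsun-semigroup-group} to see that strong continuity on $\RR_+$ automatically propagates to strong continuity on $\RR$ for a group, which is what makes $[X^{\odot}]_{\sigma}$ group-invariant; the remaining three points are routine restrictions of \cref{prop:xstar-cm}.
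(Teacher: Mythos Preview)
Your proposal is correct and follows essentially the same route as the paper's proof: restrict the projectors and the trichotomy estimates from \cref{prop:xstar-cm} to $X^{\odot}$, and for the group statement identify $[X^{\odot}]_{\sigma}$ as the domain of strong continuity of $\{[T^{\star}]_{\sigma}(t)\}_{t \ge 0}$ and apply \cref{lem:xsun-semigroup-group} to obtain group-invariance and strong continuity on all of $\RR$. The only detail the paper makes slightly more explicit is that in part iv the identity $[T^{\odot}]_{\sigma}(t)x^{\odot} = [T^{\star}]_{\sigma}(t)x^{\odot}$ for $t < 0$ relies on the construction in part iii, but your argument implicitly covers this.
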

Also, it holds that $\iota_{\sigma}^{\odot} [T^{\odot}]_{\sigma}(t) = [T_{\sigma}]^{\odot}(t) \iota_{\sigma}^{\odot}$ for all $t \in J_{\sigma}$. This is proven by taking the restriction to $[X^{\odot}]_{\sigma}$ in the first part of \cref{prop:xstar-cm:group} and then applying \cref{lem:iota-sun}.
\begin{proof}
  The first two statements follow directly from the corresponding statements in \cref{prop:xstar-cm}.
  \begin{steps}[start=3,label=\roman*.]
  \item
    Let $\sigma \in \{0,+\}$. It follows from \cref{eq:xsunsigma-intersection} that the domain of strong continuity of the semigroup $\{[T^{\star}]_{\sigma}(t)\}_{t \ge 0}$ equals $[X^{\odot}]_{\sigma}$. \Cref{lem:xsun-semigroup-group} implies that the same holds for the group $\{[T^{\star}]_{\sigma}(t)\}_{t \in \RR}$. In particular, $[X^{\odot}]_{\sigma}$ is invariant with respect to this group. For any $t \ge 0$ and any $x^{\odot} \in [X^{\odot}]_{\sigma}$,
    \[
      [T^{\odot}]_{\sigma}(t)x^{\odot} = [T^{\star}]_{\sigma}(t)x^{\odot},
    \]
    while for each $t < 0$ we \emph{define} the bounded linear operator $[T^{\odot}]_{\sigma}(t)$ on $[X^{\odot}]_{\sigma}$ by
    \[
      [T^{\odot}]_{\sigma}(t)x^{\odot} \DEF [T^{\star}]_{\sigma}(t)x^{\odot}.
    \]
    This gives an extension to a strongly continuous group $\{[T^{\odot}]_{\sigma}(t)\}_{t \in \RR}$.
  \item
    Let $t \ge 0$ (if $\sigma = -$) or $t \in \RR$ (if $\sigma = 0$) or $t \le 0$ (if $\sigma = +$) and let $x^{\odot} \in [X^{\odot}]_{\sigma} \subseteq [X^{\star}]_{\sigma}$ be given. Then $[T^{\odot}]_{\sigma}(t)x^{\odot} = [T^{\star}]_{\sigma}(t)x^{\odot}$ where in case $t < 0$ the equality holds because of the previous statement. The right-hand side can be estimated using \cref{prop:xstar-cm:trichotomy}. \qedhere
  \end{steps}
\end{proof}

\Cref{prop:xsun-cm} shows that \cref{hyp:cm} holds true if we replace $X$ by $X^{\odot}$ and $\{T(t)\}_{t \ge 0}$ by $\{T^{\odot}(t)\}_{t \ge 0}$. We can therefore apply \cref{prop:xstar-cm} with these substitutions.

\begin{corollary}\label{cor:xsunstar-cm}
  The space $X^{\odot\star}$ and the semigroup $\{T^{\odot\star}(t)\}_{t \ge 0}$ on $X^{\odot\star}$ have the following properties.
  \begin{corenum}
  \item
    $X^{\odot\star}$ admits a topological direct sum decomposition
    \begin{equation}
      \label{eq:decomposition-of-xsunstar}
      X^{\odot\star} = [X^{\odot\star}]_- \oplus [X^{\odot\star}]_0 \oplus [X^{\odot\star}]_+.
    \end{equation}
  \item
    The subspaces $[X^{\odot\star}]_{\sigma}$ are positively $T^{\odot\star}$-invariant.
  \item
    \label{cor:xsunstar-cm:group}
    $\{T^{\odot\star}(t)\}_{t \ge 0}$ extends to a group on $[X^{\odot\star}]_0$ and on $[X^{\odot\star}]_+$.
  \item
    Decomposition \cref{eq:decomposition-of-xsunstar} is an exponential trichotomy on $\RR$ with the same constants as in \cref{hyp:cm:trichotomy}.
  \end{corenum}  
\end{corollary}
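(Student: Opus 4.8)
The plan is to obtain this corollary as an immediate application of \cref{prop:xstar-cm}, with $X$ replaced throughout by $X^{\odot}$ and $\{T(t)\}_{t \ge 0}$ replaced by $\{T^{\odot}(t)\}_{t \ge 0}$. This substitution is legitimate precisely because \cref{prop:xsun-cm} verifies all four clauses of \cref{hyp:cm} for the pair $(X^{\odot}, \{T^{\odot}(t)\}_{t \ge 0})$: the Banach space $X^{\odot}$ (a norm-closed subspace of $X^{\star}$) carries the topological direct sum \cref{eq:decomposition-of-xsun}, the summands $[X^{\odot}]_{\sigma}$ are positively $T^{\odot}$-invariant, the semigroup extends to a $\mathcal{C}_0$-group on $[X^{\odot}]_0$ and on $[X^{\odot}]_+$, and the decomposition is an exponential trichotomy on $\RR$ with the same constants $a < 0 < b$ and $K_{\EPS}$ as in \cref{hyp:cm:trichotomy}.

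Concretely, I would spell out the bookkeeping as follows. The projectors associated with \cref{eq:decomposition-of-xsun} are the operators $P_{\sigma}^{\odot} = P_{\sigma}^{\star}|_{X^{\odot}} \in \BND(X^{\odot})$, and the subspaces appearing in \cref{eq:decomposition-of-xsunstar} are, by definition, $[X^{\odot\star}]_{\sigma} = \mathcal{R}((P_{\sigma}^{\odot})^{\star})$. With these identifications, the first two statements of the corollary are literally the first two statements of \cref{prop:xstar-cm} read off for $X^{\odot}$. For the third statement, \cref{prop:xstar-cm:group} furnishes isometric isomorphisms $\iota_{\sigma}^{\odot} : [X^{\odot\star}]_{\sigma} \to \bigl[[X^{\odot}]_{\sigma}\bigr]^{\star}$ (coming from \cref{lem:iota} applied to $P_{\sigma}^{\odot}$) intertwining $[T^{\odot\star}]_{\sigma}(t)$ with $\bigl[[T^{\odot}]_{\sigma}\bigr]^{\star}(t)$; since $[T^{\odot}]_{\sigma}$ is a $\mathcal{C}_0$-group on $[X^{\odot}]_0$ and on $[X^{\odot}]_+$ by \cref{prop:xsun-cm:group}, its adjoint is a group on the respective dual spaces, and the intertwining transports this to a group extension of $\{T^{\odot\star}(t)\}_{t \ge 0}$ on $[X^{\odot\star}]_0$ and on $[X^{\odot\star}]_+$. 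Finally, the fourth statement is exactly \cref{prop:xstar-cm:trichotomy} applied to $X^{\odot}$, which by construction preserves the trichotomy constants.

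Since every ingredient has already been established, there is no genuine obstacle here; the only point warranting a moment's care is confirming that \cref{prop:xsun-cm} really delivers the full strength of \cref{hyp:cm} for $(X^{\odot}, \{T^{\odot}(t)\}_{t \ge 0})$ — in particular that the group extension is strongly continuous (which \cref{prop:xsun-cm:group} asserts by calling it a $\mathcal{C}_0$-group) and that the trichotomy constants are literally unchanged (which the last clause of \cref{prop:xsun-cm} states). Once that is in hand, \cref{prop:xstar-cm} applies verbatim and the proof is complete.
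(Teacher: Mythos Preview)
Your proposal is correct and follows exactly the paper's own approach: the paper states just before the corollary that \cref{prop:xsun-cm} shows \cref{hyp:cm} holds with $X$ replaced by $X^{\odot}$ and $\{T(t)\}_{t \ge 0}$ by $\{T^{\odot}(t)\}_{t \ge 0}$, so \cref{prop:xstar-cm} applies with these substitutions. Your write-up simply spells out this substitution in more detail than the paper does.
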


\begin{lemma}\label{lem:Pj}
  The canonical embedding $j : X \to X^{\odot\star}$ has the following properties.
  \begin{lemenum}
  \item
    \label{lem:Pj:1}
    $P_{\sigma}^{\odot\star}j = jP_{\sigma}$.
  \item
    \label{lem:Pj:2}
    $j$ maps $X_{\sigma}$ into $[X^{\odot\star}]_{\sigma}$.
  \item
    \label{lem:Pj:3}
    $T^{\odot\star}(t)j = j T(t)$ on $X_{\sigma}$ for $\sigma \in \{0,+\}$ and all $t \in \RR$.
  \end{lemenum}
\end{lemma}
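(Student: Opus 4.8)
The plan is to prove the three statements in order, with (i) doing essentially all the work. First I would establish (i) by unwinding the definitions. Recall that $P_{\sigma}^{\odot} = P_{\sigma}^{\star}|_{X^{\odot}}$, that $P_{\sigma}^{\odot\star} = (P_{\sigma}^{\odot})^{\star}$, and that the canonical embedding is characterized by $\PAIR{x^{\odot}}{jx} = \PAIR{x}{x^{\odot}}$ for all $x \in X$ and $x^{\odot} \in X^{\odot}$. Then for arbitrary $x \in X$ and $x^{\odot} \in X^{\odot}$,
\[
  \PAIR{x^{\odot}}{P_{\sigma}^{\odot\star}jx} = \PAIR{P_{\sigma}^{\odot}x^{\odot}}{jx} = \PAIR{x}{P_{\sigma}^{\star}x^{\odot}} = \PAIR{P_{\sigma}x}{x^{\odot}} = \PAIR{x^{\odot}}{jP_{\sigma}x},
\]
where I use, successively, the definition of the adjoint $P_{\sigma}^{\odot\star}$, the characterization of $j$ together with $P_{\sigma}^{\odot}x^{\odot} = P_{\sigma}^{\star}x^{\odot}$, the defining property of the adjoint $P_{\sigma}^{\star}$, and once more the characterization of $j$. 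Since $x^{\odot} \in X^{\odot}$ is arbitrary this gives $P_{\sigma}^{\odot\star}j = jP_{\sigma}$.

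Statement (ii) is then immediate: for $\phi \in X_{\sigma} = \mathcal{R}(P_{\sigma})$ one has $P_{\sigma}\phi = \phi$, so (i) yields $j\phi = jP_{\sigma}\phi = P_{\sigma}^{\odot\star}j\phi \in \mathcal{R}(P_{\sigma}^{\odot\star}) = [X^{\odot\star}]_{\sigma}$.

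For (iii), the case $t \ge 0$ is nothing but the standard intertwining identity $T^{\odot\star}(t)j = jT(t)$, which holds on all of $X$ and has already been used repeatedly in this article. For $t < 0$ and $\sigma \in \{0,+\}$ I would argue via the group structure: by \cref{hyp:cm:group} the semigroup $\{T(t)\}_{t \ge 0}$ restricts to a $\mathcal{C}_0$-group on $X_{\sigma}$, by \cref{cor:xsunstar-cm:group} the semigroup $\{T^{\odot\star}(t)\}_{t \ge 0}$ extends to a group on $[X^{\odot\star}]_{\sigma}$, and by (ii) the embedding $j$ maps $X_{\sigma}$ into $[X^{\odot\star}]_{\sigma}$, so these group extensions may legitimately be applied to elements of the form $j\phi$ with $\phi \in X_{\sigma}$. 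Given $\phi \in X_{\sigma}$, put $\psi \DEF T(t)\phi \in X_{\sigma}$, so that $T(-t)\psi = \phi$ with $-t > 0$. The nonnegative-time identity gives $j\phi = jT(-t)\psi = T^{\odot\star}(-t)j\psi$, and applying $T^{\odot\star}(t)$ on $[X^{\odot\star}]_{\sigma}$ yields $T^{\odot\star}(t)j\phi = T^{\odot\star}(t)T^{\odot\star}(-t)j\psi = j\psi = jT(t)\phi$, as desired.

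I do not expect a genuine obstacle here; the only point that requires care is the bookkeeping in (iii), namely that for negative $t$ the operator $T^{\odot\star}(t)$ is defined only on the invariant subspace $[X^{\odot\star}]_{\sigma}$ (so that (ii) is genuinely needed for the manipulation to make sense) and that $T(t)\phi$ for negative $t$ refers to the group extension supplied by \cref{hyp:cm:group}.
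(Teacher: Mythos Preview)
Your proof is correct and follows essentially the same approach as the paper: part (i) is exactly what the paper means by ``easily checked using the definitions,'' part (ii) follows from (i) in the same way, and your argument for (iii) is the paper's argument with the bookkeeping reversed (the paper starts from the $t \ge 0$ identity, applies $T^{\odot\star}(-t)$, and then substitutes $x = T(-t)y$, whereas you first set $\psi = T(t)\phi$ and then apply $T^{\odot\star}(t)$). The care you take in noting that (ii) is needed to justify applying the group extension $T^{\odot\star}(t)$ for negative $t$ is appropriate and matches the paper's implicit use of \cref{cor:xsunstar-cm:group}.
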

\begin{proof}
  The first statement is easily checked using the definitions, and the second statement follows directly from the first. We know that the third statement is true for $t \ge 0$,
  \[
    T^{\odot\star}(t)j x = j T(t) x \qquad \ALL\,x \in X_{\sigma} \text{ and } t \ge 0.
  \]
  Let $t \ge 0$ be arbitrary. We use \cref{cor:xsunstar-cm:group} to act on both sides with $T^{\odot\star}(-t)$ to obtain $j x = T^{\odot\star}(-t) j T(t) x$ for all $x \in X_{\sigma}$. By \cref{hyp:cm:group}  we can substitute $x = T(-t)y$ for any $y \in X_{\sigma}$ to obtain
  \[
    j T(-t) y = T^{\odot\star}(-t) j y.
  \]
  Since $t \ge 0$ and $y \in X_{\sigma}$ are arbitrary, this yields the third statement.
\end{proof}

For any $\sigma \in \{-,0,+\}$, the commutativity of $R(\lambda,A^{\odot\star})$ and $P_{\sigma}^{\odot\star}$ and \cref{lem:Pj:1} together imply that $P_{\sigma}^{\odot\star}$ maps $X^{\odot\times}$ into itself. The restriction $P_{\sigma}^{\odot\times} \DEF P_{\sigma}^{\odot\star}|_{X^{\odot\times}}$ is a projector in $\BND(X^{\odot\times})$. We denote its range by $[X^{\odot\times}]_{\sigma}$. By restricting the results from \cref{cor:xsunstar-cm}, we obtain:

\begin{proposition}\label{prop:xsuncross-cm}
  The space $X^{\odot\times}$ and the semigroup $\{T^{\odot\star}(t)\}_{t \ge 0}$ on $X^{\odot\times}$ have the following properties.
  \begin{propenum}
  \item
    $X^{\odot\times}$ admits a topological direct sum decomposition
    \begin{equation}
      \label{eq:decomposition-of-xsuncross}
      X^{\odot\times} = [X^{\odot\times}]_- \oplus [X^{\odot\times}]_0 \oplus [X^{\odot\times}]_+.
    \end{equation}
  \item
    \label{prop:xsuncross-cm:invariance}
    The subspaces $[X^{\odot\times}]_{\sigma}$ are positively $T^{\odot\star}$-invariant.
  \item
    \label{prop:xsuncross-cm:group}
    $\{T^{\odot\star}(t)\}_{t \ge 0}$ extends to a group on $[X^{\odot\times}]_0$ and on $[X^{\odot\times}]_+$.
  \item
    \label{prop:xsuncross-cm:trichotomy}
    Decomposition \cref{eq:decomposition-of-xsuncross} is an exponential trichotomy on $\RR$ with the same constants as in \cref{hyp:cm:trichotomy}.
  \end{propenum}  
\end{proposition}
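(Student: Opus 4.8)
The plan is to derive all four properties by restricting the corresponding assertions of \cref{cor:xsunstar-cm} to the closed subspace $X^{\odot\times}$, exactly as \cref{prop:xsun-cm} was obtained from \cref{prop:xstar-cm}. Since the preamble already records that each $P_\sigma^{\odot\star}$ maps $X^{\odot\times}$ into itself, we have $[X^{\odot\times}]_\sigma = \mathcal{R}(P_\sigma^{\odot\times}) = [X^{\odot\star}]_\sigma \cap X^{\odot\times}$, a closed subspace of $X^{\odot\times}$, while the projector relations $(P_\sigma^{\odot\times})^2 = P_\sigma^{\odot\times}$, $P_\sigma^{\odot\times}P_\rho^{\odot\times} = 0$ for $\sigma \neq \rho$ and $\sum_\sigma P_\sigma^{\odot\times} = I$ on $X^{\odot\times}$ are inherited from $X^{\odot\star}$; hence \cite[p. 248]{TaylorLay1980} gives the topological direct sum \cref{eq:decomposition-of-xsuncross}. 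For \cref{prop:xsuncross-cm:invariance}, if $x^{\odot\star}\in[X^{\odot\times}]_\sigma$ and $t\ge 0$, then $T^{\odot\star}(t)x^{\odot\star}\in[X^{\odot\star}]_\sigma$ by \cref{cor:xsunstar-cm} and $T^{\odot\star}(t)x^{\odot\star}\in X^{\odot\times}$ by positive $T^{\odot\star}$-invariance of $X^{\odot\times}$ (the analogue for $T$ of the property recorded in \cref{sec:xsuncross}), whence $T^{\odot\star}(t)x^{\odot\star}\in[X^{\odot\times}]_\sigma$. And \cref{prop:xsuncross-cm:trichotomy} follows because an element of $[X^{\odot\times}]_\sigma$ lies in $[X^{\odot\star}]_\sigma$ with the same norm, so the exponential estimates of \cref{cor:xsunstar-cm} — which already carry the constants of \cref{hyp:cm:trichotomy} — apply verbatim.

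The one point that is not a literal restriction, and the step I expect to cost the most care, is \cref{prop:xsuncross-cm:group}: the extension to a group on $[X^{\odot\times}]_0$ and $[X^{\odot\times}]_+$. Fix $\sigma\in\{0,+\}$. \Cref{cor:xsunstar-cm:group} supplies a group $\{[T^{\odot\star}]_\sigma(t)\}_{t\in\RR}$ on $[X^{\odot\star}]_\sigma$, so it suffices to prove that $[X^{\odot\times}]_\sigma$ is invariant under $[T^{\odot\star}]_\sigma(t)$ for every $t\le 0$; the restriction is then automatically a group of bounded operators on $[X^{\odot\times}]_\sigma$. I would first record two auxiliary identities. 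The first is $jX\cap[X^{\odot\star}]_\sigma = jX_\sigma$: the inclusion $\supseteq$ is \cref{lem:Pj:2}, and for $\subseteq$ one notes that $jx\in[X^{\odot\star}]_\sigma$ forces $jP_\sigma x = P_\sigma^{\odot\star}jx = jx$ by \cref{lem:Pj:1}, hence $x\in X_\sigma$ by injectivity of $j$. The second is that $R(\lambda,A^{\odot\star})$ restricted to $[X^{\odot\star}]_\sigma$ is the resolvent of the part of $A^{\odot\star}$ there and commutes with the whole group on $[X^{\odot\star}]_\sigma$: indeed $R(\lambda,A^{\odot\star})$ commutes with $P_\sigma^{\odot\star}$ and with $T^{\odot\star}(t)$ for $t\ge 0$, hence also with the group inverses. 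Here $\lambda$ is any point of the resolvent set of $A$, as in \cref{eq:xsuncross}.

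Granting this, let $x^{\odot\star}\in[X^{\odot\times}]_\sigma$ and $t\le 0$, and set $y^{\odot\star}\DEF[T^{\odot\star}]_\sigma(t)x^{\odot\star}\in[X^{\odot\star}]_\sigma$. Then
\[
  R(\lambda,A^{\odot\star})y^{\odot\star} = [T^{\odot\star}]_\sigma(t)\,R(\lambda,A^{\odot\star})x^{\odot\star},
\]
and $R(\lambda,A^{\odot\star})x^{\odot\star}$ lies in $jX$ (since $x^{\odot\star}\in X^{\odot\times}$) and in $[X^{\odot\star}]_\sigma$, hence in $jX_\sigma$ by the first identity; writing it as $jx_\sigma$ with $x_\sigma\in X_\sigma$, \cref{lem:Pj:3} (which is valid for $t\le 0$) gives $[T^{\odot\star}]_\sigma(t)jx_\sigma = jT(t)x_\sigma\in jX$. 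Thus $R(\lambda,A^{\odot\star})y^{\odot\star}\in jX$, i.e.\ $y^{\odot\star}\in X^{\odot\times}$, which proves the invariance and hence \cref{prop:xsuncross-cm:group}. The main obstacle is therefore concentrated entirely in this invariance argument, which ultimately reduces — through \cref{lem:Pj:3} and the resolvent characterisation of $X^{\odot\times}$ — to bookkeeping with the projectors $P_\sigma^{\odot\star}$ and the identity $jX\cap[X^{\odot\star}]_\sigma = jX_\sigma$; everything else is a direct restriction of \cref{cor:xsunstar-cm}.
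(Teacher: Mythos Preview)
Your proposal is correct and follows precisely the approach the paper intends: the paper gives no explicit proof, merely stating that the result is obtained ``by restricting the results from \cref{cor:xsunstar-cm}.'' You have correctly identified that parts i, ii and iv are literal restrictions, while the group extension in \cref{prop:xsuncross-cm:group} requires an argument for negative times; your verification via the resolvent characterisation of $X^{\odot\times}$, the identity $jX\cap[X^{\odot\star}]_\sigma = jX_\sigma$, and \cref{lem:Pj:3} is a clean and valid way to fill this gap that the paper leaves implicit.
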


\subsection{Verification of \texorpdfstring{\cref{hyp:cm} and \cref{hyp:cm:j}}{H-II and H-III}}\label{sec:real-case}
Let $\{T(t)\}_{t \ge 0}$ be a $\mathcal{C}_0$-semigroup on a \emph{real} Banach space $X$. In this case \cref{hyp:cm} and \cref{hyp:cm:j} are typically verified in two steps. First, using spectral theory it is shown that these hypotheses hold for the complexified semigroup $\{T_{\cc}(t)\}_{t \ge 0}$ on the complexified Banach space $X_{\cc}$, where
\[
  T_{\cc}(t) \DEF [T(t)]_{\cc} \qquad \ALL\,t \ge 0.
\]
Next, it is proven that \cref{hyp:cm} and \cref{hyp:cm:j} also hold for the original semigroup $\{T(t)\}_{t \ge 0}$ on the real space $X$. \Cref{thm:hypcm} in \cref{sec:decomposition} is the result of the application of this procedure to a particular but still rather large class of $\mathcal{C}_0$-semigroups. In this appendix we prove a number of propositions that will then be used to give the deferred proof of \cref{thm:hypcm}.

We assume familiarity with the procedure of complexification of a real normed linear space, its dual space, and the linear operators defined on them; I recommend the presentation in \cite[Section III.7]{Delay1995} that is partly based on \cite[Section 1.3]{Ruston1986}. We begin with a trivial lemma.

\begin{lemma}\label{lem:restrict-complexify}
  Let $V$ be a real vector space, $L : V \to V$ a linear operator and $M$ a subspace of $V$ such that $LM \subseteq M$. Then $L_{\cc}M_{\cc} \subseteq M_{\cc}$ and $L_{\cc}|_{M_{\cc}} = [L|_M]_\cc$, so complexification and restriction commute.
\end{lemma}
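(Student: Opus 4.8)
\textbf{Proof proposal for \cref{lem:restrict-complexify}.}

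The plan is to work directly from the standard description of complexification: elements of $V_{\cc}$ are formal sums $x + iy$ with $x, y \in V$, the subspace $M_{\cc} \subseteq V_{\cc}$ consists of those $x + iy$ with $x, y \in M$, and the complexified operator acts by $L_{\cc}(x + iy) \DEF Lx + iLy$. With these conventions the statement is essentially a matter of unwinding definitions, so I would keep the proof to a few lines.

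First I would verify the inclusion $L_{\cc}M_{\cc} \subseteq M_{\cc}$. Take an arbitrary element $z = x + iy \in M_{\cc}$, so $x, y \in M$. Then $L_{\cc}z = Lx + iLy$, and the hypothesis $LM \subseteq M$ gives $Lx \in M$ and $Ly \in M$, hence $L_{\cc}z \in M_{\cc}$. Since $z$ was arbitrary, this proves the inclusion.

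Next I would check the identity $L_{\cc}|_{M_{\cc}} = [L|_M]_{\cc}$ of operators on $M_{\cc}$. Both sides are linear operators on $M_{\cc}$ (the left side is well-defined on $M_{\cc}$ precisely because of the inclusion just proved), so it suffices to evaluate them on a general element $x + iy \in M_{\cc}$: the left-hand side gives $Lx + iLy$ by definition of $L_{\cc}$, and the right-hand side gives $(L|_M)x + i(L|_M)y = Lx + iLy$ by definition of the complexification of $L|_M$. The two agree, which completes the proof.

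I do not anticipate any real obstacle here; the only thing to be careful about is keeping the two claims separate — the inclusion must be established before the restriction $L_{\cc}|_{M_{\cc}}$ can even be spoken of as an operator into $M_{\cc}$ — and making sure the conventions for complexification match those recalled from \cite[Section III.7]{Delay1995}. This lemma is purely preparatory, so brevity is appropriate.
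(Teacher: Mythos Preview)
Your proposal is correct; the paper itself calls this ``a trivial lemma'' and states it without proof, so your brief unwinding of the definitions is exactly in the intended spirit.
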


The following result shows that \cref{hyp:cm} holds for $\{T(t)\}_{t \ge 0}$ on $X$ if \cref{hyp:cm} holds for the complexifications and additionally the associated projectors are symmetric for conjugation; see \cite[Definition III.7.22]{Delay1995}. When the projectors can be obtained via contour integration, this symmetry condition is rather natural.

\begin{proposition}\label{prop:complex-to-real}
  If \cref{hyp:cm} holds for $\{T_{\cc}(t)\}_{t \ge 0}$ on $X_{\cc}$ and $\BAR{P}_{\sigma} = P_{\sigma}$ for $\sigma \in \{-,0,+\}$, then \cref{hyp:cm} holds for $\{T(t)\}_{t \ge 0}$ on $X$.
\end{proposition}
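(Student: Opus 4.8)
The plan is to transfer each of the four properties \cref{hyp:cm:sum}--\cref{hyp:cm:trichotomy} from the complexified data to the real data, using that the real projectors $P_\sigma$ are precisely the restrictions to $X$ of the complex projectors $[P_\sigma]_\cc$ under the symmetry hypothesis $\BAR{[P_\sigma]_\cc} = [P_\sigma]_\cc$. First I would observe that by the standard complexification theory (as in \cite[Section III.7]{Delay1995}), a bounded operator on $X_\cc$ that commutes with conjugation is itself the complexification of a bounded operator on $X$; applying this to each $[P_\sigma]_\cc$ yields bounded projectors $P_\sigma$ on $X$ with $[P_\sigma]_\cc$ equal to the given complex projector. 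Define $X_\sigma \DEF \mathcal{R}(P_\sigma) \subseteq X$; these are closed because the $P_\sigma$ are bounded. From $\sum_\sigma [P_\sigma]_\cc = I$, $[P_\sigma]_\cc^2 = [P_\sigma]_\cc$ and $[P_\sigma]_\cc [P_\rho]_\cc = 0$ for $\sigma \ne \rho$, and the injectivity of complexification on operators, we get the same relations for the $P_\sigma$, hence the topological direct sum \cref{eq:decomposition-of-x}. This disposes of \cref{hyp:cm:sum}.

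Next, for \cref{hyp:cm:invariance}, positive $T$-invariance of $X_\sigma$ follows from positive $T_\cc$-invariance of $[X_\cc]_\sigma = [X_\sigma]_\cc$ (by \cref{lem:restrict-complexify}, the complexification of $X_\sigma$ is exactly the $\sigma$-summand of $X_\cc$) together with the fact that $T(t)$ is the restriction of $T_\cc(t)$ to $X$: if $\phi \in X_\sigma$ then $T_\cc(t)\phi = T(t)\phi$ lies in $[X_\sigma]_\cc \cap X = X_\sigma$. For \cref{hyp:cm:group}, the $\mathcal{C}_0$-group extension of $\{T_\cc(t)\}_{t \ge 0}$ on $[X_\cc]_0$ and $[X_\cc]_+$ consists of operators that commute with conjugation (since $\BAR{T_\cc(t)} = T_\cc(t)$ for $t \ge 0$ and the group inverse of a conjugation-symmetric operator is again conjugation-symmetric), so each $T_\cc(-t)$ for $t > 0$ restricts to a bounded operator on $X_0$, respectively $X_+$; strong continuity of the real group then follows from strong continuity of the complex group restricted to the real subspace, or alternatively from \cite[Exercise I.5.9.(5)]{Engel2000} as in \cref{lem:xsun-semigroup-group}.

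Finally, for the exponential trichotomy \cref{hyp:cm:trichotomy}, take the same constants $a < 0 < b$ and, for given $\EPS > 0$, the same $K_\EPS$ furnished by \cref{hyp:cm:trichotomy} for $\{T_\cc(t)\}_{t \ge 0}$. For $\phi \in X_\sigma \subseteq [X_\sigma]_\cc = [X_\cc]_\sigma$ one has $\|T(t)\phi\|_X = \|T_\cc(t)\phi\|_{X_\cc}$ provided the complexification norm on $X_\cc$ restricts to the original norm on $X$, which it does for the standard (Taylor, or Bochner-type) complexification norm; then the three estimates \cref{eq:trichotomy:stable,eq:trichotomy:center,eq:trichotomy:unstable} on the real subspaces are immediate from the corresponding complex estimates. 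The main obstacle I anticipate is purely bookkeeping rather than conceptual: one must be careful that the chosen complexification norm is compatible with restriction to $X$ (some natural complexification norms only agree with the original norm up to an equivalence constant), and one must invoke the correct statement that conjugation-symmetric bounded operators on $X_\cc$ are exactly complexifications of real operators, together with its behaviour under inversion, so that the group extension descends. None of these steps involves a genuine difficulty beyond citing \cite[Section III.7]{Delay1995} and \cref{lem:restrict-complexify} carefully.
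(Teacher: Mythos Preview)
Your proposal is correct and follows essentially the same route as the paper's proof: descend each projector via $\BAR{P_\sigma}=P_\sigma$ to obtain real projectors $P_\sigma^X$, derive the decomposition from the projector identities, get invariance and the group extension by showing the complex operators (including the group inverses) are conjugation-symmetric, and read off the trichotomy via the admissibility of the complexification norm. The only minor imprecision is your invocation of \cref{lem:restrict-complexify} to justify $[X_\cc]_\sigma=[X_\sigma]_\cc$; that lemma is about operators, so the identification of the summands is really a direct consequence of $P_\sigma=[P_\sigma^X]_\cc$ applied to $X_\cc$, which is how the paper argues it.
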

\pagebreak
\begin{proof}
  We verify the four parts of \cref{hyp:cm} for $\{T(t)\}_{t \ge 0}$ on $X$.
  \begin{steps}[label=\roman*.]
  \item
    Let $\sigma,\rho \in \{-,0,+\}$ be arbitrary and write $X_{\cc\sigma}$ for the respective subspace in the decomposition of $X_{\cc}$. The second assumption and \cite[Lemma III.7.23]{Delay1995} imply that $P_{\sigma}$ is the complexification of an operator $P_{\sigma}^X \in \BND(X)$. In particular,
    \begin{equation}
      \label{eq:PsigmaX}
      P_{\sigma}(x + i0) = P_{\sigma}^X x + i0 \qquad \ALL\,x \in X,
    \end{equation}
    and therefore
    \[
      P_{\sigma}P_{\rho}(x + i0) = P_{\sigma}^XP_{\rho}^X x + i0 \qquad \ALL\,x \in X.
    \]
    The previous two equalities easily imply that
    \[
      (P_{\sigma}^X)^2 = P_{\sigma}^X, \qquad P_{\sigma}^X P_{\rho}^X = 0 \quad \text{if } \sigma \neq \rho, \qquad \textstyle \sum_{\sigma}P_{\sigma}^X = I,
    \]
    so by \cite[p. 248]{TaylorLay1980} we have the decomposition in \cref{eq:decomposition-of-x} with $X_{\sigma} \DEF \mathcal{R}(P_{\sigma}^X)$ and each summand is closed due to the continuity of $P_{\sigma}^X$. For use below we also note that
    \begin{equation}
      \label{eq:XCsigma}
      X_{\cc\sigma} = X_{\sigma\cc} \qquad \ALL\,\sigma \in \{-,0,+\}.
    \end{equation}
  \item
    For any $\sigma \in \{-,0,+\}$ the semigroup $\{T_{\cc}(t)\}_{t \ge 0}$ commutes with $P_{\sigma}$. By \cref{eq:PsigmaX} this implies that
    \[
      T(t)P_{\sigma}^X x + i0 = P_{\sigma}^XT(t)x + i0 \qquad \ALL\,x \in X,
    \]
    so $\{T(t)\}_{t \ge 0}$ commutes with $P_{\sigma}^X$ or, equivalently, $X_{\sigma}$ is positively $T$-invariant.
  \item
    For $\sigma \in \{0,+\}$ let $\{T_{\cc\sigma}(t)\}_{t \in \RR}$ be the extension of $\{T_{\cc}(t)\}_{t \ge 0}$ to a $\mathcal{C}_0$-group on $X_{\cc\sigma} = X_{\sigma\cc}$, where the latter equality was noted in \cref{eq:XCsigma}. For all $t \ge 0$ \cref{lem:restrict-complexify} with $V = X$, $L = T(t)$ and $M = X_{\sigma}$ gives $T_{\cc\sigma}(t) = T_{\sigma\cc}(t)$ and consequently
    \begin{align*}
      \BAR{T_{\cc\sigma}(-t)} &= \BAR{T_{\cc\sigma}(t)^{-1}}\\
                              &= \BAR{T_{\cc\sigma}(t)}^{\,-1}\\
                              &= T_{\cc\sigma}(t)^{-1} = T_{\cc\sigma}(-t),
    \end{align*}
    so $T_{\cc\sigma}(-t)$ on $X_{\sigma\cc}$, too, is the complexification of an operator, denoted by $T_{\sigma}(-t)$, in $\BND(X_{\sigma})$. We conclude that
    \[
      T_{\cc\sigma}(t) = T_{\sigma\cc}(t) \qquad \ALL\,t \in \RR.
    \]
    From this equality and the group property of $\{T_{\cc\sigma}(t)\}_{t \in \RR}$ it follows immediately that
    \[
      T_{\sigma\cc}(t + s) = T_{\sigma\cc}(t)T_{\sigma\cc}(s) \qquad \ALL\,t,s \in \RR,
    \]
    and by acting on arbitrary $x + i0 \in X_{\sigma\cc}$ we obtain the group property for $\{T_{\sigma}(t)\}_{t \in \RR}$. Its strong continuity follows from \cite[Exercise I.5.9.(5)]{Engel2000} and the strong continuity of the semigroup $\{T_{\sigma}(t)\}_{t \ge 0}$.
  \item
    In the following we use that $X_{\cc}$ is endowed with a norm that is admissible with respect to the norm on $X$; see \cite[Definition III.7.5]{Delay1995}. Let $t \ge 0$ (if $\sigma = -$) or $t \in \RR$ (if $\sigma = 0$) or $t \le 0$ (if $\sigma = +$) and let $x \in X_{\sigma}$ be given. Then
    \[
      \|T_{\sigma}(t)x\| = \|T_{\sigma}(t)x + i0\| = \|T_{\sigma\cc}(t)(x + i0)\| = \|T_{\cc\sigma}(t)(x + i0)\|.
    \]
    By \cref{eq:XCsigma} we have $x + i0 \in X_{\sigma\cc} = X_{\cc\sigma}$, so we can estimate the right-hand side by the appropriate exponential factor from \cref{eq:trichotomy} times $\|x + i0\| = \|x\|$. \qedhere
  \end{steps}
\end{proof}

\renewcommand{\rho}{\savedrho}
\renewcommand{\sigma}{\savedsigma}

\renewcommand{\ll}{\text{l}}
\newcommand{\rr}{\text{r}}

\begin{proposition}\label{prop:PbarP}
  Let $L \in \BND(X)$ and suppose there exists $\alpha \in \RR$ such that $\sigma(L_{\cc})$ is the disjoint union of the nonempty closed subsets $\sigma_{\ll} \DEF \{\lambda \in \sigma(L_{\cc})\,:\,\RE{\lambda} < \alpha\}$ and $\sigma_{\rr} \DEF \{\lambda \in \sigma(L_{\cc})\,:\,\RE{\lambda} > \alpha\}$. If $P$ denotes the spectral projector corresponding to either subset, then $\BAR{P} = P$.
\end{proposition}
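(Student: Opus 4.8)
The plan is to use that a real operator commutes with complex conjugation, so that $\sigma(L_{\cc})$ and each of the two spectral sets $\sigma_{\ll},\sigma_{\rr}$ are invariant under conjugation, and then to read off $\BAR{P}=P$ from the Riesz--Dunford contour representation of $P$.

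First I would recall the conjugation structure on the complexification, as in \cite[Section III.7]{Delay1995}: the conjugation $z\mapsto\BAR z$ on $X_{\cc}$ induces a conjugation $S\mapsto\BAR S$ on $\BND(X_{\cc})$ via $\BAR S z\DEF\BAR{S\BAR z}$, and this map is $\CC$-linear, multiplicative, bounded (isometric for the admissible norm on $X_{\cc}$), fixes $I$, sends the scalar operator $\lambda I$ to $\BAR\lambda I$, and fixes $L_{\cc}$ because $L$ acts on the real space $X$. Consequently $\BAR{(\lambda I-L_{\cc})^{-1}}=(\BAR\lambda I-L_{\cc})^{-1}$ for every $\lambda\in\rho(L_{\cc})$, so $\rho(L_{\cc})$, hence also $\sigma(L_{\cc})$, is symmetric under conjugation. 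Since $\RE{\BAR\lambda}=\RE{\lambda}$, the conditions $\RE{\lambda}<\alpha$ and $\RE{\lambda}>\alpha$ are conjugation-invariant, so $\BAR{\sigma_{\ll}}=\sigma_{\ll}$ and $\BAR{\sigma_{\rr}}=\sigma_{\rr}$; moreover both sets are compact (closed by hypothesis and bounded because $L_{\cc}\in\BND(X_{\cc})$), so the holomorphic functional calculus applies to each of them.

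Next, writing $P$ for the spectral projector attached to either of the two sets, say $\sigma_{\bullet}$, I would represent it as $P=\frac{1}{2\pi i}\oint_{\Gamma}(\lambda I-L_{\cc})^{-1}\,d\lambda$, where $\Gamma$ is a positively oriented cycle in $\rho(L_{\cc})$ winding once around $\sigma_{\bullet}$ and not at all around $\sigma(L_{\cc})\setminus\sigma_{\bullet}$; since $\BAR{\sigma_{\bullet}}=\sigma_{\bullet}$, one may choose $\Gamma$ symmetric under reflection in the real axis. Applying the conjugation to this integral, and using antilinearity on the scalar ($\BAR{(2\pi i)^{-1}}=-(2\pi i)^{-1}$), the identity $\BAR{(\lambda I-L_{\cc})^{-1}}=(\BAR\lambda I-L_{\cc})^{-1}$, and the substitution $\mu=\BAR\lambda$ (which replaces $\Gamma$ by the reflected contour $\BAR\Gamma$ traversed with the opposite orientation, i.e.\ clockwise around $\sigma_{\bullet}$), the two sign changes cancel and one is left with $\BAR P=\frac{1}{2\pi i}\oint_{\Gamma}(\mu I-L_{\cc})^{-1}\,d\mu=P$. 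For the remaining spectral set the conclusion is then immediate, since the two spectral projectors sum to $I$ and $\BAR I=I$.

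I expect the only genuinely delicate point to be keeping track of orientations under the change of variables $\mu=\BAR\lambda$: one must verify that conjugating the scalar $(2\pi i)^{-1}$ and conjugating (hence reflecting and reversing) the contour together produce no net sign; everything else is routine bookkeeping. An equivalent alternative I would keep in reserve avoids the contour computation: $\BAR P$ is a bounded projector commuting with $L_{\cc}$, and the spectra of $L_{\cc}$ restricted to $\mathcal{R}(\BAR P)$ and to $\mathcal{R}(I-\BAR P)$ are the conjugates of $\sigma_{\bullet}$ and of $\sigma(L_{\cc})\setminus\sigma_{\bullet}$, hence equal to $\sigma_{\bullet}$ and its complement; uniqueness of the spectral projector associated with a given spectral set then forces $\BAR P=P$.
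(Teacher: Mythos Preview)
Your proof is correct and essentially identical to the paper's: both choose a contour symmetric about the real axis and check that conjugating $(2\pi i)^{-1}$ and reversing orientation under $\mu=\BAR\lambda$ produce cancelling signs, yielding $\BAR P=P$. One slip to fix: the conjugation $S\mapsto\BAR S$ on $\BND(X_{\cc})$ is conjugate-linear, not $\CC$-linear as you write --- you use precisely this antilinearity a few lines later when you compute $\BAR{(2\pi i)^{-1}}$ and $\BAR{\lambda I}$.
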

\begin{proof}
  The spectral projector $P$ corresponding to $\sigma_{\rr}$ is given by the contour integral
  \[
    P = \frac{1}{2\pi i}\int_{\Gamma}R(\lambda, L_{\cc})\,d\lambda,
  \]
  where $\Gamma \subseteq \rho(L_{\cc})$ is any simple closed rectifiable curve that encloses $\sigma_{\rr}$ and the integral itself is of the Riemann-Stieltjes type. We recall that $\BAR{\sigma}(L_{\cc}) = \sigma(L_{\cc})$ and $\BAR{R(\lambda,L_{\cc})} = R(\BAR{\lambda},L_{\cc})$ for all $\lambda \in \rho(L_{\cc})$.
  \begin{steps}
  \item
    There exists $M > 0$ such that $\alpha < \RE{\lambda} < M$ and $-M < \IM{\lambda} < M$ for all $\lambda$ in the compact set $\sigma_{\rr}$. Let $\Gamma$ be the boundary of the rectangle with vertices $\alpha \pm i M$ and $M \pm i M$. Then $\Gamma$ is a simple closed rectifiable curve in $\rho(L_{\cc})$ that encloses $\sigma_{\rr}$ and is symmetric with respect to the real axis. Let $\gamma : [0,1] \to \CC$ be the natural parametrization of $\Gamma$, say in the counterclockwise direction, with $\gamma(0) = \alpha = \gamma(1)$.
  \item
    In order to be explicit, let us write $C : X_{\cc} \to X_{\cc}$ for the conjugation operator, i.e. $C(x + iy) \DEF \BAR{x + iy}$. It follows from the admissibility of the norm on $X_{\cc}$ that $C$ is continuous. For any $x + iy \in X_{\cc}$ we have
    \begin{align*}
      PC(x + iy) &= \frac{1}{2\pi i} \int_{\Gamma}R(\lambda,L_{\cc})C(x + iy)\,d\lambda\\
                 &=  \frac{1}{2\pi i} \int_{\Gamma}C \BAR{R(\lambda,L_{\cc})}(x + iy)\,d\lambda\\
                 &= \frac{1}{2\pi i} \int_{\Gamma}C R(\BAR{\lambda},L_{\cc})(x + iy)\,d\lambda\\
      &= \frac{1}{2\pi i} \int_0^1 C R(\BAR{\gamma}(t),L_{\cc})(x + iy)\,d\gamma(t)
    \end{align*}
    $C$ is additive but not homogeneous, since $C \lambda(x + iy) = \BAR{\lambda} C(x + iy)$. By inspection of the Riemann-Stieltjes sums in \cite[Appendix A.2]{ADDE1} and the continuity of $C$ we see that $C$ can be brought in front of the integral at the cost of conjugating $\gamma$, so
    \begin{align*}
      PC(x + iy) &= C\frac{1}{2\pi(-i)} \int_0^1 R(\BAR{\gamma}(t),L_{\cc})(x + iy)\,d\BAR{\gamma}(t)\\
                 &= C\frac{-1}{2\pi (-i)}\int_0^1 R(\gamma(t),L_{\cc})(x + iy)\,d\gamma(t)\\
                 &= C \frac{1}{2\pi i} \int_{\Gamma}R(\lambda,L_{\cc})(x + iy)\,d\lambda = C P(x + iy).
    \end{align*}
    Since $x + iy$ is arbitrary, this proves that $\BAR{P} = CPC = P$. \qedhere
  \end{steps}
\end{proof}

We recall from the complexified duality diagram in \cite[Section III.7]{Delay1995} that $[X_{\cc}]^{\odot\star}$ and $[X^{\odot\star}]_{\cc}$ may be identified, so the latter space can be used to represent the former. Under this identification, it is not difficult to check that the canonical embedding $j : X_{\cc} \to [X^{\odot\star}]_{\cc}$ is the complexification of the canonical embedding $j^X : X \to X^{\odot\star}$, i.e.
\begin{equation}
  \label{eq:jjX}
  j(x + iy) = j^X x + i j^X y \qquad \ALL\, x + iy \in X_{\cc}.
\end{equation}
Regarding the following result, we recall from the remark following \cref{eq:x0suncross} that $X^{\odot\times}$ is defined using the resolvent of $A^{\odot\star}$ and that both operators are \emph{real} objects.

\begin{proposition}\label{prop:xcross-complexification}
Under the identification of $[X_{\cc}]^{\odot\star}$ and $[X^{\odot\star}]_{\cc}$ it holds that $[X_{\cc}]^{\odot\times} = [X^{\odot\times}]_{\cc}$.
\end{proposition}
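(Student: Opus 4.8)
The plan is to reduce everything to the defining resolvent condition and the compatibility relation \cref{eq:jjX}. Recall that by definition $[X^{\odot\times}]_{\cc}$ consists of those elements $x^{\odot\star} + i y^{\odot\star} \in [X^{\odot\star}]_{\cc}$ with $x^{\odot\star}, y^{\odot\star} \in X^{\odot\times}$, i.e. with $R(\lambda, A^{\odot\star})x^{\odot\star} \in j^X X$ and $R(\lambda, A^{\odot\star})y^{\odot\star} \in j^X X$ for $\lambda \in \rho(A)$ (real). On the other hand, $[X_{\cc}]^{\odot\times}$ consists of those $z^{\odot\star} \in [X_{\cc}]^{\odot\star}$ with $R(\mu, [A_{\cc}]^{\odot\star})z^{\odot\star} \in j([X_{\cc}])$ for $\mu \in \rho(A_{\cc})$. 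So the statement amounts to matching these two descriptions under the identification $[X_{\cc}]^{\odot\star} \simeq [X^{\odot\star}]_{\cc}$.

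First I would record the basic commutation facts: under the identification, the generator $[A_{\cc}]^{\odot\star}$ of $\{[T_{\cc}(t)]^{\odot\star}\}_{t \ge 0}$ is the complexification $[A^{\odot\star}]_{\cc}$ of the real operator $A^{\odot\star}$ (this is part of the complexified duality diagram in \cite[Section III.7]{Delay1995}, since complexification commutes with taking adjoints, with passing to sun-duals, and with generators). Consequently $\rho(A_{\cc}) \supseteq \rho(A)$, and for real $\lambda \in \rho(A)$ the resolvent satisfies
\[
  R(\lambda, [A_{\cc}]^{\odot\star})(x^{\odot\star} + i y^{\odot\star}) = R(\lambda, A^{\odot\star})x^{\odot\star} + i R(\lambda, A^{\odot\star})y^{\odot\star},
\]
simply because $R(\lambda, A^{\odot\star})$ is real and complexification is $\RR$-linear. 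Second, I would use \cref{eq:jjX}, which says $j(x + iy) = j^X x + i j^X y$, to conclude that an element $u + iv \in [X^{\odot\star}]_{\cc}$ lies in $j([X_{\cc}])$ if and only if $u \in j^X X$ and $v \in j^X X$ — here one uses that $j^X$ is injective and its range is a real-linear subspace, so the real and imaginary parts of an element of $j([X_{\cc}])$ are each individually in $j^X X$, and conversely.

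Putting these together: fix any real $\lambda \in \rho(A)$ (which lies in $\rho(A_{\cc})$), and for $z = x^{\odot\star} + i y^{\odot\star}$ observe
\[
  R(\lambda, [A_{\cc}]^{\odot\star})z \in j([X_{\cc}]) \iff R(\lambda, A^{\odot\star})x^{\odot\star} \in j^X X \text{ and } R(\lambda, A^{\odot\star})y^{\odot\star} \in j^X X,
\]
which is exactly the statement $z \in [X^{\odot\times}]_{\cc}$. Since $[X_{\cc}]^{\odot\times}$ does not depend on the choice of point in $\rho(A_{\cc})$ (the remark following \cref{eq:x0suncross}, applied to the complexified semigroup), it suffices to test at this one real $\lambda$, and the two spaces coincide. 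The only mild subtlety — and the step I would be most careful about — is checking that the identification $[X_{\cc}]^{\odot\star} \simeq [X^{\odot\star}]_{\cc}$ is compatible with both the resolvent map (so that complexification of $R(\lambda, A^{\odot\star})$ really is $R(\lambda, [A_{\cc}]^{\odot\star})$) and with $j$ as in \cref{eq:jjX}; but both are precisely the content of the complexified duality diagram cited above, so no genuinely new work is needed. I would state this compatibility explicitly at the start of the proof and then the rest is a one-line equivalence.
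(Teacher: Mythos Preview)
Your proposal is correct and follows essentially the same route as the paper's proof: both fix a real $\lambda$ in the common resolvent set, use that the resolvent of the complexified operator acts on $x^{\odot\star}+iy^{\odot\star}$ by applying $R(\lambda,A^{\odot\star})$ componentwise, and then invoke \cref{eq:jjX} to split the condition ``lies in $j(X_{\cc})$'' into the pair of real conditions ``real and imaginary parts lie in $j^X X$''. The only cosmetic difference is that the paper phrases the compatibility of resolvents as ``complexification and inversion commute'' rather than your ``complexification of $R(\lambda,A^{\odot\star})$ is $R(\lambda,[A_{\cc}]^{\odot\star})$'', but these are the same statement.
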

\begin{proof}
  Complexification and inversion of an invertible linear operator commute, which implies that 
  \[
    R(\lambda, [A^{\odot\star}]_{\cc}) = R(\lambda,A^{\odot\star})_{\cc} \qquad \ALL\,\lambda \in \rho([A^{\odot\star}]_{\cc}) \cap \rho(A^{\odot\star}).
  \]
  Fix $\lambda$ in the above (non-empty) intersection of resolvent sets. For any $x^{\odot\star} + iy^{\odot\star} \in [X^{\odot\times}]_{\cc}$ we have
  \begin{align*}
    R(\lambda,[A^{\odot\star}]_{\cc})(x^{\odot\star} + i y^{\odot\star}) &= R(\lambda,A^{\odot\star})x^{\odot\star} + iR(\lambda,A^{\odot\star})y^{\odot\star}\\
    &= j^X x + i j^X y = j(x + iy)
  \end{align*}
  for certain $x, y \in X$, where \cref{eq:jjX} was used as well. We conclude that $x^{\odot\star} + iy^{\odot\star} \in [X_{\cc}]^{\odot\times}$. Conversely, assume that this conclusion holds. Then, similarly,
  \begin{align*}
    R(\lambda,A^{\odot\star})x^{\odot\star} + iR(\lambda,A^{\odot\star})y^{\odot\star} &= R(\lambda,[A^{\odot\star}]_{\cc})(x^{\odot\star} + iy^{\odot\star})\\
                                                                                       &= j(x + iy) = j^X x + j^X y,
  \end{align*}
  for certain $x + iy \in X_{\cc}$. Hence $x^{\odot\star}$ and $y^{\odot\star}$ are in $X^{\odot\times}$ or, in other words, $x^{\odot\star} + i y^{\odot\star} \in [X^{\odot\times}]_{\cc}$.
\end{proof}

We can now give a proof of \cref{thm:hypcm} from \cref{sec:decomposition}. 

\newcommand{\cu}{\text{cu}}
\hypcmthm*
\begin{proof}
  The starting point is contained in the proof of \cite[Theorem A-III.3.3]{Positive1986}. It is convenient to introduce the shorthand notation
  \begin{equation}
    \label{eq:shorthands}
    E \DEF X_{\cc}, \quad U(t) \DEF T_{\cc}(t), \quad D \DEF A_{\cc}, \qquad \ALL\,t \ge 0.
  \end{equation}
  Let $\omega_0(U)$ be the growth bound of $\{U(t)\}_{t \ge 0}$. The spectral mapping theorem for the resolvent implies that, for any real $\lambda_0 > \omega_0(U)$, the spectrum of the \emph{bounded} operator $R_0(D) \DEF R(\lambda_0,D) \in \BND(E)$ is the disjoint union of the closed set $\tau_- \cup \{0\}$ and the compact set $\tau_{\cu}$, where
  \[
    \tau_- \DEF \{(\lambda_0 - \lambda)^{-1}\,:\,\lambda \in \sigma_-\}, \qquad \tau_{\cu} \DEF \{(\lambda_0 - \lambda)^{-1}\,:\,\lambda \in \sigma_{\cu}\}.
  \]
  Let $P_{\cu} \in \BND(E)$ be the spectral projector corresponding to $\tau_{\cu}$ in the above decomposition of $\sigma(R_0(D))$. Explicitly,
  \begin{equation}
    \label{eq:Pcu-integral}
    P_{\cu} = \frac{1}{2\pi i}\int_{\Gamma_{\cu}} R(\lambda,R_0(D))\,d\lambda,
  \end{equation}
  where $\Gamma_{\cu} \subseteq \rho(R_0(D))$ is any simple closed rectifiable curve that encloses $\tau_{\cu}$. In the steps below, we will first verify the hypotheses of \cref{prop:complex-to-real}, which will let us conclude that \cref{hyp:cm} holds for $\{T(t)\}_{t \ge 0}$ on $X$. After this we will prove that \cref{hyp:cm:j} is satisfied.    
  \begin{steps}
  \item
    The proof of \cite[Theorem A-III.3.3]{Positive1986} shows that $P_{\cu}$ is a spectral projector corresponding to the decomposition $\sigma(D) = \sigma_- \cup \sigma_{\cu}$ where $\sigma_{\cu} \DEF \sigma_0 \cup \sigma_+$. So, if $E_{\cu} \DEF \mathcal{R}(P_{\cu})$ and $E_- \DEF \mathcal{R}(I - P_{\cu})$ then $E = E_- \oplus E_{\cu}$, the subspaces $E_-$ and $E_{\cu}$ are non-trivial, closed, and positively $U$-invariant and
    \[
      \sigma(D_-) = \sigma_-, \qquad \sigma(D_{\cu}) = \sigma_0 \cup \sigma_+.
    \]
    Here $D_-$ and $D_{\cu}$ are the generators of the restricted semigroups $\{U_-(t)\}_{t \ge 0}$ and $\{U_{\cu}(t)\}_{t \ge 0}$ and moreover $D_{\cu}$ is bounded.
  \item
    Rather than applying \cite[Theorem A-III.3.3]{Positive1986} for a second time, we use the boundedness of $D_{\cu}$ to obtain that the contour integrals
    \begin{equation}
      \label{eq:Qmu-integral}
      Q_{\mu} \DEF \frac{1}{2\pi i}\int_{\Gamma_{\mu}}R(\lambda,D_{\cu})\,d\lambda, \qquad \mu \in \{0,+\}, 
    \end{equation}
    are spectral projectors that give the unique decomposition $E_{\cu} = E_0 \oplus E_+$ with the property that the subspaces $E_{\mu} \DEF \mathcal{R}(Q_{\mu})$ are non-trivial, closed, and positively $D_{\cu}$-invariant and
    \[
      \sigma(D_0) = \sigma_0, \qquad \sigma(D_+) = \sigma_+.
    \]
    Here $D_0$ and $D_+$ are the respective restrictions of $D$ to $E_0$ and $E_+$. These subspaces are positively invariant for $\{U_{\cu}(t)\}_{t \ge 0}$ as well, and the restricted semigroups $\{U_0(t)\}_{t \ge 0}$ and $\{U_+(t)\}_{t \ge 0}$ are generated by $D_0$ and $D_+$. Both generators are bounded, so the semigroups extend to uniformly continuous groups.
  \end{steps}
  
  In summary, we have obtained a decomposition $E = E_- \oplus E_0 \oplus E_+$ into non-trivial closed positively $U$-invariant subspaces of $E$ such that on both $E_0$ and $E_+$ the restricted semigroups $\{U_0(t)\}_{t \ge 0}$ and $\{U_+(t)\}_{t \ge 0}$ extend to uniformly continuous groups. This is more than what is needed to conclude that \cref{hyp:cm:sum}, \cref{hyp:cm:invariance}, and \cref{hyp:cm:group} are satisfied by $\{U(t)\}_{t \ge 0}$ on $E$. We continue by checking \cref{hyp:cm:trichotomy}.
  
  \begin{steps}[resume]
  \item
    We fix $\delta \in (0,\min\{\gamma_+, -\gamma_-\})$ and set $a \DEF \gamma_- + \delta < 0$ and $b \DEF \gamma_+ - \delta > 0$. Let $\EPS > 0$ be given. An application of \cite[Corollary A-III.3.4]{Positive1986} to $\{U_{\cu}(t)\}_{t \ge 0}$ on $E_{\cu}$ yields a constant $m_+ \ge 1$ such that
    \[
      m_+ e^{b(-t)}\|U_+(t)x\| \le \|U_+(-t)U_+(t)x\| \qquad \ALL\, t \le 0 \text{ and } x \in E_+,
    \]
    which can be rewritten as
    \begin{equation}
      \label{eq:U+estimate}
      \|U_+(t)x\| \le \frac{1}{m_+}e^{bt}\|x\| \qquad \ALL\, t \le 0 \text{ and } x \in E_+.
    \end{equation}
    The same corollary also yields constants $m_{\EPS} \ge 1$ and $M_{\EPS} \ge 1$ such that
    \[
      \|U_0(t)x\| \le M_{\EPS} e^{\EPS t}\|x\| \qquad \ALL\,t \ge 0 \text{ and } x \in E_0
    \]
    and
    \[
      m_{\EPS} e^{-\EPS(-t)}\|U_0(t)x\| \le \|U_0(-t)U_0(t)x\| \qquad \ALL\,t \le 0 \text{ and } x \in E_0.
    \]
    We rewrite the second inequality and combine it with the first inequality to obtain
    \begin{equation}
      \label{eq:U0estimate}
      \|U_0(t)x\| \le \max\Bigl\{M_{\EPS},\frac{1}{m_{\EPS}}\Bigr\}e^{\EPS|t|}\|x\| \qquad \ALL\,t \in \RR \text{ and } x \in E_0.
    \end{equation}
    On the complementary space $E_-$ we apply the spectral mapping theorem  \cite[Theorem A-III.6.6]{Positive1986} for eventually norm continuous $\mathcal{C}_0$-semigroups to $\{U_-(t)\}_{t \ge 0}$. Its growth bound satisfies $\omega_0(U_-) = \gamma_- < a$ strictly, so there exists a constant $M_- \ge 1$ such that
    \begin{equation}
      \label{eq:U-estimate}
      \|U_-(t)x\| \le M_- e^{at}\|x\| \qquad \ALL\,t \ge 0 \text{ and } x \in E_-.
    \end{equation}
    So, for $K_{\EPS} \DEF \max\Bigl\{M_-, \frac{1}{m_+}, M_{\EPS}, \frac{1}{m_{\EPS}}\Bigr\}$ we see from \cref{eq:U+estimate,eq:U0estimate,eq:U-estimate} that $\{U(t)\}_{t \ge 0}$ on $E$ satisfies \cref{hyp:cm:trichotomy}.
  \end{steps}
  
  The first hypothesis of \cref{prop:complex-to-real} has been verified. Concerning the second hypothesis, for simplicity we only consider the (more difficult) case that $D$ is unbounded.
  
  \begin{steps}[resume]
  \item
    Let $r > 0$ be the radius of a ball containing $\sigma_{\cu}$ and fix $\lambda_0 > \max\{\omega_0(U),2r,-\frac{r^2}{\gamma_-}\}$. It is not difficult to check that this choice for $\lambda_0$ implies that
    \[
      \RE{\frac{1}{\lambda_0 - \lambda}} \le \frac{1}{\lambda_0 - \gamma_-} < \frac{\lambda_0}{\lambda_0^2 + r^2} \le \RE{\frac{1}{\lambda_0 - \kappa}} \qquad \ALL \lambda \in \sigma_- \text{ and all } \kappa \in \sigma_{\cu}.
    \]
    Also, $\lambda_0$ is real and $\BAR{D} = D$, which implies that $\BAR{R_0(D)} = R_0(D)$, so $R_0(D)$ is the complexification of an operator $R_0^X(D) \in \BND(X)$. \Cref{prop:PbarP} with $L = R_0^X(D)$, $\sigma_{\ll} = \tau_- \cup \{0\}$ and $\sigma_{\rr} = \tau_{\cu}$ then shows that $\BAR{P}_{\cu} = P_{\cu}$ and consequently $\BAR{P}_- = \BAR{I - P_{\cu}} = P_-$. 
  \item
    It follows that $P_{\cu}$ is the complexification of a projector $P_{\cu}^X \in \BND(X)$ and we note that if $X_{\cu} \DEF \mathcal{R}(P_{\cu}^X)$, then $E_{\cu} = [X_{\cu}]_{\cc}$. The equalities $\BAR{D} = D$ and $\BAR{P}_{\cu} = P_{\cu}$ together imply that $\BAR{D}_{\cu} = D_{\cu}$, so $D_{\cu}$ is the complexification of an operator $D_{\cu}^X \in \BND(X_{\cu})$. We can therefore apply \cref{prop:PbarP} with $L = D_{\cu}^X$, $\sigma_{\ll} = \sigma_0$ and $\sigma_{\rr} = \sigma_+$ to obtain that $\BAR{Q}_0 = Q_0$ and $\BAR{Q}_+ = Q_+$. It remains to note that
    \begin{equation}
      \label{eq:PmubarPmu}
      \BAR{P}_{\mu} = \BAR{Q}_{\mu}\BAR{P}_{\cu} = Q_{\mu}P_{\cu} = P_{\mu}, \qquad \mu \in \{0,+\}.
    \end{equation}
  \end{steps}
  The second hypothesis of \cref{prop:complex-to-real} has been verified, and we infer that \cref{hyp:cm} holds for $\{T(t)\}_{t \ge 0}$ on $X$. Next we prove that \cref{hyp:cm:j} holds, at first for $\{U(t)\}_{t \ge 0}$ on $E$.
  \begin{steps}[resume]
  \item
    The subspace $E_{\cu}$ of $E$ is positively $R(\lambda_0,D)$-invariant, by construction. The compactness of $\sigma_0$ and $\sigma_+$ implies the existence of $\eta > 0$ such that $-\eta \le \IM{\lambda} \le \eta$ for all $\lambda \in \sigma_0 \cup \sigma_+$. Define $\Delta_{\cu} \subseteq \rho(D)$ as the \emph{complement} in the open half-plane $\{\lambda \in \CC\,:\,\RE{\lambda} > \gamma_-\}$ of the union of the line segment and the rectangle
    \[
      \{\lambda \in \CC\,:\,\RE{\lambda} = 0 \text{ and } \IM{\lambda} \in [-\eta,\eta]\}, \qquad \{\lambda \in \CC\,:\,\RE{\lambda} \in [\gamma_+,\omega_0(U)] \text{ and } \IM{\lambda} \in [-\eta,\eta]\}.
    \]
    Then $\Delta_{\cu}$ is open and path-connected. We will show that
    \begin{equation}
      \label{eq:REcu}
      R(\lambda,D)E_{\cu} \subseteq E_{\cu} \qquad \ALL\, \lambda \in \Delta_{\cu}.
    \end{equation}
    For every $\lambda_1 \in \Delta_{\cu}$ there exists a continuous function $p : [0,1] \to \Delta_{\cu}$ with $p(0) = \lambda_0$ and $p(1) = \lambda_1$. The function $R(p(\cdot),D) : [0,1] \to \BND(E)$ is uniformly continuous on its compact domain, so there exists $\EPS > 0$ such that $\max_{0 \le t \le 1} \|R(p(t),D)\| < \frac{1}{\EPS}$ and there exists $\delta > 0$ such that
    \[
      |t - s| \le \delta \quad \text{implies} \quad |p(t) - p(s)| \le \EPS \qquad \ALL\,t,s \in [0,1].
    \]
    Then
    \begin{equation}
      \label{eq:Rexpansion}
      R(p(t),D) = \sum_{n=0}^{\infty} (p(s) - p(t))^n R(p(s),D)^{n+1}
    \end{equation}
    for all $t,s \in [0,1]$ with $|t - s| \le \delta$. So, choose $N \in \NN$ such that $\frac{1}{N} \le \delta$ and let $t_k \DEF \frac{k}{N}$ for $k = 0,\ldots,N$. By induction on $k$ it follows from \cref{eq:Rexpansion} and the closedness of $E_{\cu}$ that $R(p(t_k),D)E_{\cu} \subseteq E_{\cu}$ for all $k = 0,\ldots,N$. In particular, $R(\lambda_1,D)E_{\cu} \subseteq E_{\cu}$ and since $\lambda_1 \in \Delta_{\cu}$ is arbitrary, this proves \cref{eq:REcu}.
  \item
    For $\mu \in \{0,+\}$ let $\Gamma_{\mu} \subseteq \Delta_{\cu}$ be a simple closed rectifiable curve that encloses $\sigma_{\mu}$. It follows from \cref{eq:REcu} and an application of \cite[Lemma IV.1.15]{Engel2000} that
    \[
      \lambda \in \rho(D_{\cu}) \quad \text{and} \quad R(\lambda,D_{\cu}) = R(\lambda,D)|_{E_{\cu}} \qquad \ALL\, \lambda \in \Gamma_{\mu}.
    \]
    Together with \cref{eq:Qmu-integral} this implies
    \[
      P_{\mu} = Q_{\mu} P_{\cu} = \frac{1}{2\pi i}\int_{\Gamma_{\mu}} R(\lambda,D) P_{\cu}\,d\lambda, \qquad \mu \in \{0,+\}.
    \]
    The positive invariance of $E^{\odot}$ for each $P_{\mu}^{\star}$ shows that the same holds true for $P_{\cu}^{\star} = P_0^{\star} + P_+^{\star}$ and we denote $P_{\cu}^{\odot} \DEF P_{\cu}^{\star}|_{E^{\odot}}$. Hence the restriction to $E^{\odot}$ of $(R(\lambda,D)P_{\cu})^{\star} = P_{\cu}^{\star}R(\lambda,D^{\star})$ is given by $P_{\cu}^{\odot}R(\lambda,D^{\odot})$. Taking adjoints once more, we obtain
    \begin{equation}
      \label{eq:Pmu-sunstar-contour}
      P_{\mu}^{\odot\star} = \frac{1}{2\pi i}\int_{\Gamma_{\mu}} R(\lambda,D^{\odot\star}) P_{\cu}^{\odot\star}\,d\lambda, \qquad \mu \in \{0,+\}.
    \end{equation}
    By the comments preceding \cref{prop:xsuncross-cm} each $P_{\mu}^{\odot\star}$ maps $E^{\odot\times}$ into itself, so the same holds true for $P_{\cu}^{\odot\star} = P_0^{\odot\star} + P_+^{\odot\star}$. It follows that the integrand in \cref{eq:Pmu-sunstar-contour} maps $E^{\odot\times}$ into $jE$. The closedness of $jE$ then implies that
    \begin{equation}
      \label{eq:Esuncrossmu}
      [E^{\odot\times}]_{\mu} = P_{\mu}^{\odot\star}E^{\odot\times} \subseteq jE, \qquad \mu \in \{0,+\}.
    \end{equation}
  \end{steps}

  In conclusion we show that \cref{hyp:cm:j} also holds for the original semigroup $\{T(t)\}_{t \ge 0}$ on $X$. Here we need to recall the shorthands introduced in \cref{eq:shorthands}.
  
  \begin{steps}[resume]
  \item
    From \cref{eq:PmubarPmu} it follows that for $\mu \in \{0,+\}$ the projector $P_{\mu} \in \BND(X_{\cc})$ is the complexification of a projector $P_{\mu}^X \in \BND(X)$. We prove that $P_{\mu}^{\odot\star}$ is the complexification of $P_{\mu}^{X\odot\star}$. An application of \cite[Lemma III.7.16]{Delay1995} to $P_{\mu} = [P_{\mu}^X]_{\cc}$ yields $P_{\mu}^{\star} = [P_{\mu}^{X\star}]_{\cc}$. Together with the identification $[X_{\cc}]^{\odot} \simeq [X^{\odot}]_{\cc}$ this implies $P_{\mu}^{\odot} = [P_{\mu}^{X\odot}]_{\cc}$. A second application of the same lemma then indeed gives
    \begin{equation}
      \label{eq:PmuPmuX}  
      P_{\mu}^{\odot\star} = [P_{\mu}^{X\odot\star}]_{\cc}, \qquad \mu \in \{0,+\}.
    \end{equation}
  \item
    For $\mu \in \{0,+\}$ it follows from 
    \[
      P_{\mu}^{X\odot\star}X^{\odot\times} + i0 = P_{\mu}^{\odot\star}(X^{\odot\times} + i0) \subseteq P_{\mu}^{\odot\star}E^{\odot\times} \subseteq jE,
    \]
    where the equality is due to \cref{eq:PmuPmuX}, the first inclusion follows from \cref{prop:xcross-complexification} and the second inclusion is due to \cref{eq:Esuncrossmu}. From \cref{eq:jjX} we infer that $jE = j^XX + i j^XX$, so we conclude that
    \[
      [X^{\odot\times}]_{\mu} = P_{\mu}^{X\odot\star}X^{\odot\times} \subseteq j^XX, \qquad \mu \in \{0,+\},
    \]
    which proves that \cref{hyp:cm:j} holds for $\{T(t)\}_{t \ge 0}$ on $X$. \qedhere
  \end{steps}
\end{proof}

It is interesting to note that the eventual norm continuity of $\{T(t)\}_{t \ge 0}$ is \emph{only} used to obtain the equality of the spectral and growth bounds of $\{U_-(t)\}_{t \ge 0}$, and from there the estimate \cref{eq:U-estimate} on the complementary space $E_-$. So, if $\{T(t)\}_{t \ge 0}$ fails to be eventually norm continuous, but that estimate can still be established via some other route (perhaps by using the structure of the particular equation at hand), then \cref{thm:hypcm} can still be used to verify \cref{hyp:cm} and \cref{hyp:cm:j}.


\clearpage
\appendix
\numberwithin{equation}{section}
\numberwithin{theorem}{section}

\pdfbookmark[0]{References}{myreferences}
\printbibliography

\end{document}